\documentclass{article}

\usepackage[english]{babel}
\usepackage{mysty}
\usepackage[margin=1in]{geometry}

\usepackage{amsmath}
\usepackage{graphicx}

\title{Efficient Optimization on the Euclidean Sphere:\\Riemannian Gradient Alignment as a Unifying Principle}
\author{Puqian Wang\\
UW-Madison\\
\texttt{pwang333@wisc.edu}
\and Nikos Zarifis\\
MIT\\
\texttt{zarifis@mit.edu}
\and Jelena Diakonikolas\\
UW-Madison\\
\texttt{jelena@cs.wisc.edu}
}
\date{}

\begin{document}
\maketitle

\begin{abstract}
Euclidean sphere-constrained optimization problems represent perhaps the most basic class of nonconvex-constrained problems for which there exist efficient optimization algorithms for specific problem instances, such as the eigenvalue problems and learning problems arising in the recent literature. However, when and why we can efficiently optimize on the sphere using simple first-order methods is poorly understood. We introduce Riemannian Gradient Alignment (RGA) as a unifying structural condition enabling fast convergence. RGA is a
parameterized directional error bound requiring a tangent vector to 
sufficiently align with a target solution vector. We prove that  Riemannian
Gradient Descent with diminishing step sizes converges linearly when RGA holds with order $r=1$, and at rate
$O(k^{-1/(2r-2)})$ for $r>1$. The result applies even to tangent vector fields
that are not gradients of any objective.

Our main technical contribution is a family of derivative-based sufficient conditions 
for RGA. We prove these conditions for a range of learning problems, including 
multi-instance learning/max pooling, Gaussian halfspaces, single-index and
generalized linear models, phase retrieval, complete orthogonal dictionary
learning, and (sparse) PCA. The provided examples span distributional families that include standard Gaussian, conditionally Gaussian, and more general structured distributions. The provided guarantees cover objectives that are generally not 
(geodesically) convex, may be nonsmooth, and may arise from discrete distributions. These results establish tractability of problem classes where no prior error bound conditions were known, as well as 
 unify, strengthen, and generalize several prior results. 
\end{abstract}

\newpage
\tableofcontents
\newpage

\section{Introduction}
Optimization problems on the Euclidean sphere $\spd := \{\vx \in \sR^d: \norm{\vx}_2 = 1\}$,
\begin{equation}\label{problem:opt-on-sphere-general}
    \min_{\vx\in\spd} f(\vx), \tag{P}
\end{equation}
represent perhaps the simplest class of problems with nonconvex constraints for which there are efficiently solvable instances. Although nonconvex, Euclidean sphere is a smooth manifold and among the simplest examples of Riemannian optimization problems where generalizations of Euclidean concepts of distance and gradient updates transfer almost seamlessly. As such, optimization problems on the sphere feature prominently in the literature, arising in applications such as  
eigenvalue/eigenvector computation for real symmetric matrices~\cite{zhang2016riemannian,ADVA21,alimisis2022geodesic} {and their generalizations to sparse Principal Component Analysis (PCA)~\cite{jolliffe2003modified,zou2006sparse,zou2018selective,chen2020proximal} and robust PCA~\cite{zhang2018robustPCA}}, dictionary learning~\cite{sun2015complete,bai2018subgradient,gilboa2019efficient,Li2019WeaklyCO,qu2019analysis}, 
robust subspace recovery~\cite{zhu2018DPCP,Li2019WeaklyCO,Zhu2019LinearConvergenceGrassman,ding2021DPCP},
learning single-index models and generalized linear models~\cite{BenArous2021OnlineSGD,damian2023smoothing,ZWDD2024,WZDD24b}, and (as a special case of learning generalized linear models) learning halfspaces (a.k.a.\ binary linear classification)~\cite{SB14,DKTZ20c,DDKWZ2023}.

Despite substantial progress in Riemannian optimization and in optimizing different objective functions on the sphere, there still remains a significant gap in our understanding of what kind of objective functions are amenable to being efficiently minimized on the sphere. Additionally, the authors' own recent work \cite{WZDD2023,WZDD24b,ZWDD2024,zarifis2025robustly,wang2026robustly} has revealed several examples of learning problems for which direct generalization of (projected) gradient descent to sphere-constrained problems, with suitably selected step sizes, exhibits provable linear convergence despite the apparent nonconvexity of the objective (in the traditional or Riemannian sense) and often even nonsmoothness. 

These recent results motivated the present investigation of the underlying problem geometry that enables such results. Our guiding principle is the development of theory akin to classical Euclidean optimization, focusing on simple algorithms (mainly, Riemannian Gradient Descent) and familiar concepts from classical optimization. As our conceptual contribution, we introduce the Riemannian Gradient Alignment (RGA) property parameterized by $r \geq 1, \mu > 0$, as a unifying concept that defines classes of optimization problems on the sphere for which we prove that diminishing-step Riemannian Gradient Descent (RGD; see \Cref{alg:RGD}) is guaranteed to converge to a solution at either linear rate (for $r = 1$) or fast sublinear rate ($(1/k)^{1/(2r -2)}$ for $r > 1$), depending on the parameters defining the problem class. 

Our main technical contributions are two-fold: (1) as a parallel to classical optimization theory developed for Euclidean (and closely related) spaces, we introduce derivative-based sufficient conditions for optimization problems on the sphere to satisfy RGA; and (2) we prove the RGA condition, providing fully specified RGA parameters, for a wide range of learning problems, by verifying the introduced sufficient conditions. Together with the established convergence guarantees for diminishing-step RGD under RGA, these results imply fast (linear or sublinear, see the paragraph above) convergence of diminishing-step RGD on the considered learning problems. Importantly, many of the considered problems are neither convex (in Euclidean or geodesic sense) nor continuously differentiable, demonstrating the power of RGA as a unifying property.

\subsection{Contributions and Technical Overview}

The central goal of this work is to identify general structural conditions under which a simple first-order
method can efficiently optimize an objective over the Euclidean sphere, even when the objective is nonconvex,
nonsmooth, or not explicitly available. Toward this goal, we introduce a parameterized directional error bound (RGA), establish convergence guarantees for a
diminishing-step RGD method under this condition, and develop derivative-based
conditions that allow RGA to be proved for broad classes of learning problems. 

\paragraph{Riemannian Gradient Alignment}
We define RGA as follows. 
\begin{definition}[Riemannian Gradient Alignment--RGA]\label{def:sharp}
    Fix a target unit vector $\vx^*\in \spd$. For $\vx\in\spd$, we say that the vector field $\vg(\vx)\in\mathrm{T}_{\vx}\spd$, $\mathrm{T}_{\vx}\spd = \{\vy \in \R^d: \vy \text{ is orthogonal to }\vx\},$ satisfies the RGA property with parameters $(\mu,r)$ at  $(\vx,\vx^*)$ if $\vg(\vx) \neq \vzero$ whenever $\vx \neq \vx^*$ and \begin{equation}\label{eq:sharp}
        \vg(\vx)\cdot\vx^*\leq -\mu  \sin^r(\theta(\vx,\vx^*)) \|\vg(\vx)\|_2 ,
    \end{equation}
    where $\theta(\vx, \vx^*) = \arccos(\vx \cdot \vx^*).$ 
    In this case, we say that the vector field $\vg(\vx)$ is $(\mu,r)$-RGA at $(\vx,\vx^*)$, and in particular, when $r=1$, we simply say that $\vg(\vx)$ is $\mu$-RGA at $(\vx,\vx^*)$.

    If, {at $(\vx,\vx^*)$, every measurable selection $\vg(\vx)$ generated from the subdifferential of a minimization objective $f(\vx)$ is $(\mu,r)$-RGA, then we say that $f$ is $(\mu,r)$-RGA (or simply $\mu$-RGA when $r = 1$) at $(\vx,\vx^*)$.}
    
    If there exists a fixed $\vx^*$ such that for any vector $\vx$ in the subset $\cX\subseteq\spd$, $f$ is $(\mu,r)$-RGA at $(\vx,\vx^*)$, then we say $f$ is $(\mu,r)$-RGA on $(\cX,\vx^*)$. 
\end{definition}

RGA resembles and is related to inequality-based conditions bounding the correlation between the gradient of an objective function and a target solution, studied under names ``restricted secant inequality'' \cite{ZY2013}, ``Riemannian Regularity Condition'' \cite{Zhu2019LinearConvergenceGrassman}, and  ``(alignment) sharpness'' \cite{WZDD2023,WZDD24b,ZWDD2024,zarifis2025robustly,wang2026robustly}. Compared to these past works, RGA can be seen as both a specialization (e.g., from Euclidean spaces \cite{ZY2013,WZDD2023} to the sphere) and generalization (e.g., using a generally smaller Riemannian gradient in place of the Euclidean one \cite{ZWDD2024} and normalizing it to a unit-norm vector, different from all the aforementioned past work). 

Observe here that the RGA property can be defined for an arbitrary vector field $\vg(\vx)$, not necessarily associated with a minimization problem. In that sense, RGA is a general property that can plausibly be used to study other related problems on the sphere, such as root-finding problems or fixed-point equations. Further, even when the underlying problem we want to solve is a minimization problem expressible as \eqref{problem:opt-on-sphere-general}, $\vg(\vx)$ need not be the (Riemannian) (sub)gradient of the objective. Although it may seem unintuitive that considering such vector fields would help in any way for minimization problems, this higher level of generality turns out crucial for some problems. In particular, we point to \cite{zarifis2025robustly}, where it was argued that for the (generalized linear model) learning problem considered in that work, the Riemannian gradient of the optimization objective may, in fact, negatively correlate with the target directions (meaning that it is not a reliable ``signal,'' or, in other words, that it would guide gradient-based methods \emph{away} from target solutions).

Another property worth pointing out right away is that RGA refers to $\vx^*$ as some fixed \emph{target} vector rather than a \emph{solution} to any given problem. Of course, to give a meaning to an algorithm converging to some $\vx^*,$ we would often want to provide further specification. For instance, the `target vectors' can be local minima, local stationary points, or any vectors with small error (not necessarily local minima or stationary points); the appropriate definition is problem-specific. 
Relatedly, the convergence result we establish for diminishing-step RGD (\Cref{lem:descent-rsi}) does not require the target parameter $\vx^*$ to have any specific property such as stationarity. Rather, $\vx^*$ can be any fixed vector for which the vector field $\vg(\vx)$ satisfies RGA along the trajectory. 
For example, in \cite{WZDD2023,ZWDD2024,WZDD24b,zarifis2025robustly,wang2026robustly}, there is a spherical cap in which every vector has the loss (objective) value smaller than $C\,\opt$ for a constant $C > 1$, where $\opt$ denotes the minimum loss value. The RGA property established in these prior works holds with respect to any vector $\vx^*$ in this region, for $\vx$ outside the spherical cap containing $C\, \opt$ solutions.

Finally, in comparison to standard notions such as geodesic (strong) convexity (see \Cref{sec:prelims} for relevant definitions), which can hold \emph{locally} on the sphere, we argue that RGA is a much weaker property. RGA is implied by geodesic strong convexity (see \Cref{claim:str-cvx-implies-RGA}) and it is possible for RGA to hold on a spherical cap of constant radius around a solution $\vx^*$, whereas geodesic convexity fails on a much smaller spherical cap region, of radius $O(1/\sqrt{d})$ (see \Cref{claim:lattice-relu-glm-not-geo-convex}).

\paragraph{Convergence of Diminishing-Step Riemannian Gradient Descent}  To establish the sufficiency of RGA for efficient optimization on the sphere,  in \Cref{sec:sharpness-RSI}, we analyze an RGD-style update, of the form
\[
    \vx_{k+1}
    =
    \frac{\vx_k-\eta_k \vg(\vx_k)}
         {\|\vx_k-\eta_k \vg(x_k)\|_2}, \; k \geq 0,
\]
where $\vx_0$ is initialized on the sphere and $\eta_k$ is a suitably chosen sequence of diminishing step sizes. In particular, the analyzed schedule of step sizes $\eta_k$ can be seen as an estimating sequence of the iterate distance to the target vector $\vx^*.$ 

Under the assumption that RGA holds on the algorithm trajectory $\vx_0, \vx_1, \vx_2, \dots,$ for a fixed target $\vx^* \in \spd$ and for fixed RGA parameters $\mu > 0, r \geq 1,$ we establish the following convergence guarantees. Suppose that $\theta(\vx_0, \vx^*) \leq \Bar{\theta} < \pi/2$ for some $\Bar{\theta} > 0.$ When $r=1$, the iterates converge linearly:
\[
    \frac{1}{2}\norm{\vx_k - \vx^*}_2 = \sin\!\left({\theta(\vx_k,\vx^\ast)}/{2}\right)
    \leq
    \sin\!\left({\bar\theta}/{2}\right)
    \exp\bigl(-\Omega(\mu^2 k)\bigr).
\]
For $r>1$, the iterates converge sublinearly as
\[
    \frac{1}{2}\norm{\vx_k - \vx^*}_2 = \sin\!\left({\theta(\vx_k, \vx^\ast)}/{2}\right)
    =
    O\!\left(
        \left(
            \frac{1}{\mu^2(r-1)(k+1)}
        \right)^{\!1/(2r-2)}
    \right).
\]
In particular, for $r = 2,$ the convergence rate is of the order $1/\sqrt{k+1}.$ 

Moreover, we argue that if RGA holds on a spherical cap centered at $\vx^*$ and $\vx_0$ is initialized in that spherical cap, then all RGD iterates described above are guaranteed to remain in the same spherical cap on which RGA holds. This makes the RGA algorithm trajectory requirement trivial (under suitable initialization) for all our examples provided in \Cref{sec:examples}, as in all these examples we prove the RGA holds on a spherical cap centered at a solution $\vx^*.$ Moreover, in most examples, RGA is proven for all $\vx$ such that $\theta(\vx, \vx^*) \in [0, \pi/2)$, in which case random initialization suffices by the standard concentration of measure on the Euclidean sphere: with probability one, $\vx_0 \in \spd$ chosen uniformly at random from the sphere satisfies $|\theta(\vx_0, \vx^*)| \in [0, \pi/2)$. Thus, it suffices to run the algorithm initialized at $\vx_0$ and $-\vx_0.$  

We note here that while similar convergence guarantees have been established for diminishing-step RGD when $r = 1$ under comparable assumptions to RGA (see \Cref{sec:related work} for a detailed comparison to prior work), the result for $r > 1$ (which is more challenging to establish, see the proof of \Cref{lem:descent-rsi}), is new, to our knowledge. We remark here that our sparse PCA example in \Cref{sec:spiked-sparse-pca} provably satisfies RGA with $r =2$ but not with $r = 1$ (see \Cref{lem:spiked-sparse-pca-cannot-have-lower-order} for details).  

\paragraph{Derivative-based Sufficient Conditions for RGA} Our main technical contribution is a collection of conditions that make RGA verifiable without directly
estimating the angle between a vector field $\vg$ and an unknown target $\vx^*$. Our sufficient conditions are provided for two general settings: (1) encompassing stochastic optimization problems most naturally mapped to regression, and (2) corresponding to general vector fields $\vg,$ not necessarily associated with an optimization problem. 

For our first set of sufficient conditions, we consider stochastic optimization problems on the sphere, whose objective $f(\vx)$ in \eqref{problem:opt-on-sphere-general} is expressible as
\[
    f(\vx; \vx^*)
    =
    \E_{\mZ\sim \D}
    \big[
        \ell(\mZ\vx,\mZ\vx^*;\vx,\vx^*)
    \big],
\]
where $\mZ \in \R^{m \times d}$ is a random matrix drawn from a probability distribution $\D$ and $\ell$ is a penalty (or loss) function. We only consider the case where $m > 1$ for the standard Gaussian distribution $\D = \cN(\vzero, \mI)$; the sufficient conditions we provide for more general distributions are for $m = 1.$  
Our sufficient conditions require that $\ell$ has a nontrivial dependence on the first argument only, while it may be independent of the remaining arguments; in fact, this will be true for the Complete Orthogonal Dictionary (COD) learning example in \Cref{sec:ex:general Gaussian}. The dependence on the remaining arguments is included to cover a broader set of examples in \Cref{sec:examples}. 

Our sufficient conditions in this case establish RGA for a notion of Riemannian correlation vector field that we define as 
\begin{equation*}
     \vg_\ell(\vx, \vx^\ast)
    :=
    \E_{\mZ\sim\D}
    \big[
        P_{\perp \vx}\mZ^\top
        \partial_1\ell(\mZ\vx,\mZ\vx^*;\vx,\vx^*)
    \big],
\end{equation*}
where $P_{\perp \vx}$ denotes the projection onto the subspace of $\R^d$ orthogonal to $\vx$ and $\partial_1$ denotes the gradient of $\ell$ with respect to its first argument (or a measurable selection from $\ell$'s Clarke subdifferential when the function is possibly nonsmooth; see \Cref{sec:prelims} for relevant definitions). The vector field $\vg_\ell$ may or may not coincide with the objective's Riemannian (sub)gradient, depending on the problem setting.

Our sufficient conditions can be seen as ``curvature'' conditions, based on a subset of ``second derivatives'' of the expected loss $\ell.$ Here, we allow for functions that are not strictly twice (or sometimes even once) differentiable, through the notions of Clarke subdifferentials and weak distributional derivatives (see \Cref{sec:prelims} for definitions). In particular, our sufficient conditions are based on what can be interpreted as the second derivative of $\ell$ with respect to (w.r.t.) the first argument ($\partial^2_1 \ell$) and the sequential derivative w.r.t.\ the first two arguments ($\partial_2 \partial_1 \ell$, where differentiation is carried out w.r.t.\ the first then w.r.t.\ the second argument; henceforth, this mixed derivative is referred to as the ``joint curvature''). We highlight here that either of these high-dimensional derivatives may determine the RGA property. 
In particular, {for the COD learning problem, spiked-sparse covariance PCA problem, and the Rayleigh quotient problem discussed in \Cref{sec:ex:general Gaussian}}, $\ell$ only depends on the first argument, so the joint curvature is zero and thus RGA is determined by a ``second-order'' condition for the first argument. In all remaining examples, it is the joint curvature w.r.t.\ the first and the second argument that determines RGA; the ``second-order'' derivative with respect to the first argument is either zero or dominated by the contribution of the joint curvature in these cases. 

When $m \geq 1$ and $\mZ$ is a matrix whose rows are i.i.d.\ standard Gaussian, the sufficient conditions simplify considerably. In this case, it suffices that the expectation of the ``joint curvature'' of $\ell$ (interpreted in the weak sense discussed in the previous paragraph) has strictly negative \emph{trace}. In other words, if we think about this ``joint curvature'' as the Hessian of the function, in parallel to Euclidean optimization, this corresponds to a much weaker condition than strict convexity (which would require the minimum eigenvalue of the Hessian, if it exists, to be positive). In this setting, this mild sufficient condition suffices for $\vg_\ell$ (and the associated regression problem) to satisfy RGA with $\mu = r = 1.$  

Our second set of sufficient conditions applies to arbitrary vector fields $\vg(\vx, \vy) \in \mathrm{T}_\vx\spd$ such that $\vx, \vy \in \spd$. Roughly, if $\vg(\vx,\vx)=\vzero$ and every Clarke Jacobian of the map $\vy\mapsto \vg(\vx,\vy)$ has uniformly negative
curvature in the target tangent direction, while its operator norm is uniformly bounded, we have that 
$\vg(\vx,\vx^*)$ is (locally) RGA. This result does not require a stochastic representation or even an underlying
objective, and it allows us to treat nonsmooth loss functions corresponding to generalized linear models under discrete covariate distributions (see \Cref{sec:ex:general distributions}).  

\paragraph{Learning and Estimation Examples} As our final set of contributions, we prove sufficient conditions (and thus RGA) for a range of learning and estimation problems, obtaining the following consequences.

\vspace{5pt}\noindent
 \emph{Standard Gaussian designs.}
We show that the population loss for Gaussian Multi-Instance Learning with max pooling (see \Cref{prob:mil}) is $1$-RGA on the entire target hemisphere for a broad class of locally Lipschitz penalties, including standard hinge-type and
logistic penalties. The RGA parameter is independent of the number of instances in each ``bag.'' Learning a
Gaussian halfspace is recovered as the ``single-instance per bag'' special case. We also establish $1$-RGA for several
 Gaussian Single-Index Model (SIM) objectives (\Cref{prob:glm+sim}, Part 2): the squared loss for arbitrary nonconstant
Gaussian-square-integrable target links; the nonsmooth $L_1$ loss for strictly increasing, locally Lipschitz
links; and an $L_1$ phase-retrieval objective. The proofs characterize the corresponding best-fitting links and
show that their joint curvature is strictly negative.

\vspace{5pt}\noindent \emph{General and conditionally Gaussian designs.}
We extend the Gaussian sufficient condition to conditionally Gaussian distributions with random
positive-semidefinite covariance matrices. This formulation allows us to analyze the Gaussian-Bernoulli model
used in COD learning (\Cref{prob:dictionary}) and recover an explicit local RGA guarantee from~\cite{bai2018subgradient,Zhu2019LinearConvergenceGrassman} by decomposing the
population field into its ``signal'' and ``noise'' components. In this example, $r = 1.$ We further argue that deterministic problems, such as PCA (or Rayleigh quotient optimization, see \Cref{prob:eigenvector-problem}) and spiked sparse PCA (\Cref{prob:spiked-sparse-pca}) can be cast as regression problems with general-covariance Gaussian covariates. As a consequence, we prove the RGA property for both problems by verifying our sufficient conditions. In particular, for minimum\footnote{For our results, the matrix is allowed to be indefinite, so the maximum eigenvalue/eigenvector problem corresponding to PCA is recovered by symmetry, applying the result to $-\mA.$} eigenvalue/eigenvector problems defined w.r.t.\ a real symmetric matrix $\mA \in \R^{d \times d}$ with eigenvalues $\lambda_1 \geq \lambda_2 \geq \dots\geq \lambda_r = \dots = \lambda_d,$  we prove that the Rayleigh quotient $\vx^\top\mA \vx$ is RGA with $r = 1$ and $\mu$ scaling with $\sqrt{\frac{\lambda_{r-1} - \lambda_d}{\lambda_1 - \lambda_d}}$, {recovering the results from~\cite{zhang2016riemannian,ADVA21,alimisis2022geodesic}}. For lasso-regularized Rayleigh
quotient in a spiked sparse-covariance model, we prove that the objective is locally RGA with $r = 2$, near the planted sparse
vector. Such a local error bound result was not previously known, to our knowledge. We further exhibit a geodesic path approaching the target along which no uniform RGA inequality with
$r<2$ can hold. Thus, the higher-order ($r = 2$) regime is intrinsic to this example, rather than an artifact of the
analysis, and the $r>1$ convergence result is necessary to cover its local geometry.

\vspace{5pt}\noindent \emph{Well-behaved distributions.}
For a class of ``well-behaved'' distributions (namely, isotropic distributions whose two-dimensional marginals have a density bounded below on a bounded square; see \Cref{def:well-behaved}), 
we study SIMs (\Cref{prob:glm+sim}, Part 2) over a class of structured ($(a,b)$-unbounded, see \Cref{def:ab-unbounded-link})  activations, which contains activations like rectified linear unit (ReLU), its ``leaky'' variants (LeakyReLU), and exponential linear unit (ELU). For every smooth strongly convex penalty, we construct a Riemannian correlation vector field that is RGA with $r = 1$ and  constant $\mu > 0$ (dependent on the problem parameters), 
throughout the target hemisphere. This extends the squared-loss Euclidean gradient alignment analysis from prior work \cite{ZWDD2024} 
to the spherical setting, allows for non-quadratic penalties, and provides the error bound parameter $\mu$ that is substantially larger than in \cite{ZWDD2024}. This example also illustrates the usefulness of RGA defined for vector fields that
are not necessarily equal to the Riemannian gradient of the corresponding problem objective.

\vspace{5pt}\noindent \emph{Near-isotropic and discrete distributions.}
For generalized linear models (GLMs, \Cref{prob:glm+sim}, Part 1) with $(a,b)$-unbounded activations, we prove local RGA guarantees under mild distributional assumptions that cover even discrete distributions like the uniform distribution on $\{-1, 0, 1\}^d$. The result applies to more general loss functions than prior work on Euclidean gradient alignment \cite{WZDD2023} (see \Cref{thm:general-distribution-ab-link-GLM-are-RGA}), and it holds uniformly for every Riemannian Clarke subgradient. This
substantially extends the scope of the framework to nonsmooth objectives and discrete covariate distributions. 
For example, under the uniform distribution on $\{-1,0,1\}^d$ with a ReLU activation, we show that the squared loss
is RGA on a constant-radius cap even though it fails to be geodesically convex on a much smaller cap around a target solution, of radius
order-$(1/\sqrt d)$ (\Cref{claim:lattice-relu-glm-not-geo-convex}).

\subsection{Related Work}\label{sec:related work}

Our work is related to multiple lines of research belonging to Riemannian optimization, local error bounds, and computational learning theory. We discuss only those works that are most closely related to ours. 

\paragraph{Riemannian Optimization}Riemannian optimization methods are powerful tools for  optimization problems with geometric and usually nonconvex constraints that can be formulated as smooth Riemannian manifolds~\cite{Absil2008OptimizationAlgorithmsOnMatrixManifolds,boumal2023introduction}. 
Hence, over the past decades, Riemannian optimization has garnered wide attention and witnessed great advancements.
Many applications in image and signal processing, machine learning, and statistics can be cast as optimization problems on Riemannian manifolds.  
For example, Riemannian optimization methods are used in {PCA/eigenvector problems~\cite{zhang2016riemannian,ADVA21,alimisis2022geodesic}}, matrix completion and recovery~\cite{boumal2011lowRankMatrix,vandereycken2013lowRankMatrix,2021LowRankMatrixRecovery}, robust PCA~\cite{zhang2018robustPCA}, robust subspace recovery~\cite{zhu2018DPCP,Li2019WeaklyCO,Zhu2019LinearConvergenceGrassman,ding2021DPCP}, tensor-on-tensor regression~\cite{luo2024tensor}, dictionary learning~\cite{sun2015complete,bai2018subgradient,gilboa2019efficient,Li2019WeaklyCO,qu2019analysis}, {sparse PCA~\cite{jolliffe2003modified,zou2018selective,chen2020proximal}}, SIM and GLM learning~\cite{ChenM20,BenArous2021OnlineSGD,damian2023smoothing,ZWDD2024,WZDD24b}, and in optimizing unit-weight neural networks~\cite{lee2025neural}. 
Most of the aforementioned works on Riemannian optimization consider RGD and its variants,  
whose convergence rates for geodesically convex and strongly convex objectives have been studied in detail (see the  textbook \cite{boumal2023introduction}). The objective functions considered in our work are not geodesically convex in general (see  \Cref{claim:lattice-relu-glm-not-geo-convex} for one such example).

\paragraph{Riemannian Local Error Bounds}While first-order optimization methods are often used in practice, it is rarely the case that the considered optimization objectives are geodesically (strongly) convex. Further, the associated objectives are often nonsmooth, invalidating standard arguments for RGD convergence analysis that rely on some form of the objective's smoothness. Thus, a large body of work in this area has studied convergence of RGD under local error bounds (a.k.a.\ ``growth conditions'').  
Local error bounds have a long history in optimization theory, starting with the seminal work of Hoffman \cite{hoffman1952approximate}. For concreteness, we focus on reviewing results most relevant to the present work, addressing problems on the unit sphere. 

\vspace{5pt}
\noindent\emph{Riemannian Regularity Condition.}
\cite{bai2018subgradient} showed that for the COD learning problem, the population loss and its empirical counterpart enjoy a type of local error bound, later termed Riemannian Regularity Condition (RRC) and generalized to the Grassmannian manifold \cite{Zhu2019LinearConvergenceGrassman}. The same work further proved that the Dual Principal
Component Pursuit (DPCP) loss for the robust Grassmannian subspace recovery problem also satisfies RRC (\cite{ding2021DPCP} later extended this result to noisy DPCP loss). 
Contrary to dictionary learning, DPCP is not formulated as a regression problem with underlying distributional assumptions on the data. 

While RRC applies more generally to other types of Riemannian manifolds, it is subsumed by RGA with $r = 1$ on the sphere: for non-zero Riemannian subgradient $\vg(\vx)$ of $ f(\vx)$ and parameters $L_0,\mu_{\rm RRC}>0$, RRC condition is defined by
\begin{equation}\tag{RRC}
    \begin{aligned}
        \|\vg(\vx)\|_2 &\leq L_0,\\
        \vg(\vx)\cdot\vx^* &\leq -\mu_{\rm RRC}\sin\theta(\vx, \vx^*).
    \end{aligned} \end{equation}
Comparing to RGA (\Cref{def:sharp}), it is immediate that RRC implies RGA with $\mu = \frac{\mu_{\rm RRC}}{L_0}$ and $r = 1.$ 
We note here that \cite{Zhu2019LinearConvergenceGrassman} also analyzed diminishing step RGD under RRC, proving a linear convergence rate.

\vspace{5pt}
\noindent\emph{Riemannian P\L{}/Gradient Dominance, Weak-Quasi-Convexity, Quadratic Growth.}
Another thread of work~\cite{zhang2016riemannian,balashov2020gradient} studied the Riemannian Polyak-\L{}ojasiewicz(P\L{})/Gradient Dominance conditions:
\begin{equation}\tag{P\L}
    f(\vx) - f(\vx^*)\leq \mu_{\mathrm{PL}}\|\nablar f(\vx)\|_2^2, \; \forall \vx^* \in \X^*, \mu_{\mathrm{PL}} >0,
\end{equation}
where $\X^*$ is the set of global solutions to the considered minimization problem \eqref{problem:opt-on-sphere-general} and $\nabla^R f(\vx) = P_{\perp \vx} \nabla f(\vx)$ is the Riemannian gradient of $f$.  
 They showed that when $f$ is geodesically smooth, there is a diminishing step-size schedule for RGD that leads to a linear convergence rate.
However, the only example of objectives in  \cite{zhang2016riemannian,balashov2020gradient} that satisfy the PL condition is the Rayleigh quotient $f(\vx) = \vx^\top\mA\vx$ where $\mA$ is a real symmetric matrix. 

 Similarly, focusing specifically on the Rayleigh quotient {of real symmetric positive semi-definite matrices,}
\cite{ADVA21} showed that the negative Rayleigh quotient function $h(\vx) = -f(\vx)= -\vx^\top\mA\vx$ is geodescially smooth and possesses weak-quasi-convexity and quadratic growth properties: for a global minimizer $\vx^*$ of $g$ and $\vx \in \X\eqdef \{\vy\in\spd: \theta(\vy,\vx^*)\leq \pi/2\}$, \begin{align}
    a(h(\vx) - h(\vx^*)) &\leq \langle \nablar h(\vx), -\Log_\vx(\vx^*)\rangle_\vx,\;  a>0, \tag{WQC}\\
    h(\vx) - h(\vx^*)&\geq \frac{\mu_{\mathrm{QG}}}{2}\mathrm{dist}^2(\vx, \vx^*),\;  \mu_{\mathrm{QG}}>0, \tag{QG}
 \end{align}
where $\Log_\vx(\vx^*)$ is the logarithmic map: $\Log_\vx(\vx^*) = ({\theta}/{\sin\theta})(\vx^*)_{\perp\vx}$, $\theta\eqdef\theta(\vx,\vx^*)$.
These conditions were later generalized to Grassmannian manifolds for the problem of finding the top-$k$ eigenvector space of a positive semi-definite matrix $\mA$, an extension of the PCA/Rayleigh quotient optimization problem~\cite{alimisis2022geodesic}.
They also proposed a diminishing-step RGD algorithm for objectives that are geodesically smooth and satisfy WQC and QG.
Note that in a Euclidean space, PL implies QG when $f$ is smooth~\cite{karimi2016linear} or weakly convex~\cite{Liao2024ErrorBoundsForWeaklyCOnvexFunctions}; 
On a Riemannian manifold, the implication from PL to local QG remains valid~\cite{rebjock2025fast}, furthermore, when $f$ is twice continuously differentiable, then PL, QG are essentially locally equivalent~\cite{rebjock2025fast}. 
However, these objective-based conditions do not, by themselves, imply RGA.
What can be established is that $L$-geodesic smoothness + WQC + QG implies RGA: let $\theta = \theta(\vx,\vx^*)$; since $\nablar f(\vx^*) = 0$ and $\|\nablar f(\vx) - \nablar f(\vx^*)\|_2\leq L\theta$, we have 
\begin{equation*}
    \nablar f(\vx)\cdot\vx^* = \langle\nablar f(\vx), (\vx^*)_{\perp\vx}\rangle_\vx \leq -\frac{a \mu_{\mathrm{QG}}}{2}\theta\sin\theta \leq-\frac{a \mu_{\mathrm{QG}}}{2 L}\|\nablar f(\vx)\|_2\sin\theta.
\end{equation*}

\vspace{5pt}
\noindent\emph{Riemannian Weak Sharpness}
\cite{Li2019WeaklyCO} studied objectives $f(\mX)$ that are weakly convex in the Euclidean space and satisfy a Riemannian weak-sharpness (WS) condition~\cite{karkhaneei2019nonconvex,Li2011WeakSharpMinimaOnRiemannianManifolds} on the rank-$r$ Stiefel manifold $\mX\in{\rm ST}(r,d)$:
\begin{equation}\tag{WS-Stiefel}
    f(\mX) - f(\mX^*)\geq \mu\, \dist(\mX,\X^*), \,\forall\mX^*\in\X^*,
\end{equation}
where $\X^*$ is the set of minima on the Stiefel manifold, and $\dist$ is the Frobenius distance.
For $f$ that are weakly convex and satisfy WS-Stiefel, \cite{Li2019WeaklyCO} gave a diminishing-step RGD algorithm that converges at a linear rate.
\cite{Li2019WeaklyCO} further showed that DPCP also satisfies WS-Stiefel and is weakly convex, and, in addition, empirically observed that orthogonal dictionary learning satisfies WS-Stiefel on ${\rm ST}(r,d)$.  

This type of a weak-sharpness condition has also appeared in the thread of research on matrix completion and recovery. For example, \cite{2021LowRankMatrixRecovery} studied composite losses of the form $f(\mX) = h(F(\mX))$ for a range of matrix recovery/sensing problems, where $\mX$ are rank-$r$ orthogonal matrices.  
When the matrix recovery/sensing problems satisfy a certain type of regularity condition (i.e., the Restricted Isometry Property (RIP)), \cite{2021LowRankMatrixRecovery} showed that the loss $f(\mX)$ satisfies WS-Stiefel and, in addition, it holds that 
\begin{equation}\label{eq:additional-conditions-for-matrix-recovery}
\begin{aligned}
    f(\mY) - h(F(\mX) + \la\nabla F(\mX), \mY - \mX\ra) &\leq \frac{\rho}{2}\|\mY - \mX\|_F^2,\; \forall \mX, \mY \in {\rm ST}(r,d)\\
    \|\vg(\mX)\|_F &\leq L,\;\forall\vg(\mX)\in\partial f(\mX).
\end{aligned}
\end{equation}
Although \cite{2021LowRankMatrixRecovery}  has also studied the WS-Stiefel property for the losses corresponding to matrix recovery problems, the discussion primarily concerns the Euclidean space, while their algorithms are not of RGD type. 

The unit sphere manifold considered in our work is a rank-$1$ Stiefel manifold, which is a special case of the manifolds studied in~\cite{Li2019WeaklyCO,2021LowRankMatrixRecovery}, but it is unclear whether the conditions from~\cite{Li2019WeaklyCO,2021LowRankMatrixRecovery} can be induced from RGA. 
However, it is worth noting that our RGA condition does not require additional assumptions such as weak-convexity or conditions in \Cref{eq:additional-conditions-for-matrix-recovery}, and for many examples discussed in this work, weak-convexity is not satisfied (e.g., many of the SIM problems in \Cref{sec:examples}).

\paragraph{Diminishing-step RGD} In the line of work discussed  above~\cite{zhang2016riemannian,zhu2018DPCP,Zhu2019LinearConvergenceGrassman,balashov2020gradient,ADVA21,alimisis2022geodesic,Li2019WeaklyCO,2021LowRankMatrixRecovery}, the analyzed algorithm is RGD with geometrically decreasing step sizes, shown to converge at a linear rate when the associated objective satisfies the respective local error bounds and additional conditions discussed above. These results are closely related to our analysis of RGD (\Cref{alg:RGD}) in the cases where RGA is satisfied with order $r= 1$. Additionally, our own prior work~\cite{WZDD2023,WZDD24b,ZWDD2024,zarifis2025robustly,wang2026robustly} established similar results (linear convergence of RGD with geometrically decaying step sizes) under related (but not identical) conditions to RGA. However, no prior work has established convergence guarantees that are related to RGA with $r > 1$. 

\paragraph{Context of Our Contributions} Our RGA condition is a direct generalization of the line of work on GLM and SIM learning \cite{mei2018landscape,diakonikolas2020approximation,yehudai2020learning,diakonikolas2022learning,WZDD2023,WZDD24b,ZWDD2024,zarifis2025robustly,wang2026robustly}. The power of RGA condition is in its ability to cover both smooth and nonsmooth objectives and to even extend to vector fields that are not (Riemannian sub)gradient vector fields. RGA also generalizes to degrading landscapes when $r>1$, covering examples such as sparse PCA (\Cref{prob:spiked-sparse-pca}) in \Cref{sec:spiked-sparse-pca}. 
Compared to the other types of local error bounds summarized above, although many of them can deal with more general manifolds, when constrained to the sphere, we either subsume these prior conditions (like RRC and smoothness + WQC + QG), or they are not directly comparable (WS + weakly convex in $\R^d$ or \Cref{eq:additional-conditions-for-matrix-recovery}), but we can cover examples that do not satisfy those conditions (e.g.,  many examples in \Cref{sec:examples} are not weakly convex).
More importantly, the ultimate goal of our work is not merely to define a new local error bound, but to investigate what structural properties of problems (such as function curvature properties) make local error bounds like RGA possible on the sphere. 
We focus on regression problems that span a wide range of learning and estimation applications, and propose a series of {derivative-based} sufficient conditions for RGA that are unified by {the curvature of the loss.}
The sufficient conditions are verified in all provided examples with explicit RGA parameters, and, based on these results, we not only recover known prior results {(\cite{bai2018subgradient,Zhu2019LinearConvergenceGrassman} for dictionary learning,~\cite{zhang2016riemannian,ADVA21} for optimizing Rayleigh quotient)}, but also streamline the relevant arguments within a unified framework~\cite{zhang2016riemannian,bai2018subgradient,Zhu2019LinearConvergenceGrassman,ADVA21}, extend the problem classes that can be handled through both generalizations of the loss functions and the associated distributions~\cite{mei2018landscape,diakonikolas2020approximation,yehudai2020learning,DKTZ20c,diakonikolas2022learning,WZDD2023,DDKWZ2023,WZDD24b,ZWDD2024,zarifis2025robustly,wang2026robustly}, 
and even prove local error bounds (namely, RGA) that are new in the literature {(e.g., for multi-instance learning problems---see \Cref{prob:mil} and \Cref{sec:mil}---and spiked-sparse covariance PCA---see \Cref{prob:spiked-sparse-pca} and \Cref{sec:spiked-sparse-pca})}.

\section{Preliminaries}\label{sec:prelims}

In this section, we introduce the necessary notation, definitions, and auxiliary facts used in our analysis. 

\paragraph{Notation}
Given two vectors $\vu, \vv$ in $\R^d,$ we use $\vu \cdot \vv = \vu^\top \vv$ to denote their (standard) inner product and $\theta(\vu,\vv) = \arccos({\vu \cdot \vv}/{\norm{\vu}_2 \norm{\vv}_2}) \in [0, \pi]$ to denote the angle between $\vu, \vv$, where $\norm{\cdot}_2$ denotes the $\ell_2$ norm. For a matrix $\mA$, $\|\mA\|_2$ denotes its operator norm.
{We denote $[d] :=\{1,\dots,d\}$.}
For any vector $\vv\in\R^d$, we use $\vv = (v_1,\dots,v_d)$ to denote its entries. $\ve_i$, $i\in[d]$ will be the canonical basis of the Euclidean space $\R^d$.
We write $\vu \perp \vv$ to indicate that $\vu, \vv$ are orthogonal to each other ($\theta(\vu,\vv) = \pi/2$). 
For vectors $\vu,\vv$ such that $\|\vu\|_2 = 1$, we use $\vv_{\perp\vu}\eqdef (\mI - \vu\vu^\top)\vv$ to denote the component of $\vv$ that is orthogonal to $\vu$. 
{The matrix $\mP_{\perp\vx}\eqdef\mI - \vx\vx^\top$ denotes the projection orthogonal to a unit vector $\vx$.}
{For a vector $\vb\in\R^m$, $\diag(\vb)$ denotes the diagonal matrix in $\R^{m\times m}$ with diagonal $\vb$, and $\va\odot \vb$ is the entry-wise product between vectors $\va$ and $\vb$. }
For a set $\V\subset\R^d$, we denote $\proj_\V(\vx) \eqdef \argmin_{\vv\in\V}\|\vx - \vv\|_2$. 
We use $\B^m(\vc,r)$ to denote the Euclidean ball centered at $\vc$ with radius $r$ in $\R^m$, and  $\Sp^{m-1}(\vc,r)$ to denote the sphere centered at $\vc$ with radius $r$ in $\R^m$. When $m = d$, we simply use $\B$ to denote the centered ($\vc = \vzero$) unit ball in $\R^d$ and $\Sp^{d-1}$ the centered unit sphere. 
We denote $[t]_+ = \max\{0,t\}$, and use $\1\{\cE\} = \1_\cE$ as an indicator for event $\cE$. 

{
For a random variable $X$, let $\cL(X)$ denote its law, and let $p_X$ denote its density when one exists. We use $L_p(\nu)$ for the usual $L_p$ space under the measure $\nu$, and occasionally write $L_p(X)$ as shorthand for $L_p(\cL(X))$. 
A random vector $\vz$ is isotropic if $\E[\vz] = 0$ and $\E[\vz\vz^\top] = \mI$.
We use $\calN_m$ to denote the $m$-dimensional standard Gaussian distribution and write $\calN \eqdef \calN_1$.
We denote the density of $\calN(0,\mI_m)$ by $p_{\calN_m}$, and write $p_\calN\eqdef p_{\calN_1}$.
We use $\Phi$ for the standard Gaussian cumulative distribution function.
}

On the manifold $\cM$, we denote the geodesic connecting $\vx,\vy\in\cM$ (i.e., the shortest path from $\vx$ to $\vy$) by $\calG_{\vx,\vy}(t)$, $t\in[0,1]$. 
{On the sphere, when $\theta(\vx,\vy)\in(0,\pi)$, the geodesic between $\vx$ and $\vy$ is the great circle: $\calG_{\vx,\vy}(t) = \cos(t\theta)\vx + \sin(t\theta)\vy_{\perp\vx}/\|\vy_{\perp\vx}\|_2, t\in[0,1]$.
Therefore, the geodesic distance on the sphere is $\dist_{\spd}(\vx,\vy) = \theta(\vx,\vy)$.}
{A set $\cX\subset\spd$ is geodesically convex if it contains the minimizing geodesic between every pair of vectors contained in  it. A function $f:\cX\to\R$ is geodesically $\mu$-strongly convex if for any geodesic $\cG_{\vx,\vy}(t)$ in $\cX$, it satisfies:
\begin{align}\label{eq:geo-str-cvx-basic}
    f(\cG_{\vx,\vy}(t)) + \frac{\mu}{2}t(1 - t)\dist_{\spd}(\vx,\vy)^2 \leq (1 - t)f(\vx) + t f(\vy),\;\forall t\in[0,1].
\end{align}
If $f:\cX\to\R$ satisfies \eqref{eq:geo-str-cvx-basic} with $\mu = 0$, they we say $f$ is geodesically convex.
}
{The tangent space of a point $\vx$ on the manifold $\cM$ is denoted by $\mathrm{T}_{\vx}\cM$. When $\cM$ is the unit sphere $\spd$, the tangent space is simply ${\rm T}_\vx\spd = \{\vy\in\R^d:\vy\perp\vx\}$.}

\paragraph{Derivatives of Nonsmooth Functions: Clarke Subdifferentials}
We consider optimization problems of the form \eqref{problem:opt-on-sphere-general}, 
where $f:\spd\to\R$ is a locally Lipschitz function. By Rademacher's theorem, $f$ is almost everywhere (a.e.) differentiable. 
For these locally Lipschitz functions $f$, we use $\nablar f(\vx)$ to denote the Riemannian gradient of $f$ at $\vx$; 
{when $f$ has a locally Lipschitz extension $\Tilde{f}$ to an open neighborhood of $\spd$, $\nablar f(\vx) = (\nabla \tilde{f}(\vx))_{\perp\vx}$ when it is differentiable at $\vx$. 
This definition is independent of the chosen extension.} With a slight abuse of notation, for $\vx \in \spd$, we use $\nabla f(\vx)$ to denote both the gradient (at points $\vx \in \spd$ where $f$ is differentiable) and any element in the Clarke subdifferential set:
\begin{align*}
    \nabla f(\vx)\in\partial^{C} f(\vx)\eqdef \mathrm{Conv}\bigg\{\lim_{k\to\infty} \nabla f(\vy_k): \text{ any } \vy_k\to\vx, f \text{ differentiable on $\vy_k$}\bigg\}.
\end{align*}
The same convention is applied to partial derivatives, i.e., ${\partial}_\vu\ell(\vu,\vv)\in\partial^{C}_\vu\ell(\vu,\vv)$ and Riemannian gradients $\nablar f(\vx)\in\partial^{C,R} f(\vx)$. 
When $f(\vx)$ is a population loss, meaning that it is expressible as $f(\vx) = \Evz[\ell(\vx;\vz)]$, define the Aumann expectation by:
\begin{align*}
    \Evz[\partial^C_\vx \ell(\vx;\vz)]\eqdef \bigg\{\Evz[\partial_\vx\ell(\vx;\vz)]\,:\,\partial_\vx\ell(\vx;\vz) \text{ is a measurable and integrable selection from $\partial_\vx^C \ell(\vx;\vz)$}\bigg\}.
\end{align*}
Then we have the following properties:
\begin{fact}\label{fact:Aumann-expectation}
Let $\cX$ be an open set in $\R^d$.
\begin{enumerate}
    \item \cite{BurkeChenSun2020} {Let $\vz$ be a random variable on $\Omega$. Let $\ell:\cX\times\Omega\to\R$ be measurable in $\vz$ and locally Lipschitz in $\vx$ almost surely. Suppose that, for every compact $\cK\subset \cX$, there is an integrable random variable $M_\cK(\vz)$ such that $|\ell(\vx,\vz) -\ell(\vy,\vz)|\leq M_\cK(\vz)\|\vx - \vy\|_2$, $\vx,\vy\in \cK$. Then $f(\vx) = \Evz[\ell(\vx;\vz)]$ is locally Lipschitz with respect to $\vx$ and $\partial^C f(\vx)\subseteq \Evz[\partial^C_\vx \ell(\vx;\vz)]$.}
    \item \cite{clarke1990optimization}  If $F:\cX\to\R^m$ is continuously differentiable and $g:\R^m\to\R$ is locally Lipschitz near $F(\vx)$, then $\partial^C (g(F(\vx)))\subseteq J_F(\vx)^\top\partial^C(g(F(\vx)))$, where $J_F$ is the Jacobian of $F$.
    In particular, if $h:\R\to\R$ is continuously differentiable and $\sigma:\R\to\R$ is locally Lipschitz, then $\partial^C (h(\sigma(u)))\subseteq h'(\sigma(u))\partial^C\sigma(u)$.
\end{enumerate}
\end{fact}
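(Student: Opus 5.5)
Both parts of \Cref{fact:Aumann-expectation} are cited results (from \cite{BurkeChenSun2020} and \cite{clarke1990optimization}). The plan is to recall the standard arguments behind them rather than build new machinery.

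\emph{Part 1.} Local Lipschitzness of $f$ is immediate. For $\vx,\vy$ in a compact $\cK$, we have $|f(\vx)-f(\vy)|\le \E[M_\cK(\vz)]\,\|\vx-\vy\|_2$, and $\E[M_\cK(\vz)]<\infty$.

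For the inclusion, I would work with the Clarke generalized directional derivative $f^\circ(\vx;\vv)=\limsup_{\vy\to\vx,\,t\downarrow 0}\big(f(\vy+t\vv)-f(\vy)\big)/t$. The chain of inequalities I aim for is
\[
f^\circ(\vx;\vv)\le \E[\ell^\circ(\vx,\vz;\vv)]=\E\big[\max_{\vxi\in\partial^C_\vx\ell(\vx;\vz)}\vxi\cdot\vv\big].
\]
The first inequality follows from reverse Fatou. The difference quotients are dominated by $M_\cK(\vz)\|\vv\|_2$ on a compact neighborhood of $\vx$, so the $\limsup$ can be moved inside the expectation. The equality is the support-function characterization of the Clarke subdifferential of $\ell(\cdot;\vz)$. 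Consequently, the support function of $\partial^C f(\vx)$ is dominated by the support function of the Aumann expectation.

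It remains to check that the Aumann expectation $\E[\partial^C_\vx\ell(\vx;\vz)]$ is a closed convex set whose support function equals $\vv\mapsto\E[\max_{\vxi}\vxi\cdot\vv]$. This holds because the multifunction $\vz\mapsto\partial^C_\vx\ell(\vx;\vz)$ has compact convex values, is measurable, and is integrably bounded by $M_\cK$. A measurable selection theorem (Kuratowski--Ryll-Nardzewski) then gives a selection attaining the max for each fixed $\vv$, and Aumann's theorem gives convexity and compactness. Comparing support functions of two closed convex sets yields the inclusion.

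The main obstacle is the measurability and selection step: one must show that $\vz\mapsto\partial^C_\vx\ell(\vx;\vz)$ is a measurable multifunction. I would handle this by writing $\ell^\circ(\vx,\vz;\vv)$ as a countable $\limsup$ over rational $\vy$ and $t$, which makes it measurable in $\vz$.

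\emph{Part 2.} This is Clarke's chain rule for a $C^1$ inner map. The plan is to compute the directional derivative
\[
(g\circ F)^\circ(\vx;\vv)\le g^\circ(F(\vx);J_F(\vx)\vv).
\]
To get this, write $g(F(\vy+t\vv))-g(F(\vy)) = g(F(\vy)+tJ_F(\vx)\vv+o(t))-g(F(\vy))$, using continuity of $J_F$. The $o(t)$ term costs at most $L\cdot o(t)$ by the local Lipschitz property of $g$. Taking $\limsup$ gives the bound.

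The right-hand side is the support function of $J_F(\vx)^\top\partial^C g(F(\vx))$, which is a compact convex set. So the inclusion follows again by comparing support functions. The scalar case $h(\sigma(u))$ should be read with $h\in C^1$ as the outer map and $\sigma$ Lipschitz as the inner map. For it I would argue directly from the gradient-limit definition: at points $u_k$ where $\sigma$ is differentiable, $(h\circ\sigma)'(u_k)=h'(\sigma(u_k))\sigma'(u_k)$. Since $h'(\sigma(u_k))\to h'(\sigma(u))$ by continuity, every limit of gradients lies in $h'(\sigma(u))\,\partial^C\sigma(u)$. This set is convex, so taking the convex hull preserves the inclusion.
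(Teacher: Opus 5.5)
The paper gives no proof of this fact; it only cites \cite{BurkeChenSun2020} and \cite{clarke1990optimization}. Your proposal reconstructs the classical arguments behind those citations: generalized directional derivatives, reverse Fatou, comparison of support functions against the Aumann integral, and the strict-differentiability chain rule. It is correct in substance. You are also right that the scalar ``in particular'' clause is not a special case of the first clause, since the roles of the smooth map and the Lipschitz map are swapped.

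One step needs an additional justification. In the scalar case you take limits of $(h\circ\sigma)'(u_k)$ only along points where $\sigma$ is differentiable. The paper's gradient-limit definition of $\partial^C(h\circ\sigma)(u)$, however, ranges over all points where $h\circ\sigma$ is differentiable, and these can include points where $\sigma$ is not. There are three ways to close this.
\begin{itemize}
\item Invoke Clarke's theorem that the gradient-limit formula is unchanged when an arbitrary Lebesgue-null set is excluded; here that set is the non-differentiability set of $\sigma$, which is null by Rademacher. You already rely on this same theorem implicitly when you identify the gradient-limit definition with the support function $f^\circ$, so it should be cited.
\item Argue directly. If $h\circ\sigma$ is differentiable at $u_k$ but $\sigma$ is not, then $h'(\sigma(u_k))=0$: otherwise $h$ is locally invertible with an inverse differentiable at $h(\sigma(u_k))$, which would make $\sigma=h^{-1}\circ(h\circ\sigma)$ differentiable at $u_k$. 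The mean value theorem then gives $(h\circ\sigma)'(u_k)=0$. Along any sequence containing infinitely many such points, continuity gives $h'(\sigma(u))=0$, so the limit $0$ lies in $h'(\sigma(u))\,\partial^C\sigma(u)=\{0\}$.
\item Reuse your vector-case support-function argument. Write $h(\sigma(y+tv))-h(\sigma(y))=h'(\xi_{y,t})\bigl(\sigma(y+tv)-\sigma(y)\bigr)$ with $h'(\xi_{y,t})\to h'(\sigma(u))$, and bound the $\limsup$ by the support function of $h'(\sigma(u))\,\partial^C\sigma(u)$.
\end{itemize}

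A smaller precision in Part 1 concerns why the Aumann expectation is closed and convex. The paper applies this fact to purely atomic distributions such as $\mathrm{Unif}\{-1,0,1\}^d$, where Aumann's nonatomic convexity theorem does not apply. Closedness and convexity should instead come from the values being compact, convex and integrably bounded: the set of integrable selections is then convex and weakly compact in $L_1$, and its image under the expectation map is therefore convex and compact.
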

We note that the same result applies to the Riemannian subdifferential by projecting to the space orthogonal to $\vx$, by definition.

\paragraph{Derivatives of Nonsmooth Functions: Distributional derivatives under Gaussian Integrals}
In several proofs, we differentiate functions with jump discontinuities inside expectations. Such derivatives are understood in the weak sense, or as distributional derivatives (see, e.g., \cite[Chapter 9]{folland1999real}). Let $Df(z)$ be the distributional derivative of a function $f$. Note that when a function $f$ is differentiable, its distributional derivative is equal to its classical derivative: $Df = f'$. In this paper, we only require the following properties:
\begin{fact}[Distributional Derivatives]\label{fact:distributional-derivatives}
    It holds that $D|z| = \sign(z)$, $D\,\sign(z) = 2 \delta(z)$, where $\delta(z)$ is the Dirac measure at the origin, and $D\,\sign(q(z)) = 2\delta(q(z))q'(z)$ for any continuously differentiable function $q$.

    The notation $\E[Df(X)]$, where $X$ is a random variable with continuous density $p_X(x)$, means the `distribution-pairing' between $Df$ and $p_X(x)$, and in particular, for any function $g\in L_2(X)$, it holds:
\begin{align*}
    \E[g(X)\delta(X)] = \int_\R g(x)\delta(x)p_X(x)\diff{x}  = g(0)p_X(0). \end{align*}
\end{fact}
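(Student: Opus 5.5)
The plan is to check each identity against test functions $\varphi\in C_c^\infty(\R)$, using the definition of the distributional derivative, $\langle Df,\varphi\rangle=-\int_\R f\varphi'\,\diff z$.

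For $D|z|=\sign(z)$, I split the integral at the origin and integrate by parts on each half-line:
\[
-\int_{-\infty}^0(-z)\varphi'(z)\diff z-\int_0^\infty z\varphi'(z)\diff z=-\int_{-\infty}^0\varphi(z)\diff z+\int_0^\infty\varphi(z)\diff z=\int_\R\sign(z)\varphi(z)\diff z .
\]
The boundary terms vanish because $z\varphi(z)$ is zero at $z=0$ and $\varphi$ has compact support.

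For $D\,\sign(z)=2\delta(z)$, the same splitting gives
\[
-\int_\R\sign(z)\varphi'(z)\diff z=\int_{-\infty}^0\varphi'\diff z-\int_0^\infty\varphi'\diff z=\varphi(0)-(-\varphi(0))=2\varphi(0).
\]
This equals $\langle 2\delta,\varphi\rangle$.

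For the chain rule $D\,\sign(q(z))=2\delta(q(z))q'(z)$, I will assume that $q\in C^1$ has only simple zeros, i.e.\ $q'(z_i)\neq0$ at each zero $z_i$. This is the setting in which $\delta(q(z))$ is defined, as the pullback $\delta(q(z))=\sum_i\delta(z-z_i)/|q'(z_i)|$. The zeros are then isolated, so on $\operatorname{supp}\varphi$ there are finitely many of them, and $\sign(q)$ is piecewise constant between them. Repeating the computation above interval by interval, each zero contributes a point mass $2\,\sign(q'(z_i))\,\delta_{z_i}$. The factor is $+2$ when $q$ crosses zero upward and $-2$ when it crosses downward. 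On the other hand, the pullback formula gives
\[
2\delta(q)q'=\sum_i\frac{2q'(z_i)}{|q'(z_i)|}\,\delta_{z_i}=\sum_i 2\,\sign(q'(z_i))\,\delta_{z_i},
\]
so the two sides agree. An equivalent route is to mollify $\sign$ by $s_\epsilon$, apply the classical chain rule $(s_\epsilon\circ q)'=s_\epsilon'(q)q'$, and pass to the limit, using the change of variables $u=q(z)$ near each simple zero.

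For the pairing, $\E[g(X)\delta(X)]$ is defined as $\langle\delta,g\,p_X\rangle=g(0)p_X(0)$. The main obstacle is well-definedness. Membership $g\in L_2(X)$ alone does not determine $g(0)$, since $g$ is only an equivalence class. So I would make explicit that $g$ is taken continuous at $0$, which is the case in every application in the paper, where $g$ is built from continuous links and losses. Under that assumption the pairing is justified by mollification: $\int g\,p_X\,\delta_\epsilon\,\diff x\to g(0)p_X(0)$ by continuity of $g\,p_X$ at $0$. In the same spirit, for the chain rule the simple-zero hypothesis (or a transversality statement holding almost surely) is what I would state alongside the fact.
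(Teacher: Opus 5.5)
Your verification is correct, and it is the standard one. The paper does not actually prove this fact. It states it as background with a pointer to \cite[Chapter~9]{folland1999real}. So the real difference is that you write out the test-function computations and, more importantly, state hypotheses under which the claims make sense.

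Both of your additions are needed rather than cosmetic. Membership $g\in L_2(X)$ does not determine $g(0)$: $g=\1\{x=0\}$ is the zero class, yet the displayed formula would return $p_X(0)$. Similarly, $\delta(q)$ as a pullback requires $q'\neq 0$ on $\{q=0\}$. For $q(z)=z^2$ and a standard mollifier, $\langle\delta_\epsilon(z^2),\varphi\rangle$ grows like $\epsilon^{-1/2}\varphi(0)$. Your simple-zero condition is exactly the nondegeneracy hypothesis the paper itself imposes in \Cref{fact:dirac-inetgral}. In every example where the chain rule is used, the argument of $\sign$ has derivative $\pm1$ in the differentiated variable at its zeros.

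Two refinements. First, your side remark that $g$ is continuous in the applications because the links and losses are continuous is not quite accurate. In the MIL theorem $h$ is only locally Lipschitz, so $h'$ may jump. What legitimizes the pairing there is that, after integrating out the Gaussian coordinate independent of the $\delta$-variable, the multiplier becomes a Gaussian-smoothed, hence continuous, function of that variable. In the $L_1$ SIM, phase retrieval, COD and sparse PCA examples it is even constant in that variable.

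Second, your mollification route gives more than the simple-zero case. Take $s_\epsilon$ smooth and odd with $s_\epsilon(y)\to\sign(y)$ boundedly, so $s_\epsilon(0)=0=\sign(0)$. Then $s_\epsilon\circ q\to\sign\circ q$ in $L^1_{\mathrm{loc}}$ for every $C^1$ function $q$, hence $D\,\sign(q)=\lim_{\epsilon\to0}s_\epsilon'(q)\,q'$ in the sense of distributions. Simple zeros are needed only to identify this limit with the product of the pullback $\delta(q)$ and $q'$. Reading $2\delta(q)q'$ as that limit is how the paper's ``any continuously differentiable $q$'' can be taken literally; for $q(z)=z^2$ both sides are then $0$.
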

Further computational facts regarding integration of the Dirac delta function are gathered in \Cref{fact:dirac-inetgral}.

 We make extensive use of Stein's Lemma, stated below for completeness. 
\begin{fact}[Stein's Lemma~\cite{Stein1981}]\label{fact:stein}
    Let $\D = \calN(\vec 0,\vec I_m)$ and let $g:\sR^m \to \sR$ be an integrable function such that $\Evz[g(\vz)\vz]$ exists. Suppose $g(\vz)$ has a distributional derivative $D g(\vz)$.
    Then, $\Evz[g(\vz)\vz] = \Evz[D g(\vz)]$.
\end{fact}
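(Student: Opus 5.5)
The plan is to reduce Stein's Lemma to the defining identity of the distributional derivative, using the one special feature of the Gaussian density: its score is linear. Concretely, $p_{\calN_m}(\vz) = (2\pi)^{-m/2} e^{-\|\vz\|_2^2/2}$ satisfies $\nabla p_{\calN_m}(\vz) = -\vz\, p_{\calN_m}(\vz)$. Since both sides of the claimed identity are vectors in $\R^m$, I will prove it one coordinate at a time: for each $i\in[m]$,
\begin{equation*}
\Evz[g(\vz)z_i] = \Evz[D_i g(\vz)].
\end{equation*}
Here the right-hand side is read, in the same sense as \Cref{fact:distributional-derivatives}, as the pairing $\langle D_i g, p_{\calN_m}\rangle$.

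\emph{Step 1 (compactly supported case).} Suppose first that $\varphi := \chi\, p_{\calN_m}$ for a smooth compactly supported cutoff $\chi$. Then $\varphi$ is a legitimate test function, and the definition of the weak derivative gives
\begin{equation*}
\langle D_i g, \varphi\rangle = -\int g\, \partial_i \varphi \, d\vz = \int g\,\bigl(z_i \chi - \partial_i\chi\bigr)\, p_{\calN_m}\, d\vz .
\end{equation*}
The second equality uses the score identity $\partial_i p_{\calN_m} = -z_i p_{\calN_m}$ together with the product rule.

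\emph{Step 2 (remove the cutoff).} Take $\chi_R(\vz) = \chi(\vz/R)$, where $\chi\equiv 1$ on the unit ball, so that $\|\nabla\chi_R\|_\infty \le C/R$. Let $R\to\infty$ in the identity from Step 1.
\begin{itemize}
\item By the hypothesis that $\Evz[g(\vz)\vz]$ exists, $g\, z_i$ is integrable, so dominated convergence gives $\int g z_i\chi_R p_{\calN_m} \to \Evz[g(\vz)z_i]$.
\item Since $g$ is integrable, the boundary term is bounded by $\frac{C}{R}\int |g|\, p_{\calN_m} \to 0$.
\item On the left-hand side, I take the natural extension of the pairing $\langle D_i g, p_{\calN_m}\rangle := \lim_{R\to\infty}\langle D_i g, \chi_R p_{\calN_m}\rangle$. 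The previous two bullets show this limit exists and is independent of the choice of $\chi$.
\end{itemize}
Whenever $D_i g$ is a function or measure integrable against $p_{\calN_m}$ (for example a Dirac mass on a hypersurface, as in \Cref{fact:distributional-derivatives}), this limit agrees with $\Evz[D_ig(\vz)]$, again by dominated convergence.

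\emph{Main obstacle.} The only delicate point is the interpretation in Step 2. The function $p_{\calN_m}$ is not compactly supported, so the expectation of a distributional derivative has to be defined by this limiting procedure, and the hypotheses on $g$ must be shown to make the limit well defined. The estimates above (integrability of $g$ and of $g\vz$) are exactly what this requires. As a sanity check, in the classical case where $g$ is absolutely continuous with $\E|\partial_i g|<\infty$, the same computation reduces to one-dimensional integration by parts in $z_i$ followed by Fubini. There the boundary terms $g\,p_{\calN_m}\big|_{z_i=\pm\infty}$ vanish along a subsequence because $g\,p_{\calN_m}$ is integrable.
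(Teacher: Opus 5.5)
Your proof is correct and follows the same route as the paper: integrate by parts against the Gaussian density using the score identity $\nabla p_{\calN_m}(\vz) = -\vz\, p_{\calN_m}(\vz)$. The paper pairs $Dg$ with $p_{\calN_m}$ directly as if it were a test function, so your cutoff argument with $\chi_R$ simply justifies that pairing with a non-compactly supported function, using only the integrability of $g$ and $g\vz$.
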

\begin{proof}
    Since $\nabla p_\calN(\vz) = -\vz p_\calN(\vz)$, by the property of distributional derivatives (see, e.g., \cite{folland1999real}), it holds: 
    \begin{align*}
        \E_{\vz\sim\calN (\vec 0,\vec I_m)}[Dg(\vz)] = \int_{\R^m} p_\calN(\vz)\diff{(D g)}(\vz) = -\int_{\R^m} g(\vz)\nabla p_\calN(\vz)\diff{\vz} = \int_{\R^m} \vz g(\vz)p_\calN(\vz)\diff{\vz} = \E_{\vz\sim\calN (\vec 0,\vec I_m)}[g(\vz)\vz].
    \end{align*}
\end{proof}

\section{Riemannian Gradient Alignment and Diminishing-step  RGD}\label{sec:sharpness-RSI}

In this section, we establish sufficiency of the RGA property for efficient optimization on the sphere. We do so by proving that \Cref{alg:RGD} converges to target $\vx^*$ at a linear rate (for $r = 1$) or at rate $1/k^{1/(2r-2)}$ (for $r>1$). The considered algorithm is known as Riemannian Gradient Descent (RGD) when the input vector field $\vg(\vx)$ is the Riemannian gradient of the objective associated with a minimization problem on the sphere \eqref{problem:opt-on-sphere-general}. Although our work does not restrict $\vg$ to Riemannian gradient fields, we still refer to the considered algorithm as RGD, for consistency and comparison to related literature.

\begin{algorithm}[ht]
   \caption{Riemannian Gradient Descent (RGD)}
   \label{alg:RGD}
\begin{algorithmic}[1]
\STATE {\bfseries Input:} oracle access to vector field $\vg(\vx) \in \mathrm{T}_{\vx}\spd$, initial point $\vx^0$, step size schedule $\eta_k$, number of iterations $K$ \FOR{$k = 0,\dots,K-1 $}
\STATE $\vx^{k+1} = (\vx^k - \eta_k \vg(\vx^k))/\|\vx^k - \eta_k \vg(\vx^k)\|_2$
\ENDFOR
\STATE {\bfseries Return:} $\vx^{K}$
\end{algorithmic}
\end{algorithm}
\begin{restatable}[Convergence of RGD under RGA]{proposition}{proprgd}\label{lem:descent-rsi}
Let $\vx^*\in\spd$ be a fixed target vector.  
Given an initial vector $\vx^0 \in \spd$ such that $\theta(\vx^0,\vx^*)\leq \bar{\theta} < \pi/2$, let $\{\vx^k\}_{k=1}^K$ be the sequence of vectors generated by \Cref{alg:RGD}, and denote $\theta_k = \theta(\vx^k,\vx^*)$. Suppose $\vg(\vx)$ is $(\mu,r)$-RGA at $(\vx^k,\vx^*)$ for all $\{\vx^k\}_{k=0}^K$, where $0 < \mu \leq 1$. 

\begin{enumerate}
    \item If $r = 1$ and $\eta_k = \sin(\bar{\theta}/2)\mu(1 - \mu^2/8)^k/\|\vg(\vx^k)\|_2$, then
\begin{equation*}
        \frac{1}{2}\|\vx^k - \vx^*\|_2 = \sin(\theta_k/2)\leq \sin(\bar{\theta}/2)\exp\bigg(-\frac{\mu^2 k}{8}\bigg);
    \end{equation*}
    \item If $r>1$, let $\bar{k} \eqdef \lfloor 4/((r-1)\mu^2(\sin(\bar{\theta}/2)/2)^{2(r-1)})\rfloor - 1$ and 
    \begin{align*}
        \eta_k = \begin{cases}
            \frac{\mu\sin^r(\bar{\theta}/2)}{\|\vg(\vx^k)\|_2} & \text{when } k\leq \bar{k}-1\;,\\
            \big(\frac{4}{\mu^2(r-1)(k+1)}\big)^{r/(2r-2)}\frac{\mu}{\|\vg(\vx^k)\|_2} & \text{when } k\geq \bar{k}\;;
        \end{cases}
    \end{align*}
 then
    \begin{equation*}
        \frac{1}{2}\|\vx^k - \vx^*\|_2 =\sin(\theta_k/2)\leq 2\bigg(\frac{4}{\mu^2(r-1)}\cdot\frac{1}{k+1}\bigg)^{\frac{1}{2(r-1)}}\land \sin(\bar{\theta}/2).
    \end{equation*}
\end{enumerate}
\end{restatable}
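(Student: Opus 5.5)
The plan is to derive a one-step recursion for $\|\vx^k-\vx^*\|_2^2 = 4\sin^2(\theta_k/2)$ and then unroll it. Write $\vg_k = \vg(\vx^k)$ and $\vu_k = \vx^k - \eta_k\vg_k$. Since $\vg_k\perp\vx^k$, we have $\|\vu_k\|_2^2 = 1+\eta_k^2\|\vg_k\|_2^2 \geq 1$. Normalization onto the sphere is the Euclidean projection onto the closed unit ball when $\|\vu_k\|_2\ge 1$, and $\vx^*$ lies in that ball, so projection is nonexpansive toward $\vx^*$:
\[
\|\vx^{k+1}-\vx^*\|_2^2 \le \|\vu_k-\vx^*\|_2^2 = \|\vx^k-\vx^*\|_2^2 + 2\eta_k\,\vg_k\cdot\vx^* + \eta_k^2\|\vg_k\|_2^2 .
\]
Here we used $\vg_k\cdot\vx^k=0$. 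Apply RGA, $\vg_k\cdot\vx^*\le -\mu\sin^r\theta_k\|\vg_k\|_2$, and write $s_k=\sin(\theta_k/2)$ and $t_k = \eta_k\|\vg_k\|_2$ (the normalized step, which is exactly what the schedules prescribe). This gives
\[
4s_{k+1}^2 \le 4s_k^2 - 2\mu t_k\sin^r\theta_k + t_k^2 .
\]
We keep the iterates in the region $\theta_k<\pi/2$ by induction. There $\cos(\theta_k/2)\ge 1/\sqrt2$, so $\sin\theta_k = 2s_k\cos(\theta_k/2)\ge \sqrt2\, s_k \ge s_k$. Hence $4s_{k+1}^2\le 4s_k^2 - 2\mu t_k s_k^r + t_k^2$.

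\emph{Case $r=1$.} Set $t_k=\mu a_k$ with $a_k=\sin(\bar\theta/2)(1-\mu^2/8)^k$, and induct on the claim $s_k\le a_k$. The right-hand side $4s^2-2\mu^2 a_k s+\mu^2a_k^2$ is a convex quadratic in $s$, so it is maximized over $s\in[0,a_k]$ at an endpoint. At $s=a_k$ its value is $a_k^2(4-\mu^2)$, and at $s=0$ it is $\mu^2a_k^2\le 4a_k^2$. Therefore $s_{k+1}^2\le a_k^2(1-\mu^2/4)\le a_k^2(1-\mu^2/8)^2$, which closes the induction. The same bound gives $s_{k+1}\le s_0$-scale quantities, so $\theta_{k+1}\le\bar\theta<\pi/2$ and the induction hypothesis that keeps us in the region persists. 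Finally $(1-\mu^2/8)^k\le e^{-\mu^2k/8}$.

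\emph{Case $r>1$.} The same convex-in-$s$ endpoint argument works, now with the comparison $s_{k+1}\le a_{k+1}$ for a target sequence $a_k$ and $t_k=\mu a_k^r$. At $s=a_k$ the bound becomes $4a_k^2-\mu^2a_k^{2r} \le 4a_k^2(1-\mu^2a_k^{2r-2}/4)$; at $s=0$ it is $\mu^2a_k^{2r}\le 4a_k^2$ since $\mu\le1$ and $a_k\le1$. The proof then has two phases.

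\emph{Phase 1} ($k<\bar k$): take $a_k\equiv\sin(\bar\theta/2)$, which keeps $s_k\le \sin(\bar\theta/2)$.

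\emph{Phase 2} ($k\ge\bar k$): take $a_k = \bigl(4/(\mu^2(r-1)(k+1))\bigr)^{1/(2r-2)}$, matching the step-size schedule. It remains to check $a_k^2(1-\mu^2a_k^{2r-2}/4)\le a_{k+1}^2$. Writing $b_k = a_k^{2r-2} = c/(k+1)$ with $c=4/(\mu^2(r-1))$, this reduces to
\[
\bigl(1-\tfrac{1}{(r-1)(k+1)}\bigr)\le \bigl(\tfrac{k+1}{k+2}\bigr)^{1/(r-1)},
\]
which follows from Bernoulli's inequality $(1-1/(k+2))^{1/(r-1)}\ge 1-\tfrac{1}{(r-1)(k+2)}$ when $1/(r-1)\le1$, and from a direct $\log$ comparison otherwise.

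The choice of $\bar k$ makes $a_{\bar k}$ comparable to $\sin(\bar\theta/2)/2$. At the switch this gives $s_{\bar k}\le\sin(\bar\theta/2)\le 2a_{\bar k}$, so the induction can start with slack factor $2$; this is why the stated bound carries the leading $2$ and the $\land\sin(\bar\theta/2)$. In Phase 1, the explicit bound $2(\cdot)^{1/(2r-2)}$ exceeds $\sin(\bar\theta/2)$, so the minimum is achieved by $\sin(\bar\theta/2)$ there.

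The main obstacle is the $r>1$ phase transition. The factor-$2$ slack has to propagate through the recursion. I will handle this by running the comparison with $\tilde a_k=2a_k$ and step $t_k=\mu a_k^r$. The quadratic in $s$ is then evaluated at $s=\tilde a_k$, where the decrease term $2\mu t_k\tilde a_k^r$ dominates $t_k^2$ because $\tilde a_k\ge a_k$. This yields $4\tilde a_k^2(1-\mu^2 a_k^{2r-2}2^{r}/8)+\dots$, and we then verify that this is at most $4\tilde a_{k+1}^2$ using the same Bernoulli-type estimate, possibly after adjusting constants with the flexibility in $\bar k$'s definition.
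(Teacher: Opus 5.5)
Your $r=1$ argument is the paper's. Your $r>1$ architecture also matches it: a fixed step up to $\bar k$, then induction against $\phi_k=2a_k$ with normalized step $t_k=\mu a_k^r$.

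The gap is in the Phase~2 induction. You dropped the factor $2^{r/2}$ coming from $\sin\theta_k\ge\sqrt2\,s_k$. With the weakened recursion $4s_{k+1}^2\le 4s_k^2-2\mu t_k s_k^r+t_k^2$, the induction against $2a_k$ does not close when $r$ is near $1$. The induction must allow $s_k=2a_k$. There, using $\mu^2a_k^{2r-2}=4p/(k+1)$ with $p=1/(r-1)$, the weakened recursion only gives
\[
s_{k+1}^2\le 4a_k^2\Bigl(1-\frac{(2^{r+1}-1)\,p}{4(k+1)}\Bigr),
\qquad
4a_{k+1}^2=4a_k^2\Bigl(1+\frac{1}{k+1}\Bigr)^{-p}\le 4a_k^2\Bigl(1-\frac{p}{k+1}+\frac{p(p+1)}{2(k+1)^2}\Bigr).
\]
For $r<\log_2 5-1\approx 1.32$ we have $(2^{r+1}-1)/4<1$. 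So the first right-hand side exceeds the second for all sufficiently large $k$. The deficit is asymptotic in $k$, so no adjustment of $\bar k$ can repair it, and the target $2a_k$ and the step schedule are fixed by the statement.

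The fix is to keep $\sin\theta_k\ge\sqrt2\,s_k$. The decrease term then carries $(2\sqrt2)^r$ instead of $2^r$. Since $2(2\sqrt2)^r-1>4\sqrt2-1>4$, you get $s_{k+1}^2\le 4a_k^2\bigl(1-p/(k+1)\bigr)$. Then $4a_k^2\bigl(1-p/(k+1)\bigr)\le 4a_{k+1}^2$ follows from $(1+x)^p(1-px)\le 1$, which comes from $\log(1+x)\le x$ and $\log(1-px)\le -px$. This is exactly why the paper retains the $\sqrt2$.

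Your endpoint argument also needs justification, because $s\mapsto 4s^2-cs^r$ with $c>0$ is not convex for $r\in(1,2)\cup(2,\infty)$. For $r\in(1,2)$ it is concave near $0$; for $r>2$ it is concave for large $s$.

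\begin{itemize}
\item For $r\in(1,2]$ the function is decreasing-then-increasing, so its maximum over $[0,2a_k]$ is still at an endpoint.
\item For $r>2$ it is increasing-then-decreasing. You must check that the turning point lies beyond $2a_k$. It does for $k\ge\bar k$, but this is an extra computation you have not done.
\end{itemize}

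The paper avoids the issue by splitting cases. If $s_k\ge\phi_k/(2\sqrt2)$, it uses $(2\sqrt2\,y_k)^r\ge 2\sqrt2\,y_k$ to get a genuinely convex quadratic in $y_k=s_k/\phi_k$. If $s_k<\phi_k/(2\sqrt2)$, it bypasses the recursion and uses nonexpansiveness, $2(s_{k+1}-s_k)\le\eta_k\|g_k\|_2=\mu a_k^r$, to get $s_{k+1}\le\phi_{k+1}$ directly.

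Finally, your Bernoulli citation is reversed: $(1-y)^p\ge 1-py$ holds for $p\ge 1$, not for $p\le 1$. The logarithmic inequality above covers all $p>0$. Separately, the $s=0$ endpoint requires $k+1\ge 5p/4$, which the definition of $\bar k$ guarantees since it gives $k+1\ge 2p$.
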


\begin{proof}
    For simplicity of notation, in the following proof we denote $\vg(\vx^k)$ by $\vg^k$ and assume $\vg^k \neq \mathbf{0},$ as otherwise the algorithm would have converged to $\vx^*$. Observe that since $\vg^k$ is orthogonal to $\vx^k$, we have $\|\vx^k - \eta_k\vg^k\|_2\geq \|\vx^k\|_2 = 1$,
hence  $\vx^{k+1} = \proj_{\B}(\vx^k - \eta_k\vg^k).$ As a consequence, the distance between $\vx^{k+1}$ and $\vx^*$ can be bounded above in the following way:
    \begin{align}
        \|\vx^{k+1} - \vx^*\|_2^2 &= \|\proj_{\B}(\vx^k - \eta_k\vg^k) - \vx^*\|_2^2 \nonumber\\
        &\leq \|\vx^k - \vx^* - \eta_k\vg^k\|_2^2\nonumber\\
        &= \|\vx^k - \vx^*\|_2^2 + \eta_k^2\|\vg^k\|_2^2 - 2\eta_k\vg^k\cdot(\vx^k - \vx^*)\nonumber\\
        &= \|\vx^k - \vx^*\|_2^2 + \eta_k^2\|\vg^k\|_2^2 + 2\eta_k\vg^k\cdot \vx^*,\label{eq:expanding ||xk+1 - x*||2}
    \end{align}
    where in the first inequality we used the fact that projection onto $\B$ is a nonexpansive operator and in the last equality we used that $\vg^k\perp \vx^k$, which holds by the definition of $\vg^k$. 
    
    Since $\vx^k,\vx^{k+1},\vx^*$ are vectors on the unit sphere, the Euclidean distance between these vectors can be alternatively expressed by the angle. In particular, we have $\|\vx^{k+1} - \vx^*\|_2 = 2\sin(\theta_{k+1}/2)$ and similarly, $\|\vx^k - \vx^*\|_2 = 2\sin(\theta_k/2)$. Thus, plugging these equalities back into \Cref{eq:expanding ||xk+1 - x*||2} and applying the condition $\vg^k\cdot\vx^*/\|\vg^k\|_2\leq -\mu(\sin\theta_k)^r$, we further get
    \begin{equation}\label{eq:base-equation}
        4\sin^2(\theta_{k+1}/2)\leq 4\sin^2(\theta_k/2) + \eta_k^2\|\vg^k\|_2^2 - 2\eta_k\mu\|\vg^k\|_2\sin^r(\theta_k).
    \end{equation}

    \paragraph{Case I: $r=1$.} Let $\phi_k = \sin(\bar{\theta}/2)(1 - \mu^2/8)^k$ and choose step size $\eta_k = \sin(\bar{\theta}/2)\mu(1 - \mu^2/8)^k/\|\vg^k\|_2 = \mu\phi_k/\|\vg^k\|_2$. Plugging the definition of $\eta_k$ into \Cref{eq:base-equation} yields
    \begin{equation}\label{eq:decrease-r=1}
        4\sin^2(\theta_{k+1}/2)\leq 4\sin^2(\theta_k/2) + \mu^2\phi_k(\phi_k - 2\sin(\theta_k)).
    \end{equation} 
    We show that $\sin(\theta_k/2)\leq \phi_k$ by induction on $k$. 
    Observe first that since $\theta_0\in[0,\bar{\theta}]$, the base case $\phi_0 = \sin(\bar{\theta}/2) \geq \sin(\theta_0/2)$ holds trivially. 
    
    Now suppose that the inductive hypothesis is true for $0,\dots,k$. In particular, $\sin(\theta_k/2) \leq \phi_k$ for some $k \geq 0.$  Since $\phi_k \leq \phi_0 < \frac{1}{\sqrt{2}}$, we have that $\theta_k < \pi/2,$ which implies $\sin(\theta_k) \geq \sqrt{2}\sin(\theta_k/2) \geq \sin(\theta_k/2)$. Substituting into \Cref{eq:decrease-r=1}, this leads to
\begin{equation}\label{eq:decrease-r=1-2}
        4\sin^2(\theta_{k+1}/2)\leq 4\sin^2(\theta_k/2) + \mu^2\phi_k(\phi_k - 2\sin(\theta_k/2)).
    \end{equation} 
Divide both sides of \Cref{eq:decrease-r=1-2} by $\phi_k^2$ and denote $y_k := \sin(\theta_k/2)/\phi_k$ to get:
\begin{equation}\label{eq:r=1-ratio-bnd}
        \frac{4\sin^2(\theta_{k+1}/2)}{\phi_k^2} \leq h(y_k) := 4 y_k^2 + \mu^2 - 2\mu^2 y_k. 
   \end{equation}
Observe that $h$ is convex in $y_k.$ Since $y_k \in [0, 1]$, $h$ is maximized at its argument equal to either 0 or 1. Further, for $\mu \in (0, 1],$ we have $h(1) - h(0) = 4 - 2\mu^2 > 0,$ thus we can conclude that $h(y_k) \leq h(1) = 4 - \mu^2.$ Thus, we can conclude from \Cref{eq:r=1-ratio-bnd} that
\begin{equation}\notag
        \sin^2(\theta_{k+1}/2) \leq \phi_k^2 \big(1 - \mu^2/4\big) \leq \phi_k^2 \big(1 - \mu^2/8\big)^2 = \phi_{k+1}^2, 
    \end{equation}
completing the inductive proof. 

      \paragraph{Case II: $r > 1$.} Denote $p = 1/(r - 1)\in(0,+\infty)$, so that  $r = 1 + 1/p$. Let $\phi_k$ be a decreasing sequence of order $(k+1)^{-1/(2r-2)}$, defined as follows 
    \begin{equation*}
        \phi_k = \frac{C_\phi}{(k+1)^{\frac{p}{2}}} = 2\bigg(\frac{1}{\mu}\sqrt{\frac{4p}{k+1}}\bigg)^p = 2\bigg(\frac{1}{\mu^2(r-1)}\cdot\frac{4}{k+1}\bigg)^{\frac{1}{2(r-1)}}, \;\text{where } C_\phi=  2\bigg(\frac{\sqrt{4p}}{\mu}\bigg)^p .
    \end{equation*}
    Our goal is show that $\sin(\theta_k/2)\leq \phi_k\land \sin(\bar{\theta}/2)$ for $k\geq 1$ using induction on $k$. 

Observe that when $k = 0$,  we have $\phi_0 = 2(2/\mu)^p p^{p/2}\geq 1$ since $\mu\leq 1$ and $p^{p/2}\geq 1/2$ for all $p>0$. Therefore, it always holds that $\phi_k\geq \sin(\bar{\theta}/2)$ in the beginning phase where $k$ is small. Thus, to begin with, consider the case $0\leq k\leq \bar{k} \eqdef \lfloor 4p/(\mu^2(\sin(\bar{\theta}/2)/2)^{2/p})\rfloor - 1$ where it holds $\phi_k\geq \sin(\bar{\theta}/2)$. 
Then $\eta_k = \mu\sin^r(\bar{\theta}/2)/\|\vg^k\|_2$ for all $k\leq\bar{k} - 1$ by our definition of step size. 
We show by induction that $\sin(\theta_k/2)\leq \sin(\bar{\theta}/2)$ when $0\leq k\leq \bar{k}$. 
The inequality naturally holds when $k =0$ since $\bar{\theta}\geq \theta_0$ by the definition of $\bar{\theta}$. Now using induction hypothesis that $\sin(\theta_k/2)\leq \sin(\bar{\theta}/2)$ at $k\leq \bar{k}-1$, consider the updated angle $\theta_{k+1}$ after one algorithm iteration. 
Plugging the definition of $\eta_k$ into \Cref{eq:base-equation} and recalling that $\sin(\theta_k)\geq \sqrt{2}\sin(\theta_k/2)$ when $\theta\in(0,\pi/2)$, we get \begin{align}
        4\sin^2(\theta_{k+1}/2)&\leq 4\sin^2(\theta_k/2) + \mu^2\sin^{2r}(\bar{\theta}/2) - 2\mu^2\sin^r(\bar{\theta}/2)\sin^r(\theta_k) \nonumber\\
        &\leq 4\sin^2(\theta_k/2) + \mu^2\sin^r(\bar{\theta}/2)(\sin^r(\bar{\theta}/2) - 2^{1+r/2}\sin^r(\theta_k/2)).\label{eq:decrease-r>1-0}
    \end{align}
Suppose first $\sin(\bar{\theta}/2)\geq \sin(\theta_k/2)\geq \sin(\bar{\theta}/2)/\sqrt{2}$. Then it holds $4\sin^2(\theta_{k+1}/2)\leq 4\sin^2(\bar{\theta}/2) - \mu^2\sin^{2r}(\bar{\theta}/2)\leq 4\sin^2(\bar{\theta}/2)$, indicating that $\sin(\theta_{k+1}/2)\leq \sin(\bar{\theta}/2)$. On the other hand, suppose $\sin(\theta_k/2)\leq \sin(\bar{\theta}/2)/\sqrt{2}$, then using \Cref{eq:decrease-r>1-0} again we see that $\sin(\theta_{k+1}/2)$ cannot be much larger than $\sin(\theta_k/2)$: 
\begin{align*}
    4\sin^2(\theta_{k+1}/2)&\leq 4\sin^2(\theta_k/2) + \mu^2\sin^{2r}(\bar{\theta}/2)\leq 2\sin^2(\bar{\theta}/2) + \mu^2\sin^{2r}(\bar{\theta}/2)\\
    &\leq 4\sin^2(\bar{\theta}/2)\bigg(\frac{1}{2} + \frac{\mu^2}{4}\sin^{2(r-1)}(\bar{\theta}/2)\bigg)\leq 4\sin^2(\bar{\theta}/2),
\end{align*}  
where in the last inequality we used the fact that $\mu\leq 1$ and $r>1$, hence $\mu^2\sin^{2(r-1)}(\bar{\theta}/2)\leq 1$. The inequality above implies again that $\sin(\theta_{k+1}/2)\leq \sin(\bar{\theta}/2)$. This completes the induction argument: we have proved that $\sin(\theta_k/2)\leq \sin(\bar{\theta}/2)$ for all $k\leq \bar{k}$.

Next, consider the case $k\geq \bar{k}$. We continue using induction to show that $\sin(\theta_k/2)\leq \phi_k$, for all $k\geq \bar{k}$. So far, we have proved that $\sin(\theta_{\bar{k}}/2)\leq \sin(\bar{\theta}/2)\leq \phi_{\bar{k}}$, hence the base case for the induction is valid. Now suppose that the inductive hypothesis holds at step $k\geq \bar{k}$. Plugging in the choice of step size $\eta_k = \mu(\phi_k/2)^r/\|\vg^k\|_2$ as well as the RGA inequality $\vg^k\cdot\vx^*/\|\vg^k\|_2\leq -\mu\sin^r(\theta_k)$ into \Cref{eq:base-equation}, we obtain
\begin{align}
    4\sin^2(\theta_{k+1}/2)&\leq 4\sin^2(\theta_k/2) + \mu^2(\phi_k/2)^{2r} - 2\mu^2(\phi_k/2)^r\sin^r\theta_k\nonumber\\
    &\leq 4\sin^2(\theta_k/2) + \mu^2(\phi_k/2)^{r}((\phi_k/2)^r - 2(\sqrt{2}\sin(\theta_k/2))^r),\label{eq:decrease-r>1}
\end{align}
where in the last inequality we used the fact that $\sin\theta\geq \sqrt{2}\sin(\theta/2)$ whenever $\theta\in(0,\pi/2)$. 

Similar to our previous arguments, we discuss the cases $\phi_k\geq \sin(\theta_k/2)\geq \phi_k/(2\sqrt{2})$ and $\sin(\theta_k/2)\leq \phi_k/(2\sqrt{2})$. In the first case, by \Cref{eq:decrease-r>1},
dividing $4\phi_k^2$ on both sides, we have:
\begin{align*}
    \frac{\sin^2(\theta_{k+1}/2)}{\phi_k^2}&\leq\frac{\sin^2(\theta_k/2)}{\phi_k^2} + \frac{\mu^2}{16}\bigg(\frac{\phi_k}{2}\bigg)^{2r-2} - \frac{\mu^2}{2}\frac{\phi_k^{r-2}}{2^r}(\sqrt{2})^r\sin^2(\theta_k/2)\\
    &=\frac{\sin^2(\theta_k/2)}{\phi_k^2} + \frac{\mu^2}{16}\bigg(\frac{\phi_k}{2}\bigg)^{2r-2} - 2^{3r/2 - 3}\bigg(\frac{\sin(\theta_k/2)}{\phi_k}\bigg)^r \mu^2\bigg(\frac{\phi_k}{2}\bigg)^{2r-2}.
\end{align*}
Recall that by our definition of $\phi_k$, it holds that $\mu^2(\phi_k/2)^{2/p} = 4p/(k+1)$. Then, denoting $y_k = \sin(\theta_k/2)/\phi_k$, we have:
\begin{align*}
     \frac{\sin^2(\theta_{k+1}/2)}{\phi_k^2}& = y_k^2 + \frac{p}{4(k+1)} - \frac{2^{(3/2)r} y_k^r p}{2(k+1)} = y_k^2 + \frac{p}{k+1}\bigg(\frac{1}{4} - \frac{1}{2}(2\sqrt{2}y_k)^r\bigg).
\end{align*}
Since we have assumed $2\sqrt{2}y_k = 2\sqrt{2}\sin(\theta_k/2)/\phi_k\geq 1$, and that $r>1$, it then holds that:
\begin{align*}
    \frac{\sin^2(\theta_{k+1}/2)}{\phi_k^2}\leq y_k^2 + \frac{p}{k+1}\bigg(\frac{1}{4} - \frac{1}{2}(2\sqrt{2}y_k)\bigg) = y_k^2+ \frac{p}{k+1}\bigg(\frac{1}{4} - \sqrt{2}y_k\bigg):=h(y_k).
\end{align*}
Since $h$ is a convex quadratic function on $[0,1]$, its maximum value is obtained at either $h(0)$ or $h(1)$. For $h(0)$, since $k+1\geq \bar{k} + 1\geq \lfloor 4p\rfloor\geq 2p$ when $p>1/2$ and $k+1\geq 1\geq 2p$ when $p\leq 1/2$, it holds that $h(0) = {p}/{(4(k+1))}\leq 1 - p/(k+1)$. On the other hand, for $h(1)$, we have
\begin{align*}
    h(1) = 1 + \frac{p}{k+1}(1/4 - \sqrt{2})\leq 1 - \frac{p}{k+1},
\end{align*}
since $\sqrt{2}\geq 5/4$. Therefore, we obtain:
\begin{align*}
    \sin^2(\theta_{k+1}/2)\leq \phi_k^2\bigg(1 - \frac{p}{k+1}\bigg) = \frac{C_\phi^2}{(k+1)^p}\bigg(1 - \frac{p}{k+1}\bigg).
\end{align*}
Hence, to prove that $\sin^2(\theta_{k+1}/2)\leq \phi_{k+1}^2$ it suffices to prove $1/(k+1)^p(1- p/(k+1))\leq 1/(k+2)^p$.
Multiplying $(k+2)^p$ on both sides of this inequality and canceling the parameter $C_\phi^2$, the inequality we want to prove turns out to be 
    \begin{equation}\label{eq:decrease-r>1-sufficient-1}
        \bigg(1 + \frac{1}{k+1}\bigg)^p\bigg(1 - \frac{p}{k+1}\bigg)\leq 1.
    \end{equation}

    Recall that we have the basic inequality $(1 + x)^a\leq 1/(1 - ax)$ whenever $x\in(-1,1/a]$, $a>0$.
Since we have argued that $k+1\geq 2p$, it holds that $x = 1/(k+1)\in(-1, 1/p]$.
Therefore, the inequality implies 
    \begin{equation*}
        \bigg(1 + \frac{1}{k+1}\bigg)^p\bigg(1 - \frac{p}{k+1}\bigg)\leq \frac{1}{1 - p/(k+1)}\frac{k +1 - p}{k+1} = 1,
    \end{equation*}
    thus, \Cref{eq:decrease-r>1-sufficient-1} is a valid inequality.
    This concludes the induction argument for the first scenario of $\phi_k/(2\sqrt{2})\leq \sin(\theta_k/2)\leq \phi_k$.

    We proceed to the second case where $\sin(\theta_k/2)\leq \phi_k/(2\sqrt{2})$, $k\geq \bar{k}$, and show that $\sin(\theta_{k+1})\leq \phi_{k+1}$. We use the inequality for $\sin(\theta_{k+1}/2)$ and $\sin(\theta_k/2)$:
    \begin{align*}
        2(\sin(\theta_{k+1}/2) - \sin(\theta_k/2)) &= \|\vx^{k+1} - \vx^*\|_2 - \|\vx^k - \vx^*\|_2 \leq \|\vx^{k+1} - \vx^k\|_2 \nonumber\\
        &\leq \|\vx^k - \eta_k\vg^k - \vx^k\|_2 = \eta_k\|\vg^k\|_2\leq \mu(\phi_k/2)^r.
    \end{align*}
Therefore, recalling that we have assumed $\sin(\theta_k/2)\leq \phi_k/(2\sqrt{2})$, $\phi_{k+1} - \sin(\theta_{k+1}/2)$ can be bounded below in the following way
    \begin{align*}
        \phi_{k+1} - \sin(\theta_{k+1}/2)&\geq \phi_{k+1} - \sin(\theta_k/2) - \frac{\mu}{2}\frac{\phi_k^r}{2^r} \geq \frac{C_\phi}{(k+2)^{p/2}} - \frac{C_\phi}{
        2\sqrt{2}{(k+1)}^{p/2}} - \frac{\mu}{2}\frac{C_\phi^r}{2^r(k+1)^{pr/2}}\\
        & = \frac{C_\phi}{(k+2)^{p/2}}\bigg(1 - \frac{1}{2}\bigg(\frac{k+2}{k+1}\bigg)^{p/2}\bigg(\frac{1}{\sqrt{2}} + \frac{\mu}{2}\frac{C_\phi^{r-1}}{2^{r-1}(k+1)^{p(r-1)/2}}\bigg)\bigg).
    \end{align*}
    Plugging in the definition of $C_\phi$ and recalling that $r-1 = 1/p$, we have $C_\phi^{r-1} = 2^{r-1}(2/\mu)\sqrt{p}$. Furthermore, $(k+1)^{p(r-1)/2} = \sqrt{k+1}$. Therefore, the inequality above can be further simplified as
    \begin{align*}
        \phi_{k+1} - \sin(\theta_{k+1}/2)&\geq \frac{C_\phi}{(k+2)^{p/2}}\bigg(1 - \frac{1}{2}\bigg(1 + \frac{1}{k+1}\bigg)^{p/2}\bigg(\frac{1}{\sqrt{2}} + \sqrt{\frac{p}{k+1}}\bigg)\bigg)\\
        &\geq \frac{C_\phi}{(k+2)^{p/2}}\bigg(1 - \frac{\sqrt{2}}{2}\bigg(1 + \frac{1}{2p}\bigg)^{p/2}\bigg)\geq \frac{C_\phi}{(k+2)^{p/2}}(1 - \sqrt{2}e^{1/4}/2)\geq 0,
    \end{align*}
    where in the second inequality we used $k + 1\geq\bar{k} + 1\geq 2p$ and in the last inequality we used the basic inequalities $(1+1/(2p))^{2p/4}\leq e^{1/4}$ and $\sqrt{2}e^{1/4}/2 < 1$. This concludes that when $\sin(\theta_k/2)\leq \phi_k/(2\sqrt{2})$, we still have $\sin(\theta_{k+1}/2)\leq \phi_{k+1}$.

    Thus, in summary, by induction on $k$, we have $\sin(\theta_k/2)\leq \phi_k\land\sin(\bar{\theta}/2)$ for all $k\geq 1$, completing the proof of the proposition.
\end{proof}

A few remarks are in order here. 
First, the geometrically
diminishing schedule in item~(i), $\eta_k \;=\; \mu\,(1-\mu^2/8)^k \,/\,\|\vg^k\|_2,$
should be viewed as an ``angle-guessing'' policy. In the $r=1$ case, the one-step angle
descent analysis shows that the decrease in $\sin\theta_{k}$ is maximized when
$\eta_k$ is of the order of $\mu\,\sin\theta_k/\|\vg^k\|_2$, which is generally not
known to the algorithm, as $\theta_k$ is the angle between $\vx_k$ and the unknown target vector $\vx^*$. The geometric rule above underestimates this unknown target
after a few iterations; once $\eta_k$ falls into the safe range, the proof of
\Cref{lem:descent-rsi} yields the linear rate
$\sin(\theta_k)\le 2\exp(-\mu^2 k/8)$. If an early $\eta_k$ is too large (i.e.,
overestimates the unknown angle scale), the projection step can cause a temporary
increase of the angle, but the analysis shows this increase is uniformly controlled;
because $\eta_k$ shrinks by the fixed factor $(1-\mu^2/8)$, the iterates quickly
return to the safe regime and the linear decay then persists. 

Second, observe that the current algorithm requires the knowledge of $\mu$ and $r$ to set the step size. For abstract, blackbox problems, these parameters would generally not be known. However, as we later show in \Cref{sec:examples}, these parameters can be fully specified for a wide range of regression-based problems, based on verifying the derivative-based sufficient conditions for RGA that we introduce in \Cref{sec:sufficient conditions}. Thus, for these example problems, parameters $\mu, r$ are explicitly known (and in all our examples $r \in \{1, 2\}$). 

More generally, outside the provided example problems,   
we expect most problems of interest that satisfy the RGA condition to satisfy it with $r = 1$ or $r = 2.$ Thus, running the algorithm for a constant number of guesses for the value of $r$ should suffice. Moreover, any reasonable lower bound on $\mu$ suffices, since any problem that satisfies the RGA property for some $\mu > 0$  trivially satisfies the same property for any parameter $\hat{\mu} \in (0, \mu).$ For this reason, the knowledge of $\mu$ can be relaxed by running the algorithm for guesses of $\mu$ of the form $(1/2)^k$ for $k = 0, 1, 2, \dots$, using the predicted number of steps from \Cref{lem:descent-rsi} until a solution with the target error is reached. This comes at, at most, a logarithmic cost in $1/\mu$, since $(1/2)^k \leq \mu$ for $k \geq \log_2(1/\mu).$ It is an interesting open question whether the same convergence bounds as in \Cref{lem:descent-rsi} can be attained using a step size schedule that is independent of the values of $\mu$ and $r$. 

Finally, the statement of \Cref{lem:descent-rsi} assumed the RGA condition on the algorithm trajectory, which itself may be challenging to guarantee if the region of $\spd$ on which RGA holds is small or irregular. However, in all examples supplied in \Cref{sec:examples}, we prove that the RGA condition holds on a spherical cap centered at $\vx^*.$ In this case, it suffices that the algorithm is initialized within this spherical cap; all the iterates can then be guaranteed to remain in the same spherical cap, as argued below. 
    
\begin{claim}[Iterates Stay in the Spherical Cap Region]
    Fix a target vector $\vx^*\in\spd$ 
and suppose that at any $\vx\in\mathcal{S}_\lambda\eqdef \{\vx\in\spd: \|\vx - \vx^*\|_2\leq \lambda\}$, $\lambda\leq \sqrt{2}$, the vector field $\vg(\vx)$ is $(\mu,r)$-RGA. 
Set the step size $\eta_k$ as in \Cref{lem:descent-rsi} with parameter $\bar{\theta} = 2\arcsin(\lambda/2)$. Then, starting from any vector $\vx^0\in\mathcal{S}_\lambda$, the iterates of \Cref{alg:RGD} stay in $\mathcal{S}_\lambda$ and RGA continues to hold on the entire algorithm trajectory.
\end{claim}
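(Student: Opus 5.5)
We argue by induction on $k$: if $\vx^k\in\mathcal{S}_\lambda$ then $\vx^{k+1}\in\mathcal{S}_\lambda$. This simultaneously gives that RGA holds at every iterate, which is exactly the hypothesis \Cref{lem:descent-rsi} needs along the trajectory. The base case is the assumption $\vx^0\in\mathcal{S}_\lambda$.

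First translate the cap into an angle condition. For unit vectors, $\|\vx-\vx^*\|_2 = 2\sin(\theta(\vx,\vx^*)/2)$, and $t\mapsto 2\sin(t/2)$ is increasing on $[0,\pi]$. Hence $\vx\in\mathcal{S}_\lambda$ iff $\theta(\vx,\vx^*)\le \bar\theta := 2\arcsin(\lambda/2)$. Since $\lambda\le\sqrt2$, we have $\bar\theta\le\pi/2$. If $\lambda<\sqrt 2$, then $\bar\theta<\pi/2$ and the initialization hypothesis of \Cref{lem:descent-rsi} holds. The boundary case $\lambda=\sqrt2$ needs a small remark: the proof of \Cref{lem:descent-rsi} only used $\theta_k\le\pi/2$ through $\sin\theta_k\ge\sqrt2\sin(\theta_k/2)$, and this inequality still holds at $\theta_k=\pi/2$.

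The inductive step does not re-derive anything; it reuses the one-step inequalities in the proof of \Cref{lem:descent-rsi}. Those inequalities are local: the bound on $\sin(\theta_{k+1}/2)$ uses only the RGA inequality at $\vx^k$ and the current induction bound on $\sin(\theta_k/2)$. So we run the induction of that proof with the extra statement ``$\vx^k\in\mathcal{S}_\lambda$'' carried along. At step $k$ we know $\vx^k\in\mathcal{S}_\lambda$, so RGA holds at $(\vx^k,\vx^*)$. The recursion \Cref{eq:base-equation} is then valid, and the case analysis yields the following.
\begin{itemize}
\item For $r=1$: $\sin(\theta_{k+1}/2)\le\phi_{k+1}\le\phi_0=\sin(\bar\theta/2)$.
\item For $r>1$ and $k\le\bar k$: the explicit argument after \Cref{eq:decrease-r>1-0} gives $\sin(\theta_{k+1}/2)\le\sin(\bar\theta/2)$ directly.
\item For $r>1$ and $k\ge\bar k$: we get $\sin(\theta_{k+1}/2)\le\phi_{k+1}$.
\end{itemize}

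The last case is the main obstacle. The bound $\phi_{k+1}$ is not obviously $\le\sin(\bar\theta/2)$, since the $\phi_k$ regime starts once $\phi_k$ falls near the cap scale. Two routes are available. The first is to verify from the definition of $\bar k$ that $\phi_k\le\sin(\bar\theta/2)$ for $k\ge\bar k+1$. The definition gives, up to the floor, $\mu^2(\sin(\bar\theta/2)/2)^{2/p}\approx 4p/(\bar k+1)$, i.e.\ $\phi_{\bar k}/2\approx \sin(\bar\theta/2)/2$, so $\phi_k\lesssim\sin(\bar\theta/2)$ for $k>\bar k$. The floor must be handled carefully; if it costs a constant factor, we can instead use the second route. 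That route is to prove $\sin(\theta_{k+1}/2)\le\sin(\bar\theta/2)$ directly from \Cref{eq:decrease-r>1}, with the same two-subcase argument as for $k\le\bar k$, using $\eta_k\|\vg^k\|_2=\mu(\phi_k/2)^r\le\mu\sin^r(\bar\theta/2)$. This inequality holds once $\phi_k\le 2\sin(\bar\theta/2)$, which follows from the definition of $\bar k$.

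In the subcase $\sin(\theta_k/2)\ge\sin(\bar\theta/2)/\sqrt2$, the negative RGA term dominates the squared step. In the complementary subcase, the step $\eta_k\|\vg^k\|_2\le\mu\sin^r(\bar\theta/2)$ is too small to leave the cap, since $\mu\le1$ and $r>1$ give $\mu^2\sin^{2(r-1)}(\bar\theta/2)\le1$. In both subcases $\theta_{k+1}\le\bar\theta$, hence $\vx^{k+1}\in\mathcal{S}_\lambda$, which closes the induction.
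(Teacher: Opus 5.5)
Your proposal is correct and follows the paper's argument, which reads $\|\vx^k-\vx^*\|_2=2\sin(\theta_k/2)\le 2\sin(\bar\theta/2)=\lambda$ off \Cref{lem:descent-rsi}. You go further in two places: you make explicit the step-local induction (RGA at $\vx^k$ alone yields the bound at $\vx^{k+1}$), which the paper's one-line citation of a proposition that presupposes RGA along the trajectory leaves implicit, and you correctly flag the boundary case $\lambda=\sqrt2$. Your worry about the floor in $\bar k$ is unfounded, so the first route closes directly and the second is unnecessary. With $K=4p/(\mu^2(\sin(\bar\theta/2)/2)^{2/p})$ one has $\phi_k\le\sin(\bar\theta/2)$ iff $k+1\ge K$, and $\bar k+2=\lfloor K\rfloor+1>K$, so $\phi_{k+1}\le\sin(\bar\theta/2)$ for every $k\ge\bar k$.
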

\begin{proof}
    We only need to notice that by \Cref{lem:descent-rsi}, for any vector $\vx^k$ generated by \Cref{alg:RGD}, it holds $\|\vx^k - \vx^*\|_2\leq 2\sin(\bar{\theta}/2) = \lambda$, indicating that the algorithm iterates stay in the RGA region $\mathcal{S}_\lambda$. 
\end{proof}

\section{Derivative-Based Sufficient Conditions for RGA}\label{sec:sufficient conditions}

In this section, we introduce derivative-based sufficient conditions for problems to satisfy the RGA condition. Our results involve two sets of conditions: (1) specific to stochastic optimization problems interpretable as regression/supervised learning tasks (\Cref{subsec:regression}), and (2) more general problems that do not necessarily involve stochasticity or even have an interpretation as minimization problems (\Cref{subsec:sufficient-condition-general-vector-field}). Thus, provided with a vector field for which any of such sufficient conditions can be verified, \Cref{lem:descent-rsi} implies that the provided diminishing-step RGD (\Cref{alg:RGD}) converges to a target solution $\vx^*$ at either linear (if $r = 1$) or sublinear (if $r > 1$) rate, depending on the parameter $r$ in the RGA condition. The subsequent section (\Cref{sec:examples}) provides a range of examples for which those sufficient conditions are verified, with fully specified RGA parameters $r$ and $\mu.$ 

As mentioned earlier, the considered vector field $\vg$ for which the RGA condition is derived may or may not correspond to the Riemannian gradient of an objective function, or even be associated with a minimization problem. To indicate when the vector field is associated with a loss function, we use subscript $\ell$; i.e., we denote it by $\vg_\ell$.

Most of our sufficient conditions (in fact, all but those in \Cref{subsec:Gaussian}) will require some regularity assumption about the considered vector field $\vg$; in particular, that it is either bounded or restricted Lipschitz-continuous (between an arbitrary $\vx$ and a fixed target $\vx^*$), as summarized below. This appears a mild condition, in line with the definitions of classical problem classes that have been studied in terms of oracle complexity \cite{nemirovsky1983wiley}, particularly considering that the problem is stated for vectors with unit norm. 
\begin{assumption}\label{assum:smoothness-of-regression-loss}
     For a fixed $\vx^*$, $\vg(\vx;\vx^*)$ satisfies either of the following conditions:
    \begin{enumerate}
        \item[(i)] $\|\vg(\vx;\vx^*)\|_2 \leq L_1$ for some fixed $L_1 > 0$ and all $\vx\in\spd$;
        \item[(ii)] $\|\vg(\vx;\vx^*)\|_2 \leq L_2 \|\vx - \vx^*\|_2$ for some fixed $L_2 > 0$ and all $\vx\in\spd$. \end{enumerate}
\end{assumption}
Observe that \Cref{assum:smoothness-of-regression-loss}(i) is the weaker of the two: it is implied by \Cref{assum:smoothness-of-regression-loss}(ii) with $L_1 = 2 L_2$, since it must be $\norm{\vx - \vx^*}_2 \leq 2$, as both $\vx, \vx^*$ are on the unit sphere. 

\subsection{Sufficient Conditions For Riemannian Correlation Vector Fields}\label{subsec:regression}

We begin our discussion of sufficient conditions by considering stochastic minimization problems on the sphere that are generically expressible as  
\begin{equation}\label{problem:general-regression-formulation}
    \min_{\vx\in\spd}\big\{f(\vx) \eqdef \E_{\mZ\sim\D}[\ell(\mZ\vx,\mZ\vx^*;\vx,\vx^*)]\big\}, \tag{Regression}
\end{equation}
where $\mZ \in\R^{m\times d}$, $\D$ is a distribution measure on $\R^{m\times d}$, $\ell:(\R^m\times\R^m)\times(\R^d\times\R^d)\to\R$ is a loss function, and $\vx^*\in\spd$ is a target solution (typically, a function minimizer). 
Such a problem formulation captures broad classes of fundamental learning tasks such as multi-instance learning/max-pooling, binary classification, dictionary learning, learning of generalized linear models (GLMs), learning of single-index models (SIMs), and (sparse spiked) PCA, all formally defined and further discussed in \Cref{sec:defs-for-examples}. The loss function includes the current candidate vector $\vx$ and the target parameter vector $\vx^*$ as explicit arguments to further emphasize that loss $\ell$ might implicitly depend on $\vx$ and $\vx^*$. 
An important example, which will be discussed in detail in \Cref{sec:defs-for-examples}, is the SIM regression problem. 
\begin{example}\label{examp:SIM-varying-loss}
    For SIM regression, the goal is to find both the target parameter $\vx^*$ and the target unknown link function (activation) $\sigma^*$ from a function class $\cF$, by minimizing (for example) the $L_2^2$ loss $f_\Ltwo(\vx)$:
\begin{gather*}
f_{\Ltwo}(\vx)\eqdef \min_{\sigma\in\cF}\Ez[(\sigma(\vz\cdot\vx) - \sigma^*(\vz\cdot\vx^*))^2] \eqdef \Ez[\ell_{\Ltwo}(\vz\cdot\vx,\vz\cdot\vx^*;\vx,\vx^*)], \\
\text{where }\ell_{\Ltwo}(u,v;\vx,\vx^*)\eqdef (\sigma_\Ltwo(u;\vx,\vx^*) - \sigma^*(v))^2, \sigma_\Ltwo(u;\vx,\vx^*)\in \argmin_{\sigma\in\cF}\Ez[(\sigma(\vz\cdot\vx) - \sigma^*(\vz\cdot\vx^*))^2]. 
\end{gather*}
Here, the link function $\sigma_\Ltwo(u;\vx,\vx^*)$ is defined as the best-fitting activation given $\vx$ under the $L_2^2$ loss.
Notably, $\sigma_\Ltwo(u;\vx,\vx^*)$ is an activation that depends on both $\vx$ and $\vx^*$. \end{example}

Our sufficient conditions are based on the properties of derivatives of the loss function $\ell(\vu,\vv;\vx,\vx^*), \ell:(\R^m\times\R^m)\times(\R^d\times\R^d)\to\R$, for which we introduce the following notation. \begin{gather*}
    \partial_1\ell(\vu,\vv;\vx,\vx^*)\in\R^m,\; (\partial_1\ell(\vu,\vv;\vx,\vx^*))_i = \frac{\partial}{\partial u_i} \ell(\vu,\vv;\vx,\vx^*),\\
    \partial_2\partial_1\ell(\vu,\vv;\vx,\vx^*)\in\R^{m\times m},\; (\partial_2\partial_1\ell(\vu,\vv;\vx,\vx^*))_{i,j} = \frac{\partial}{\partial v_j}(\partial_1\ell(\vu,\vv;\vx,\vx^*))_i = \frac{\partial^2}{\partial v_j\partial u_i}\ell(\vu,\vv;\vx,\vx^*),
\end{gather*}
where, as before, the gradient and partial derivatives at points where the function is not necessarily differentiable are interpreted using the Clarke subdifferential when the objective is locally Lipschitz or as distributional derivatives, as discussed in \Cref{sec:prelims}. Derivatives 
$\partial_3\ell$ and $\partial_4\ell$ are defined similarly.

Our sufficient conditions are provided for a `gradient-like' vector field, which does not necessarily correspond to the Riemannian gradient of the objective $f(\vx)$. The precise definition is provided below.
\begin{definition}[Riemannian Correlation Vector Field]\label{def:grad-field}
Let $\ell(\vu,\vv;\vx,\vx^*):\R^m\times\R^m\times\R^d\times\R^d\to\R$ be a loss that is locally Lipschitz with respect to $\vu$. We say that a vector field $\vg\in\mathrm{T}_{\vx}\spd$ is a Riemannian correlation vector field, if it is expressible as 
\begin{align*}
    \vg_\ell(\vx,\vx^*)\eqdef \E_{\mZ\sim\D}[\mP_{\perp\vx}\mZ^\top\partial_1 \ell(\mZ\vx,\mZ\vx^*;\vx,\vx^*)],
\end{align*}
where $\mZ\in\R^{m\times d}$ and $\mP_{\perp\vx} = \mI - \vx\vx^\top$. If $\ell$ is not everywhere differentiable, by convention, {we fix a measurable and integrable single-value selection $\partial_1\ell(\vu,\vv;\vx,\vx^*) \in \partial_1^C \ell(\vu,\vv;\vx,\vx^*)$ and define $\vg_\ell$ using $\partial_1\ell(\vu,\vv;\vx,\vx^*).$} 
All subsequent derivatives of $\partial_1 \ell$ refer to this fixed selection. 
\end{definition}
Whenever differentiation under the expectation is valid and the chain rule holds with equality (for example, when the loss is differentiable almost surely and its derivative admits an integrable local envelope), we have that $\nablar f(\vx) = \Evz[\mP_{\perp\vx}\mZ^\top\partial_1\ell(\mZ\vx,\mZ\vx^*;\vx,\vx^*)] + \Evz[\mP_{\perp\vx}\partial_3\ell(\mZ\vx,\mZ\vx^*;\vx,\vx^*)]$. As a consequence, if $\E_{\mZ\sim\D}[\partial_3\ell(\mZ\vx,\mZ\vx^*;\vx,\vx^*)] = \vzero$ (see, e.g., the examples in \Cref{subsec:glm-sim-Gaussian}), then the Riemannian correlation vector field coincides with the Riemannian gradient of the objective, as the chain rule implies:
\begin{align*}
    \nablar f(\vx) &= \Evz[\mP_{\perp\vx}\mZ^\top\partial_1\ell(\mZ\vx,\mZ\vx^*;\vx,\vx^*)] + \Evz[\mP_{\perp\vx}\partial_3\ell(\mZ\vx,\mZ\vx^*;\vx,\vx^*)] = \vg_\ell(\vx,\vx^*).
\end{align*}

In the rest of this subsection, we present sufficient conditions for Riemannian correlation vector fields (\Cref{def:grad-field}) associated with problems expressible as \eqref{problem:general-regression-formulation} to satisfy RGA. We begin the discussion by providing a minimal condition (\Cref{assum:ell-logconcave}), which, combined with the mild vector field regularity (\Cref{assum:smoothness-of-regression-loss}), implies RGA. This condition does not require any specific distributional assumptions and it is provided in \Cref{subsubsec:general-suff-cond}. We then specialize this condition to centered Gaussian distributions, considering distributions with  general positive semidefinite covariance matrices (\Cref{subsec:Gaussian-sigma}) and the standard Gaussian distribution (\Cref{subsec:Gaussian}). As the distribution becomes more specialized, the conditions become simpler, and in the case of standard Gaussians, \Cref{assum:smoothness-of-regression-loss} is no longer needed. 

Throughout this section, all displayed expectations and conditional expectations are assumed to exist as finite expectations, and all functions and distributional derivatives to which \Cref{fact:stein} is applied satisfy its integrability hypotheses.

\subsubsection{Sufficient Conditions for General Distributional Families}\label{subsubsec:general-suff-cond}

Our derivative-based condition for general distributional families is provided below. Throughout \Cref{subsubsec:general-suff-cond} and \Cref{subsec:Gaussian-sigma}, we specialize to $m = 1.$

\begin{assumption}\label{assum:ell-logconcave}
For a given \eqref{problem:general-regression-formulation} with its associated target $\vx^*\in\spd$, and given $\vx\in\spd$ such that $\theta(\vx,\vx^*)\in(0,\pi/2)$, denote $\vv = \vx^*_{\perp\vx}/\|\vx^*_{\perp\vx}\|_2$. Given a distribution $\D$ on $\R^d$, define the conditional kernel:
\begin{align*}
    c_{\vv\cdot\vz|\vx\cdot\vz}(t)\eqdef \Evz\bigg[\bigg(\vv\cdot\vz - \E[\vv\cdot\vz | \vx\cdot\vz]\bigg)\1\{t\leq \vv\cdot\vz\} \bigg| \vx\cdot\vz\bigg].
\end{align*}
There exist constants $\mu^{(1)}, \mu^{(2)}\geq 0$, such that $\mu^{(1)} +\mu^{(2)} > 0$, and the following conditions both hold:
\begin{enumerate}
    \item[(i)] $\Evz\big[\E[\partial_1\ell(\vx\cdot\vz,\vx^*\cdot\vz;\vx,\vx^*) | \vx\cdot\vz] \, \E[\vv\cdot\vz | \vx\cdot\vz]\,\big] \leq -\mu^{(1)}$;
    \item[(ii)] $\partial_1\ell(u_1,u_2;\vx,\vx^*)$ is locally Lipschitz in $u_2$, $\E_{\vx \cdot\vz}\big[\int_\R |\partial_2\partial_1\ell(\vx\cdot\vz,\vx\cdot\vz\cos\theta + t\sin\theta;\vx,\vx^*)c_{\vv\cdot\vz|\vx\cdot\vz}(t)|\diff{t}\big]<\infty$, and
    \begin{align*}
        \E_{\vx\cdot\vz}\bigg[\int_\R \partial_2\partial_1\ell(\vx\cdot\vz,\vx\cdot\vz\cos\theta + t\sin\theta;\vx,\vx^*)c_{\vv\cdot\vz|\vx\cdot\vz}(t)\diff{t}\bigg]\leq -\mu^{(2)}.
    \end{align*}
\end{enumerate}
\end{assumption}

\Cref{assum:ell-logconcave}, when combined with the regularity of the vector field (\Cref{assum:smoothness-of-regression-loss}), implies RGA.

\begin{theorem}\label{lem:regression-loss-satisfy-sharpness-logconcave}
Given \eqref{problem:general-regression-formulation} with target $\vx^*$ and an associated Riemannian correlation vector field $\vg_\ell(\vx,\vx^*)$, for any $\vx$ such that \Cref{assum:ell-logconcave} holds, where $\theta = \theta(\vx,\vx^*)\in (0,\pi/2)$:  
    \begin{enumerate}
        \item If \Cref{assum:smoothness-of-regression-loss}(i) is satisfied, then $\vg_\ell(\vx,\vx^*)\cdot\vx^*\leq -\frac{\mu^{(1)} + \mu^{(2)}\sin\theta}{L_1}\sin\theta\|\vg_\ell(\vx,\vx^*)\|_2$.
\item If \Cref{assum:smoothness-of-regression-loss}(ii) is satisfied, then $\vg_\ell(\vx,\vx^*)\cdot\vx^*\leq -\frac{\mu^{(1)} + \mu^{(2)}\sin\theta}{2L_2}\|\vg_\ell(\vx,\vx^*)\|_2$.
\end{enumerate}
\end{theorem}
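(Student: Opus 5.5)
The idea is to show that the inner product $\vg_\ell(\vx,\vx^*)\cdot\vx^*$ factors as $\sin\theta$ times a scalar. That scalar then splits into exactly the two quantities controlled by \Cref{assum:ell-logconcave}(i) and (ii). Once this factorization is in hand, \Cref{assum:smoothness-of-regression-loss} converts the upper bound into an RGA-type bound relative to $\|\vg_\ell\|_2$.

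\textbf{Step 1 (reduction to a scalar correlation).} Since $m=1$, $\mZ^\top\partial_1\ell = \vz\,\partial_1\ell(\vx\cdot\vz,\vx^*\cdot\vz;\vx,\vx^*)$. Because $\mP_{\perp\vx}$ is symmetric, $(\mP_{\perp\vx}\vz)\cdot\vx^* = \vz\cdot\vx^*_{\perp\vx} = \sin\theta\,(\vv\cdot\vz)$, using $\|\vx^*_{\perp\vx}\|_2=\sin\theta$. Hence
\[
\vg_\ell\cdot\vx^* = \sin\theta\,\E\big[\partial_1\ell(\vx\cdot\vz,\vx^*\cdot\vz;\vx,\vx^*)\,(\vv\cdot\vz)\big].
\]
Next write $\vx^*\cdot\vz = \cos\theta\,(\vx\cdot\vz)+\sin\theta\,(\vv\cdot\vz)$, and put $u=\vx\cdot\vz$, $w=\vv\cdot\vz$, $m(u)=\E[w\mid u]$. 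Decompose $w = m(u) + (w-m(u))$. The $m(u)$ part, after conditioning on $u$, equals $\E\big[\E[\partial_1\ell\mid u]\,m(u)\big]\le-\mu^{(1)}$ by \Cref{assum:ell-logconcave}(i).

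\textbf{Step 2 (the covariance term via the kernel).} The remaining term is $\E_u\big[\E[h_u(w)(w-m(u))\mid u]\big]$, where $h_u(w)=\partial_1\ell(u,u\cos\theta+w\sin\theta;\vx,\vx^*)$ is locally Lipschitz in $w$. Using $\E[w-m(u)\mid u]=0$, write $h_u(w)-h_u(w_0)=\int_{w_0}^{w}h_u'(t)\,dt$ and swap the order of integration (Fubini, justified by the integrability hypothesis in (ii)). This is a conditional Stein/Hoeffding-type identity, and it gives
\[
\E[h_u(w)(w-m(u))\mid u]=\int_\R h_u'(t)\,c_{\vv\cdot\vz|\vx\cdot\vz}(t)\,dt .
\]
The chain rule gives $h_u'(t)=\sin\theta\,\partial_2\partial_1\ell(u,u\cos\theta+t\sin\theta;\vx,\vx^*)$, so by (ii) the term is at most $-\mu^{(2)}\sin\theta$. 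Combining the two steps, $\vg_\ell\cdot\vx^*\le-(\mu^{(1)}+\mu^{(2)}\sin\theta)\sin\theta$.

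\textbf{Step 3 (normalization).} Under \Cref{assum:smoothness-of-regression-loss}(i), $\|\vg_\ell\|_2\le L_1$, so $1\ge\|\vg_\ell\|_2/L_1$, which yields claim 1. Under (ii), $\|\vg_\ell\|_2\le L_2\|\vx-\vx^*\|_2=2L_2\sin(\theta/2)\le 2L_2\sin\theta$ for $\theta\in(0,\pi/2)$. Therefore $\sin\theta\ge\|\vg_\ell\|_2/(2L_2)$, which yields claim 2.

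The main obstacle is Step 2. Its Fubini/integration-by-parts identity requires care with sign conventions on the half-lines $t<w_0$ and $t>w_0$. The same kernel must appear on both sides, which relies on $\E[(w-m)\1\{t\le w\}\mid u] = -\E[(w-m)\1\{t>w\}\mid u]$. The derivative is also only a Clarke/a.e. derivative, so I would use the local Lipschitz property through absolute continuity along lines.
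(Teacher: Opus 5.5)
Your proposal is correct and follows essentially the same route as the paper. The paper also splits $\vg_\ell\cdot\vv$ into a mean term, controlled by \Cref{assum:ell-logconcave}(i), and a conditional-covariance term, which it turns into $\sin\theta\int_\R \partial_2\partial_1\ell\, c_{\vv\cdot\vz|\vx\cdot\vz}(t)\,dt$ via the fundamental theorem of calculus and Fubini, and it normalizes exactly as you do with $L_1$ or with $\|\vx-\vx^*\|_2=2\sin(\theta/2)\le 2\sin\theta$. The only technical difference is that the paper symmetrizes with an independent conditional copy $w'$, so the Fubini weight $\tfrac12\E[(w-w')(\1\{t\le w\}-\1\{t\le w'\})\mid u]$ is automatically nonnegative and equal to the kernel; choosing $w_0=m(u)$ in your base-point version gives the same property, so the integrability hypothesis in (ii) is exactly what Fubini needs.
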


Before proving \Cref{lem:regression-loss-satisfy-sharpness-logconcave}, we provide a few remarks clarifying the intuition behind \Cref{assum:ell-logconcave} and its correspondence with Stein's lemma (\Cref{fact:stein}). 

\begin{remark}[\textbf{\Cref{assum:ell-logconcave}~(i) is a loss curvature condition}]
    The insight behind Part (i) of \Cref{assum:ell-logconcave} is that it is a loss curvature condition. This is clearly seen when the distribution $\D$ is Gaussian $\calN(\vec 0,\mSigma)$ with a positive-definite covariance $\mSigma$, under which it holds: $\E[\vv\cdot\vz|\vx\cdot\vz] = (\vv^\top\mSigma\vx/\vx^\top\mSigma\vx)\vx\cdot\vz$. Then, denoting $\kappa = \vv^\top\mSigma\vx/\vx^\top\mSigma\vx$, Part (i) of \Cref{assum:ell-logconcave} becomes:
    \begin{align*}
        \Evz\big[\partial_1\ell(\vx\cdot\vz,\vx^*\cdot\vz;\vx,\vx^*) \, \E[\vv\cdot\vz | \vx\cdot\vz]\,\big] = \kappa\Evz[\partial_1\ell(\vx\cdot\vz,\vx^*\cdot\vz;\vx,\vx^*)\vx\cdot\vz].
    \end{align*}
    Now let $\vp = \mSigma^{-1/2}\vz$ (observe that $\vp\sim\calN(0,\mI)$), and assume $\partial_1 \ell$ is differentiable in its first two arguments, with the relevant derivatives being integrable. Applying Stein's lemma (\Cref{fact:stein}) leads to:
    \begin{align*}
        &\quad \Evz\big[\partial_1\ell(\vx\cdot\vz,\vx^*\cdot\vz;\vx,\vx^*) \, \E[\vv\cdot\vz | \vx\cdot\vz]\,\big]\\
        &=\kappa\E_{\vp\sim\calN_d}[\partial_1\ell((\mSigma^{1/2}\vx)\cdot\vp,(\mSigma^{1/2}\vx^*)\cdot\vp;\vx,\vx^*)(\mSigma^{1/2}\vx)\cdot\vp]\\
        &=\kappa\E_{\vp\sim\calN_d}[\partial_1^2 \ell((\mSigma^{1/2}\vx)\cdot\vp,(\mSigma^{1/2}\vx^*)\cdot\vp;\vx,\vx^*)](\vx^\top\mSigma\vx) + \kappa\E_{\vp\sim\calN_d}[\partial_2\partial_1 \ell((\mSigma^{1/2}\vx)\cdot\vp,(\mSigma^{1/2}\vx^*)\cdot\vp;\vx,\vx^*)](\vx^\top\mSigma\vx^*)\\
        &=(\vv^\top\mSigma\vx)\Evz[\partial_1^2
        \ell(\vx\cdot\vz,\vx^*\cdot\vz;\vx,\vx^*)] + (\cos\theta + \kappa\sin\theta)(\vv^\top\mSigma\vx)\Evz[\partial_2\partial_1
        \ell(\vx\cdot\vz,\vx^*\cdot\vz;\vx,\vx^*)],
    \end{align*}
    where we have used $\kappa \vx^\top\mSigma\vx = \vv^\top\mSigma\vx$ and $\vx^* = \vx\cos \theta + \vv \sin \theta.$ 
    Therefore, Part (i) of \Cref{assum:ell-logconcave} is a second-order (curvature) condition for the loss. \end{remark}

\begin{remark}[\textbf{Often, $\mu^{(1)} = 0$, making $\partial_2\partial_1\ell$ the primary contributor}]
    We should remark that in most of our examples (all except for the examples in \Cref{sec:ex:general Gaussian}), the contribution of the $\partial_1^2\ell$ term that enters via Part (i) of \Cref{assum:ell-logconcave} is often either small or zero, and therefore, the joint curvature $\partial_2\partial_1\ell$ is what truly determines the RGA property. In fact, in the examples in \Cref{subsubsec:logconcave-distribution-ab-link-SLM-are-RGA}, we have 
    \begin{align*}
        (i'):\; \E[\partial_1\ell(\vx\cdot\vz,\vx^*\cdot\vz;\vx,\vx^*) | \vx\cdot\vz] = 0.
    \end{align*}
     In addition, when $\vv\cdot\vz$ and $\vx\cdot\vz$ are independent and zero-mean (for instance, when $\D$ is the standard Gaussian), $\E[\vv\cdot\vz|\vx\cdot\vz] = 0$, hence $\mu^{(1)} = 0$ and, thus, only the joint curvature in Part (ii) matters.
\end{remark}

\begin{remark}[\textbf{The Conditional Kernel and Stein's Lemma}]
Let us denote $V = \vv\cdot\vz$ and $X = \vx\cdot\vz$. Intuitively, the conditional kernel $c_{V|X}(t)$ can be viewed as a probability density function  (after normalization) with respect to which Stein's Lemma (i.e., $\E_{Z\sim\calN}[Z g(Z)] = \int_\R g'(z)p_\calN(z)\diff{z}$, see \Cref{fact:stein}) extends from standard Gaussian to more general distributions. In particular, it is possible to argue (see the proof of \Cref{claim:nablarf-x*-bounded-by-sin2theta-log-concave}) that, for locally absolutely continuous function $g$ satisfying $\int_\R |g'(t)|c_{V|X}(t)\diff{t} < \infty,$ we have 
$$\Cov(g(V), V|X) = \int_\R g'(t)c_{V|X}(t)\diff{t}.$$ 
Therefore, Part (ii) of \Cref{assum:ell-logconcave} can also be viewed as an application of this generalized Stein's lemma.

We briefly justify the claim that the kernel $c_{V|X}(t)$ can indeed be regarded as a probability density. First, $c_{V|X}(t)$ is always non-negative for any $t\in\R$, because by definition, $0 = \E[(V - \E[V|X])|X] = \E[(V - \E[V|X])\1\{V\geq t\}|X] + E[(V - \E[V|X])\1\{V < t\}|X]$, which implies: 
\begin{align}\label{eq:kernel-positive}
    c_{V|X}(t) &= \E[(V - \E[V|X])\1\{V\geq t\}|X] = \E[(\E[V|X] - V)\1\{V < t\}|X]. 
\end{align}
If $\E[V|X] < t$, we have $c_{V|X}(t) = \E[(V - \E[V|X])\1\{V\geq t\}|X] \geq \E[(V - t)\1\{V\geq t\}|X] \geq 0.$ Similarly, if $\E[V|X]\geq t$, then $c_{V|X}(t) = \E[(\E[V|X] - V)\1\{V < t\}|X] \geq \E[(t - V)\1\{V < t\}|X] \geq 0;$ thus, 
it always holds that $c_{V|X}(t)\geq 0$.
Second, the generalized Stein's lemma above also implies that $\int_\R c_{V|X}(t)\diff{t} = \Var(V|X)$ by choosing $g(t) = t$. Hence, $c_{V|X}(t)/\Var(V|X)$ is a probability density function (so long as $\Var(V|X) \in (0, \infty)$, so we avoid degenerate/corner cases).
\end{remark}

\begin{remark}[\textbf{Sufficient Conditions for Standard Gaussian}]
When $\D = \calN(0,\mI)$, the sufficient conditions are much simpler. We have already argued in previous remarks that under the standard Gaussian distribution, $\E[\vv\cdot\vz|\vx\cdot\vz] = 0$ since $\vv\perp\vx$ and hence \Cref{assum:ell-logconcave}(i) is always satisfied with $\mu^{(1)} = 0$. Thus, the sufficient conditions reduce to checking Part (ii). Using the fact that $\vv\cdot\vz$ and $\vx\cdot\vz$ are independent standard Gaussians, the conditional kernel becomes $c_{\vv\cdot\vz|\vx\cdot\vz}(t) = \E[\vv\cdot\vz\1\{t\leq \vv\cdot\vz\}] = p_{\vv\cdot\vz|\vx\cdot\vz}(t) = p_{\calN}(t)$. Thus, \Cref{assum:ell-logconcave}(ii) is simply:
    \begin{align*}
    \E_{\vx\cdot\vz}\bigg[\int_\R \partial_2\partial_1\ell(\vx\cdot\vz,\vx\cdot\vz\cos\theta + t\sin\theta;\vx,\vx^*)p_\calN(t)\diff{t}\bigg] &=\E_{\vx\cdot\vz}[\E_{\vv\cdot\vz}[\partial_2\partial_1\ell(\vx\cdot\vz,\vx\cdot\vz\cos\theta + \sin\theta \vv\cdot\vz;\vx,\vx^*)]]\\
        &=\Evz[\partial_2\partial_1\ell(\vx\cdot\vz,\vx^*\cdot\vz;\vx,\vx^*)]\leq -\mu.
    \end{align*}
In \Cref{lem:regression-loss-satisfy-sharpness-Gaussian}, 
    we show that, in fact,  
    $\Evz[\partial_2\partial_1\ell(\vx\cdot\vz,\vx^*\cdot\vz;\vx,\vx^*)]<0$ alone 
    is sufficient for RGA, and we can even extend to the setting where the data is a standard Gaussian matrix 
    $\mZ\in\R^{m\times d}$.
\end{remark}

To prove \Cref{lem:regression-loss-satisfy-sharpness-logconcave}, we first prove two auxiliary lemmas (\Cref{claim:covariance-decomposition} and \Cref{claim:nablarf-x*-bounded-by-sin2theta-log-concave}). \Cref{claim:covariance-decomposition} implies that the correlation between the vector field $\vg_\ell(\vx,\vx^*)$ and the target direction $\vx^*$ can be decomposed into ``mean'' and ``covariance'' terms. \Cref{claim:nablarf-x*-bounded-by-sin2theta-log-concave} then uses \Cref{assum:ell-logconcave} to bound the two terms from the decomposition.

\begin{lemma}\label{claim:covariance-decomposition}
Given \eqref{problem:general-regression-formulation} and its corresponding  Riemannian correlation vector field $\vg_\ell$, fix $\vx \in \spd$ such that $\theta(\vx, \vx^*) \in (0, \pi/2),$ and let $\vv := \vx^*_{\perp\vx}/\|\vx^*_{\perp\vx}\|_2$. Then:
$$\vg_\ell(\vx,\vx^*)\cdot\vv = \E_{\vx\cdot\vz}\bigg[\Cov\bigg(\partial_1\ell(\vx\cdot\vz, \vx^*\cdot\vz;\vx,\vx^*), \vv\cdot\vz \bigg| \vx\cdot\vz\bigg)\bigg] + \Evz\bigg[\E\bigg[\partial_1\ell(\vx\cdot\vz, \vx^*\cdot\vz;\vx,\vx^*) \bigg| \vx\cdot\vz\bigg] \vv\cdot\vz\bigg].$$
    \end{lemma}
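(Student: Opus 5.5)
The plan is to unfold the definition of $\vg_\ell$ for $m=1$ and then apply the law of total covariance conditionally on $X := \vx\cdot\vz$. Write $V := \vv\cdot\vz$ and $G := \partial_1\ell(\vx\cdot\vz,\vx^*\cdot\vz;\vx,\vx^*)$, using the fixed measurable selection from \Cref{def:grad-field}. With $m=1$ we have $\mZ = \vz^\top$, so
\[
\vg_\ell(\vx,\vx^*) = \Evz\big[G\,\mP_{\perp\vx}\vz\big].
\]
Since $\vv \in \mathrm{T}_\vx\spd$, we have $\mP_{\perp\vx}\vv = \vv$, and therefore $\vg_\ell(\vx,\vx^*)\cdot\vv = \Evz[G\,(\vv\cdot\vz)] = \E[GV]$. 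Here we use the standing assumption that all displayed expectations are finite; it allows us to move the dot product inside the expectation by linearity.

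Next, by the tower property, $\E[GV] = \E\big[\E[GV\mid X]\big]$. Pointwise in $X$ we use the conditional covariance identity
\[
\E[GV\mid X] = \Cov(G,V\mid X) + \E[G\mid X]\,\E[V\mid X].
\]
Taking the outer expectation, the first term is exactly $\E_{\vx\cdot\vz}[\Cov(G,V\mid X)]$. For the second term, $\E[G\mid X]$ is $\sigma(X)$-measurable, so a second application of the tower property gives
\[
\E\big[\E[G\mid X]\,\E[V\mid X]\big] = \E\big[\E[G\mid X]\,V\big].
\]
This is the mean term $\Evz\big[\E[G\mid X]\,\vv\cdot\vz\big]$ in the statement. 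Adding the two terms gives the claimed decomposition.

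The only delicate point is integrability. We need $GV$, $\E[G\mid X]\,V$ and the conditional covariance to be integrable, so that the tower property and the splitting are valid. This follows from the section's blanket assumption that all expectations exist and are finite. Beyond that, the argument is a routine rearrangement, and it uses nothing about $\D$ beyond $\vv\perp\vx$.
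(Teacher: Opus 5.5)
Your proposal is correct and follows the paper's proof: both apply the conditional covariance decomposition $\E[GV]=\E[\Cov(G,V\mid X)]+\E[\E[G\mid X]\E[V\mid X]]$ and then use the tower property to rewrite the second term as $\E[\E[G\mid X]\,V]$. You also make explicit the reduction $\vg_\ell(\vx,\vx^*)\cdot\vv=\E[G\,(\vv\cdot\vz)]$ via $\mP_{\perp\vx}\vv=\vv$, a step the paper leaves implicit.
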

    \begin{proof}
    Denote, for simplicity, $X = \vx \cdot \vz,$ $V = \vv \cdot \vz$, and $A = \partial_1 \ell(\vx \cdot \vz, \vx^* \cdot \vz; \vx, \vx^*).$ The lemma claim follows from the conditional covariance decomposition: $\E[AV] = \E[\Cov(A, V| X)] + \E[\E[A|X]\E[V|X]]$ (a consequence of the definition of the conditional covariance), combined with the tower property of expectations, by which $\E[\E[A|X]V] = \E[\E[\E[A|X]V|X]] = \E[\E[A|X]\E[V|X]].$  
     \end{proof}

\begin{lemma}\label{claim:nablarf-x*-bounded-by-sin2theta-log-concave}
    Given \eqref{problem:general-regression-formulation} and the associated Riemannian correlation vector field $\vg_\ell,$ suppose \Cref{assum:ell-logconcave} is satisfied for some $\vx\in\spd$ such that $\theta(\vx,\vx^*)\in(0,\pi/2)$. Then:  
    $$\vg_\ell(\vx,\vx^*)\cdot \vx^* \leq -\mu^{(1)}\sin\theta(\vx,\vx^*) - \mu^{(2)}\sin^2\theta(\vx,\vx^*).$$
\end{lemma}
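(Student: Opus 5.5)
We begin by reducing $\vg_\ell\cdot\vx^*$ to $\vg_\ell\cdot\vv$. Since $\vg_\ell(\vx,\vx^*)\in\mathrm{T}_\vx\spd$ and $\vx^*=\cos\theta\,\vx+\sin\theta\,\vv$ with $\theta=\theta(\vx,\vx^*)$, we have $\vg_\ell\cdot\vx^*=\sin\theta\,(\vg_\ell\cdot\vv)$. Moreover, $\mP_{\perp\vx}\vv=\vv$ and $m=1$, so $\vg_\ell\cdot\vv=\Evz[\partial_1\ell(\vx\cdot\vz,\vx^*\cdot\vz;\vx,\vx^*)\,\vv\cdot\vz]$. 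It therefore suffices to prove
\[
\vg_\ell\cdot\vv\le-\mu^{(1)}-\mu^{(2)}\sin\theta .
\]
We apply \Cref{claim:covariance-decomposition} to split $\vg_\ell\cdot\vv$ into a mean term and a covariance term. The mean term is $\Evz[\E[A|X]\,V]=\Evz[\E[A|X]\,\E[V|X]]$ by the tower property, where $A=\partial_1\ell$, $X=\vx\cdot\vz$ and $V=\vv\cdot\vz$. This is exactly the quantity in \Cref{assum:ell-logconcave}(i), so it is at most $-\mu^{(1)}$.

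The covariance term carries the real content. Conditioning on $X=x$, we have $\vx^*\cdot\vz=x\cos\theta+V\sin\theta$, so $A=h_x(V)$ with $h_x(t):=\partial_1\ell(x,x\cos\theta+t\sin\theta;\vx,\vx^*)$. By \Cref{assum:ell-logconcave}(ii), $h_x$ is locally Lipschitz, hence locally absolutely continuous, with $h_x'(t)=\sin\theta\,\partial_2\partial_1\ell(x,x\cos\theta+t\sin\theta;\vx,\vx^*)$ a.e. The key step is the generalized Stein identity
\[
\Cov(h(V),V\mid X)=\int_\R h'(t)\,c_{V|X}(t)\,\diff{t}.
\]
To prove it, we write $\Cov(h(V),V|X)=\E[(V-m)(h(V)-h(m))|X]$ with $m=\E[V|X]$, which holds because $\E[V-m|X]=0$. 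We then express $h(V)-h(m)$ as $\int_m^V h'(t)\,\diff t$. Splitting into the cases $V\ge m$ and $V<m$, the double integral becomes
\[
\int_{t\ge m} h'(t)\,\E[(V-m)\1\{V\ge t\}|X]\,\diff t+\int_{t<m} h'(t)\,\E[(m-V)\1\{V<t\}|X]\,\diff t .
\]
By \eqref{eq:kernel-positive}, both integrands equal $h'(t)\,c_{V|X}(t)$.

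Substituting $h_x$ then gives
\[
\E_X\bigl[\Cov(A,V|X)\bigr]=\sin\theta\;\E_X\!\left[\int_\R\partial_2\partial_1\ell(X,X\cos\theta+t\sin\theta;\vx,\vx^*)\,c_{V|X}(t)\,\diff t\right]\le-\mu^{(2)}\sin\theta,
\]
again by \Cref{assum:ell-logconcave}(ii). Adding this to the mean-term bound gives $\vg_\ell\cdot\vv\le-\mu^{(1)}-\mu^{(2)}\sin\theta$, and multiplying by $\sin\theta>0$ yields the claim.

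The main obstacle is justifying the Fubini/Tonelli interchange in the generalized Stein identity. The integrand $(V-m)h'(t)$ takes a constant sign on each region, and $c_{V|X}\ge0$, so the interchange is legitimate once $\int|h'|\,c_{V|X}<\infty$. This finiteness is exactly the integrability hypothesis in \Cref{assum:ell-logconcave}(ii), taken conditionally on $X$ for almost every $X$. We must also make sure the fixed measurable selection of $\partial_1\ell$ is used consistently throughout.
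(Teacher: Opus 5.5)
Your proof is correct and follows the paper's overall structure. You reduce to $\vg_\ell\cdot\vx^*=\sin\theta\,(\vg_\ell\cdot\vv)$, split via \Cref{claim:covariance-decomposition}, and bound the mean term by \Cref{assum:ell-logconcave}(i). You then handle the covariance term through $\Cov(h(V),V\mid X)=\int_\R h'(t)\,c_{V|X}(t)\,\diff{t}$, with the interchange justified by the integrability hypothesis in \Cref{assum:ell-logconcave}(ii).

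The one place you diverge is in deriving that identity. The paper symmetrizes with a conditionally independent copy $V'$ and writes the covariance as $\frac{1}{2}\E[(h(V)-h(V'))(V-V')\mid X]$. It then uses that the weight $(V-V')(\1\{t\le V\}-\1\{t\le V'\})$ is nonnegative and has conditional half-mean $c_{V|X}(t)$. You instead center at $m=\E[V|X]$ and integrate $h'$ from $m$ to $V$. You then identify the contributions from $t\ge m$ and from $t<m$ with the two representations of $c_{V|X}$ in \eqref{eq:kernel-positive}.

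Each route has an advantage. Yours avoids constructing a conditionally independent copy, which implicitly requires a regular conditional distribution, at the cost of a case split. The paper's symmetrization produces a single nonnegative weight, so the Tonelli step is immediate.

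Two small precision points:
\begin{itemize}
\item It is $(V-m)$ times the region indicator, not $(V-m)h'(t)$, that has constant sign, since $h'$ may change sign. Tonelli should therefore be applied to $|h'(t)|\,|V-m|$ times the indicator, which integrates exactly to $\int_\R|h'(t)|\,c_{V|X}(t)\,\diff{t}$. The conclusion you draw is unaffected.
\item On the region $t<m$, your derivation naturally produces $\1\{V\le t\}$ rather than $\1\{V<t\}$. The two differ only at the at most countably many atoms of the conditional law of $V$, a Lebesgue-null set of $t$, so this is harmless.
\end{itemize}
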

\begin{proof}
    Let $\vv = \vx^*_{\perp\vx}/\|\vx^*_{\perp\vx}\|_2$ and consider the decomposition of $\vg_\ell(\vx,\vx^*)\cdot \vv$ from 
    \Cref{claim:covariance-decomposition}.  
    By Part (i) of \Cref{assum:ell-logconcave}, we have 
$\Evz\big[\partial_1\ell(\vx\cdot\vz,\vx^*\cdot\vz;\vx,\vx^*) \, \E[\vv\cdot\vz | \vx\cdot\vz]\,\big] \leq -\mu^{(1)}$;
    therefore, it only remains to bound the covariance part.
Denote for simplicity $ z_\vu = \vu\cdot\vz$ for any unit vector $\vu$.
    We observe that the covariance term can be expressed as an integral with respect to the joint curvature of the loss $\ell$. Given any $ z_\vx$, let us denote
    \begin{align*}
       p_{ z_\vx}(t) \eqdef \partial_1\ell( z_\vx, z_\vx \cos\theta  + t \sin\theta ;\vx,\vx^*)\;\Rightarrow \; p_{ z_\vx}( z_\vv) = \partial_1\ell( z_\vx, z_\vx \cos\theta  + z_\vv\sin\theta; \vx,\vx^*). 
    \end{align*}
Observe that:  
\begin{equation}\label{eq:Cov-via-p_zx}
     \Cov(\partial_1\ell( z_\vx, z_{\vx^*};\vx,\vx^*),  z_\vv |  z_\vx)  = \Cov(p_{z_\vx}(z_\vv),  z_\vv |  z_\vx).
     \end{equation}
        Let $z_\vv'$ be an independent copy of $z_\vv$ conditional on $z_\vx.$ Then:
        \begin{align}
            \Cov(p_{z_\vx}(z_\vv),  z_\vv |  z_\vx) &= \frac{1}{2}\E\big[(p_{z_\vx}(z_\vv) - p_{z_\vx}(z_\vv'))(z_\vv - z_\vv')|z_\vx\big]\notag \\
            &= \frac{1}{2}\E\Big[\Big(\int_\R p_{z_\vx}'(t)(\1\{t\leq z_\vv\} - \1\{t\leq z_\vv' \})\diff{}t\Big)(z_\vv - z_\vv')|z_\vx\Big], \label{eq:Cov-of-p_zx-via-FTC}
        \end{align}
        where we have used that $p_{z_\vx}$ is locally Lipschitz, and thus absolutely continuous on every compact interval, so the fundamental theorem of calculus applies. This holds because $\partial_1\ell(u_1,u_2; \vx, \vx^*)$ is locally-Lipschitz in $u_2$, thus differentiable a.e., and, therefore, $p_{ z_\vx}(t)$ is also differentiable a.e., with $p'_{ z_\vx}(t) = \sin\theta\, \partial_2\partial_1\ell( z_\vx, z_\vx \cos\theta   + t \sin\theta ; \vx,\vx^*)$.
        
        Now observe that $(z_\vv - z_\vv')(\1\{t\leq z_\vv\} - \1\{t\leq z_\vv' \}) \geq 0$ and $\frac{1}{2}\E[(z_\vv - z_\vv')(\1\{t\leq z_\vv\} - \1\{t\leq z_\vv' \})|z_\vx] = \E[(z_\vv - \E[z_\vv|z_\vx])\1\{t\leq z_\vv\} |z_\vx] = c_{z_\vv|z_\vx}(t)$. Therefore,
\begin{align*}
    \frac{1}{2}\E\Big[\int_\R |p_{z_\vx}'(t)||\1\{t\leq z_\vv\} - \1\{t\leq z_\vv' \}||z_\vv - z_\vv'|\diff{}t\Big|z_\vx\Big] = \int_\R |p_{z_\vx}'(t)|c_{z_\vv|z_\vx}(t) \diff{} t,
\end{align*}
which is finite by \Cref{assum:ell-logconcave}(ii), as $p'_{ z_\vx}(t) = \sin\theta\, \partial_2\partial_1\ell( z_\vx, z_\vx \cos\theta   + t \sin\theta ; \vx,\vx^*)$. Thus, we can use Fubini's theorem to exchange the order of integration in \eqref{eq:Cov-of-p_zx-via-FTC}, so that, combined with \eqref{eq:Cov-via-p_zx}, we get
\begin{align*}
        \Cov(\partial_1\ell( z_\vx, z_{\vx^*};\vx,\vx^*),  z_\vv |  z_\vx) = \int_\R p'_{ z_\vx}(t) c_{ z_\vv| z_\vx}(t)\diff{t} =\sin\theta\int_\R\partial_2\partial_1\ell( z_\vx, z_\vx \cos\theta + t\sin\theta;\vx,\vx^*)c_{ z_\vv| z_\vx}(t)\diff{t}.
    \end{align*}
Using Part (ii) of \Cref{assum:ell-logconcave}, 
    we obtain: $\E_{z_{\vx}}[\Cov(\partial_1\ell( z_\vx, z_{\vx^*};\vx,\vx^*),  z_\vv |  z_\vx)]\leq -\mu^{(2)}\sin\theta$ and, thus, combined with the bound on the mean from Part (i), we get
    \begin{align*}
        \vg_\ell(\vx,\vx^*)\cdot\vx^* = \sin\theta\vg_\ell(\vx,\vx^*)\cdot\vv\leq  -\mu^{(1)}\sin\theta -\mu^{(2)}\sin^2\theta,  
    \end{align*}
    which completes the proof.
\end{proof}

\begin{proof}[Proof of \Cref{lem:regression-loss-satisfy-sharpness-logconcave}]
    The proof is a direct consequence of \Cref{claim:nablarf-x*-bounded-by-sin2theta-log-concave}, combined with \Cref{assum:smoothness-of-regression-loss} to bound $\norm{\vg_\ell(\vx, \vx^*)}_2$.  In particular, if \Cref{assum:smoothness-of-regression-loss}(i) holds, then \Cref{claim:nablarf-x*-bounded-by-sin2theta-log-concave} implies $\vg_\ell(\vx,\vx^*)\cdot\vx^*\leq -(\mu^{(1)}/L_1)\sin\theta\|\vg_\ell(\vx,\vx^*)\|_2 -(\mu^{(2)}/L_1)\sin^2\theta\|\vg_\ell(\vx,\vx^*)\|_2$. 
On the other hand, if \Cref{assum:smoothness-of-regression-loss}(ii) holds, observing that $\|\vx - \vx^*\|_2 = 2\sin(\theta/2)\leq 2\sin\theta$ and 
    applying \Cref{claim:nablarf-x*-bounded-by-sin2theta-log-concave}, we get $\vg_\ell(\vx,\vx^*)\cdot\vx^*\leq -((\mu^{(1)} + \sin\theta\mu^{(2)})/(2L_2))\|\vg_\ell(\vx, \vx^*)\|_2$.
\end{proof}

\subsubsection{Sufficient Conditions under General-Covariance Gaussian Distributions}\label{subsec:Gaussian-sigma}

When the distribution $\D$ is specialized to a centered Gaussian, $\D = \calN(\vzero, \mSigma)$, simpler sufficient conditions can be established for \eqref{problem:general-regression-formulation} to satisfy RGA. Throughout this section, the covariance matrix $\mSigma$ may be singular. Let $\vp \sim \cN(\vzero, \mI_d)$ so that $\vz = \mSigma^{1/2}\vp \sim \cN(\vzero, \mSigma).$ When the loss derivatives appearing in this setting are distributional, their expectations are interpreted through this standard-Gaussian representation. Specifically, we assume that $\partial_1 \ell((\mSigma^{1/2}\vx)^\top \vp, (\mSigma^{1/2}\vx^*)^\top \vp; \vx, \vx^*)$ satisfies the integrability hypotheses of \Cref{fact:stein} and that the distributional chain rule
\begin{align*}
    D_{\vp}\partial_1\ell((\mSigma^{1/2}\vx)^\top \vp, (\mSigma^{1/2}\vx^*)^\top \vp; \vx, \vx^*) =\;& D_1 \partial_1\ell((\mSigma^{1/2}\vx)^\top \vp, (\mSigma^{1/2}\vx^*)^\top \vp; \vx, \vx^*) \mSigma^{1/2}\vx\\
    &+ D_2 \partial_1\ell((\mSigma^{1/2}\vx)^\top \vp, (\mSigma^{1/2}\vx^*)^\top \vp; \vx, \vx^*) \mSigma^{1/2}\vx^*
\end{align*}
holds, with all resulting Gaussian pairings being finite. 

In particular, consider the condition summarized below.  
\begin{assumption}\label{assum:ell-Gaussian-sigma}
    Let $\D = \calN(\vzero,\mSigma)$ for some positive-semidefinite matrix $\mSigma$ and $\vx^*\in\spd$ be associated with a given \eqref{problem:general-regression-formulation} problem. Given $\vx \in \spd$ such that $\theta\eqdef\theta(\vx,\vx^*)\in(0,\pi/2)$,  denote $\vv = \vx^*_{\perp\vx}/\|\vx^*_{\perp\vx}\|_2$. There exist non-negative reals $\mu_\theta^{(1)}, \mu_\theta^{(2)}\geq 0$, $\mu_\theta^{(1)} + \mu_\theta^{(2)} >0$ (possibly dependent on $\theta$) such that:
    \begin{gather*}
        \bigg(\Evz[D_1\partial_1\ell(\vx\cdot\vz,\vx^*\cdot\vz;\vx,\vx^*)] + \cos\theta\Evz[D_2\partial_1\ell(\vx\cdot\vz,\vx^*\cdot\vz;\vx,\vx^*)]\bigg)(\vx^\top\mSigma\vv) \leq -\mu_\theta^{(1)}\\
        \Evz[D_2\partial_1\ell(\vx\cdot\vz,\vx^*\cdot\vz;\vx,\vx^*)](\vv^\top\mSigma\vv)\leq -\mu_\theta^{(2)}.
    \end{gather*} 
\end{assumption}

{Note here that \Cref{assum:ell-Gaussian-sigma} uses distributional derivatives, which we defined and discussed in \Cref{fact:distributional-derivatives}.}
We argue that when \Cref{assum:ell-Gaussian-sigma} and \Cref{assum:smoothness-of-regression-loss} are satisfied,  the Riemannian correlation vector field $\vg_\ell$ satisfies RGA.
\begin{theorem}\label{lem:regression-loss-satisfy-sharpness-Gaussian-sigma}
    Given \eqref{problem:general-regression-formulation} and a corresponding Riemannian correlation vector field $\vg_\ell(\vx,\vx^*)$, 
    suppose that \Cref{assum:ell-Gaussian-sigma} holds for the associated target $\vx^* \in \spd$ and some $\vx \in \spd$.   
    \begin{enumerate}
        \item If \Cref{assum:smoothness-of-regression-loss}(i) is satisfied, then $\vg_\ell(\vx,\vx^*)\cdot\vx^*\leq -((\mu_\theta^{(1)} + \sin\theta\mu_\theta^{(2)})/L_1)\sin\theta\|\vg_\ell(\vx,\vx^*)\|_2$; 
        \item If \Cref{assum:smoothness-of-regression-loss}(ii) is satisfied, then $\vg_\ell(\vx,\vx^*)\cdot\vx^*\leq -((\mu_\theta^{(1)} + \sin\theta\mu_\theta^{(2)})/(2L_2))\|\vg_\ell(\vx,\vx^*)\|_2$. 
    \end{enumerate}
\end{theorem}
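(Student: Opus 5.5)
The approach mirrors the proof of \Cref{lem:regression-loss-satisfy-sharpness-logconcave}: first establish the analogue of \Cref{claim:nablarf-x*-bounded-by-sin2theta-log-concave}, namely
\begin{equation*}
\vg_\ell(\vx,\vx^*)\cdot\vx^* \leq -\mu_\theta^{(1)}\sin\theta - \mu_\theta^{(2)}\sin^2\theta,
\end{equation*}
and then divide by the bound on $\|\vg_\ell\|_2$ from \Cref{assum:smoothness-of-regression-loss}. Because $\vg_\ell\perp\vx$ and $\vx^* = \cos\theta\,\vx + \sin\theta\,\vv$, we have $\vg_\ell\cdot\vx^* = \sin\theta\,\vg_\ell\cdot\vv$. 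Using $\mP_{\perp\vx}\vv=\vv$, this reduces further to $\vg_\ell\cdot\vx^* = \sin\theta\,\Evz[\partial_1\ell(\vx\cdot\vz,\vx^*\cdot\vz;\vx,\vx^*)\,\vv\cdot\vz]$. It therefore suffices to compute this single Gaussian correlation.

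To do so, I would write $\vz = \mSigma^{1/2}\vp$ with $\vp\sim\calN(\vzero,\mI_d)$, so that $\vv\cdot\vz = (\mSigma^{1/2}\vv)\cdot\vp$. Apply Stein's lemma (\Cref{fact:stein}) to $\partial_1\ell((\mSigma^{1/2}\vx)^\top\vp,(\mSigma^{1/2}\vx^*)^\top\vp;\vx,\vx^*)$ and take the inner product with $\mSigma^{1/2}\vv$. The integrability hypotheses and the distributional chain rule are exactly the standing assumptions of this subsection. This gives
\begin{equation*}
\Evz[\partial_1\ell\,\vv\cdot\vz] = \Evz[D_1\partial_1\ell]\,(\vx^\top\mSigma\vv) + \Evz[D_2\partial_1\ell]\,(\vx^{*\top}\mSigma\vv).
\end{equation*}
Expanding $\vx^{*\top}\mSigma\vv = \cos\theta\,\vx^\top\mSigma\vv + \sin\theta\,\vv^\top\mSigma\vv$ and regrouping yields
\begin{equation*}
\big(\Evz[D_1\partial_1\ell] + \cos\theta\,\Evz[D_2\partial_1\ell]\big)(\vx^\top\mSigma\vv) + \sin\theta\,\Evz[D_2\partial_1\ell]\,(\vv^\top\mSigma\vv).
\end{equation*}
By the two inequalities of \Cref{assum:ell-Gaussian-sigma}, this is at most $-\mu^{(1)}_\theta - \sin\theta\,\mu^{(2)}_\theta$. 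Multiplying by $\sin\theta$ gives the displayed bound.

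The conclusion then follows. Under \Cref{assum:smoothness-of-regression-loss}(i), $1\ge\|\vg_\ell\|_2/L_1$, which gives item 1. Under (ii), $\|\vg_\ell\|_2\le L_2\|\vx-\vx^*\|_2 = 2L_2\sin(\theta/2)\le 2L_2\sin\theta$, so $\sin\theta\ge\|\vg_\ell\|_2/(2L_2)$, which gives item 2.

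The main obstacle is the Stein step in the singular or distributional setting: the derivative must be taken with respect to $\vp$ rather than $\vz$, and one must check that pairing with $\mSigma^{1/2}\vv$ is legitimate. Since the paper explicitly assumes both the chain rule and the finiteness of all pairings, I expect this step to be essentially a bookkeeping verification.
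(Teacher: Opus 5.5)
Your proposal is correct and follows the paper's own route. The paper proves the same intermediate bound $\vg_\ell(\vx,\vx^*)\cdot\vx^*\le -\mu_\theta^{(1)}\sin\theta-\mu_\theta^{(2)}\sin^2\theta$ by writing $\vz=\mSigma^{1/2}\vp$, applying Stein's lemma, and splitting $\vv^\top\mSigma\vx^*$ into its $\cos\theta$ and $\sin\theta$ parts; it then divides by the norm bounds exactly as you do, including $2\sin(\theta/2)\le 2\sin\theta$ for item 2. The only cosmetic difference is that the paper first computes the full vector $\vg_\ell=\E[D_1\partial_1\ell]\,\mP_{\perp\vx}\mSigma\vx+\E[D_2\partial_1\ell]\,\mP_{\perp\vx}\mSigma\vx^*$ and pairs with $\vv$ afterward, whereas you pair with $\vv$ first.
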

\begin{proof}
    The proof is a direct consequence of \Cref{claim:nablarf-x*-bounded-by-sin2theta} (stated and proved below), following the same reasoning as in the proof of \Cref{lem:regression-loss-satisfy-sharpness-logconcave}. \end{proof}

It remains to state and prove the following lemma. 
\begin{lemma}\label{claim:nablarf-x*-bounded-by-sin2theta}
Given \eqref{problem:general-regression-formulation} and a corresponding Riemannian correlation vector field $\vg_\ell(\vx,\vx^*)$, 
    suppose that \Cref{assum:ell-Gaussian-sigma} holds for target  $\vx^* \in \spd$ and some $\vx \in \spd$.  Then, 
    \begin{equation*}
        \vg_\ell(\vx,\vx^*)\cdot\vx^*\leq -\mu_\theta^{(1)}\sin\theta - \mu_\theta^{(2)}\sin^2\theta.
    \end{equation*}
\end{lemma}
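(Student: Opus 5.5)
We plan to prove the bound by computing $\vg_\ell(\vx,\vx^*)\cdot\vv$ exactly with Stein's lemma (\Cref{fact:stein}), using the standard-Gaussian representation $\vz=\mSigma^{1/2}\vp$, $\vp\sim\calN(\vzero,\mI_d)$. Since $\vx^*=\cos\theta\,\vx+\sin\theta\,\vv$ and $\vg_\ell\perp\vx$, we have $\vg_\ell\cdot\vx^*=\sin\theta\,\vg_\ell\cdot\vv$. Moreover $\mP_{\perp\vx}\vv=\vv$, so
\begin{align*}
\vg_\ell(\vx,\vx^*)\cdot\vv=\Evz[(\vv\cdot\vz)\,\partial_1\ell(\vx\cdot\vz,\vx^*\cdot\vz;\vx,\vx^*)]=\E_{\vp}[((\mSigma^{1/2}\vv)\cdot\vp)\,\partial_1\ell((\mSigma^{1/2}\vx)\cdot\vp,(\mSigma^{1/2}\vx^*)\cdot\vp;\vx,\vx^*)].
\end{align*}

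First, apply Stein's lemma in the direction $\mSigma^{1/2}\vv$: $\E_\vp[(\va\cdot\vp)G(\vp)]=\va\cdot\E_\vp[D_\vp G(\vp)]$. Then use the assumed distributional chain rule for $D_\vp\partial_1\ell$. This gives
\begin{align*}
\vg_\ell\cdot\vv=\Evz[D_1\partial_1\ell]\,(\vv^\top\mSigma\vx)+\Evz[D_2\partial_1\ell]\,(\vv^\top\mSigma\vx^*),
\end{align*}
where the pairings are evaluated at $(\vx\cdot\vz,\vx^*\cdot\vz;\vx,\vx^*)$.

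Second, expand $\vv^\top\mSigma\vx^*=\cos\theta\,\vv^\top\mSigma\vx+\sin\theta\,\vv^\top\mSigma\vv$ and regroup:
\begin{align*}
\vg_\ell\cdot\vv=\Big(\Evz[D_1\partial_1\ell]+\cos\theta\,\Evz[D_2\partial_1\ell]\Big)(\vx^\top\mSigma\vv)+\sin\theta\,\Evz[D_2\partial_1\ell]\,(\vv^\top\mSigma\vv).
\end{align*}
The first term is at most $-\mu_\theta^{(1)}$ and the second at most $-\sin\theta\,\mu_\theta^{(2)}$, both by \Cref{assum:ell-Gaussian-sigma}. Multiplying by $\sin\theta>0$ gives the claim.

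The main obstacle is justifying the Stein step when $\partial_1\ell$ is only a Clarke selection or has jumps, and when $\mSigma$ is singular. For singular $\mSigma$ the vector $\mSigma^{1/2}\vv$ may be zero; the identity then holds trivially, since both sides vanish consistently. The remaining justification, that the pairings are well-defined and the chain rule holds distributionally, is exactly what the standing integrability hypotheses stated before \Cref{assum:ell-Gaussian-sigma} provide. So the argument reduces to invoking \Cref{fact:stein} componentwise, applied to $G(\vp)=\partial_1\ell(\cdots)$.
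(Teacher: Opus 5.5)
Your proposal is correct and is essentially the paper's proof: write $\vz=\mSigma^{1/2}\vp$, apply Stein's lemma with the assumed distributional chain rule, expand $\vx^*=\cos\theta\,\vx+\sin\theta\,\vv$ to regroup into the two terms bounded by \Cref{assum:ell-Gaussian-sigma}, and multiply by $\sin\theta$. The only difference is cosmetic. You contract with $\vv$ before applying a directional Stein identity, whereas the paper first computes the full vector $\vg_\ell=\Evz[D_1\partial_1\ell]\,\mP_{\perp\vx}\mSigma\vx+\Evz[D_2\partial_1\ell]\,\mP_{\perp\vx}\mSigma\vx^*$ and then takes the inner product with $\vv$.
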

\begin{proof}
    Let $\mLambda = \mSigma^{1/2}$ and $\vz = \mLambda\vp$ for $\vp\sim\calN(\vzero,\mI)$. Let $\mP_\vx = \mI - \vx\vx^\top$ be the projection matrix mapping to the space orthogonal to $\vx$. Writing down the definition of $\vg_\ell(\vx,\vx^*)$ and substituting $\vz$ by $\mLambda\vp$, we have
    \begin{align*}
        \vg_\ell(\vx,\vx^*) &= \mP_\vx\Evz[\partial_1\ell(\vx\cdot\vz,\vx^*\cdot\vz;\vx,\vx^*)\vz]=\mP_\vx\E_{\vp\sim\calN(0,\mI)}[\partial_1\ell((\mLambda\vx)^\top\vp,(\mLambda\vx^*)^\top\vp;\vx,\vx^*)\mLambda\vp].
    \end{align*}
    Applying Stein's lemma (\Cref{fact:stein}), we get:
    \begin{align*}
        \vg_\ell(\vx,\vx^*) &= \mP_\vx\mLambda\E_{\vp\sim\calN(0,\mI)}[\partial_1\ell((\mLambda\vx)^\top\vp,(\mLambda\vx^*)^\top\vp;\vx,\vx^*)\vp]\\
        &=\mP_\vx\mLambda\bigg(\E_{\vp\sim\calN(0,\mI)}\bigg[D_1\partial_1\ell((\mLambda\vx)^\top\vp,(\mLambda\vx^*)^\top\vp;\vx,\vx^*)\bigg]\mLambda\vx\\
        &\quad + \E_{\vp\sim\calN(0,\mI)}\bigg[D_2\partial_1\ell((\mLambda\vx)^\top\vp,(\mLambda\vx^*)^\top\vp;\vx,\vx^*)\bigg]\mLambda\vx^*\bigg)\\
        &=\Evz\bigg[D_1\partial_1\ell(\vx\cdot\vz,\vx^*\cdot\vz;\vx,\vx^*)\bigg]\mP_\vx\mSigma\vx + \Evz\bigg[D_2\partial_1\ell(\vx\cdot\vz,\vx^*\cdot\vz;\vx,\vx^*)\bigg]\mP_\vx\mSigma\vx^*.
    \end{align*}
    Therefore, the inner product $\vg_\ell(\vx,\vx^*)\cdot\vv$ equals:
    \begin{align}\label{eq:Gaussian-sigma-expression-vg-cdot-v}
        \vg_\ell(\vx,\vx^*)\cdot\vv 
        &=\Evz[D_1\partial_1\ell(\vx\cdot\vz,\vx^*\cdot\vz;\vx,\vx^*)]\vx^\top\mSigma\vv  + \Evz[D_2\partial_1\ell(\vx\cdot\vz,\vx^*\cdot\vz;\vx,\vx^*)]\vv^\top\mSigma\vx^* \nonumber\\
        &=\bigg(\Evz[D_1\partial_1\ell(\vx\cdot\vz,\vx^*\cdot\vz;\vx,\vx^*)] + \cos\theta\Evz[D_2\partial_1\ell(\vx\cdot\vz,\vx^*\cdot\vz;\vx,\vx^*)]\bigg)\vx^\top\mSigma\vv \nonumber\\
        &\quad + \Evz[D_2\partial_1\ell(\vx\cdot\vz,\vx^*\cdot\vz;\vx,\vx^*)]\vv^\top\mSigma\vv\sin\theta.
    \end{align}
    Then by \Cref{assum:ell-Gaussian-sigma}, it holds that $\vg_\ell(\vx,\vx^*)\cdot\vv\leq -\mu_\theta^{(1)} - \mu_\theta^{(2)}\sin\theta.$
    It remains to use that $\vg_\ell(\vx,\vx^*)\cdot\vx^* = \sin\theta\,\vg_\ell(\vx,\vx^*)\cdot\vv$. 
\end{proof}

\subsubsection{Sufficient Conditions under Standard Gaussian Distribution}\label{subsec:Gaussian}

In this subsection, we focus on the case where the rows of $\mZ\sim\D$ are independent  Gaussian random vectors, i.e., $\mZ^\top = [\vz^1,\dots,\vz^m]$, $\vz_i\sim\calN(\vec 0, \vec I)$. We show that a simple negative curvature condition (\Cref{assum:ell-Gaussian}) is sufficient for the RGA property to hold with a constant RGA parameter (\Cref{lem:regression-loss-satisfy-sharpness-Gaussian}), regardless of the smoothness of the vector field.

\begin{assumption}\label{assum:ell-Gaussian}
Let $\D$ be the distribution of random $m \times d$ matrices with i.i.d.\ standard Gaussian entries and $\vx^*\in\spd$ be the target solution, both associated with a given \eqref{problem:general-regression-formulation} problem. Let $\vx\in\spd$ be a given vector such that $\theta(\vx,\vx^*)\in(0,\pi/2)$. 
Assume that $\ell(\cdot,\cdot;\vx,\vx^*)$ satisfies $$\E_{\mZ\sim\D}[\tr(D_2\partial_1\ell(\mZ\vx, \mZ\vx^*;\vx,\vx^*))]<0.$$
\end{assumption}
Observe here that \Cref{assum:ell-Gaussian} can be seen as a second-order condition. It is, however, substantially different than typical second-order conditions like strong convexity or `restricted' strong convexity (where the definition of strong convexity applies with one of the points restricted to $\mZ\vx^*$) used in traditional Euclidean optimization. First, the matrix $D_2\partial_1\ell(\mZ\vx, \mZ\vx^*;\vx,\vx^*)$ is not necessarily symmetric. Second,  $D_2\partial_1\ell(\mZ\vx, \mZ\vx^*;\vx,\vx^*)$ does not contain second derivatives w.r.t.\ the vector of variables $\mZ\vx.$ Instead, it is obtained by sequentially taking partial derivatives w.r.t.\ the first and the second argument (the latter is fixed to $\mZ\vx^*$). Third, the condition does not impose any restrictions on individual singular values of $D_2\partial_1\ell(\mZ\vx, \mZ\vx^*;\vx,\vx^*)$; all that is required is that the trace of $\E_{\mZ \sim \cD}[D_2\partial_1\ell(\mZ\vx, \mZ\vx^*;\vx,\vx^*)]$ is strictly negative. 
Finally, the condition only involves the differentiation of the random variables $\mZ\vx$ and $\mZ\vx^*$, without accounting for the implicit dependence of the loss on $\vx,\vx^*$.

\Cref{assum:ell-Gaussian} alone implies RGA, as stated and proved below. 
\begin{theorem}[Sufficient Conditions for RGA Under Standard Gaussian Distribution]\label{lem:regression-loss-satisfy-sharpness-Gaussian}
Given \eqref{problem:general-regression-formulation} and a corresponding Riemannian correlation vector field $\vg_\ell(\vx,\vx^*)$ and target solution $\vx^*\in\spd$,  
     for any $\vx$ such that $\theta = \theta(\vx,\vx^*)\in(0, \pi/2)$ and \Cref{assum:ell-Gaussian} holds, we have: 
$$\vg_\ell(\vx,\vx^*)\cdot\vx^*= - \|\vg_\ell(\vx,\vx^*)\|_2\sin\theta.$$
    In other words, the RGA property holds with $\mu=r = 1.$
\end{theorem}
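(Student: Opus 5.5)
The plan is to compute $\vg_\ell(\vx,\vx^*)$ in closed form with Stein's lemma (\Cref{fact:stein}), applied entrywise to the Gaussian matrix $\mZ$. The goal is to show that $\vg_\ell(\vx,\vx^*)$ is a nonpositive multiple of $\vv := \vx^*_{\perp\vx}/\|\vx^*_{\perp\vx}\|_2$. Once that holds, the alignment is exact rather than merely bounded.

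\emph{Step 1: Stein's lemma.} Write $\vh(\mZ) := \partial_1\ell(\mZ\vx,\mZ\vx^*;\vx,\vx^*)\in\R^m$, so that $\E[\mZ^\top\vh(\mZ)]_k = \sum_{i=1}^m \E[Z_{ik}h_i(\mZ)]$. The entries $Z_{ik}$ are i.i.d.\ standard Gaussians. Applying \Cref{fact:stein} in the variable $Z_{ik}$ (with the other entries fixed) gives $\E[Z_{ik}h_i(\mZ)] = \E[D_{Z_{ik}}h_i(\mZ)]$. The only arguments of $h_i$ that depend on $Z_{ik}$ are $u_i=(\mZ\vx)_i$ and $v_i=(\mZ\vx^*)_i$, with $\partial u_i/\partial Z_{ik}=x_k$ and $\partial v_i/\partial Z_{ik}=x^*_k$. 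The distributional chain rule, assumed valid as in the standing integrability conventions, then gives
\[
D_{Z_{ik}}h_i = (D_1\partial_1\ell)_{ii}\,x_k + (D_2\partial_1\ell)_{ii}\,x^*_k .
\]
Summing over $i$ yields
\[
\E[\mZ^\top\vh(\mZ)] = \E[\tr(D_1\partial_1\ell)]\,\vx + \E[\tr(D_2\partial_1\ell)]\,\vx^*.
\]

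\emph{Step 2: projection.} Apply $\mP_{\perp\vx}$, which annihilates the $\vx$ term and maps $\vx^*$ to $\vx^*_{\perp\vx}=\sin\theta\,\vv$. This gives
\[
\vg_\ell(\vx,\vx^*) = \sin\theta\;\E_{\mZ\sim\D}[\tr(D_2\partial_1\ell(\mZ\vx,\mZ\vx^*;\vx,\vx^*))]\,\vv = -c\,\vv,
\]
where $c := -\sin\theta\,\E[\tr(D_2\partial_1\ell)]$. By \Cref{assum:ell-Gaussian} and $\theta\in(0,\pi/2)$, $c>0$. In particular $\vg_\ell\neq\vzero$ and $\|\vg_\ell\|_2 = c$.

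\emph{Step 3: alignment.} Since $\vv\cdot\vx^* = \|\vx^*_{\perp\vx}\|_2 = \sin\theta$, we get $\vg_\ell\cdot\vx^* = -c\sin\theta = -\|\vg_\ell\|_2\sin\theta$. This is exactly \eqref{eq:sharp} with $\mu=r=1$, holding with equality.

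The main obstacle is the rigor of Step 1. The derivatives are distributional (possibly involving Dirac masses), and the loss depends on $\mZ$ only through the two linear maps $\mZ\vx$ and $\mZ\vx^*$. I would handle this as in the general-covariance section. First vectorize $\mZ$ into a standard Gaussian vector in $\R^{md}$. The map $\mZ\mapsto(\mZ\vx,\mZ\vx^*)$ is linear, so the distributional chain rule holds under the paper's standing assumption. The multivariate Stein identity then gives the formula above directly, with all Gaussian pairings finite by assumption. Nothing beyond this bookkeeping is needed, since no norm bound on $\vg_\ell$ enters.
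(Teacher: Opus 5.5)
Your proof is correct and reaches exactly the paper's identity $\vg_\ell=\sin\theta\,\E[\tr(D_2\partial_1\ell)]\,\vv$, which gives equality in the RGA inequality. The difference lies in how Stein's lemma is applied.

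You integrate by parts in every entry $Z_{ik}$, equivalently in the full pair $(\mZ\vx,\mZ\vx^*)$. This gives the unprojected identity $\E[\mZ^\top\partial_1\ell]=\E[\tr(D_1\partial_1\ell)]\,\vx+\E[\tr(D_2\partial_1\ell)]\,\vx^*$, and then $\mP_{\perp\vx}$ removes the $\vx$-term. This is the computation of \Cref{claim:nablarf-x*-bounded-by-sin2theta} with $\mSigma=\mI$, run row by row.

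The paper works in two separate steps:
\begin{itemize}
\item[(1)] Components along any $\vu\perp\vx,\vv$ vanish by independence and zero mean of $\mZ\vu$, with no differentiation at all.
\item[(2)] For the $\vv$-component, it conditions on $\mZ\vx$ and applies Stein only in the coordinates $\vz^i\cdot\vv$. Then only the second argument $\mZ\vx^*=\cos\theta\,\mZ\vx+\sin\theta\,\mZ\vv$ moves.
\end{itemize}

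What the paper's route buys is that $D_1\partial_1\ell$ never appears, so nothing beyond the joint curvature controlled by \Cref{assum:ell-Gaussian} has to be given meaning. Your route needs $\E[\tr(D_1\partial_1\ell)]$ to be well defined.

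That is harmless if you define each $\E[(D_1\partial_1\ell)_{ii}]$ as the pairing $-\langle(\partial_1\ell)_i,\partial_{u_i}p\rangle$. Here $p$ is the joint Gaussian density of $(\mZ\vx,\mZ\vx^*)$, nondegenerate since $\theta\in(0,\pi/2)$. Read this way, it is finite under the same linear-weight integrability that makes $\vg_\ell$ exist. You should say this explicitly rather than invoke ``the paper's standing assumption.'' The two-term chain rule with finite $D_1$-pairings is assumed only in \Cref{subsec:Gaussian-sigma}, not for this theorem.

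In exchange, your computation yields the whole vector $\E[\mZ^\top\partial_1\ell]$ in closed form. It also makes the parallelism to $\vv$ automatic and unifies the standard-Gaussian case with the general-covariance lemma.
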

\begin{proof}
Define $\vv = \vx^*_{\perp\vx}/\|\vx^*_{\perp\vx}\|_2$.
We first argue that the vector field $\vg_\ell$ is always parallel to $\vv$. Note that $\vg_\ell$ is orthogonal to $\vx$ by its definition. Let $\vu$ be any direction orthogonal to both $\vv$ and $\vx$; then using the definition of $\vg_\ell$, we have:
\begin{align*}
    \vg_\ell(\vx,\vx^*)\cdot\vu = \E_{\mZ\sim\D}[\partial_1\ell(\mZ\vx,\mZ\vx^*;\vx,\vx^*)^\top\mZ_{\perp\vx}\vu] = \E_{\mZ\sim\D}[\partial_1\ell(\mZ\vx,\mZ\vx^*;\vx,\vx^*)]^\top\E_{\mZ\sim\D}[\mZ\vu] = 0,
\end{align*}
as $\mZ\vu$ is independent of $\mZ\vx$ and $\mZ\vx^*$ and has mean zero. Thus, we conclude that $\vg_\ell(\vx,\vx^*)$ is parallel to $\vv$.

Next, we prove that:
\begin{align*}
    \vg_\ell(\vx,\vx^*)\cdot\vv = \sin\theta\E_{\mZ\sim\D}[\tr(D_2\partial_1\ell(\mZ\vx, \mZ\vx^*;\vx,\vx^*))].
\end{align*}
To this aim, we apply Stein's lemma (\Cref{fact:stein}).
Note that by the definition of $\vv$, we can decompose $\vx^* = \vx\cos\theta + \vv \sin\theta$ and $(\mI - \vx\vx^\top)\vx^* = (\vx^*)_{\perp\vx} = \sin\theta\vv$. 
Then, using the definition of $\vg_\ell(\vx,\vx^*)$ again, a direct calculation yields:
    \begin{align}\label{eq:nablarf-x*-bounded-by-sin2theta-Gaussian-0}
        \vg_\ell(\vx,\vx^*)\cdot\vv& =\E_{\mZ\sim\D}[\partial_1\ell(\mZ\vx,\mZ\vx^*;\vx,\vx^*)^\top\mZ\vv] \nonumber\\
        & = \E_{\mZ\sim\D}\bigg[\sum_{i=1}^m (\partial_1\ell(\mZ\vx, \mZ\vx\cos\theta + \mZ\vv \sin\theta;\vx,\vx^*))_i(\vz^i\cdot\vv)\bigg] \nonumber\\
        &=\sum_{i=1}^m \E_{\mZ\vx\sim\calN(\vec 0, \mI)}\bigg[\E_{\mZ\vv\sim\calN(\vec 0, \mI)}\bigg[\bigg(\partial_1\ell(\mZ\vx, \mZ\vx\cos\theta + \mZ\vv\sin\theta;\vx,\vx^*)\bigg)_i(\vz^i\cdot\vv)\bigg| \,\mZ\vx\bigg]\bigg],
    \end{align}
    where in the last equation we used the tower rule of expectations.  
    Given $\mZ\vx$, $(\partial_1\ell(\mZ\vx, \mZ\vx\cos\theta + \mZ\vv\sin\theta;\vx,\vx^*))_i$ is a function of $\vz^1\cdot\vv,\dots,\vz^m\cdot\vv$. Therefore, we can denote $h_i(\vz^1\cdot\vv,\dots,\vz^m\cdot\vv)\eqdef (\partial_1\ell(\mZ\vx, \cos\theta\mZ\vx + \sin\theta\mZ\vv;\vx,\vx^*))_i$, $h_i:\R^m\to\R$. Note that $\mZ\vx$ and $\mZ\vv$ are independent standard Gaussian vectors since $\vv$ and $\vx$ are orthogonal to each other. Further, $\vz^i$'s are also independent standard Gaussian random vectors. Therefore, applying Stein's lemma (\Cref{fact:stein}),  we obtain
    \begin{align*}
        &\quad \E_{\mZ\vv\sim\calN(\vec 0, \mI)}\bigg[h_i(\vz^1\cdot\vv,\dots,\vz^m\cdot\vv)(\vz^i\cdot\vv)\bigg]\\
        &=\E_{\vz^j\cdot\vv\sim\calN(0, 1), j\neq i}\bigg[\E_{\vz^i\cdot\vv\sim\calN(0,1)}\bigg[h_i(\vz^1\cdot\vv,\dots,\vz^m\cdot\vv)(\vz^i\cdot\vv)\bigg|\,\vz_j\cdot\vv, j\neq i\bigg]\bigg]\\
        &=\E_{\vz^j\cdot\vv\sim\calN(0, 1), j\neq i}\bigg[\E_{\vz^i\cdot\vv\sim\calN(0,1)}\bigg[D_{\vz^i\cdot\vv}h_i(\vz^1\cdot\vv,\dots,\vz^m\cdot\vv)\bigg|\,\vz_j\cdot\vv, j\neq i\bigg]\bigg] \\
        &=\E_{\mZ\vv\sim\calN(\vec 0, \mI)}\bigg[D_{\vz^i\cdot\vv} h_i(\vz^1\cdot\vv,\dots,\vz^m\cdot\vv)\bigg].
    \end{align*}
    Now, using the chain rule, the partial derivative of $h_i$ equals:
    \begin{align*}
        D_{\vz^i\cdot\vv} h_i(\vz^1\cdot\vv,\dots,\vz^m\cdot\vv) &= \bigg(D_{2}(\partial_1 \ell(\mZ\vx, \mZ\vx\cos\theta + \mZ\vv\sin\theta;\vx,\vx^*))_i \bigg)^\top \bigg(\frac{\partial }{\partial \vz^i\cdot\vv} (\mZ\vv\sin\theta)\bigg) \\
        &= \sin\theta \big(D_2\partial_1\ell(\mZ\vx, \mZ\vx^*;\vx,\vx^*)\big)_{i,i}.
    \end{align*}
    Plugging this equality back into \Cref{eq:nablarf-x*-bounded-by-sin2theta-Gaussian-0}, we obtain
    \begin{align*}
        \vg_\ell(\vx,\vx^*)\cdot\vv = \sin\theta \sum_{i=1}^m \E_{\mZ\sim\D}[(D_2\partial_1\ell(\mZ\vx, \mZ\vx^*;\vx,\vx^*))_{i,i}] = \sin\theta\E_{\mZ\sim\D}[\tr(D_2\partial_1\ell(\mZ\vx, \mZ\vx^*;\vx,\vx^*))].
    \end{align*}

    Now since $\vg_\ell(\vx,\vx^*)$ is parallel to $\vv$ and $\E_{\mZ\sim\D}[\tr(D_2\partial_1\ell(\mZ\vx, \mZ\vx^*;\vx,\vx^*))]<0$ by  \Cref{assum:ell-Gaussian}, we conclude that:
    \begin{align*}
        \|\vg_\ell(\vx,\vx^*)\|_2 = -\sin\theta\E_{\mZ\sim\D}[\tr(D_2\partial_1\ell(\mZ\vx, \mZ\vx^*;\vx,\vx^*))] = -\vg_\ell(\vx,\vx^*)\cdot\vv.
    \end{align*}
    Finally, since $\vx^*_{\perp\vx} = \vv \sin\theta$, we obtain:
    \begin{align*}
        \vg_\ell(\vx,\vx^*)\cdot\vx^* = \vg_\ell(\vx,\vx^*)\cdot(\sin\theta\vv) = -\|\vg_\ell(\vx,\vx^*)\|_2\sin\theta,
    \end{align*}
    completing the proof.
\end{proof}

\subsection{Sufficient Conditions for General Vector Fields}\label{subsec:sufficient-condition-general-vector-field}

In this subsection, we consider general vector fields $\vg(\vx, \vy)$ that need not be associated with any minimization problem, deterministic or stochastic. Instead, $\vg$ can be an arbitrary map of pairs of vectors $\vx, \vy$ on the sphere to elements of $\mathrm{T}_\vx\spd.$ This means that the provided sufficient condition (\Cref{assum:ell-general-vector-field}) can be used not only for minimization problems (that are the focus of this paper), but also, possibly, for min-max optimization problems, variational inequalities, and root-finding problems on the sphere.
\begin{assumption}\label{assum:ell-general-vector-field}
Let $\vg(\vx,\vy)$ be a mapping from $\R^d\times\R^d$ to $\R^d$ such that $\vg(\vx,\vy)\in\mathrm{T}_\vx\spd$ for all $\vx,\vy\in\spd$.
Let $B,\mu$ be some positive constants. 
Fix $\vx^*\in\spd, 0<\bar{\theta}<\pi/2$. Assume all of the following:
\begin{enumerate}
    \item (Stationarity) $\vg(\vx,\vy) \mid_{\vy = \vx} = \vzero$, for all $\vx\in\spd$.\item (Bounded Curvature) 
    For any $\vx\in\spd$ with $\theta = \theta(\vx,\vx^*)\in(0, \Bar{\theta}]$, define the geodesic 
    $$\cG_{\vx,\vx^*}(t) = \cos(t \theta) \vx + \sin(t \theta) \vv, t\in[0,1], \vv = {(\vx^*)_{\perp\vx}}/{\|(\vx^*)_{\perp\vx}\|_2}.$$
For each fixed $\vx$, the map $\vy \mapsto \vg(\vx, \vy)$ has a locally Lipschitz extension $\tilde{\vg}_\vx: \cU_\vx \to \R^d$ to an open Euclidean neighborhood $\cU_\vx$ of the geodesic $\cG_{\vx,\vx^*}(t), t\in[0,1]$. 
    Furthermore, for $\vy\in\cU_\vx$, define the Clarke Jacobian by:
\begin{align*}
    \partial_{2}^C \vg(\vx,\vy)\eqdef \partial^C \tilde{\vg}_\vx(\vy) = \mathrm{Conv}\bigg\{\lim_{k\to\infty} \partial_{2} \tilde{\vg}_\vx(\vy_k): \vy_k\to\vy, \text{ $\tilde{\vg}_\vx(\vy_k)$ differentiable at $\vy_k$}\bigg\}.
\end{align*}
   For almost every $t\in[0,1]$ and any $\partial_{2}\vg(\vx,\vy(t))\in\partial^C_{2}\vg(\vx,\vy(t))$  
    \begin{equation*}
        \vv^\top \partial_{2}\vg(\vx,\vy(t))\vv\leq -\mu\;\text{ and }\;\|\partial_{2}\vg(\vx,\vy(t))\|_2\leq B.
    \end{equation*}
\end{enumerate}
\end{assumption}

Our main result from this subsection is summarized in the following theorem. Compared to the results from the previous subsection, this result is more `local', requiring the angle between $\vx$ and $\vx^*$ to be sufficiently small ($\tan(\theta(\vx,\vx^*)/2)\leq  \mu/(2B)$), thus, from the aspect of algorithms, appropriate initialization becomes important to ensure convergence of DS-RGD.

\begin{theorem}\label{lem:regression-loss-satisfy-sharpness-general-distr}
    Let $\vg,\vx,\vx^*$ satisfy \Cref{assum:ell-general-vector-field}, where $\theta = \theta(\vx,\vx^*)>0$ satisfies $\tan(\theta/2)\leq \min\{\mu/(2B), \tan(\bar{\theta}/2)\}$.
    Then, $\vg(\vx,\vx^*)\cdot\vx^*\leq -(\mu/(4B))\|\vg(\vx,\vx^*)\|_2\sin\theta$, i.e., $\vg(\vx,\vx^*)$ is $\mu/(4B)$-RGA.
\end{theorem}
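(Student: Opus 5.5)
The plan is to write $\vg(\vx,\vx^*)$ as an integral of Clarke Jacobians along the geodesic from $\vx$ to $\vx^*$, and then use the two halves of the bounded-curvature condition in \Cref{assum:ell-general-vector-field}. The curvature half gives a negative bound on $\vg(\vx,\vx^*)\cdot\vv$; the operator-norm half gives an upper bound on $\|\vg(\vx,\vx^*)\|_2$. The RGA inequality comes from comparing the two.

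\emph{Step 1 (integral representation).} Write $\vy(t)=\cG_{\vx,\vx^*}(t)$, so that $\vy(0)=\vx$, $\vy(1)=\vx^*$, and $\vy'(t)=\theta(-\sin(t\theta)\vx+\cos(t\theta)\vv)$. By stationarity, $\vg(\vx,\vx^*)=\tilde{\vg}_\vx(\vy(1))-\tilde{\vg}_\vx(\vy(0))$. The map $t\mapsto\tilde{\vg}_\vx(\vy(t))$ is a locally Lipschitz map composed with a smooth curve lying in $\cU_\vx$, so it is Lipschitz on $[0,1]$ and equals the integral of its a.e.\ derivative. By Clarke's chain rule for generalized Jacobians, at a.e.\ $t$ this derivative has the form $J(t)\vy'(t)$ for some $J(t)\in\partial_2^C\vg(\vx,\vy(t))$. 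If only the convex-hull inclusion is available, I will work with the scalar functions $\vw\cdot\tilde{\vg}_\vx(\vy(t))$ for fixed $\vw$; the needed bounds are linear in $J$, so they pass through convex combinations. This measure-theoretic justification is the main technical obstacle, since the rest is elementary. Granting it, $\vg(\vx,\vx^*)=\int_0^1 J(t)\vy'(t)\,\diff{t}$.

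\emph{Step 2 (alignment bound).} Take the inner product with $\vv$ and use $\vv^\top J(t)\vv\le-\mu$ together with $|\vv^\top J(t)\vx|\le\|J(t)\|_2\le B$:
\begin{align*}
\vg(\vx,\vx^*)\cdot\vv=\theta\int_0^1\big(\cos(t\theta)\,\vv^\top J(t)\vv-\sin(t\theta)\,\vv^\top J(t)\vx\big)\diff{t}\le-\mu\sin\theta+B(1-\cos\theta).
\end{align*}
Since $1-\cos\theta=\sin\theta\tan(\theta/2)$ and $\tan(\theta/2)\le\mu/(2B)$ by hypothesis, this gives $\vg\cdot\vv\le-(\mu/2)\sin\theta<0$. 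In particular $\vg(\vx,\vx^*)\ne\vzero$, which the RGA definition also requires.

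\emph{Step 3 (norm bound and conclusion).} From the integral representation, $\|\vg(\vx,\vx^*)\|_2\le\int_0^1\|J(t)\|_2\|\vy'(t)\|_2\,\diff{t}\le B\theta$. Because $\theta\le\bar\theta<\pi/2$, we have $\sin\theta\ge(2/\pi)\theta\ge(2/\pi)\|\vg\|_2/B$. Using $\vx^*_{\perp\vx}=\sin\theta\,\vv$ and the orthogonality of $\vg$ to $\vx$,
\begin{align*}
\vg\cdot\vx^*=\sin\theta\,(\vg\cdot\vv)\le-\frac{\mu}{2}\sin^2\theta\le-\frac{\mu}{\pi B}\|\vg\|_2\sin\theta\le-\frac{\mu}{4B}\|\vg\|_2\sin\theta,
\end{align*}
which is the claimed $\mu/(4B)$-RGA property.
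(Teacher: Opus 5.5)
Your proposal is correct and follows the paper's proof essentially step for step. It uses the same fundamental-theorem-of-calculus representation along the geodesic via the Clarke chain rule, and the same split into the $\vv^\top J\vv$ and $\vv^\top J\vx$ terms to get $\vg\cdot\vv\le -\mu\sin\theta+2B\sin^2(\theta/2)\le-(\mu/2)\sin\theta$, then the same norm bound $\|\vg\|_2\le B\theta$. The only differences are cosmetic: you use $\sin\theta\ge(2/\pi)\theta$ (giving the slightly sharper $\mu/(\pi B)$ before relaxing to $\mu/(4B)$) instead of the paper's $\sin\theta\ge\theta/2$, and you explicitly note $\vg(\vx,\vx^*)\neq\vzero$.
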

\begin{proof}
    We first prove that $\vg(\vx,\vx^*)\cdot\vx^*\leq -(\mu/2)\sin^2\theta$ under \Cref{assum:ell-general-vector-field} when $\tan(\theta/2)\leq \min\{\mu/(2B), \tan(\bar{\theta}/2)\}$.

    Denote $\vv\eqdef (\vx^*)_{\perp\vx}/\|(\vx^*)_{\perp\vx}\|_2$. 
{Since $\vg(\vx,\vx^*)\in\mathrm{T}_\vx\spd$,} we have $\vg(\vx,\vx^*)\cdot\vx^* = \vg(\vx,\vx^*)\cdot\vv\sin\theta$.

    Now consider the geodesic $\vy(t)$ on the sphere defined by $\vy(t)=  \cos(t\theta)\vx + \sin(t\theta)\vv$. This curve satisfies $\vy(0) = \vx$ and $\vy(1) = \vx^*$. Furthermore, though $\vg(\vx,\vy(t))$ might not be differentiable for every $t$, since $\vg(\vx,\vy)$ is locally Lipschitz around $\vy(t)$, the vector field $\mF(t)\eqdef\vg(\vx,\vy(t))$ is absolutely continuous in $t$ on the geodesic, and hence the fundamental theorem of calculus (see, e.g., \cite{rudin-principles}) is applicable, leading to 
\begin{align*}
        \vg(\vx,\vy(1))&= \mF(1) = \mF(0) + \int_0^1 \mF'(t)\diff{t} = \vg(\vx,\vy(0)) + \int_0^1 \mF'(t)\diff{t}.
    \end{align*}
    By the stationarity part of \Cref{assum:ell-general-vector-field},  noting that by construction $\vy(0) = \vx$, we have $\vg(\vx,\vy(0)) = 0$. Therefore, only the second term in the equation above survives. In addition, for almost every $t\in[0,1]$, the Clarke chain rule gives some matrices $\partial_{2}\vg(\vx,\vy(t))\in\partial^C_{2} \vg(\vx,\vy(t))$ such that $\mF'(t) = \partial_{2}\vg(\vx,\vy(t))(\diff{}\vy(t)/\diff{t})$. 
By the definition of $\vy(t)$, we have $\vy'(t) = -\theta\sin(t\theta)\vx + \theta\cos(t\theta)\vv$. Therefore, plugging in, we get:
\begin{align*}
    \vg(\vx,\vx^*)\cdot\vv&=\int_0^1\partial_{2}\vg(\vx,\vy(t))\vy'(t)\cdot\vv\diff{t} \\
    &= -\underbrace{\int_{0}^1 \vx^\top \partial_{2} \vg(\vx,\vy(t))^\top\vv \sin(t\theta)\theta\diff{t}}_{Q_1} + \underbrace{\int_0^1 \vv^\top \partial_{2} \vg(\vx,\vy(t))^\top\vv \cos(t\theta)\theta\diff{t}}_{Q_2}.
\end{align*}
For the first term, since $\|\partial_{2} \vg(\vx,\vy(t))\|_2\leq B$ for a.e.\ $t\in[0,1]$ according to \Cref{assum:ell-general-vector-field}, we have 
\begin{align*}
    |Q_1|&\leq \int_{0}^1 \bigg|\vx^\top \partial_{2} \vg(\vx,\vy(t))^\top\vv\bigg| \sin(t\theta)\theta\diff{t}\leq B\int_{0}^1 \sin(t\theta)\theta\diff{t} = 2B\sin^2(\theta/2).
\end{align*}
On the other hand, for the term $Q_2$, using the curvature assumption in \Cref{assum:ell-general-vector-field}, which requires that $\vv^\top \partial_{2}\vg(\vx,\vy(t))^\top\vv\leq -\mu$ for a.e.\ $t\in[0,1]$, we obtain
\begin{align*}
    Q_2\leq -\mu\int_0^1 \cos(t\theta)\theta\diff{t} = -\mu\sin\theta.
\end{align*}
Hence, combining the upper bounds on $Q_1$ and $Q_2$, we have
\begin{align*}
    \vg(\vx,\vx^*)\cdot\vv\leq 2B\sin^2(\theta/2) - \mu\sin\theta = -2\sin(\theta/2)(\mu\cos(\theta/2) - B\sin(\theta/2))
\end{align*}
Therefore, whenever $\tan(\theta/2)\leq \mu/(2B)$, it holds
\begin{align*}
    \vg(\vx,\vx^*)\cdot\vx^* = \sin\theta\vg(\vx,\vx^*)\cdot\vv\leq -\mu\sin\theta\sin(\theta/2)\cos(\theta/2) = -(\mu/2)\sin^2\theta.
\end{align*}

Next, we prove that $\|\vg(\vx,\vx^*)\|_2\leq B\theta.$ This directly comes from the assumption that $\|\partial_2\vg(\vx,\vy(t))\|_2\leq B$ for a.e. $t\in[0,1]$ in \Cref{assum:ell-general-vector-field}. To see this, letting $\mF(t)= \vg(\vx,\vy(t))$ and using the fundamental theorem of calculus as well as the stationarity assumption in \Cref{assum:ell-general-vector-field}, again, we have:
\begin{align*}
    \|\vg(\vx,\vx^*)\|_2 = \bigg\|\vg(\vx,\vy(0)) + \int_0^1 \mF'(t)\diff{t}\bigg\|_2\leq \int_0^1 \|\partial_2\vg(\vx,\vy(t))\|_2\|\vy'(t)\|_2\diff{t}\leq B\int_0^1 \|\vy'(t)\|_2\diff{t}.
\end{align*}
Recalling that $\vy'(t) = \theta(-\sin(t\theta)\vx + \cos(t\theta)\vv)$, hence $\|\vy'(t)\|_2 = \theta$, we obtain $\|\vg(\vx,\vx^*)\|_2\leq B\theta$. 

We conclude that
\begin{align*}
    \vg(\vx,\vx^*)\cdot\vx^*\leq -\frac{\mu\sin\theta}{2B\theta}\|\vg(\vx,\vx^*)\|_2\sin\theta\leq -(\mu/(4B))\|\vg(\vx,\vx^*)\|_2\sin\theta,
\end{align*}
since $\sin\theta\geq \theta/2$ when $\theta\in(0,\pi/2)$.
\end{proof}

\section{Example Problems that Satisfy Sufficient Conditions}\label{sec:examples}

In this section, we apply the sufficient conditions developed in \Cref{sec:sufficient conditions} to concrete learning and estimation problems. For each problem, we identify an appropriate vector field and verify the relevant sufficient conditions, thereby obtaining an explicit RGA parameter. 
The discussion in this section unifies a wide range of results, ranging from dictionary learning ~\cite{sun2015complete,bai2018subgradient,gilboa2019efficient,Li2019WeaklyCO,qu2019analysis,Zhu2019LinearConvergenceGrassman} to a line of work on learning SIMs/GLMs~\cite{mei2018landscape,diakonikolas2020approximation,yehudai2020learning,diakonikolas2022learning,WZDD2023,WZDD24b,ZWDD2024,zarifis2025robustly,wang2026robustly}, additionally providing extensions of results for learning SIMs/GLMs under structured distributional families \cite{WZDD2023,WZDD24b,ZWDD2024,zarifis2025robustly,wang2026robustly}, and new results including: multi-instance learning/max pooling~\cite{andrews2002SVMforMIL,SolveMultiInstanceProblem1997,carbonneau2018multiple,gupta25learningMIL,levien2026based} under the Gaussian marginal, learning Gaussian SIMs and phase retrieval under the nonsmooth $\ell_1$ loss, learning SIMs under logconcave distributions with a variety of vector fields, and learning GLMs under near-isotropic distributions for $L_2^2$ and $\alpha$-Huber loss.

Observe that to fully specify \eqref{problem:general-regression-formulation}, we need to specify: (P1) the covariate distribution $\D$, (P2) the value of $m$, and (P3) the loss function $\ell.$  
We begin this section by providing the necessary background and definitions used in the rest of the section. We then prove the RGA property for various problem classes defined by specifying parameters (P1)-(P3) of \eqref{problem:general-regression-formulation}. We group the problem classes by the assumptions on the covariate distribution $\D$, as stated in (P1). \Cref{sec:ex:standard Gaussian} and \Cref{sec:ex:general Gaussian} provide results under Gaussian distributions, while \Cref{sec:ex:general distributions} provides results under more general (structured) distributions.

\subsection{Definitions and Technical Considerations}\label{sec:defs-for-examples}

In this subsection, we introduce example problems and provide necessary definitions pertaining to loss/penalty functions discussed in the rest of the section.

\paragraph{Example Problems.}

We first consider a planted Gaussian specialization of multiple-instance halfspace learning that is captured by \eqref{problem:general-regression-formulation} for $m \geq 1$.
The learner is given a set of bags where each bag contains $m$ independent data vectors, with each bag  labeled positive  if it intersects an unknown target halfspace. 
\begin{problem}[Multi-Instance Learning/Max-pooling (MIL)]\label{prob:mil}
    Let $\mZ\in\R^{m\times d}$ be a matrix drawn from a distribution $\D$, $\mZ^\top = [\vz_1,\dots,\vz_m]$. Let $t\in\R$ be a given parameter
    \footnote{Throughout, we assume that $t$ is known. Under suitable anti-concentration assumptions, unknown $t$ can be handled using a one-dimensional grid search; see  \cite{DDKWZ2023}. }. 
    For a bag of vectors $\{\vz_i,i\in[m]\}$, the label $y$ is defined by $y = \sign(\max_{i\in[m]}\vx^*\cdot\vz_i + t)$. 
    The MIL asks to find a vector $\vx\in\spd$ that minimizes the objective $f_h(\vx) \eqdef \E_{\mZ\sim\D}[h(-y\max_{i\in [m]}(\vx\cdot\vz_i + t))]$,
    where $h:\R\to\R$ is a penalty function.
\end{problem}

Namely, the label of the bag is positive if it contains at least one positive element, and the label is negative if there are no positive elements in the bag.
The existential bag-labeling rule is classical in MIL, while hard-max bag-level hinge penalty and logistic penalty underlie MI-SVM and probabilistic or neural max-pooling MIL; see e.g., \cite{andrews2002SVMforMIL}.
MIL problems have numerous applications in areas such as  drug activity prediction and image and video classification \cite{SolveMultiInstanceProblem1997,carbonneau2018multiple}. 
In this work, we analyze the resulting population objective over the unit sphere.
Prior works that are most closely related to our formulation are 
\cite{gupta25learningMIL}, which also considers planted linear regression with i.i.d.\ Gaussian bags, but their bag label is generated by a uniformly selected primary instance; and Levien’s recent query–value model \cite{levien2026based}, which contains Gaussian max-linear regression as an aligned special case, but does not study our thresholded margin risk or its spherical geometry.

\Cref{prob:mil} is a generalization of the classical problem of learning halfspaces/binary linear classification. When the number of elements in the bag $m$ is equal to 1, the label $y =\sign(\vx^*\cdot\vz + t)$ is exactly the halfspace separating positive and negative samples. Formally, we have:
\begin{problem}[Learning General Halfspaces]\label{prob:halfspace}
    Let $\D$ be a distribution on $\R^d$. Given sample access to $(\vz,y)$ where $\vz\sim\D$ and $y = \sign(\vx^*\cdot\vz + t)$ for some given $t$, let $h:\R\to\R$ be a penalty function. Find a parameter vector $\vx\in\spd$ that minimizes $f_h(\vx) = \Evz[h(-y(\vx\cdot\vz + t))]$.
\end{problem}

When $m = 1$, \eqref{problem:general-regression-formulation} 
is also an abstraction of the widely-studied regression problems corresponding to generalized linear models and single index models~\cite{kakade2011efficient,mei2018landscape,diakonikolas2020approximation,yehudai2020learning,BenArous2021OnlineSGD,diakonikolas2022learning,WZDD2023,GGKS23,damian2023smoothing,WZDD24b,ZWDD2024,hu2025omnipredicting,zarifis2025robustly,wang2026robustly}, defined below. \begin{problem}[Generalized Linear Models and Single Index Models]\label{prob:glm+sim}
    Let $\mathcal{F}$ be a class of real non-linear univariate functions $\sigma:\R\to\R$, referred to as ``link'' or ``activation'' functions. Let $\D$ be a (often unknown, sometimes restricted to standard Gaussian) distribution on $\R^d$ that belongs to some distribution class $\mathfrak{D}$. Let $h:\R\to\R$ be a given penalty function. \begin{enumerate}
        \item Generalized Linear Model (GLM): Given a function $\sigma\in\mathcal{F}$ and sample access to data $(\vz,y)$, where $\vz\sim\D$ and $y = \sigma(\vx^*\cdot\vz)$ for some $\vx^*\in\spd$, find a parameter vector $\vx\in\spd$ that minimizes $f(\vx) = \Ez[h(\sigma(\vx\cdot\vz) - \sigma(\vx^*\cdot\vz))]$.
        \item Single Index Model (SIM): Given sample access to data $(\vz,y)$, where $\vz\sim\D$ and $y = \sigma^*(\vx^*\cdot\vz)$ for some $\vx^*\in\spd$ and $\sigma^*\in\mathcal{F}$, find a parameter vector $\vx\in\spd$ and a candidate activation $\sigma\in\mathcal{F}$ that minimize $f(\vx) = \inf_{\sigma\in \mathcal{F}}\Ez[h(\sigma(\vx\cdot\vz) - \sigma^*(\vx^*\cdot\vz))]$.
    \end{enumerate}
\end{problem}
Observe that in GLM problems, the activation function is explicitly given as part of the problem, and so it suffices to find the parameter vector $\vx$ to have a fully specified model (i.e., we are in the setting of \emph{parametric regression}). In SIM problems, the activation function is not known, so the problem involves finding both the parameter vector and the unknown activation (i.e., we are in the setting of \emph{semi-parametric regression}).

Another related optimization problem on the sphere concerns sparse recovery, where the goal is to learn a sparse code of the features that represents the label. We consider the complete orthogonal dictionary learning problem under the sparse Gaussian-Bernoulli distribution, following a long line of related work \cite{sun2015complete,bai2018subgradient,gilboa2019efficient,Li2019WeaklyCO,qu2019analysis,Zhu2019LinearConvergenceGrassman}.
\begin{problem}[Complete Orthogonal Dictionary Learning (COD) Under Sparse Gaussian-Bernoulli Distribution]\label{prob:dictionary}
    Let $\mA = [\va_1,\dots,\va_d]\in\R^{d\times d}$ be an orthonormal dictionary, where each column $\va_i$ represents a letter. Given words $\vy|\vz$ generated by $\vy = \mA\vz$, where $\vz$ follows the $\eta$-Gaussian-Bernoulli distribution: $\vz = \vp\odot\vb$, $\vp\sim\calN(0,\mI_d)$ and $\vb = (b_1,\dots,b_d)$, $b_i \sim \mathrm{Bernoulli}(\eta)$, $\eta\in(0,1)$, COD asks to learn one letter of the dictionary $\mA$ via optimizing the loss $f_{\mathsf{dic}}(\vx) = \Evz[|\vx\cdot\vy(\vz)|]$ on the sphere $\vx\in\spd$. 
    
    Since $\vy= \mA\vz$ and $\mA$ is an orthonormal matrix, we have $\vx\cdot\vy = (\mA^\top\vx)\cdot\vz = \Tilde{\vx}\cdot\vz$, where $\Tilde{\vx}$ is also on the unit sphere. Therefore, optimizing $f_{\mathsf{dic}}(\vx)$ is equivalent to minimizing $f_{\mathsf{dic}}(\tilde{\vx}) = \Evz[|\tilde{\vx}\cdot\vz|]$ (see \cite{bai2018subgradient}). We use this transformed formulation and focus on optimizing $f_{\mathsf{dic}}({\vx}) = \Evz[|{\vx}\cdot\vz|]$.
\end{problem}

{
In addition to dictionary learning, we consider another sparse regression problem:
\begin{problem}[Spiked Sparse Covariance]\label{prob:spiked-sparse-pca}
Let $\mSigma = \mI + \beta\vx^*\vx^{*\top}$, where $\beta>0$ and $\vx^*\in\spd$. In particular, $\vx^*$ is a sparse vector with equal magnitudes; i.e., let $\cI\subset[d]$ be a set of indices with cardinality $|\cI| = s$, $\vx^* = (x^*_1,\dots,x^*_d)$ satisfies $x^*_i \in \{-1/\sqrt{s},1/\sqrt{s}\}$ for $i\in\cI$ and $x^*_i = 0$ otherwise. Minimize the lasso-regularized Rayleigh quotient to recover the planted sparse vector $\vx^*$: $f_\lambda(\vx) = -\vx^\top\mSigma\vx + \lambda\|\vx\|_1$, $\lambda\geq 0$.
\end{problem}

This formulation of the spiked sparse covariance problem is deterministic and does not fall into the model of \eqref{problem:general-regression-formulation} at a first glance. In \Cref{claim:convert-spiked-sparse-pca-to-regression}, we show that \Cref{prob:spiked-sparse-pca} can be transformed into a regression problem (in the form of \eqref{problem:general-regression-formulation}) whose data follow Gaussian distributions with general covariances.
Using the regression formulation of \Cref{prob:spiked-sparse-pca} derived in \Cref{claim:convert-spiked-sparse-pca-to-regression}, we apply our sufficient conditions (\Cref{assum:ell-Gaussian-sigma} and \Cref{lem:regression-loss-satisfy-sharpness-Gaussian-sigma}) and prove that locally around the planted sparse vector $\vx^*$, any vector field $\vg(\vx)\in\partial^{C,R}f_\lambda(\vx)$ is $(\mu,2)$-RGA where $\mu \gtrsim (\beta \cos(\bar{\theta})- \lambda\sqrt{s})/(\beta + \lambda\sqrt{d})$ ($\bar{\theta}$ denotes the upper bound on the distance from $\vx$ to the target $\vx^*$, see \Cref{lem:spiked-pca-order-2-rga} for details). 
This higher-order characterization of the gradient field is tight---as we prove in \Cref{lem:spiked-sparse-pca-cannot-have-lower-order}, RGA cannot hold with any order $r<2$ near $\vx^*$.

Sparse PCA has been studied extensively in the past decades; see, for example~\cite{jolliffe2003modified,zou2006sparse,zou2018selective,chen2020proximal}. 
Our result is mostly orthogonal to prior work. 
Most closely related to our results is~\cite {chen2020proximal}, which studied general non-smooth composite Riemannian optimization problems on the Stiefel manifolds (where the lasso-regularized Rayleigh quotient serves as a special example). They proposed a proximal gradient-type algorithm, called ManPG, that converges to an $\eps$-stationary point in $O(1/\eps^2)$ steps globally.
The stationary point to which their algorithm converges is not guaranteed to be the planted sparse vector in the problem formulation; note that, by comparison, our result guarantees iterate convergence (i.e., convergence to solutions).   
Their iteration complexity matches our result for RGD when order-2 RGA is satisfied (see \Cref{lem:descent-rsi}); however, ManPG requires solving  subproblems with second-order methods in each iteration. 
We further point out that, unlike prior work, our result characterizes the benign landscape of the objective. 
Namely, for the spiked-sparse covariance model in \Cref{prob:spiked-sparse-pca}, we prove that all Clarke subgradient fields of the non-smooth objective are structured and benign (in the RGA sense) near the target planted sparse vector.
Furthermore, our characterization of this benign structure is tight, as shown in \Cref{lem:spiked-sparse-pca-cannot-have-lower-order}.
The identification of this benign landscape was not previously known.
}

{
Our last problem is the eigenvector/PCA problem of a real symmetric matrix $\mA$, arguably the most basic optimization problem on the sphere.
\begin{problem}[Eigenvector Problems/Optimizing Rayleigh Quotient]\label{prob:eigenvector-problem}
    Let $\mA$ be a real symmetric matrix in $\R^{d\times d}$ with eigenvalues $\lambda_1\geq \lambda_2\geq \dots\geq \lambda_{r-1}>\lambda_r = \dots = \lambda_d$, and corresponding eigenvectors $\vq_1,\dots,\vq_r,\dots,\vq_d$. Define the Rayleigh quotient function $f_{\mathsf{RQ}}(\vx) = \vx^\top\mA\vx$. The goal is to find one vector in the eigenspace of $\mathrm{Span}(\vq_r,\dots,\vq_d)$ by optimizing $f(\vx)$ on the unit sphere.
\end{problem}
Though \Cref{prob:eigenvector-problem} is defined deterministically, similar to \Cref{prob:spiked-sparse-pca}, we convert the problem into a regression problem in the form of \eqref{problem:general-regression-formulation}, via which we show that the Rayleigh Quotient function is RGA, recovering the results from~\cite{zhang2016riemannian,ADVA21,alimisis2022geodesic};  see details in \Cref{sec:eigenvector-problem-and-optimizing-rayleigh-quotient}.
}

\subsection{Example Regression Problems under the Standard Gaussian Distribution}\label{sec:ex:standard Gaussian}

We begin with examples under the standard Gaussian distribution. The first set of examples is for multi-instance learning, where the number of rows in the random Gaussian matrix $\mZ$ is larger than one. These examples illustrate the invariance of the RGA parameters under increasing $m$ and that this property can hold even for functions that are not smooth. 
The rest of the examples are for the case $m = 1.$

\subsubsection{Multi-Instance Learning/Max Pooling under the Gaussian Marginal}\label{sec:mil}

Our first example is MIL (\Cref{prob:mil}), with the marginal distribution of $\vz$ being the standard Gaussian.

\begin{theorem}[MIL under Gaussian is RGA]\label{lem:MIL-is-RGA}
    Let $\mZ\in\R^{m\times d}\sim\D$, $\mZ^\top = [\vz_1,\dots,\vz_m]$ where rows $\vz_1, \dots, \vz_m \stackrel{i.i.d.}{\sim}\calN(0,\mI)$. Let $y = \sign(\max_i\vx^*\cdot\vz_i + t)$, where $t\in\R$ is a given parameter. Let $\cH_+$ be the class of functions $h:\R\to\R$ that are locally Lipschitz and such that $h'(z) + h'(-z)>0$ for a.e.\ $z>0$ and for every $a \in \R,$ $\E_{z \sim \cN(0, 1)}[|h'(a + z)| + |h'(-a - z)|] < \infty$. Then for any $h\in\cH_+$, the loss $f_h(\vx) = \E_{\mZ\sim\D}[h(-y(\max_{i \in [m]}\vx\cdot\vz_i + t))]$ is $1$-RGA for every $\theta(\vx,\vx^*)\in(0,\pi/2)$.
\end{theorem}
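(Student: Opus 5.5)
The plan is to cast \Cref{prob:mil} as an instance of \eqref{problem:general-regression-formulation} with $m$ Gaussian rows and then invoke \Cref{lem:regression-loss-satisfy-sharpness-Gaussian}. Set
\[
\ell(\vu,\vv)\eqdef h\bigl(-\sign(\max_i v_i+t)\,(\max_i u_i+t)\bigr),\qquad \vu=\mZ\vx,\ \vv=\mZ\vx^*.
\]
The loss has no explicit dependence on $(\vx,\vx^*)$, so $\partial_3\ell=0$. Hence the Riemannian correlation field $\vg_\ell$ of \Cref{def:grad-field} coincides with the Riemannian (sub)gradient of $f_h$.

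To cover every Clarke selection, I would first use \Cref{fact:Aumann-expectation}(1), with the integrable envelope supplied by $\E[|h'(a+z)|+|h'(-a-z)|]<\infty$, to place $\partial^C f_h(\vx)$ inside the Aumann expectation of $\partial^C_\vx\ell$. Almost surely the maximizer $i^*(\vu)=\argmax_i u_i$ is unique. At such points \Cref{fact:Aumann-expectation}(2) gives
\[
\partial_1\ell(\vu,\vv)=-y\,h'\bigl(-y\,s\bigr)\,\ve_{i^*(\vu)},\qquad y=\sign(\max_i v_i+t),\quad s=\max_i u_i+t,
\]
with an arbitrary measurable selection of $h'$. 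Every selection therefore has exactly this form. This means the trace computation below does not depend on which selection is used.

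Next I compute the joint curvature. The quantity $\partial_1\ell$ depends on $\vv$ only through the label $y$. Write $\varphi(y)=-y\,h'(-ys)$; its jump across the label switch is $\varphi(1)-\varphi(-1)=-(h'(-s)+h'(s))$. By \Cref{fact:distributional-derivatives},
\[
D_{v_j}\sign(\max_i v_i+t)=2\,\delta(\max_i v_i+t)\,\1\{j=i^*(\vv)\}.
\]
Combining these,
\[
(D_2\partial_1\ell)_{ii}=-\bigl(h'(s)+h'(-s)\bigr)\,\delta(\max_k v_k+t)\,\1\{i=i^*(\vu)=i^*(\vv)\}.
\]
Only the jump of the label contributes, so no differentiability of $h'$ is needed. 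Summing over $i$ gives
\[
\E[\tr D_2\partial_1\ell]=-\E\Bigl[\bigl(h'(s)+h'(-s)\bigr)\,\1\{i^*(\mZ\vx)=i^*(\mZ\vx^*)\}\,\delta(\max_k \vz_k\cdot\vx^*+t)\Bigr].
\]

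It remains to show this is strictly negative. The condition $h'(z)+h'(-z)>0$ for a.e.\ $z>0$ is symmetric in $z$, so it holds for a.e.\ $z\neq0$. For $\theta\in(0,\pi/2)$, conditional on $\max_k\vz_k\cdot\vx^*=-t$ (an event at which the density of this maximum is positive), the variable $s$ still has a continuous conditional distribution. The event that the two argmaxes agree has positive conditional probability, because correlation $\cos\theta>0$ and the agreeing configuration is an open set. The delta pairing is therefore a positive quantity times $-1$, so \Cref{assum:ell-Gaussian} holds.

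Then \Cref{lem:regression-loss-satisfy-sharpness-Gaussian} yields $\vg_\ell\cdot\vx^*=-\|\vg_\ell\|_2\sin\theta$. Its proof also shows $\|\vg_\ell\|_2=-\sin\theta\,\E[\tr D_2\partial_1\ell]>0$, so $\vg_\ell\neq\vzero$, which is exactly $1$-RGA.

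The main obstacle I expect is the rigorous justification of the distributional derivative and of Stein's lemma (\Cref{fact:stein}) for the discontinuous, only locally Lipschitz integrand. Concretely, this requires checking the conditional, one-coordinate application in the proof of \Cref{lem:regression-loss-satisfy-sharpness-Gaussian}: with $\mZ\vx$ and the other coordinates $\vz_j\cdot\vv$ fixed, the map $\vz_i\cdot\vv\mapsto\partial_1\ell$ is piecewise constant with finitely many jumps. I would verify Stein directly by integrating by parts on each piece, using the stated integrability of $h'$ to control the boundary terms and to justify the Fubini exchange.
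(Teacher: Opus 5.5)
Your proposal is correct and follows the paper's proof essentially step for step. It uses the same loss $\ell_h$ and the same Aumann-expectation argument showing every Clarke selection equals $\vg_h$, arrives at the same diagonal joint curvature $-(h'(s)+h'(-s))\,\delta(\cdot)\,\1\{i=i^*(\mZ\vx)=i^*(\mZ\vx^*)\}$, and then applies \Cref{lem:regression-loss-satisfy-sharpness-Gaussian}. The only difference is cosmetic: the paper makes strict negativity explicit by conditioning on row $i$, using row independence to get the factor $\Phi^{m-1}(-t\cos\theta+z\sin\theta,t)\,p_\calN(-t)$ and integrating out the delta, whereas you argue positivity qualitatively from positive Gaussian density on an open piece of the surface (where what you actually use is nondegeneracy, $\sin\theta>0$, not $\cos\theta>0$).
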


\begin{proof}
Let us denote $\ell_h(\vu,\vv) = h(-\sign(\max_{i\in[m]}v_i + t)(\max_{i \in [m]} u_i + t))$ for any penalty function $h\in\cH_+$, $\vv,\vu\in\R^m$.
Our theorem applies to the Riemannian correlation vector field $\vg_{h}(\vx,\vx^*)$ (\Cref{def:grad-field}); therefore, the first task is to prove that $\nablar f(\vx) = \vg_h(\vx,\vx^*)$.
By the definition of $\ell_h$, we have
\begin{align*}
    \partial^{C}_1\ell_h(\vu,\vv) \subseteq \partial^C h(-y(\max_{i\in[m]} u_i + t))(-y\mathrm{Conv}\{\ve_i: i \in\argmax_{j\in [m]} u_j\}). 
\end{align*}
Fix a measurable selection of the maximizing index (e.g., the smallest maximizing index) and a Borel measurable selection from $\partial^C h.$ This gives a measurable sample subgradient. Moreover, denoting the maximizing index of $u_i = \vx\cdot \vz_i$ by $i^*$, its norm is bounded by $\max_{i \in [m]}\norm{\vz_i}_2 (|h'(\vx\cdot \vz_{i^*} + t)| + |h'(-(\vx\cdot \vz_{i^*} + t))|)$. The expectation of this quantity is finite by Gaussian integrability assumption on $h'$, and the same bound implies that $f_h$ is locally Lipcshitz. Hence, \Cref{fact:Aumann-expectation} applies, and we have 
\begin{align*}
    \partial^{C,R}f(\vx) &\subseteq \E_{\mZ\sim\D}[\partial^{C,R}_\vx\ell(\mZ\vx,\mZ\vx^*; \vx, \vx^*)]\\
    &\subseteq\E_{\mZ\sim\D}[\partial^C h(-y(\max_{i\in[m]}\vx\cdot\vz_i + t))(-y\mP_{\perp\vx}\mZ^\top\mathrm{Conv}\{\ve_i: i \in\argmax_{j\in [m]} \vx\cdot\vz_j\})],
\end{align*}
where the expectation is the Aumann expectation.
Furthermore, since $h$ is locally Lipschitz, it is differentiable everywhere except on a set with Gaussian measure zero. 
In addition, since $\vx^*\cdot\vz_i,\vx\cdot\vz_i$ are standard Gaussian random variables for all $i\in[m]$ with continuous density functions, the measure of the set of  arguments $-y\max_i \vx\cdot\vz_i + t$ at which $h$ is non-differentiable is zero, and so is the measure of non-singleton sets $\argmax_{i\in[m]}\vx\cdot\vz_i$. 
Therefore, denoting the max-index indicator $i^*(\vu) = \argmax_{j\in[m]} u_j$ (the max-index set is a singleton set almost surely), the Aumann expectation of $\partial^{C,R}_\vx\ell(\mZ\vx,\mZ\vx^*)$ is simply:
\begin{align*}
    \E_{\mZ\sim\D}[\partial_\vx^{C,R}\ell(\mZ\vx,\mZ\vx^*; \vx, \vx^*)] &= -\E_{\mZ\sim\D}[\mP_{\perp\vx}\mZ^\top y\ve_{i^*(\mZ\vx)} h'(-y(\ve_{i^*(\mZ\vx)}^\top\mZ\vx + t))]\\
    &= \E_{\mZ\sim\D}[\mP_{\perp\vx}\mZ^\top\partial_1\ell_h(\mZ\vx,\mZ\vx^*; \vx, \vx^*)] = \vg_h(\vx,\vx^*).
\end{align*}
Since the Aumann expectation is a singleton set, we  conclude that $\nablar f(\vx) = \vg_h(\vx,\vx^*)$.

    According to \Cref{lem:regression-loss-satisfy-sharpness-Gaussian}, we only need to prove that $\E_{\mZ\sim\D}[\tr(D_2\partial_1\ell(\mZ\vx,\mZ\vx^*; \vx, \vx^*))]<0$. Note that by the definition of the max-index indicator $i^*(\vu)$, the label equals $y = \sign(\ve_{i^*(\mZ\vx^*)}^\top\mZ\vx^* + t) = \sign(\vx^*\cdot\vz_{i^*(\mZ\vx^*)} + t)$ almost surely, and hence the partial derivative $\partial_{(\mZ\vx)_i}\ell(\mZ\vx,\mZ\vx^*; \vx, \vx^*)$ equals (almost surely):
    \begin{align*}
       &\quad \frac{\partial}{\partial{u_i}}\ell_h(\vu,\vv; \vx, \vx^*)|_{\vu = \mZ\vx,\vv = \mZ\vx^*}\\
       &= -\sign(\ve_{i^*(\vv)}^\top\vv + t) \1\{i = i^*(\vu)\}h'(-\sign(\ve_{i^*(\vv)}^\top\vv + t)(\ve_{i^*(\vu)}^\top\vu + t))|_{\vu = \mZ\vx,\vv = \mZ\vx^*}\\
        &=\begin{cases}
            -\1\{i = i^*(\vu)\}h'(-(\ve_{i^*(\vu)}^\top\vu + t))|_{\vu = \mZ\vx} & \vx^*\cdot\vz_{i^*(\mZ\vx^*)} + t\geq 0\\
            \1\{i = i^*(\vu)\}h'(\ve_{i^*(\vu)}^\top\vu + t)|_{\vu = \mZ\vx} & \vx^*\cdot\vz_{i^*(\mZ\vx^*)} + t< 0.
        \end{cases}
    \end{align*}
    Therefore, when differentiating with respect to $v_i$, the joint differential is zero except at the decision boundary $\vx^*\cdot\vz_{i^*(\mZ\vx^*)} + t = 0$, at which place the function $\partial_{u_i}\ell_h(\mZ\vx,\mZ\vx^*; \vx, \vx^*)$ jumps from $\1\{i = i^*(\mZ\vx)\}h'(\vx\cdot\vz_i + t)$ to $-\1\{i = i^*(\mZ\vx)\}h'(-(\vx\cdot\vz_i + t))$. Therefore, by the definition of distributional derivatives and \Cref{fact:distributional-derivatives}, we have:
    \begin{align*}
        &\frac{D}{D {v_i}}\frac{\partial}{\partial{u_i}}\ell_h(\vu,\vv; \vx, \vx^*)|_{\vu = \mZ\vx,\vv = \mZ\vx^*} \\ 
        =& -\1\{i = i^*(\mZ\vx) = i^*(\mZ\vx^*)\}(h'(\vx\cdot\vz_i + t) + h'(-(\vx\cdot\vz_i + t)))\delta(\vx^*\cdot\vz_i + t),
    \end{align*}
    where $\delta$ denotes the Dirac delta function. 
    Thus, denoting $z_{i, \vu}\eqdef \vu\cdot\vz_i$, $i\in[m]$, the expectation of this joint differential is:
    \begin{align*}
        &\quad \E_{\mZ\sim\D}[(D_2\partial_1\ell(\mZ\vx,\mZ\vx^*; \vx, \vx^*))_{ii}]\\
        &=-\E_{\mZ\sim\D}[(h'(z_{i, \vx} + t) + h'(-(z_{i, \vx} + t))\delta(z_{i, \vx^*} + t)\1\{z_{j, \vx}\leq z_{i, \vx}, z_{j, \vx^*}\leq -t,\forall j\neq i\}]\\
        &=-\E_{\mZ\sim\D}\Big[(h'(z_{i, \vx} + t) + h'(-(z_{i, \vx} + t))\delta(z_{i, \vx^*} + t)\E\big[\1\{z_{j, \vx}\leq z_{i, \vx}, z_{j, \vx^*}\leq -t,\forall j\neq i\} | z_{i, \vx}, z_{i, \vx^*}\big]\Big].
    \end{align*}
    Noting that $\vz_{i,\vu},\, \vz_{j, \vu}$ are independent standard Gaussians for any fixed unit vector $\vu$, $i\neq j$, and $\E[\vz_{i, \vx}\vz_{i, \vx^*}] = \cos\theta\in(0,1)$, we have 
    \begin{align*}
        \Phi(a,t) &= \pr[z_{j, \vx}\leq a, z_{j, \vx^*}\leq -t] \\
        &= \int_{-\infty}^a\int_{-\infty}^{-t} \frac{1}{2\pi\sin\theta}\exp(-(z_1^2 - 2z_1z_2\cos\theta  + z_2^2)/(2\sin^2\theta))\diff{z_1}\diff{z_2}>0, \;\forall a, t\in\R.
    \end{align*}
    Then, letting $z\sim\calN(0,1)$ be an independent Gaussian, we have that $z_{i, \vx} = z_{i, \vx^*} \cos\theta  + z \sin\theta$ in distribution, and it holds:
    \begin{align*}
        &\quad \E_{\mZ\sim\D}[(D_2\partial_1\ell(\mZ\vx,\mZ\vx^*; \vx, \vx^*))_{ii}]\\
        &=-\iint_{z_{i, \vx},z_{i, \vx^*}}(h'(z_{i, \vx^*}\cos\theta  + z \sin\theta  + t) + h'(-(z_{i, \vx^*} \cos\theta  + z \sin\theta  + t))\delta(z_{i, \vx^*} + t)\times\\
        &\quad\quad\times\Phi^{m-1}(z_{i, \vx^*}\cos\theta  + z\sin\theta ,t)p_\calN(z_{i, \vx^*})p_\calN(z)\diff{z_{i, \vx^*}}\diff{z}\\
        &=-\int_z (h'((1-\cos\theta) t + z \sin\theta ) + h'(-((1-\cos\theta) t + z \sin\theta ))\Phi^{m-1}(-t\cos\theta  + z\sin\theta ,t)p_\calN(-t)p_\calN(z)\diff{z}.
    \end{align*}
    Since by our assumption $h'(z) + h'(-z)>0$ a.e., and since $\Phi^{m-1}(-t\cos\theta  + z\sin\theta, t)>0$, $p_\calN(-t)>0$ and $p_\calN(z)>0$, we conclude that $\E_{\mZ\sim\D}[(D_2\partial_1\ell(\mZ\vx,\mZ\vx^*; \vx, \vx^*))_{ii}]<0$ and therefore \Cref{assum:ell-Gaussian} is satisfied. Using \Cref{lem:regression-loss-satisfy-sharpness-Gaussian} we immediately conclude that $f_h(\vx)$ is $1$-RGA.
\end{proof}

\Cref{lem:MIL-is-RGA} proves that the MIL loss $f_h(\vx) = \E_{\mZ\sim\D}[h(-y(\max_{i\in[m]}\vx\cdot\vz_i + t))]$ is $1$-RGA for any $h\in\cH_+$ and any $\vx$ on the hemisphere $\theta(\vx,\vx^*)\in(0,\pi/2)$. The function class $\cH_+$ includes a broad range of penalty functions $h$; for example, ReLU, LeakyReLU, and the logistic loss,  which are frequently used in SVM applications \cite{andrews2002SVMforMIL,carbonneau2018multiple}, are all contained in $\cH_+$.

\subsubsection{Learning Gaussian Halfspaces}

When $m=1$, the MIL problem reduces to the classical  problem of learning halfspaces (\Cref{prob:halfspace}). Therefore, \Cref{lem:MIL-is-RGA} immediately characterizes the benign landscape of learning general Gaussian halfspaces under a variety of loss/penalty functions, as discussed above for \Cref{lem:MIL-is-RGA}. 

\begin{corollary}
    Let $\vz\sim\D$ where $\D = \calN(0,\mI_d)$ is the standard Gaussian distribution. Let $y\mid \vz = \sign(\vx^*\cdot\vz + t)$, where $t\in\R$ is a given real number.
Let $\cH_+$ be the class of functions $h:\R\to\R$ that are locally Lipschitz and such that $h'(z) + h'(-z)>0$ for a.e.\ $z>0$ and for every $a \in \R,$ $\E_{z \sim \cN(0, 1)}[|h(a + z)| + |h'(a + z)|] < \infty$. Then, for any $h\in\cH_+$, the loss $f_h(\vx) = \Evz[h(-y(\vx\cdot\vz + t))]$ is $1$-RGA at any $\vx$ such that $\theta(\vx,\vx^*)\in(0,\pi/2)$.
\end{corollary}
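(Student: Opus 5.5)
The plan is to obtain the corollary as the $m=1$ specialization of \Cref{lem:MIL-is-RGA}. With $m=1$, $\mZ = \vz^\top$ for a single $\vz\sim\calN(0,\mI_d)$, and $\max_{i\in[1]}\vx\cdot\vz_i = \vx\cdot\vz$. The bag label $y=\sign(\max_i \vx^*\cdot\vz_i + t)$ becomes exactly $\sign(\vx^*\cdot\vz+t)$, so $f_h$ in the corollary coincides with the MIL loss. It therefore suffices to check that every $h$ in the corollary's class $\cH_+$ lies in the class required by \Cref{lem:MIL-is-RGA}.

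The local Lipschitz property and the curvature condition $h'(z)+h'(-z)>0$ for a.e.\ $z>0$ are identical in both statements. The integrability hypotheses differ. \Cref{lem:MIL-is-RGA} asks for $\E_{z\sim\calN}[|h'(a+z)|+|h'(-a-z)|]<\infty$ for every $a\in\R$, whereas the corollary assumes $\E[|h(a+z)|+|h'(a+z)|]<\infty$ for every $a$. Since $-a-z$ has the same law as $-a+z$, we get $\E|h'(-a-z)| = \E|h'(-a+z)|$. This is finite by the corollary's hypothesis applied at the shift $-a$. Hence the hypothesis of \Cref{lem:MIL-is-RGA} holds.

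The extra integrability of $|h|$ ensures that $f_h$ is finite, which is implicit in calling it a loss. The main point to double-check is this symmetry of the Gaussian under $z\mapsto -z$, and that no other step of the MIL proof used $m\ge2$. In the MIL proof with $m=1$, the factor $\Phi^{m-1}$ equals $1$, and the formula becomes
\[
\E[D_2\partial_1\ell] = -p_\calN(-t)\,\E_z\big[h'((1-\cos\theta)t+z\sin\theta)+h'(-(1-\cos\theta)t-z\sin\theta)\big].
\]
This is strictly negative by the positivity assumption. Then \Cref{lem:regression-loss-satisfy-sharpness-Gaussian} gives $1$-RGA for every $\vx$ with $\theta(\vx,\vx^*)\in(0,\pi/2)$, as claimed.
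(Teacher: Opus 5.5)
Your proposal is correct and is the same argument as the paper, which simply sets $m=1$ in \Cref{lem:MIL-is-RGA}. You also show that the corollary's integrability hypothesis implies the theorem's, using the symmetry $z\mapsto -z$ of the standard Gaussian to get $\E|h'(-a-z)|=\E|h'(-a+z)|<\infty$; this makes explicit a class-inclusion step the paper leaves implicit.
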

\begin{proof}
    Setting $m=1$ in \Cref{lem:MIL-is-RGA} yields the result.
\end{proof}

\subsubsection{GLMs and SIMs with General Activations}\label{subsec:glm-sim-Gaussian}

Our next example is the SIM regression problem defined in \Cref{prob:glm+sim}, with $\D$ being the standard Gaussian distribution. We consider two most commonly used loss/penalty functions: the $L_2^2$ loss and the $L_1$ loss, applied to broad link function classes. Since SIM is a more general model of regression problems, the results also apply to GLMs, by specialization.
In addition, we also consider the phase retrieval problems with $L_1$ loss/penalty, which is formulated as a specific SIM example.  

\begin{theorem}\label{thm:Gaussian-glm-sim-RGA}
    Let $\vz\sim\D$ where $\D = \calN(0,\mI_d)$ is standard Gaussian. 
    Let $\cF$ be the class of all non-constant square-integrable functions under the Gaussian measure.
In addition, let $\cF_+\subset\cF$ be the subset of $\cF$ that contains all square-integrable (under the Gaussian measure), strictly increasing, and locally Lipschitz functions. Fix any $\vx^*\in\spd$.
    \begin{enumerate}
        \item Consider the $L_2^2$ loss of Gaussian SIMs on $\cF$: $f_\Ltwo(\vx)\eqdef \min_{\sigma\in \cF}\Evz[(\sigma(\vx\cdot\vz) - \sigma^*(\vx^*\cdot\vz))^2]$ where $\sigma^*$ is any function in $\cF$. Then $f_\Ltwo(\vx)$ is 1-RGA for any $\theta(\vx,\vx^*)\in(0,\pi/2)$.
        \item Consider the $L_1$ loss of Gaussian SIMs on $\cF_+$: $f_\Lone(\vx)\eqdef \min_{\sigma\in \cF_+}\Evz[|\sigma(\vx\cdot\vz) - \sigma^*(\vx^*\cdot\vz)|]$ where $\sigma^*$ is any function in $\cF_+$. Then $f_\Lone(\vx)$ is $1$-RGA for any $\theta(\vx,\vx^*)\in(0,\pi/2)$.
        \item Consider the $L_1$ loss of Gaussian Phase Retrieval: $f_{\Lone,\mathsf{PR}}(\vx) = \min_{r\in\cF_+} \Evz[|r(|\vx\cdot\vz|) - r^*(|\vx^*\cdot\vz|)|]$ where $r^*$ is any function in $\cF_+$. Then $f_{\Lone,\mathsf{PR}}(\vx)$ is $1$-RGA for any $\theta(\vx,\vx^*)\in(0,\pi/2)$.
    \end{enumerate}    
\end{theorem}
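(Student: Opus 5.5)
All three parts will be reduced to \Cref{lem:regression-loss-satisfy-sharpness-Gaussian} with $m=1$. For each, I write the objective as $\Evz[\ell(\vx\cdot\vz,\vx^*\cdot\vz;\vx,\vx^*)]$, where the first argument is fed to the best-fitting link $\sigma_\vx(\cdot)=\sigma(\cdot;\vx,\vx^*)$. The work then has three parts: (a) identify $\sigma_\vx$ explicitly; (b) show $\nablar f=\vg_\ell$, i.e. the implicit dependence through $\sigma_\vx$ contributes nothing; (c) show $\Evz[D_2\partial_1\ell]<0$.

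For (a), write $u=\vx\cdot\vz$ and $v=\vx^*\cdot\vz=u\cos\theta+w\sin\theta$, with $w$ an independent standard Gaussian. In the $L_2^2$ case, $\sigma_\vx(u)=\E[\sigma^*(v)\mid u]=(T_{\cos\theta}\sigma^*)(u)$, the Ornstein--Uhlenbeck smoothing of $\sigma^*$. It is non-constant because $\sigma^*$ is non-constant and $\cos\theta\neq 0$ (its Hermite coefficients are damped by $\cos^k\theta$, not annihilated), so it lies in $\cF$. In the $L_1$ case, the minimizer over all measurable links is the conditional median, $\mathrm{med}(\sigma^*(u\cos\theta+w\sin\theta))=\sigma^*(u\cos\theta)$, since $\sigma^*$ is strictly increasing and $w$ is symmetric. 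This is strictly increasing and locally Lipschitz, so it lies in $\cF_+$. The phase retrieval case is analogous: conditional on $|u|$, the target is $r^*(|v|)$, and I would take the candidate $r_\vx$ to be the conditional median of $|v|$ given $|u|$, pushed through $r^*$. Checking that $r_\vx$ is strictly increasing in $|u|$ is the monotone-likelihood-ratio step I expect to be most delicate.

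For (b), because $\sigma_\vx$ minimizes over a function class, an envelope/Danskin argument kills the $\partial_3$ term. Concretely, $\Evz[\partial_3\ell]$ is the derivative of the inner risk in the direction of the perturbation of $\sigma$, which vanishes at the optimum by first-order optimality: $\E[\sigma_\vx(u)-\sigma^*(v)\mid u]=0$ for $L_2^2$, and $\E[\sign(\sigma_\vx(u)-\sigma^*(v))\mid u]=0$ for $L_1$. Hence $\vg_\ell=\nablar f$. For the nonsmooth $L_1$ losses I will argue via \Cref{fact:Aumann-expectation}: ties $\sigma_\vx(u)=\sigma^*(v)$ have probability zero, so the Aumann expectation is a singleton.

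For (c), I compute $\partial_1\ell$ and then $D_2\partial_1\ell$ in each case.
\begin{itemize}
\item $L_2^2$: $\partial_1\ell=2\sigma_\vx'(u)(\sigma_\vx(u)-\sigma^*(v))$. Rather than differentiating $\sigma^*$, which may not be differentiable, I use the quantity that the proof of \Cref{lem:regression-loss-satisfy-sharpness-Gaussian} actually needs, namely $\E[\partial_1\ell\, w]/\sin\theta$. This equals $-2\E[\sigma_\vx'(u)\sigma^*(v)w]/\sin\theta$. By Stein in $u$ and the Mehler/Hermite expansion, this is $-2\sum_k k\, c_k^2\cos^{2k-1}\theta$, where $c_k$ are the Hermite coefficients of $\sigma^*$. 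It is strictly negative since $\sigma^*$ is non-constant.
\item $L_1$: $\partial_1\ell=\sigma_\vx'(u)\sign(\sigma_\vx(u)-\sigma^*(v))$. By \Cref{fact:distributional-derivatives}, $D_2\partial_1\ell=-2\sigma_\vx'(u)\sigma^{*\prime}(v)\,\delta(\sigma^*(u\cos\theta)-\sigma^*(v))$, i.e. a mass concentrated on $v=u\cos\theta$, which is $\le 0$. The expectation is strictly negative because $\sigma_\vx'>0$ on a set of positive measure, by strict monotonicity.
\item Phase retrieval: the same computation applies, with the sign jump located at $|v|=$ the threshold, giving a strictly negative expectation.
\end{itemize}

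The main obstacles are the justification steps rather than the algebra: the envelope argument over an infinite-dimensional class with a nonsmooth loss, the applicability of Stein's lemma when $\sigma^*$ is only $L_2$, and the phase-retrieval median identification.
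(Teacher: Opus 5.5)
Parts 1 and 2 of your plan follow the paper's proof. You use the same best-fitting links ($\mathrm{T}_{\cos\theta}\sigma^*$ and the conditional median $\sigma^*(u\cos\theta)$), the same first-order-optimality argument that removes the $\partial_3$ contribution so that $\nablar f=\vg_\ell$, and the same reduction to \Cref{lem:regression-loss-satisfy-sharpness-Gaussian}. Your Hermite value $-2\sum_k k c_k^2\cos^{2k-1}\theta$ coincides with the paper's $-\frac{2}{\cos\theta}\|(\mathrm{T}_{\cos\theta}\sigma^*)'\|_{L_2(\cN)}^2$. The paper obtains that value from the integration-by-parts identity $\E[D\sigma^*(\vx^*\cdot\vz)\mid \vx\cdot\vz=u]=(\mathrm{T}_{\cos\theta}\sigma^*)'(u)/\cos\theta$. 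Computing $\E[\partial_1\ell\,(\vv\cdot\vz)]$ directly, as you do, is a legitimate way to avoid pairing distributional derivatives of a function that is merely in $L_2$.

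The gap is in Part 3, where ``the same computation applies'' is not true. Let $m=m_{\cos\theta}(|u|)>0$ be the conditional-median threshold. Then $\partial_1\ell=r_\vx'(|u|)\sign(u)\sign(m-|v|)$, and $\sign(m-|v|)$ jumps up at $v=-m$ and down at $v=m$, so
\[
D_2\partial_1\ell=-2\,r_\vx'(|u|)\,\sign(u)\bigl(\delta(v-m)-\delta(v+m)\bigr).
\]
Unlike the monotone case, this is not a nonpositive measure. For $u>0$ the mass at $v=-m$ is positive, so $r_\vx'\ge 0$ alone proves nothing.

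The missing step is a density comparison. Since $v\mid u\sim\cN(u\cos\theta,\sin^2\theta)$, the conditional expectation is $-2r_\vx'(|u|)\sign(u)\bigl(p(m)-p(-m)\bigr)$, where $p$ is that conditional density. Because $p(m)/p(-m)=\exp(2mu\cos\theta/\sin^2\theta)$, you get $\sign(u)\bigl(p(m)-p(-m)\bigr)>0$ exactly because $\cos\theta>0$. This is where the hemisphere hypothesis is actually used for phase retrieval, and your sketch never invokes it. For $\cos\theta<0$ the sign flips, consistent with the $\vx\mapsto-\vx$ symmetry of the problem.

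Relatedly, monotone likelihood ratio gives you monotonicity of the median, but not the local Lipschitz continuity of $r_\vx$. You need that continuity for $r_\vx\in\cF_+$, for $r_\vx'$ to make sense in $\partial_1\ell$, and to justify interchanging differentiation and expectation. The paper gets all of this from the implicit function theorem applied to $\Phi\bigl(\frac{m-cu}{\sqrt{1-c^2}}\bigr)-\Phi\bigl(\frac{-m-cu}{\sqrt{1-c^2}}\bigr)=\frac12$. With $a=\frac{m-cu}{\sqrt{1-c^2}}$ and $b=\frac{-m-cu}{\sqrt{1-c^2}}$, this yields $m_c'(u)=c\,\frac{p_\cN(a)-p_\cN(b)}{p_\cN(a)+p_\cN(b)}\in(0,c)$ for $u>0$. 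That is the same Gaussian density comparison that then drives the curvature sign.
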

Before proving the theorem, we state the following fact about Gaussian smoothing, which is repeatedly used in our proof. For completeness, its proof is provided in \Cref{appx:omitted-facts}.
\begin{restatable}[Gaussian Smoothing]{fact}{factgaussiansmoothing}\label{fct:Gaussian-smoothing}
Let $\D = \calN(0,1)$. For $\sigma\in L_2(\D)$ and $\rho\in(0,1)$, define 
\begin{equation}\label{eq:Trho-def}
    \Tr_\rho(\sigma)(u) := \Ez[\sigma(\rho u + \sqrt{1 - \rho^2}z)].
\end{equation} 
Then for any $u\in\R$ and $\rho\in(0,1)$, $(\rho,u)\mapsto\Tr_\rho\sigma(u)$ is infinite-times  continuously differentiable. Furthermore, for any positive integers $a,b$ and any $\rho\in(0,1)$,  we have that the function $u \mapsto \partial_\rho^a\partial_u^b \Tr_\rho\sigma$ is in $L_2(\D)$.

In addition, given $f(\vx) = \Evz[F(\vx, \vz; \vx^*)]$, $F(\vx, \vz; \vx^*)\eqdef(\Tr_{\vx\cdot\vx^*}\sigma(\vx\cdot\vz) - \sigma(\vx^*\cdot\vz))^2$, we have $\nablar f(\vx) = \Evz[\nablar F(\vx, \vz; \vx^*)]$, for any $\sigma\in L_2(\D)$.
\end{restatable}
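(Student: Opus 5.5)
The plan is to work in the normalized Hermite basis $\{h_k\}_{k\ge 0}$ of $L_2(\D)$, $\D=\calN(0,1)$, and reduce every claim to estimates on coefficient sequences. Write $\sigma=\sum_k c_k h_k$ with $\sum_k c_k^2=\|\sigma\|_{L_2(\D)}^2<\infty$. By Mehler's formula (the Ornstein--Uhlenbeck semigroup diagonalizes in this basis), $\Tr_\rho h_k=\rho^k h_k$, so
\[
\Tr_\rho\sigma(u)=\sum_{k\ge0} c_k\rho^k h_k(u).
\]

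\textbf{Smoothness and $L_2$ membership.} The normalized Hermite polynomials satisfy $h_k'=\sqrt{k}\,h_{k-1}$, and $\partial_\rho^a\rho^k=k(k-1)\cdots(k-a+1)\rho^{k-a}$. Differentiating termwise gives
\[
\partial_\rho^a\partial_u^b\Tr_\rho\sigma=\sum_k c_k\,\frac{k!}{(k-a)!}\sqrt{\frac{k!}{(k-b)!}}\,\rho^{k-a}h_{k-b}.
\]
The coefficients are bounded in absolute value by $|c_k|\,k^{a+b}\rho^{k-a}$. For $\rho$ in a compact subinterval $[\rho_0,\rho_1]\subset(0,1)$ this is square-summable, uniformly in $\rho$, because $\rho_1^k$ decays geometrically. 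Hence each termwise-differentiated series converges in $L_2(\D)$, which gives the $L_2$ claim.

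For genuine (pointwise) $C^\infty$ regularity in $(\rho,u)$, I will use Cramér's bound $|h_k(u)|\le C e^{u^2/4}$. With it, every differentiated series is dominated on $[\rho_0,\rho_1]\times[-M,M]$ by $C e^{M^2/4}\sum_k |c_k|k^{a+b}\rho_1^{k-a}<\infty$. So all series converge uniformly on compacts, and the classical theorem on termwise differentiation of uniformly convergent series yields joint $C^\infty$ smoothness.

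\textbf{Interchange of gradient and expectation.} Write $\rho=\vx\cdot\vx^*$, $u=\vx\cdot\vz$, $v=\vx^*\cdot\vz$. The pair $(u,v)$ is standard bivariate Gaussian with correlation $\rho$, so $\E[h_j(u)h_k(v)]=\delta_{jk}\rho^k$. I will first verify the identity $\nablar f(\vx)=\Evz[\nablar F(\vx,\vz;\vx^*)]$ for the truncations $\sigma_N=\sum_{k\le N}c_kh_k$. Everything is then a polynomial in $(\vx,\vz)$ with Gaussian moments, so differentiation under the expectation is justified by a trivial polynomial envelope.

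Next I pass $N\to\infty$ on both sides, uniformly for $\vx$ in a neighborhood where $\rho\in[\rho_0,\rho_1]\subset(0,1)$.

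On the left, orthogonality gives $f_N(\vx)=\sum_{k\le N}c_k^2(\rho^{2k}-2\rho^{2k})+\sum_{k\le N}c_k^2$, an explicit power series in $\rho$. Its $\rho$-derivative converges uniformly on $[\rho_0,\rho_1]$, so $f_N\to f$ in $C^1$ there, and hence $\nablar f_N\to\nablar f$.

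On the right, $\nabla_\vx F=2\big(\Tr_\rho\sigma(u)-\sigma(v)\big)\big(\partial_\rho\Tr_\rho\sigma(u)\,\vx^*+\partial_u\Tr_\rho\sigma(u)\,\vz\big)$. The difference of this field for $\sigma$ and for $\sigma_N$ is controlled by Cauchy--Schwarz. The needed ingredients are:
\begin{itemize}
\item the $L_2(\D)$ bounds from the first step (note $u\sim\D$);
\item $\|\sigma-\sigma_N\|_{L_2}\to0$;
\item finiteness of the Gaussian moments of $\|\vz\|$, once the $\vz$ factor is split into its component along $\vx$ and the orthogonal component, which is independent of $u$ and $v$.
\end{itemize}
Together these show $\E[\nablar F_N]\to\E[\nablar F]$.

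\textbf{Main obstacle.} The delicate step is producing integrable envelopes for the products involving $\partial_u\Tr_\rho\sigma(u)\,\vz$, since $\sigma$ is only in $L_2$. The decomposition $\vz=u\,\vx+\vz_{\perp\vx}$ handles it. The $\vz_{\perp\vx}$ part is independent of $u$, and its correlation with $v$ enters only through $\sin\theta$ times a centered Gaussian independent of $u$, which reduces to Stein's identity (\Cref{fact:stein}) in one dimension. The $u\,\vx$ part is removed by the projection $P_{\perp\vx}$ in the Riemannian gradient, so no envelope for $u\,\partial_u\Tr_\rho\sigma(u)$ is needed.
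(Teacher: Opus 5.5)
Your proposal is correct, and it takes a genuinely different (spectral) route from the paper's. The paper mentions the Hermite/Ornstein--Uhlenbeck viewpoint but does not use it. Instead it writes $\mathrm{T}_\rho\sigma(u)=\int_\R\sigma(y)\,c(\rho,u,y)\,p_\calN(y)\,dy$ with the explicit kernel $c(\rho,u,y)=(1-\rho^2)^{-1/2}\exp\bigl(-\tfrac{(y-\rho u)^2}{2(1-\rho^2)}+\tfrac{y^2}{2}\bigr)$. From there it:
\begin{itemize}
\item obtains smoothness by differentiating under the integral;
\item gets the $L_2(\D)$ bounds by Cauchy--Schwarz against $\iint(\partial_\rho^a\partial_u^b c)^2p_\calN(y)p_\calN(u)\,dy\,du$;
\item shows $L_2$ convergence of difference quotients;
\item interchanges $\nablar$ and $\E$ by a mean-value step along a geodesic $\vx_t$.
\end{itemize}
You instead diagonalize $\mathrm{T}_\rho$ and read smoothness and $L_2$ membership off the coefficient sequences, with Cram\'er's bound giving pointwise $C^\infty$. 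You then prove the interchange by polynomial truncation together with the closed form $f(\vx)=\sum_k c_k^2\bigl(1-(\vx\cdot\vx^*)^{2k}\bigr)$.

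Your route buys three things. First, explicit norms, e.g.\ $\|\partial_\rho^a\partial_u^b\mathrm{T}_\rho\sigma\|_{L_2(\D)}^2=\sum_k c_k^2\bigl(\tfrac{k!}{(k-a)!}\bigr)^2\tfrac{k!}{(k-b)!}\rho^{2(k-a)}$. Second, a one-line formula for $\nablar f$. Third, a cleaner interchange. The paper's mean-value point depends on $\vz$, and its $L_1$-convergence claim (``every term converges in $L_2$'') tacitly needs an envelope for the unbounded factor $\vz_{\perp\vx_\tau}$; that is exactly the issue you isolate and resolve. The price is importing Mehler's eigenrelation, $h_k'=\sqrt{k}\,h_{k-1}$ and Cram\'er's inequality. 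The paper's kernel computation is elementary by comparison, and it is reused in Part (I) of the proof of \Cref{thm:Gaussian-glm-sim-RGA}.

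Two small points to tighten.

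First, you need $\mathrm{T}_\rho\sigma(u)=\sum_kc_k\rho^kh_k(u)$ for every $u$, not only in $L_2(\D)$. This holds because $\mathrm{T}_\rho\sigma(u)=\langle\sigma,c(\rho,u,\cdot)\rangle_{L_2(\D)}$ with $c(\rho,u,\cdot)\in L_2(\D)$, so $\mathrm{T}_\rho\sigma_N(u)\to\mathrm{T}_\rho\sigma(u)$ pointwise.

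Second, your bullet says $\vz_{\perp\vx}$ is independent of $u$ and $v$. It is independent of $u=\vx\cdot\vz$, but not of $v=\vx^*\cdot\vz$. Only independence from $u$ is needed. Take a unit $\vr\perp\vx$, and set $A=\mathrm{T}_\rho\sigma(u)-\sigma(v)$ and $B=\partial_u\mathrm{T}_\rho\sigma$. Since $\E[B(u)^2(\vr\cdot\vz)^2]=\E[B(u)^2]\,\E[(\vr\cdot\vz)^2]$, one has $\E|A\,B(u)\,(\vr\cdot\vz)|\le\|A\|_{L_2}\|B\|_{L_2(\D)}\|\vr\cdot\vz\|_{L_2}$. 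This gives both integrability and, by telescoping, the $N\to\infty$ convergence, so Stein's identity is not actually needed there.
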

We will also make use of the following fact.
\begin{fact}[Theorem 6.1.5\cite{Hormander2003}]\label{fact:dirac-inetgral}
    Given an integrable function $f(z_1,z_2)$ and a continuously differentiable $h(z_1,z_2)$  whose gradient $\nabla_{z_1,z_2} h$ is not zero when $h = 0$, we have:
    \begin{align*}
        \iint \delta(h(z_1,z_2))f(z_1,z_2)\diff{z_1,z_2} = \int_{h = 0} \frac{f(z_1,z_2)}{\|\nabla h(z_1,z_2)\|_2}\diff{\Sigma},
    \end{align*}
    where $\diff{\Sigma}$ is the Euclidean measure on the surface $h(z_1,z_2) = 0$.
\end{fact}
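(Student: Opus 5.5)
We would prove this by the standard definition of $\delta(h)$ as a limit of smooth approximations, combined with the coarea formula. Concretely, fix a mollifier $\rho\in C_c^\infty(\R)$ with $\rho\ge 0$ and $\int\rho=1$, and set $\delta_\varepsilon(s)=\varepsilon^{-1}\rho(s/\varepsilon)$. The left-hand side is interpreted as $\lim_{\varepsilon\to 0}\iint \delta_\varepsilon(h(z_1,z_2))f(z_1,z_2)\diff{z_1,z_2}$, which is consistent with the pairing convention of \Cref{fact:distributional-derivatives}.

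\emph{Step 1 (localize).} Since $h$ is $C^1$ and $\nabla h\neq 0$ on $\{h=0\}$, for any compact piece of the zero set there is a tube $\{|h|<s_0\}$ on which $\|\nabla h\|_2$ is bounded below. By the implicit function theorem, each level set $\{h=s\}$ with $|s|<s_0$ is a $C^1$ curve depending continuously on $s$. For $\varepsilon$ small, $\delta_\varepsilon(h)$ is supported in this tube.

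\emph{Step 2 (coarea).} On the tube we apply the coarea formula for the $C^1$ map $h$ with nonvanishing gradient:
\begin{align*}
\iint \delta_\varepsilon(h)\,f\,\diff{z_1,z_2}=\int_{\R}\delta_\varepsilon(s)\,\Psi(s)\,\diff{s},\qquad \Psi(s)\eqdef\int_{\{h=s\}}\frac{f}{\|\nabla h\|_2}\diff{\Sigma}.
\end{align*}
This is the same identity as the classical change of variables to local coordinates $(s,\tau)$, where $s=h$ and $\tau$ is arclength along the level curve. The Jacobian of that change is exactly $1/\|\nabla h\|_2$.

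\emph{Step 3 (limit).} It remains to show that $\Psi$ is continuous at $s=0$. Then $\int\delta_\varepsilon\Psi\to\Psi(0)$, which is the right-hand side. This is the main obstacle. Integrability of $f$ alone only gives $\Psi\in L^1_{\mathrm{loc}}$, hence convergence at Lebesgue points of $\Psi$. So we would make explicit the regularity that is implicitly assumed in all applications: $f$ is continuous near $\{h=0\}$ and is dominated by an integrable envelope along the level curves. In the paper's uses, $f$ is a product of Gaussian densities with locally Lipschitz or continuous factors such as $h'$ terms, up to a.e. modifications that are handled in the same way as in the proof of \Cref{lem:MIL-is-RGA}.

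Under that assumption, we parametrize the level curves $\{h=s\}$ via the implicit function theorem as $\gamma_s(\tau)$, with continuous dependence on $s$. Continuity of $\Psi$ at $0$ then follows from dominated convergence applied to $\tau\mapsto f(\gamma_s(\tau))\,\|\gamma_s'(\tau)\|/\|\nabla h(\gamma_s(\tau))\|$. For unbounded level sets, we would first truncate to a large ball and use the integrable envelope (for instance, Gaussian decay) to control the tails uniformly in $s$. Finally, $\varepsilon\to0$ gives the claimed formula. This is precisely the content of the cited result \cite[Theorem 6.1.5]{Hormander2003}, which defines the pullback $\delta\circ h$ and proves the formula in this generality.
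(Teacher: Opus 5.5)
Your route (mollify $\delta$, pass via the implicit function theorem or coarea to coordinates in which $h$ is a coordinate, then let $\varepsilon\to0$) is the standard one and is what underlies the cited result. The paper gives no proof of its own. It only cites H\"ormander's Theorem 6.1.5, which is formulated for $C^\infty$ $h$ paired with test functions. You are right that ``integrable $f$'' cannot suffice, since $\{h=0\}$ is Lebesgue-null.

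\textbf{The continuity hypothesis.} The gap is in how you close Step 3. You assume $f$ is continuous near $\{h=0\}$ and assert that this is what holds in the paper's applications, but it does not. Where the paper pairs $\delta(h)$ with an integrand, the integrand contains derivatives of merely locally Lipschitz functions: the penalty derivative in \Cref{lem:MIL-is-RGA}, and $(\sigma^*)'$ and $r'$ in parts (II)--(III) of the proof of \Cref{thm:Gaussian-glm-sim-RGA}. These are defined only a.e.\ and are in general discontinuous (e.g.\ hinge penalties, LeakyReLU links). An a.e.\ modification cannot be absorbed by a continuity hypothesis, because it is exactly what changes $f$ on $\{h=0\}$.

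The hypothesis you need is the one your Step 3 already isolates: continuity of $\Psi$ at $s=0$. In the paper's uses it holds for a structural reason. There $f=H(w)\,c$, where $H\in L^1_{\mathrm{loc}}$ depends only on a second coordinate $w$ ($w=\vx\cdot\vz$ or $\vx\cdot\vz_i$) whose gradient is not parallel to $\nabla h$, and $c$ is continuous (Gaussian densities). In the coordinates $(w,s)=(w,h)$ one gets $\Psi(s)=\int H(w)\,\tilde c(w,s)\,dw$ with $\tilde c$ continuous. Hence $\Psi$ is continuous by dominated convergence, and $f|_{\{h=0\}}$ is defined $d\Sigma$-a.e., even though $f$ itself is not continuous.

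\textbf{The tail step.} Your tail step also fails as phrased. Gaussian decay of $f$ does not control the level-set integrals uniformly in $s$ for general $h$. Take $h(z)=e^{-\|z\|_2^2}$: the zero set is empty, so the right-hand side is $0$. Yet with $f$ the standard Gaussian density and a symmetric mollifier supported in $[-\varepsilon,\varepsilon]$, one computes $\iint\delta_\varepsilon(h)f=\frac12\int_0^{1}\delta_\varepsilon(s)s^{-1/2}\,ds\ge\frac{1}{4\sqrt{\varepsilon}}\to\infty$. The level sets escape to infinity, where $\|\nabla h\|_2$ is tiny; this is also a counterexample to the Fact as literally stated. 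The envelope must therefore be imposed on $f/\|\nabla h\|_2$ along level sets, as in your dominated-convergence step, or you must assume $\|\nabla h\|_2$ is bounded below on $\{|h|<s_0\}$. In all of the paper's uses this is automatic: there $h$ has the form $v-m(u)$ in suitable coordinates, so $\|\nabla h\|_2\ge1$.
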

\begin{proof}[Proof of \Cref{thm:Gaussian-glm-sim-RGA}]
    Let us denote $h_\Ltwo(u) = u^2$ and $h_\Lone(u) = |u|$, and define: \begin{gather*}
        \sigma_\Ltwo(\cdot;\vx,\vx^*) \in \argmin_{\sigma\in\cF}\Evz[h_\Ltwo(\sigma(\vx\cdot\vz) - \sigma^*(\vx^*\cdot\vz))],\, \ell_\Ltwo(u,v;\vx,\vx^*) = h_\Ltwo(\sigma_\Ltwo(u;\vx,\vx^*) - \sigma^*(v));\\
        \sigma_\Lone(\cdot;\vx,\vx^*) \in \argmin_{\sigma\in\cF_+}\Evz[h_\Lone(\sigma(\vx\cdot\vz) - \sigma^*(\vx^*\cdot\vz))],\, \ell_\Lone(u,v;\vx,\vx^*) = h_\Lone(\sigma_\Lone(u;\vx,\vx^*) - \sigma^*(v));\\
        r(\cdot;\vx,\vx^*)\in\argmin_{r\in\cF_+}\Evz[h_\Lone(r(|\vx\cdot\vz|) - r^*(|\vx^*\cdot\vz|))],\, \sigma(u;\vx,\vx^*) = r(|u|;\vx,\vx^*),\,\sigma^*(u) = r^*(|u|).
    \end{gather*}
    Throughout the proof, we denote the angle between $\vx$ and $\vx^*$ by $\theta$ and note that $\theta\in (0, \pi/2)$ by assumption.
    We discuss the $L_2^2$ loss and the $L_1$ loss of SIMs separately.
    
    \textbf{(I): $L_2^2$ SIM Loss. } Let us investigate the $L_2^2$ loss first. We first characterize the best-fitting link function. \begin{claim}\label{claim:Gaussian-l2-bestfitting}
Let $\sigma^*\in\cF$ and let $\vx, \vx^* \in \spd$ be such that $\theta:= \theta(\vx, \vx^*) \in (0, \pi/2).$ Then the best-fitting activation under the $L_2^2$ loss, up to a.e.\ equality under the standard Gaussian measure, is $\sigma_\Ltwo(u;\vx,\vx^*) = \mathrm{T}_{\cos\theta}\sigma^*(u)$. 
   Furthermore, 
        \begin{align*}
            \nablar f_\Ltwo(\vx) = \Evz[\partial_1\ell_{\Ltwo}(\vx\cdot\vz,\vx^*\cdot\vz;\vx,\vx^*)\vz_{\perp\vx}] = \vg_\Ltwo(\vx,\vx^*).
        \end{align*}
    \end{claim}
    \begin{proof}
Given $\vx\in\spd$, by the standard conditional expectation identity 
        \[
        \E[(\sigma(\vx \cdot \vz) - \sigma^*(\vx^*\cdot \vz))^2] = \E[(\sigma(\vx \cdot \vz) - \E[\sigma^*(\vx^*\cdot \vz)|\vx\cdot\vz])^2] + \E[\Var(\sigma^*(\vx^*\cdot \vz)|\vx\cdot\vz)]. 
        \]
        Thus, the minimizing activation among all Gaussian square-integrable functions is $\sigma_\Ltwo(u;\vx,\vx^*) = \Evz[\sigma^*(\vx^*\cdot\vz)\mid\vx\cdot\vz = u]$.
        Since $\vz$ is standard Gaussian, the random variables $\vx\cdot\vz$ and $\vv\cdot\vz$ are independent standard Gaussians provided that $\|\vv\|_2 = 1$ and $\vv\perp\vx$. Now let $\vv := \vx^*_{\perp\vx}/\|\vx^*_{\perp\vx}\|_2$. Then $\sigma_\Ltwo$ equals:
        \begin{align*}
            \sigma_\Ltwo(u;\vx,\vx^*) = \Evz[\sigma^*(\vx\cdot\vz\cos\theta + \vv\cdot\vz\sin\theta)\mid\vx\cdot\vz = u] = \E_{\vv\cdot\vz\sim\cN(0,1)}[\sigma^*(u\cos\theta  + \vv\cdot\vz\sin\theta)] = \mathrm{T}_{\cos\theta}\sigma^*(u).
        \end{align*}
        This proves the first claim. Furthermore, by \Cref{fct:Gaussian-smoothing},  $(\rho,u)\mapsto\mathrm{T}_{\rho}\sigma^*(u)$ is infinite-times differentiable in both arguments $(\rho,u)$, for any $\rho\in(0,1)$ and $u\in\R$. 
        Moreover, by the same fact, $\nablar f(\vx) = \Evz[\nablar_\vx \ell(\vx\cdot\vz,\vx^*\cdot\vz;\vx,\vx^*)]$, where
\begin{align*}
          &  \nablar_\vx \ell_\Ltwo(\vx\cdot\vz,\vx^*\cdot\vz;\vx,\vx^*)\\
          =& \partial_1\ell_\Ltwo(\vx\cdot\vz,\vx^*\cdot\vz;\vx,\vx^*)\vz_{\perp\vx} + (\partial_3\ell_\Ltwo(\vx\cdot\vz,\vx^*\cdot\vz;\vx,\vx^*))_{\perp\vx}\\
            =& 2(\mathrm{T}_{\cos\theta}\sigma^*(\vx\cdot\vz) - \sigma^*(\vx^*\cdot\vz))\bigg((\mathrm{T}_{\cos\theta}\sigma^*)'(\vx\cdot\vz)\vz_{\perp\vx} + \Big(\frac{\diff{}}{\diff{\rho}}\mathrm{T}_{\rho}\sigma^*(\vx\cdot\vz)\Big)\Big|_{\rho = \vx\cdot \vx^*}\vx^*_{\perp\vx}\bigg),
        \end{align*}
        where we have used the chain rule, recalling that $\sigma_\Ltwo(u;\vx,\vx^*) = \mathrm{T}_{\cos\theta}\sigma^*(u)$ and $\cos\theta = \vx \cdot \vx^*,$ as discussed earlier in the proof.   
        
        Now, taking the expectation with respect to the term $\partial_3\ell$ and noting that by the definition of operator $\mathrm{T}_\rho$ it holds $\E[(\mathrm{T}_{\cos\theta}\sigma^*(\vx\cdot\vz) - \sigma^*(\vx^*\cdot\vz))\mid \vx\cdot\vz] = 0$, we have:
        \begin{align*}
            \Evz[(\partial_3\ell_\Ltwo(\vx\cdot\vz,\vx^*\cdot\vz;\vx,\vx^*))_{\perp\vx}]&=\Evz\bigg[2(\mathrm{T}_{\cos\theta}\sigma^*(\vx\cdot\vz) - \sigma^*(\vx^*\cdot\vz))\bigg(\frac{\diff{}}{\diff{\rho}}\mathrm{T}_{\vx\cdot\vx^*}\sigma^*(\vx\cdot\vz)\bigg)\vx^*_{\perp\vx}\bigg]\\
            &=2\Evz\bigg[\E\bigg[(\mathrm{T}_{\cos\theta}\sigma^*(\vx\cdot\vz) - \sigma^*(\vx^*\cdot\vz))\mid \vx\cdot\vz\bigg]\bigg(\frac{\diff{}}{\diff{\rho}}\mathrm{T}_{\vx\cdot\vx^*}\sigma^*(\vx\cdot\vz)\bigg)\vx^*_{\perp\vx}\bigg]\\
            &= 0.
        \end{align*}
        Therefore,  
        \begin{align*}
            \nablar f_\Ltwo(\vx) &=2\Evz[(\mathrm{T}_{\cos\theta}\sigma^*(\vx\cdot\vz) - \sigma^*(\vx^*\cdot\vz))(\mathrm{T}_{\cos\theta}\sigma^*)'(\vx\cdot\vz)\vz_{\perp\vx}]\\
            &=\Evz[\partial_1\ell_\Ltwo(\vx\cdot\vz,\vx^*\cdot\vz;\vx,\vx^*)\vz_{\perp\vx}] = \vg_\Ltwo(\vx,\vx^*),
        \end{align*}
        proving the second claim.
    \end{proof}

    Now given \Cref{claim:Gaussian-l2-bestfitting}, it only remains to check the joint differential of $\ell_{\Ltwo}$. We have:
    \begin{align*}
        D_2\partial_{1}\ell(\vx\cdot\vz,\vx^*\cdot\vz;\vx,\vx^*) &= D_2\bigg(h'_{\Ltwo}(\sigma_\Ltwo(u;\vx,\vx^*) - \sigma^*(v))\sigma_\Ltwo'(u;\vx,\vx^*)\bigg)\bigg|_{u = \vx\cdot\vz, v = \vx^*\cdot\vz}\\
        &=-h_{\Ltwo}''(\sigma_\Ltwo(u;\vx,\vx^*) - \sigma^*(v))\sigma_\Ltwo'(u;\vx,\vx^*) D\sigma^*(v)\bigg|_{u = \vx\cdot\vz, v = \vx^*\cdot\vz}\\
        &=-2(\mathrm{T}_{\cos\theta}\sigma^*)'(\vx\cdot\vz)D\sigma^*(\vx^*\cdot\vz).
    \end{align*}
    Let $z_1= \vx\cdot\vz$ and $z_2 = \vv\cdot\vz$ where $\vv = \vx^*_{\perp\vx}/\|\vx^*_{\perp\vx}\|_2$, which are independent Gaussian random variables. 
    Therefore, by the definition of weak (distributional) derivatives, the expectation is defined as the distribution pairing, and we have:
    \begin{align*}
        \Evz[D_2\partial_{1}\ell(\vx\cdot\vz,\vx^*\cdot\vz;\vx,\vx^*)]&= \Evz[-2(\mathrm{T}_{\cos\theta}\sigma^*)'(\vx\cdot\vz)D\sigma^*(\vx^*\cdot\vz)]\\
        &=-2\la D\sigma^*(z_1 \cos\theta + z_2 \sin\theta), (\mathrm{T}_{\cos\theta}\sigma^*)'(z_1)p_{\calN}(z_1,z_2)\ra\\
        &=-2\int_{z_1}\bigg(\int_{z_2}D\sigma^*(z_1 \cos\theta + z_2 \sin\theta) p_\calN(z_2)\diff{z_2}\bigg)(\mathrm{T}_{\cos\theta}\sigma^*)'(z_1) p_\calN(z_1)\diff{z_1}.
    \end{align*}
    We show that given any fixed $z_1$, the integral in the parentheses above equals  $(1/\cos\theta)(\mathrm{T}_{\cos\theta}\sigma^*)'(z)$. To this aim, noting that by integration by parts (or the definition of distributional weak derivatives), and observing that for the Gaussian density it holds $p_\calN'(z) = -z p_\calN(z)$, we have:
    \begin{align*}
        \int_{z_2}D\sigma^*(z_1 \cos\theta + z_2 \sin\theta) p_\calN(z_2)\diff{z_2}&=\int_{y} D\sigma^*(y)p_{\calN}\bigg(\frac{y - z_1 \cos\theta}{\sin\theta}\bigg)\frac{1}{\sin\theta}\diff{y}\\
        &=-\int_y \sigma^*(y)\frac{\diff{}}{\diff{y}}p_{\calN}\bigg(\frac{y - z_1 \cos\theta}{\sin\theta}\bigg)\frac{\diff{y}}{\sin\theta}\\
        &=\int_y \sigma^*(y)\bigg(\frac{y - z_1 \cos\theta}{\sin\theta}\bigg)p_{\calN}\bigg(\frac{y - z_1 \cos\theta}{\sin\theta}\bigg)\frac{\diff{y}}{\sin^2\theta}.
    \end{align*}
    On the other hand, writing down explicitly the convolutional integral form of $\mathrm{T}_{\cos\theta}\sigma^*(z)$ we have:
    \begin{align*}
        \frac{\diff{}}{\diff{z}}\mathrm{T}_{\cos\theta}\sigma^*(z)&=\frac{\diff{}}{\diff{z}}\int_{z_2}\sigma^*(z \cos\theta + z_2 \sin\theta)p_\calN(z_2)\diff{z_2} =\frac{\diff{}}{\diff{z}}\int_y \sigma^*(y)p_\calN\bigg(\frac{y - z \cos\theta}{\sin\theta}\bigg)\frac{\diff{y}}{\sin\theta}\\
        &=\int_y \sigma^*(y)\frac{\diff{}}{\diff{z}}p_\calN\bigg(\frac{y - z \cos\theta}{\sin\theta}\bigg)\frac{\diff{y}}{\sin\theta} = \cos\theta \int_y \sigma^*(y)\bigg(\frac{y - z_1 \cos\theta}{\sin\theta}\bigg)p_{\calN}\bigg(\frac{y - z_1 \cos\theta}{\sin\theta}\bigg)\frac{\diff{y}}{\sin^2\theta}.
    \end{align*}
    Thus, comparing with the equality above we have:
    \begin{align*}
        \int_{z_2}D\sigma^*(z_1 \cos\theta + z_2 \sin\theta) p_\calN(z_2)\diff{z_2} = \frac{1}{\cos\theta}(\mathrm{T}_{\cos\theta}\sigma^*)'(z_1).
    \end{align*}
    Having this, we immediately obtain the expectation of the joint differential:
    \begin{align*}
        \Evz[D_2\partial_{1}\ell(\vx\cdot\vz,\vx^*\cdot\vz;\vx,\vx^*)] &= -2\int_{z_1}\frac{1}{\cos\theta}((\mathrm{T}_{\cos\theta}\sigma^*)'(z_1))^2 p_\calN(z_1)\diff{z_1}\\
        &=-\frac{2}{\cos\theta}\|(\mathrm{T}_{\cos\theta}\sigma^*)'(z_1)\|_{L_2(\calN)}^2<0.
    \end{align*}
    The last inequality is due to the fact that since $\sigma^*$ is not a constant function, $\mathrm{T}_{\cos\theta}\sigma^*$ is a smooth non-constant function and hence $\|(\mathrm{T}_{\cos\theta}\sigma^*)'(z_1)\|_{L_2(\calN)}^2>0$.
    Thus, \Cref{assum:ell-Gaussian} is satisfied for all $\vx$ such that $\theta(\vx,\vx^*)\in(0,\pi/2)$. Applying \Cref{lem:regression-loss-satisfy-sharpness-Gaussian} we obtain that $f_\Ltwo$ is 1-RGA.

    \textbf{(II): $L_1$ SIM Loss for Monotone Link Functions. } Similar to the $L_2^2$ loss argument, we first study and characterize the best-fitting link functions under the $L_1$ loss.
    \begin{claim}\label{claim:Gaussian-l1-bestfitting}
         Let $\sigma^*\in\cF_+$ be a strictly increasing function and $\vx^*$ be a fixed unit vector. Then, for any unit vector $\vx$ with $\theta = \theta(\vx,\vx^*)\in(0,\pi/2)$, it holds $\sigma(z;\vx,\vx^*) = \sigma^*(z\cos\theta)$. Furthermore,
    \begin{align*}
         \nablar f_\Lone(\vx) = \Evz[\partial_1\ell_\Lone(\vx\cdot\vz,\vx^*\cdot\vz;\vx,\vx^*)\vz_{\perp\vx}].
    \end{align*}
    \end{claim}
    \begin{proof}
        The best-fitting link function under the $L_1$ loss is the conditional median: $\sigma(u;\vx,\vx^*) = \mathrm{median}[\sigma^*(\vx^*\cdot\vz)\mid \vx\cdot\vz = u]$; in other words, $y = \sigma(u;\vx,\vx^*)$ if $\pr[\sigma^*(\vx\cdot\vz^*) \leq y\mid \vx\cdot\vz = u] = 1/2$. This holds because for any real-valued random variable $X$ and any $a \in \R,$ the minimizers of $a \mapsto \E[|a - X|]$ are precisely the medians of $X.$  Let $\vv = \vx^*_{\perp\vx}/\norm{\vx^*_{\perp\vx}}_2,$  $z_1 = \vx\cdot\vz$ and $z_2 = \vv\cdot\vz$. By Gaussianity, $z_1,z_2$ are independent standard Gaussians, and by the (strict) monotonicity, we have:
    \begin{align*}
        \pr[\sigma^*(\vx^*\cdot\vz) \leq y\mid \vx\cdot\vz = u] = \pr_{z_2}\bigg[u\cos\theta  + z_2\sin\theta \leq (\sigma^*)^{-1}(y)\bigg] = \pr\bigg[z_2\leq \frac{(\sigma^*)^{-1}(y) - u\cos\theta}{\sin\theta}\bigg].
    \end{align*}
    Since $z_2$ is a standard Gaussian random variable, it holds that:
    \begin{align*}
        \pr\bigg[z_2\leq \frac{(\sigma^*)^{-1}(y) - u\cos\theta }{\sin\theta}\bigg] = 1/2 \Leftrightarrow (\sigma^*)^{-1}(y) - u\cos\theta  = 0 \Leftrightarrow y = \sigma^*(u\cos\theta).
    \end{align*}
    Thus, we conclude that $\sigma(u;\vx,\vx^*) = \sigma^*(u\cos\theta) = \sigma^*(u(\vx\cdot\vx^*))$. It is immediate that $\sigma(u;\vx,\vx^*)\in\cF_+$.

    Next, we argue that $\nablar f(\vx)$ is a vector field of the form $\vg_\Lone(\vx,\vx^*)$. Similar to the proof of \Cref{claim:Gaussian-l2-bestfitting}, we first show that the differentiation and expectation are interchangeable, i.e., $\nablar f_\Lone(\vx) = \Evz[\nablar_\vx\ell_\Lone(\vx\cdot\vz;\vx^*\cdot\vz;\vx,\vx^*)]$. Since $\sigma^*$ is locally Lipschitz, we have that $\sigma^*$ is differentiable almost everywhere. By the definition of $\ell_\Lone(u,v;\vx,\vx^*)$, we have:
    \begin{align*}
        \nablar_\vx\ell_\Lone(\vx\cdot\vz,\vx^*\cdot\vz;\vx,\vx^*) &=\partial_1\ell_\Lone(u,v;\vx,\vx^*)|_{u = \vx\cdot\vz,v = \vx^*\cdot\vz}(\nabla_\vx(\vx\cdot\vz))_{\perp\vx} + (\partial_3\ell_\Lone(u,v;\vx,\vx^*)|_{u = \vx\cdot\vz,v = \vx^*\cdot\vz})_{\perp\vx}\\
        &= \sign(\sigma^*(\vx\cdot\vz\cos\theta) - \sigma^*(\vx^*\cdot\vz))(\sigma^*)'(\vx\cdot\vz\cos\theta)(\cos\theta\vz_{\perp\vx} + (\vx\cdot\vz)\vx^*_{\perp\vx}).
    \end{align*}
Since $\sigma^*$ is locally Lipschitz, strictly increasing, and square-integrable, it follows that for every compact $K \subset (0, 1),$ $\sup_{\rho \in K}\E[(1 + |z|)|(\sigma^*)'(\rho z)|] < \infty$ (by Gaussian integration by parts). 
    Therefore, the differentiation operator and expectation are interchangeable, and we have $\nablar f(\vx) =\Evz[\nablar_\vx\ell_\Lone(\vx\cdot\vz,\vx^*\cdot\vz;\vx,\vx^*)]$. Finally, we show that $\Evz[(\partial_3\ell_\Lone(\vx\cdot\vz,\vx^*\cdot\vz;\vx,\vx^*))_{\perp\vx}] = 0$, leading to $\nablar f_\Lone(\vx) = \vg_\Lone(\vx,\vx^*)$. To this end, observe that:
    \begin{align*}
        &\quad\Evz[(\partial_3\ell_\Lone(u,v;\vx,\vx^*)|_{u = \vx\cdot\vz,v = \vx^*\cdot\vz})_{\perp\vx}]\\
        &=\Evz[\sign(\sigma^*(\vx\cdot\vz\cos\theta) - \sigma^*(\vx^*\cdot\vz))(\sigma^*)'(\vx\cdot\vz\cos\theta)(\vx\cdot\vz)\vx^*_{\perp\vx}]\\
        &=\Evz[\E[\sign(\sigma^*(\vx\cdot\vz\cos\theta) - \sigma^*(\vx^*\cdot\vz))|\vx\cdot\vz](\sigma^*)'(\vx\cdot\vz\cos\theta)(\vx\cdot\vz)]\vx^*_{\perp\vx}.
    \end{align*}
    Since by definition, $\sigma(u;\vx,\vx^*) = \sigma^*(u\cos\theta) = \mathrm{median}(\sigma^*(\vx^*\cdot\vz)\mid\vx\cdot\vz = u)$, it holds that 
    $$\pr[\sigma^*(u\cos\theta)\geq \sigma^*(\vx^*\cdot\vz)\mid \vx\cdot\vz = u] = \pr[\sigma^*(u\cos\theta )\leq \sigma^*(\vx^*\cdot\vz)\mid \vx\cdot\vz = u] = 1/2,$$ 
    and therefore, $\E[\sign(\sigma^*(\vx\cdot\vz\cos\theta) - \sigma^*(\vx^*\cdot\vz))|\vx\cdot\vz] = 1/2 - 1/2 = 0$. Thus, we conclude that $\nablar f_\Lone(\vx) =\Evz[\partial_1\ell_\Lone(\vx\cdot\vz,\vx^*\cdot\vz;\vx,\vx^*)\vz_{\perp\vx}] =  \vg_\Lone(\vx,\vx^*)$.
    \end{proof}
     It remains to argue that the joint curvature is negative. By \Cref{claim:Gaussian-l1-bestfitting}, the best-fitting activation is $\sigma_\Lone(u; \vx, \vx^*) = \sigma^*(u \cos\theta)$. Since $\sigma^*$ is strictly increasing, $\sign(\sigma^*(u \cos\theta) - \sigma^*(v)) = \sign(u \cos\theta - v).$ It follows that for almost every $u$,
    \[
    \partial_1 \ell(u, v; \vx, \vx^*) = \cos\theta(\sigma^*)'(u\cos\theta)\sign(u \cos\theta - v),
    \]
    and, thus, in the distributional sense,
    \[
    D_2\partial_1 \ell(u, v; \vx, \vx^*) = - 2 \cos\theta(\sigma^*)'(u\cos\theta)\delta(v - u \cos\theta).
    \]
    Observe that, since $\vz$ is standard Gaussian, we have that the distribution of $\vx^*\cdot \vz$ conditional on $\vx\cdot\vz = u$ is $\cN(u\cos\theta, \sin^2\theta).$ As a consequence, $\E[\delta(v - u \cos\theta)|\vx\cdot\vz = u] = \frac{1}{\sin\theta \sqrt{2\pi}}.$ It thus follows that
    \[
    \E[D_2\partial_1 \ell(\vx\cdot\vz, \vx^*\cdot\vz; \vx, \vx^*)] = -\frac{2\cos\theta}{\sin\theta \sqrt{2\pi}}\E[(\sigma^*)'(\vx\cdot\vz\cos\theta)] <0,
    \]
    where the last inequality follows from the assumed strict monotonicity of $\sigma^*$, by which $(\sigma^*)'\geq 0$ a.e.\ and, together with local absolute continuity, $(\sigma^*)' > 0$ on a set of positive Lebesgue measure. The expectation is finite by the square integrability of $\sigma^*$. Thus, \Cref{assum:ell-Gaussian} applies. 
Applying \Cref{lem:regression-loss-satisfy-sharpness-Gaussian}, we immediately obtain that $f_\Lone$ is $1$-RGA.

    \textbf{(III): $L_1$ SIM Loss for Phase Retrieval. } For the phase retrieval problems, the best-fitting link function takes a more complicated form compared to the monotone SIMs:
    \begin{claim}\label{claim:Gaussian-l1-PR-bestfitting}
         Let $r^*\in\cF_+$ be a strictly increasing function and $\vx^*$ be a fixed unit vector. Let $r(u;\vx,\vx^*) = \argmin_{r\in\cF_+}\Evz[|r(|\vx\cdot\vz|) - r^*(|\vx^*\cdot\vz|))|]$. Then, for any unit vector $\vx$ with $\theta = \theta(\vx,\vx^*)\in(0,\pi/2)$, and any $u\in[0,\infty)$,
         \begin{align*}
             r(u;\vx,\vx^*) = y \;\Leftrightarrow\; \Phi\bigg(\frac{(r^*)^{-1}(y) - u \cos\theta}{\sin\theta}\bigg) - \Phi\bigg(\frac{-(r^*)^{-1}(y) - u \cos\theta}{\sin\theta}\bigg) = \frac{1}{2},
         \end{align*}
         where $\Phi(t) = \pr_{z \sim \cN(0, 1)}[z\leq t]$ denotes the Gaussian cumulative density function. 

         Furthermore,  $\nablar f_{\Lone, \mathsf{PR}} = \vg_{\Lone,\mathsf{PR}}(\vx,\vx^*)$.
\end{claim}
    \begin{proof}
        Recall that the best-fitting link function under $L_1$ loss is $r(u;\vx,\vx^*) = \mathrm{median}(r^*(|\vx^*\cdot\vz|)\mid |\vx\cdot\vz| = u)$. Note that given $|\vx\cdot\vz| = u$, in distribution it holds $|\vx^*\cdot\vz| = |\vx\cdot\vz\cos\theta  + \vv\cdot\vz\sin\theta| = |\cos\theta |\vx\cdot\vz| + \vv\cdot\vz\sin\theta| = |u\cos\theta + \vv\cdot\vz\sin\theta|$, as $\vv\cdot\vz$ is an independent Gaussian of $\vx\cdot\vz$. In other words, given $|\vx\cdot\vz| = u$, if $r(u;\vx,\vx^*) = y$ then by the definition of a median, we have:
        \begin{align*}
            \pr[r^*(|\vx^*\cdot\vz|)\geq y\mid |\vx\cdot\vz| = u] =\pr[r^*(|\vx\cdot\vz\cos\theta + \vv\cdot\vz\sin\theta|)\geq y\mid \vx\cdot\vz = u] = \frac{1}{2}.
        \end{align*}
        Since $r^*$ is strictly increasing, we have:
        \begin{align*}
            \pr[r^*(|\vx^*\cdot\vz|)\geq y\mid \vx\cdot\vz = u]&=\pr_z[|u \cos\theta + z \sin\theta|\geq (r^*)^{-1}(y)]\\
            &=\pr_z\bigg[z\geq \frac{(r^*)^{-1}(y) - u \cos\theta}{\sin\theta}\bigg] + \pr_z\bigg[z\leq \frac{-(r^*)^{-1}(y) - u\cos\theta}{\sin\theta}\bigg].
        \end{align*}
        Recalling that $\Phi(t) = \pr[z\leq t]$, we obtain:
        \begin{align}\label{eq:equation-r(uxx*)}
            r(u;\vx,\vx^*) = y \;\Leftrightarrow\; \Phi\bigg(\frac{(r^*)^{-1}(y) - u \cos\theta}{\sin\theta}\bigg) - \Phi\bigg(\frac{-(r^*)^{-1}(y) - u \cos\theta}{\sin\theta}\bigg) = \frac{1}{2}.
        \end{align}
        It remains to argue that $r(u;\vx,\vx^*)$ is locally Lipschitz and strictly increasing so that $r(u;\vx,\vx^*)\in\cF_+$. For $c \in (0, 1)$, define 
        $$F(c, u, m) := \Phi\bigg(\frac{m-cu}{\sqrt{1-c^2}}\bigg) - \Phi\bigg(\frac{-m-cu}{\sqrt{1-c^2}}\bigg)-\frac{1}{2}.$$
        Then, the conditional median $m_{\cos\theta}(u) = (r^*)^{-1}(r(u;\vx,\vx^*))$ is the unique solution to $F(c, u, m_c(u)) = 0$ when $c = \cos\theta.$ Let: 
        \begin{align*}
            a = \frac{m_c(u) - cu}{\sqrt{1-c^2}},\,b = \frac{-m_c(u) - cu}{\sqrt{1-c^2}}, \text{ and observe that }\partial_3 F(c, u, m_c(u)) = \frac{p_\cN(a) + p_\cN(b)}{\sqrt{1-c^2}} > 0.
        \end{align*}
Therefore, the implicit function theorem implies that $(c, u)\mapsto m_c(u)$ is continuously differentiable (hence, locally Lipschitz) on $(0, 1) \times [0, \infty)$. Moreover, by the implicit function theorem,
        \begin{align*}
            m_c'(u) = -\frac{\partial_2 F(c, u, m_c(u))}{\partial_3 F(c, u, m_c(u))} = c\frac{p_\cN(a) - p_\cN(b)}{p_\cN(a) + p_\cN(b)}.
        \end{align*}
For $u > 0,$ $|a| = \frac{|m_c(u) - c u|}{\sqrt{1-c^2}}< \frac{m_c(u) + c u}{\sqrt{1-c^2}} = |b|,$ so $p_\cN(a) - p_\cN(b) > 0.$ It follows that in this case $m_c'(u) \in (0, c),$ implying that $m_c(u)$ is strictly increasing and Lipschitz in $u.$ Since $r(u; \vx, \vx^*) = r^*(m_{\vx \cdot \vx^*}(u))$ and $r^*$ is strictly increasing and locally Lipschitz, so is $r$, in both $u$ and $\vx.$

Fix $\vx$ and let $c_0 = \vx \cdot \vx^* \in (0, 1).$ For $c$ in a compact neighborhood $K \subset (0, 1)$ of $c_0$, the implicit-function formulas above imply $0\leq \partial_u m_c(u) \leq c$ and $|\partial_c m_c(u)| \leq C_K(1 + u)$ for some positive constant $C_K.$ Since $r^*$ is increasing, locally Lipschitz, and square integrable, Gaussian integration by parts gives a Gaussian-integrable envelope for the derivatives of $r^*(m_c(|\vx \cdot \vz|)).$   
        This implies that we can safely interchange differentiation and expectation, and it holds that
        \begin{align*}
            \nablar f_{\Lone,\mathsf{PR}}(\vx) &= \Evz[\nablar_\vx\ell_{\Lone,\mathsf{PR}}(\vx\cdot\vz,\vx^*\cdot\vz;\vx,\vx^*)]\\
            &=\Evz[\partial_1\ell_{\Lone,\mathsf{PR}}(\vx\cdot\vz,\vx^*\cdot\vz;\vx,\vx^*)\vz_{\perp\vx}] + \Evz[(\partial_3\ell_{\Lone,\mathsf{PR}}(\vx\cdot\vz,\vx^*\cdot\vz;\vx,\vx^*))_{\perp\vx}].
        \end{align*}
        To prove that $\nablar f_{\Lone,\mathsf{PR}} = \vg_{\Lone,\mathsf{PR}}$, we observe that the expectation with respect to $\partial_3\ell_{\Lone,\mathsf{PR}}$ is zero:
        \begin{align*}
          &  \Evz[(\partial_3\ell_{\Lone,\mathsf{PR}}(\vx\cdot\vz,\vx^*\cdot\vz;\vx,\vx^*))_{\perp\vx}] \\
          =&\; \Evz[\sign(r(|\vx\cdot\vz|;\vx,\vx^*) - r^*(|\vx^*\cdot\vz|))(\partial_\vx r(u;\vx,\vx^*)|_{u = |\vx\cdot\vz|})_{\perp\vx}]\\
            =&\; \Evz[\E[\sign(r(|\vx\cdot\vz|;\vx,\vx^*) - r^*(|\vx^*\cdot\vz|))|\vx\cdot\vz](\partial_\vx r(u;\vx,\vx^*)|_{u = |\vx\cdot\vz|})_{\perp\vx}].
        \end{align*}
        By the definition of $r(|\vx\cdot\vz|;\vx,\vx^*)$, it holds that given $\vx\cdot\vz$, the probability that the sign of the difference between $r$ and $r^*$ being positive and negative is the same: $\pr[r(|\vx\cdot\vz|;\vx,\vx^*)\geq r^*(|\vx^*\cdot\vz|)|\vx\cdot\vz] = \pr[r(|\vx\cdot\vz|;\vx,\vx^*)\leq r^*(|\vx^*\cdot\vz|)|\vx\cdot\vz]  =1/2$, hence $\E[\sign(r(|\vx\cdot\vz|;\vx,\vx^*) - r^*(|\vx^*\cdot\vz|))|\vx\cdot\vz] = 0$. Therefore, $\Evz[(\partial_3\ell_{\Lone,\mathsf{PR}}(\vx\cdot\vz,\vx^*\cdot\vz;\vx,\vx^*))_{\perp\vx}] = 0$, and the gradient of $f_{\Lone, \mathsf{PR}}$ is exactly in the form of vector field $\vg$: $\nablar f_{\Lone, \mathsf{PR}}(\vx) = \vg_{\Lone, \mathsf{PR}}(\vx,\vx^*)$.
    \end{proof}

    Note here that while \Cref{claim:Gaussian-l1-PR-bestfitting} sets the domain of $r(u; \vx, \vx^*)$ to be the nonnegative real line, this is without loss of generality. The reason is that the objective in the phase retrieval problem only ever evaluates $r(|\vx \cdot \vz|),$ thus the function constructed on $[0, \infty)$ can be extended arbitrarily over the negative real line to obtain an element of $\cF_+.$ 

    Now given \Cref{claim:Gaussian-l1-PR-bestfitting}, it only remains to check that the joint curvature is negative. As before, let $c = \cos\theta$ and $u \geq 0,$ then  $m_{c}(u) = (r^*)^{-1}(r(u;\vx,\vx^*))$ and $r(u;\vx,\vx^*) = r^*(m_{c}(u))$. Since $r^*$ is strictly increasing, it holds that $\sign(r(|u|; \vx, \vx^*) - r^*(|v|)) = \sign(m_c(|u|)-|v|).$ Therefore, for almost every $u,$ we have
\[
\partial_1 \ell_{\Lone, \mathsf{PR}}(u, v; \vx, \vx^*) = r'(|u|; \vx, \vx^*)\sign(u)\sign(m_c(|u|) - |v|),
\]
and, distributionally,
\[
D_2\partial_1 \ell_{\Lone, \mathsf{PR}}(u, v; \vx, \vx^*) = - 2r'(|u|; \vx, \vx^*)\sign(u)\big(\delta(v - m_c(|u|)) - \delta(v + m_c(|u|))\big).
\]
Now let $u = \vx \cdot \vz$, $v = \vx^*\cdot\vz$. Since the distribution of $v$ given $u$ is $\cN(u \cos\theta, \sin^2\theta),$ we get
\begin{align*}
    &\quad \Evz[D_2\partial_1 \ell_{\Lone, \mathsf{PR}}(\vx \cdot \vz, \vx^*\cdot\vz; \vx, \vx^*)|\vx\cdot\vz = u] \\
    &= - 2r'(|u|; \vx, \vx^*)\sign(u)\big(p_{\cN(u \cos\theta, \sin^2\theta)}(m_c(|u|)) - p_{\cN(u \cos\theta, \sin^2\theta)}(- m_c(|u|))\big).
\end{align*}
For $u \neq 0,$ letting $m = m_c(|u|)>0,$ we have 
\begin{align*}
    \frac{p_{\cN(u \cos\theta, \sin^2\theta)}(m)}{p_{\cN(u \cos\theta, \sin^2\theta)}(-m)} = \exp\bigg(\frac{2m u \cos\theta}{\sin^2\theta}\bigg) \;\Rightarrow\; \sign(u)\big(p_{\cN(u \cos\theta, \sin^2\theta)}(m_c(|u|)) - p_{\cN(u \cos\theta, \sin^2\theta)}(- m_c(|u|))\big) > 0.
\end{align*}
Finally, because $r$ is strictly increasing, we have that $r'(|u|; \vx, \vx^*) \geq 0$ a.e., with strict inequality on a set with positive measure. Thus, we can conclude that for a.e. $u$, \[
    \Evz[D_2\partial_1 \ell_{\Lone, \mathsf{PR}}(\vx \cdot \vz, \vx^*\cdot\vz; \vx, \vx^*)|\vx\cdot\vz = u] < 0,
\]
and so \Cref{assum:ell-Gaussian} holds. 
 It remains to apply \Cref{lem:regression-loss-satisfy-sharpness-Gaussian} to conclude that $f_{\Lone, \mathsf{PR}}(\vx)$ is $1$-RGA.
\end{proof}

\subsection{Example Regression Problems with General-Covariance Gaussians}\label{sec:ex:general Gaussian}

{We now proceed to problems that can be formulated as regression problems with general-covariance Gaussian covariates. It is worth noting that for all the problems considered in this section, the RGA property is determined by the second order `Hessian' curvature condition with respect to the first argument of the loss.}

\subsubsection{Learning Complete Orthogonal Dictionaries (COD)}

Our {first example} is Learning Complete Orthogonal Dictionaries (COD), which was defined in \Cref{prob:dictionary}. 
Recall that under \Cref{prob:dictionary}, the underlying sparse code vector $\vz$ follows the Gaussian-Bernoulli distribution such that $\vz = \vb\odot\vp$ where $\vb$ is the mask vector whose entries  follow $b_i\sim{\rm Bernoulli}(\eta)$ and $\vp\sim\calN(0,\mI)$.
Let us define $\vz_\vb$ as the sparse code given the masks $\vb$, then it is easy to see $\vz_\vb$ follows zero-mean Gaussian with non-identity covariance $\vz_\vb\sim\calN(0,\mSigma_\vb)$, where $\mSigma_\vb = \diag(\vb)$.
To address COD, the idea is to verify \Cref{assum:ell-Gaussian-sigma} for the 
conditioned random variable $\vz_\vb$ and then take the expectation with respect to $\vb$. To streamline the argument, we first extend the sufficient conditions stated in \Cref{assum:ell-Gaussian-sigma} to `conditioned' general variance Gaussians:
\begin{corollary}\label{coro:conditioned-gaussian-Sigma}
    Suppose we are given \eqref{problem:general-regression-formulation} whose underlying distribution $\vz$ satisfies that, conditioned on some auxiliary random variable $\vb$, it holds $\vz_\vb = \vz|\vb\sim\calN(0,\mSigma_\vb)$, with $\mSigma_\vb\succeq 0$ almost surely.
    Fixing any target vector $\vx^*\in\spd$ and any $\vx\in\spd$ such that $\theta:=\theta(\vx,\vx^*)\in(0,\pi/2)$, suppose
    \begin{align*}
        C_\vb^{11}\eqdef \E_{\vz_\vb}[D_1\partial_1\ell(\vx\cdot\vz_\vb,\vx^*\cdot\vz_\vb;\vx,\vx^*)|\vb],\; C_\vb^{12}\eqdef \E_{\vz_\vb}[D_2\partial_1\ell(\vx\cdot\vz_\vb,\vx^*\cdot\vz_\vb;\vx,\vx^*)|\vb],
    \end{align*}
    {exists for any $\vb$ in a given event $\vb\in\cE_\vb$.}
    Let $\vv := \vx^*_{\perp\vx}/\|\vx^*_{\perp\vx}\|_2$, and suppose that the following conditions hold:
    \begin{align*}
        \E_{\vb}[(C_\vb^{11} + \cos(\theta)C_{\vb}^{12})(\vx^\top\mSigma_\vb\vv)\1\{\vb\in\cE_\vb\}]\leq -\mu^{(1)},\;\;\E_{\vb}[C_{\vb}^{12}(\vv^\top\mSigma_\vb\vv)\1\{\vb\in\cE_\vb\}]\leq -\mu^{(2)}.
    \end{align*}
    Given the corresponding Riemannian correlation vector field {$\vg_\ell(\vx,\vx^*) = \E_{\vb}[\vg_\ell(\vx,\vx^*|\vb)\1\{\vb\in\cE_\vb\}]$, where $\vg_\ell(\vx,\vx^*|\vb) = \E_{\vz_\vb}[\partial_1\ell(\vx\cdot\vz_\vb,\vx^*\cdot\vz_\vb;\vx,\vx^*)(\vz_\vb)_{\perp\vx}|\vb]$}, we have:
    \begin{enumerate}
        \item If \Cref{assum:smoothness-of-regression-loss}(i) is satisfied, then $\vg_\ell(\vx,\vx^*)\cdot\vx^*\leq -((\mu^{(1)} + \sin\theta\mu^{(2)})/L_1)\sin\theta\|\vg_\ell(\vx,\vx^*)\|_2$; 
        \item If \Cref{assum:smoothness-of-regression-loss}(ii) is satisfied, then $\vg_\ell(\vx,\vx^*)\cdot\vx^*\leq -((\mu^{(1)} + \sin\theta\mu^{(2)})/(2L_2))\|\vg_\ell(\vx,\vx^*)\|_2$. 
    \end{enumerate}
\end{corollary}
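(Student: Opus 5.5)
The plan is to reduce \Cref{coro:conditioned-gaussian-Sigma} to the computation already done in the proof of \Cref{claim:nablarf-x*-bounded-by-sin2theta}. We apply that computation conditionally on $\vb$ and then average over $\vb$. The final step, converting a correlation bound into RGA, then proceeds exactly as in the proofs of \Cref{lem:regression-loss-satisfy-sharpness-logconcave} and \Cref{lem:regression-loss-satisfy-sharpness-Gaussian-sigma}.

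\emph{Step 1 (conditional Stein identity).} Fix $\vb\in\cE_\vb$ and write $\vz_\vb=\mSigma_\vb^{1/2}\vp$ with $\vp\sim\calN(\vzero,\mI)$. The covariance may be singular; this is harmless because the standard-Gaussian representation adopted in \Cref{subsec:Gaussian-sigma} is used. Repeat verbatim the Stein's lemma (\Cref{fact:stein}) computation from the proof of \Cref{claim:nablarf-x*-bounded-by-sin2theta}, with $\mSigma$ replaced by $\mSigma_\vb$ and all expectations conditional on $\vb$. Using \eqref{eq:Gaussian-sigma-expression-vg-cdot-v}, this gives
\[
\vg_\ell(\vx,\vx^*|\vb)\cdot\vv=(C^{11}_\vb+\cos\theta\, C^{12}_\vb)(\vx^\top\mSigma_\vb\vv)+\sin\theta\, C^{12}_\vb(\vv^\top\mSigma_\vb\vv).
\]
The decomposition $\vx^*=\vx\cos\theta+\vv\sin\theta$ is used exactly as before. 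Only the integrability and chain-rule hypotheses are needed, and they hold for each $\vb\in\cE_\vb$ by assumption.

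\emph{Step 2 (averaging).} Multiply the identity by $\1\{\vb\in\cE_\vb\}$ and take $\E_\vb$. By the definition of $\vg_\ell(\vx,\vx^*)$ in the statement and linearity, we have $\vg_\ell(\vx,\vx^*)\cdot\vv=\E_\vb[\vg_\ell(\vx,\vx^*|\vb)\cdot\vv\,\1\{\vb\in\cE_\vb\}]$. The two averaged terms are bounded by $-\mu^{(1)}$ and $-\mu^{(2)}\sin\theta$ respectively, by the two displayed hypotheses. Hence $\vg_\ell(\vx,\vx^*)\cdot\vv\le-\mu^{(1)}-\mu^{(2)}\sin\theta$. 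Since $\vg_\ell\in\mathrm{T}_\vx\spd$, we get $\vg_\ell\cdot\vx^*=\sin\theta\,\vg_\ell\cdot\vv\le-\mu^{(1)}\sin\theta-\mu^{(2)}\sin^2\theta$.

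\emph{Step 3 (normalization).} Under \Cref{assum:smoothness-of-regression-loss}(i), divide by $L_1\ge\|\vg_\ell\|_2$ to obtain item 1. Under (ii), use $\|\vg_\ell\|_2\le L_2\|\vx-\vx^*\|_2=2L_2\sin(\theta/2)\le2L_2\sin\theta$ for $\theta\in(0,\pi/2)$. This gives $\sin\theta\ge\|\vg_\ell\|_2/(2L_2)$, and substituting into the bound from Step 2 yields item 2.

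\emph{Main obstacle.} The only delicate point is justifying the exchange of $\E_\vb$ with the conditional Stein identity. This requires that the conditional quantities be measurable in $\vb$ and that $\E_\vb|\vg_\ell(\vx,\vx^*|\vb)\cdot\vv|\1\{\vb\in\cE_\vb\}<\infty$. I would rely on the standing convention that all displayed expectations are finite, so that Fubini/tower applies. In the Gaussian–Bernoulli case relevant here, $\vb$ takes finitely many values, so the average is a finite sum and the point is trivial.
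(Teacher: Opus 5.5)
Your proposal is correct and follows the paper's proof. You apply the identity \eqref{eq:Gaussian-sigma-expression-vg-cdot-v} from \Cref{claim:nablarf-x*-bounded-by-sin2theta} conditionally on $\vb$, average over $\vb$ against $\1\{\vb\in\cE_\vb\}$ to obtain $\vg_\ell(\vx,\vx^*)\cdot\vx^*\le-\mu^{(1)}\sin\theta-\mu^{(2)}\sin^2\theta$, and then normalize exactly as in \Cref{lem:regression-loss-satisfy-sharpness-Gaussian-sigma}; your remarks on measurability and integrability in $\vb$ just make explicit what the paper leaves implicit.
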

\begin{proof}
    The proof is a direct application of \Cref{claim:nablarf-x*-bounded-by-sin2theta} and \Cref{lem:regression-loss-satisfy-sharpness-Gaussian-sigma}. Given $\vb$, since $\vz_\vb = \vz|\vb\sim\calN(0,\mSigma_\vb)$, then by \Cref{eq:Gaussian-sigma-expression-vg-cdot-v} in \Cref{claim:nablarf-x*-bounded-by-sin2theta}, we obtain that for this given $\vb$,
    \begin{align*}
        \vg_\ell(\vx,\vx^*|\vb)\cdot\vx^* = (C_\vb^{11} + \cos(\theta)C_{\vb}^{12})\vx^\top\mSigma_\vb\vv \sin\theta + C_\vb^{12}\vv^\top\mSigma_\vb\vv \sin^2\theta.\end{align*}
    Therefore, taking the expectation with respect to $\vb$ and using our assumptions, we have 
    \begin{align*}
        \vg_\ell(\vx,\vx^*)\cdot\vx^* &= \E_{\vb}[\vg_\ell(\vx,\vx^* | \vb)\cdot\vx^*\1_{\cE_\vb}] = \E_{\vb}[\1_{\cE_\vb}(C_\vb^{11} + \cos(\theta)C_{\vb}^{12})\vx^\top\mSigma_\vb\vv] \sin\theta + \E_{\vb}[\1_{\cE_\vb} C_\vb^{12}\vv^\top\mSigma_\vb\vv] \sin^2\theta\\
        &\leq -\mu^{(1)}\sin\theta - \mu^{(2)}\sin^2\theta.
    \end{align*}
    The rest follows from the proof of \Cref{lem:regression-loss-satisfy-sharpness-Gaussian-sigma}.
\end{proof}

We can now proceed to show that the $L_1$ loss for COD is RGA, recovering the result from~\cite{bai2018subgradient,Zhu2019LinearConvergenceGrassman} within our more general framework.

\begin{theorem}\label{lem:COD-is-RGA}
    Let $\vz\sim\D$ where 
    $\vz = \vp\odot\vb$, $\vp\sim\calN(0,\mI_d)$ and $\vb = (b_1,\dots,b_d),$ with $b_1, \dots, b_d \stackrel{i.i.d.}{\sim}\mathrm{Bernoulli}(\eta)$.
    Let $\vx^* = \ve_1$ and denote $\theta = \theta(\vx,\vx^*)$. Consider the loss function $f_{\mathsf{dic}}(\vx) = \Evz[|\vx\cdot\vz|]$.  Furthermore, let $\xi > 1$ be an absolute constant  and define the region
    \begin{align*}
        S_\alpha = \bigg\{\vx\in\spd: x_1\geq \xi\max_{i\geq 2}|x_i|, \cos\theta\geq \alpha\bigg\}, \;\; \alpha > 0.
    \end{align*}
Then, $f_{\mathsf{dic}}(\vx)$ is $\mu$-RGA on $S_\alpha$ with $\mu = \mu_\alpha = {\alpha^3\eta(1 - \eta)(1 - 1/\xi^2)}/{\sqrt{2\pi}}$.
\end{theorem}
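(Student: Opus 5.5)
The plan is to apply \Cref{coro:conditioned-gaussian-Sigma}, conditioning on the Bernoulli mask $\vb$. Given $\vb$, we have $\vz_\vb\sim\calN(0,\mSigma_\vb)$ with $\mSigma_\vb=\diag(\vb)$. The loss is $\ell(u)=|u|$, which depends only on its first argument, so $D_2\partial_1\ell=0$, i.e.\ $C^{12}_\vb=0$ and $\mu^{(2)}=0$. Everything is therefore carried by the ``Hessian'' term $D_1\partial_1\ell=2\delta(u)$ from \Cref{fact:distributional-derivatives}.

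Write $\vx_\vb=\vb\odot\vx$ and take $\cE_\vb=\{\vx_\vb\neq\vzero\}$; on the complement $\vx\cdot\vz_\vb\equiv0$ and contributes nothing. Since $\vx\cdot\vz_\vb\sim\calN(0,\|\vx_\vb\|_2^2)$, we get $C^{11}_\vb=2/(\sqrt{2\pi}\,\|\vx_\vb\|_2)$. With $\vv=(\ve_1-\cos\theta\,\vx)/\sin\theta$ and $x_1=\cos\theta$, we have $\vx^\top\mSigma_\vb\vv=\cos\theta\,(b_1-\|\vx_\vb\|_2^2)/\sin\theta$. The first condition of the corollary thus reduces to showing
\[
E:=\E_\vb\Big[\tfrac{b_1-\|\vx_\vb\|_2^2}{\|\vx_\vb\|_2}\1_{\cE_\vb}\Big]\le -c\,\sin^2\theta .
\]
Once this holds, $\mu^{(1)}=\frac{2\cos\theta}{\sqrt{2\pi}\sin\theta}\,c\sin^2\theta$, and \Cref{claim:nablarf-x*-bounded-by-sin2theta} (via the corollary) yields $\vg\cdot\vx^*\le -\frac{2\cos\theta\, c}{\sqrt{2\pi}}\sin^2\theta$. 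I would also check, using \Cref{fact:Aumann-expectation}, that every Clarke Riemannian subgradient of $f_{\mathsf{dic}}$ equals this field, since $\{\vx\cdot\vz=0\}$ is null on $\cE_\vb$.

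To bound $E$, set $S=\sum_{i\ge2}b_ix_i^2$ and split on $b_1$:
\[
E=\eta\,\E\Big[\tfrac{\sin^2\theta-S}{\sqrt{x_1^2+S}}\Big]-(1-\eta)\,\E[\sqrt S].
\]
Next expand $\sin^2\theta-S=\sum_{i\ge2}(1-b_i)x_i^2$ and $\sqrt S=\sum_{i\ge2}b_ix_i^2/\sqrt S$. Conditioning on $b_i$ and writing $S_{-i}$ for the sum without index $i$, the $i$-th terms pair up as
\[
\eta(1-\eta)\,x_i^2\,\E\Big[\tfrac{1}{\sqrt{x_1^2+S_{-i}}}-\tfrac{1}{\sqrt{x_i^2+S_{-i}}}\Big].
\]
This is negative because $x_1\ge\xi|x_i|$ forces $x_1^2>x_i^2$.

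The step I expect to be the main obstacle is making this gap quantitative. I would use the mean value theorem on $s\mapsto s^{-1/2}$ over the interval $[x_i^2+S_{-i},\,x_1^2+S_{-i}]\subset(0,2]$, together with $x_1^2-x_i^2\ge(1-1/\xi^2)x_1^2\ge(1-1/\xi^2)\alpha^2$. This gives a per-coordinate gap of order $(1-1/\xi^2)\alpha^2$ with an absolute constant. Summing $x_i^2$ over $i\ge2$ produces $\sin^2\theta$, so $c\gtrsim\eta(1-\eta)(1-1/\xi^2)\alpha^2$. Combined with the prefactor $\cos\theta\ge\alpha$, this accounts for the $\alpha^3$ in $\mu_\alpha$.

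Finally, I would bound $\|\vg\|_2$. For the target constant, the bound should be linear in $\sin\theta$ rather than the crude $L_1$, so that \Cref{assum:smoothness-of-regression-loss}(ii) can be used. By the Stein computation of \Cref{claim:nablarf-x*-bounded-by-sin2theta}, $\vg=\E_\vb[C^{11}_\vb\,\mP_{\perp\vx}\mSigma_\vb\vx\,\1_{\cE_\vb}]$, and $\|\mP_{\perp\vx}\mSigma_\vb\vx\|_2^2=\|\vx_\vb\|_2^2-\|\vx_\vb\|_2^4$. Since $x_1^2\ge\alpha^2$ on $S_\alpha$, I expect the ratio $\|\mP_{\perp\vx}\mSigma_\vb\vx\|_2/\|\vx_\vb\|_2$ to be at most of order $\sin\theta$ for either value of $b_1$, possibly at the cost of another factor of $\alpha$. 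This would give $\|\vg\|_2\lesssim\sin\theta/\sqrt{2\pi}$, and dividing the correlation bound by it yields the stated $\mu_\alpha$ up to tracking absolute constants.
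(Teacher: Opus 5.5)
Your setup is correct and matches the paper: conditioning on $\vb$, $C^{12}_\vb=0$, $C^{11}_\vb=2/(\sqrt{2\pi}\|\vx_\vb\|_2)$, $\vx^\top\mSigma_\vb\vv=\cos\theta\,(b_1-\|\vx_\vb\|_2^2)/\sin\theta$, and the pairing into $\eta(1-\eta)x_i^2\,\E[(x_1^2+S_{-i})^{-1/2}-(x_i^2+S_{-i})^{-1/2}]$ are all right. The proof breaks in the last two steps. Your mean-value step bounds $|(s^{-1/2})'|$ below by an absolute constant on $(0,2]$. That throws away the fact that the left endpoint $x_i^2+S_{-i}\le\sin^2\theta$ is small. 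You therefore only get $E\le -c\sin^2\theta$, i.e.\ $\vg\cdot\vx^*\lesssim-\sin^2\theta$, and to recover $r=1$ you then need $\|\vg\|_2\lesssim\sin\theta$. \emph{That bound is false.} On $\cE_\vb$, $\|\vg_\ell(\vx,\vx^*\mid\vb)\|_2=\sqrt{2/\pi}\,\sqrt{1-\|\vx_\vb\|_2^2}$, and for $b_1=0$ this is at least $\sqrt{2/\pi}\cos\theta\ge\sqrt{2/\pi}\,\alpha$. So the ratio is not $O(\sin\theta)$ ``for either value of $b_1$.'' Averaging over $\vb$ does not rescue it, because $f_{\mathsf{dic}}$ has a sharp (first-order) minimum at $\ve_1$. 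To see this, write $\vx=\cos\theta\,\ve_1+\sin\theta\,\vu$ with $\vu\perp\ve_1$ a unit vector, and set $W=\sum_{i\ge2}b_iu_i^2$. Your own formula for $E$ then gives $\vg\cdot\vv=-\sqrt{2/\pi}\,(1-\eta)\cos\theta\,\E\sqrt{W}+O(\sin\theta)$. Since $\E\sqrt{W}\ge\E W=\eta$, $\|\vg\|_2$ stays bounded below by a constant as $\theta\to0$. Hence \Cref{assum:smoothness-of-regression-loss}(ii) cannot hold, and your argument only yields order-$2$ RGA (an RGA constant decaying like $\sin\theta$).

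The missing idea is that the per-coordinate gap is of order $1/\sin\theta$, not $O(1)$. With $A=x_i^2+S_{-i}\le\sin^2\theta$ and $B=x_1^2+S_{-i}\le1$, you have $A^{-1/2}-B^{-1/2}=(B-A)/(\sqrt{AB}(\sqrt A+\sqrt B))\ge(1-1/\xi^2)\cos^2\theta/(2\sin\theta)$. The paper obtains the same bound from the monotonicity of $s\mapsto(a+s)^{-1/2}-(b+s)^{-1/2}$ together with $S_{-i}\le\sin^2\theta-x_i^2$. Summing against $x_i^2$ gives $E\le-\tfrac12\eta(1-\eta)(1-1/\xi^2)\cos^2\theta\sin\theta$. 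Hence $\mu^{(1)}=\eta(1-\eta)(1-1/\xi^2)\cos^3\theta/\sqrt{2\pi}\ge\mu_\alpha$, a constant, and you close with \Cref{assum:smoothness-of-regression-loss}(i) and $L_1=1$ (Cauchy--Schwarz with $|s_\vx|\le1$), as the paper does. A smaller point: when $\vx$ has zero coordinates, $\cE_\vb^c$ has positive probability and the Clarke selection there is arbitrary, so not every subgradient equals your field. What holds, and suffices, is that on $\cE_\vb^c$ necessarily $b_1=0$. Those contributions are then orthogonal to $\ve_1$, so $\vg\cdot\vx^*$ is unchanged, and $\|\vg\|_2\le1$ holds for every selection.
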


\begin{proof}
{
The objective function $f_{\mathsf{dic}}(\vx)$ is non-smooth, so we need to prove that any choice of subgradient field $\nablar f_{\mathsf{dic}}(\vx)\in \partial^{C,R}f_{\mathsf{dic}}(\vx)$ satisfies RGA.
Let $\ell_{\mathsf{dic}}(\vx\cdot\vz,\vx^*\cdot\vz;\vx,\vx^*) = \ell_{\mathsf{dic}}(\vx\cdot\vz) = |\vx\cdot\vz|$. Since $\ell_{\mathsf{dic}}$ is Lipschitz, by Aumann expectation (\Cref{fact:Aumann-expectation}), 
\begin{align*}
    \nablar f(\vx)\in \partial^{C,R}f_{\mathsf{dic}}(\vx) &\subseteq\bigg\{\Evz[s_\vx(\vz)\vz_{\perp\vx}]: \text{$s_\vx(\vz)$ is an integrable and measurable selection from $\partial_1^C\ell_{\mathsf{dic}}(\vx\cdot\vz)$}\bigg\}.
\end{align*}

We first observe that for any choice of the Aumann expectation, it holds that $\Evz[s_\vx(\vz)\vz_{\perp\vx}]\cdot\vx^*=\vg_\ell(\vx,\vx^*)\cdot \vx^*$, where $\vg_\ell(\vx,\vx^*) = \E[\partial_1\ell_{\mathsf{dic}}(\vx\cdot\vz)\vz_{\perp\vx}]$ is the correlation vector field. In particular, we choose $\partial_1\ell_{\mathsf{dic}}(u) = \sign(u)\in\partial^C\ell_{\mathsf{dic}}(u)$ with the convention that $\sign(0) = 0$.
By the problem definition, the sparse code vector conditioned on the masking $\vb$ follows a Gaussian distribution: $\vz_\vb = \vz|\vb \sim\calN(\vzero,\mSigma_\vb)$ with $\mSigma_\vb = \diag(\vb)$. 
Let us denote $\vg_\ell(\vx,\vx^*|\vb) \eqdef \E[\sign(\vx\cdot\vz)\vz_{\perp\vx}|\vb]$, and let $\cE_\vb = \{\vb:\vx^\top\mSigma_\vb\vx > 0\}$.

First, condition on the event $\vb\in\cE_\vb$. Then since $\vx\cdot\vz_\vb\sim\calN(\vzero,\vx^\top\mSigma_\vb\vx)$ we have $\vx\cdot\vz_\vb\neq 0$ a.s., implying that $\partial_1^C\ell_{\mathsf{dic}}(\vx\cdot\vz_\vb)=\sign(\vx\cdot\vz_\vb)$ a.s., and hence for any choice of $s_\vx(\vz)$ we have $\E[s_\vx(\vz_\vb)(\vz_\vb)_{\perp\vx}|\vb] = \vg_\ell(\vx,\vx^*|\vb)$. 
Now condition on the degenerate event $\cE_\vb^c = \{\vb:\vx^\top\mSigma_\vb\vx = \sum_{i=1}^d b_ix_i^2 = 0\}$, i.e., $\vx\cdot\vz_\vb = 0$ is a degenerate distribution. 
In this case, it must be $b_i = 0$ whenever $x_i \neq 0$. Note that for any $\vx\in\cS_\alpha$, $x_1>0$, hence it must be $b_1 = 0$. Therefore, since $\vx^* = \ve_1$, it further holds $(\vz_{\vb})_{\perp\vx}\cdot\vx^* = \vz_{\vb}\cdot\vx^* - (\vz_\vb\cdot\vx)(\vx\cdot\vx^*) = 0$. On the other hand, when $\vx\cdot\vz_\vb = 0$, by definition of the sign function we have $\vg_\ell(\vx,\vx^*|\vb) = \E[\sign(0)(\vz_\vb)_{\perp\vx}|\vb] = 0$. 
Thus, for $\vx\in\cS_\alpha$ we have $\E[s_\vx(\vz_\vb)((\vz_\vb)_{\perp\vx}\cdot\vx^*)|\vb]\1\{\vb\in\cE_\vb^c\} = 0 = \vg_\ell(\vx,\vx^*|\vb)\cdot\vx^*\1\{\vb\in\cE_\vb^c\}$.
We can now conclude that for any selection $s_\vx(\vz)$, $\Evz[s_\vx(\vz)\vz_{\perp\vx}]\cdot\vx^*=\E_\vb[\vg_\ell(\vx,\vx^*|\vb)]\cdot\vx^* = \vg_\ell(\vx,\vx^*)\cdot \vx^*$.}

Therefore, to prove that every vector field $\nablar f(\vx)\in \partial^{C,R}f_{\mathsf{dic}}(\vx)$ satisfies RGA, it suffices to show that the vector field $\vg_\ell(\vx,\vx^*)$ is RGA. To this aim, we apply \Cref{coro:conditioned-gaussian-Sigma}. 
{Note that, since $\vg_\ell(\vx,\vx^*|\vb)\1\{\vb\in\cE_\vb^c\} = 0$ as we have argued above, it holds that $\vg_\ell(\vx,\vx^*) = \E_{\vb}[\vg_\ell(\vx,\vx^*|\vb)\1\{\vb\in\cE_\vb\}]$.}
Since the loss $\ell_{\mathsf{dic}}(u,v;\vx,\vx^*) = |u|$ does not depend on the second argument $v$, it always holds that $D_2\partial_1\ell_{\mathsf{dic}}(u,v;\vx,\vx^*) = 0$, and hence, the random variable $C_\vb^{12} = \E_{\vz_\vb}[D_2\partial_1\ell_{\mathsf{dic}}(\vx\cdot\vz_\vb,\vx^*\cdot\vz_\vb;\vx,\vx^*)] = 0$.
    Therefore, according to \Cref{coro:conditioned-gaussian-Sigma}, to prove that $\vg_\ell$ is RGA, it only remains to bound $\E_\vb[C_\vb^{11}(\vx^\top\mSigma_\vb\vv)\1_{\cE_\vb}]$ and the norm of the vector field $\vg_\ell$.
    {Consider $\vb\in\cE_\vb$, i.e., $\vx^\top\mSigma_\vb\vx>0$.} Recall that as we have stated in \Cref{fact:distributional-derivatives}, the distributional derivative of the $\sign$ function is the Dirac delta function, hence $D_1\partial_1\ell_{\mathsf{dic}}(\vx\cdot\vz_\vb,\vx^*\cdot\vz_\vb;\vx,\vx^*) = D(\sign(\vx\cdot\vz_\vb)) = 2\delta(\vx\cdot\vz_\vb)$. Given a mask vector $\vb$ and a unit vector $\vx$, we have $\vx\cdot\vz_\vb\sim\calN(0, \vx^\top\mSigma_\vb\vx)$, and the conditioned random variable $C_\vb^{11}\vx^\top\mSigma_\vb\vv$ is (recalling that $\mSigma_\vb = \diag(\vb)$):
    \begin{align*}
        C_\vb^{11}\vx^\top\mSigma_\vb\vv = 2\vx^\top\mSigma_\vb\vv \E_{\vz_\vb}[\delta(\vx\cdot\vz_\vb)] = 2\vx^\top\diag(\vb)\vv\int_\R \delta(u)p_{\vx\cdot\vz_\vb}(u)\diff{u} = \frac{2\vx^\top\diag(\vb)\vv}{\sqrt{2\pi(\vx^\top\diag(\vb)\vx)}}.
    \end{align*}
    Since $\vx^*= \ve_1$, $\vv = \vx^*_{\perp\vx}/\|\vx^*_{\perp\vx}\|_2$, and $\|\vx^*_{\perp\vx}\|_2 = \sin\theta = \sqrt{1 - x_1^2}$, we have that 
    \begin{align*}
        \vv = \frac{(\mI - \vx\vx^\top)\ve_1}{\sin\theta} = \frac{1}{\sqrt{1 - x_1^2}}(1 - x_1^2, -x_1x_2,\dots, -x_1x_d)^\top.
    \end{align*}
    Therefore, the expectation $\E_\vb[C_\vb^{11}(\vx^\top\mSigma_\vb\vv)\1_{\cE_\vb}] = \E_\vb[2\frac{\sum_{i=1}^d b_ix_iv_i}{\sqrt{2\pi\sum_{i=1}^d b_ix_i^2}}\1_{\cE_\vb}]$. {Since on the event $\cE_\vb^c$ it holds $\sum_{i=1}^d b_ix_iv_i = \sum_{i=1}^d b_i x_i^2 = 0$, using the convention that $0/\sqrt{0} = 0$}, we further have:
    \begin{align*}
        \E_\vb[C_\vb^{11}(\vx^\top\mSigma_\vb\vv) \1_{\cE_\vb}] = \frac{2}{\sqrt{2\pi}}\E_\vb\bigg[\frac{\sum_{i=1}^d b_ix_iv_i}{\sqrt{\sum_{i=1}^d b_ix_i^2}}(\1_{\cE_\vb} + \1_{\cE_\vb^c})\bigg] = \frac{2}{\sqrt{2\pi(1 - x_1^2)}}\E_\vb\bigg[\frac{b_1 x_1(1 - x_1^2) - \sum_{i=2}^d b_i x_1 x_i^2}{\sqrt{\sum_{i=1}^d b_ix_i^2}}\bigg].
    \end{align*}
Since $b_i\stackrel{i.i.d.}{\sim}{\rm Bernoulli}(\eta)$, conditioning on whether $b_1 = 0$, we have:
    \begin{align*} \E_\vb[C_\vb^{11}(\vx^\top\mSigma_\vb\vv)\1_{\cE_\vb}]
        &= \frac{2}{\sqrt{2\pi(1 - x_1^2)}}\bigg(\E_{b_2,\dots,b_d}\bigg[-\frac{\sum_{i=2}^d b_i x_1 x_i^2}{\sqrt{\sum_{i=2}^d b_ix_i^2}}\bigg|b_1 = 0\bigg]\pr(b_1 = 0)\\
       & \hspace{1.2in}+ \E_{b_2,\dots,b_d}\bigg[\frac{x_1(1 - x_1^2 -\sum_{i=2}^d b_i x_i^2)}{\sqrt{x_1^2 + \sum_{i=2}^d b_ix_i^2}}\bigg|b_1 = 1\bigg]\pr(b_1 = 1)\bigg)\\
        &=\frac{2 x_1}{\sqrt{2\pi(1 - x_1^2)}}\bigg(-(1 - \eta)\E_{b_2,\dots,b_d}\bigg[\frac{\sum_{i=2}^d b_i x_i^2}{\sqrt{\sum_{i=2}^d b_ix_i^2}}\bigg] + \eta\E_{b_2,\dots,b_d}\bigg[\frac{\sum_{i=2}^d (1 - b_i) x_i^2}{\sqrt{x_1^2 + \sum_{i=2}^d b_ix_i^2}}\bigg]\bigg)\\
        &=\frac{2 x_1}{\sqrt{2\pi(1 - x_1^2)}}\bigg(-(1 - \eta)\sum_{j=2}^d \E_{b_i,i\geq 2, i\neq j}\bigg[\frac{x_j^2}{\sqrt{x_j^2 + \sum_{i\geq 2, i\neq j} b_ix_i^2}}\bigg|b_j = 1\bigg]\pr[b_j = 1] +\\
        &\quad\quad + \eta\sum_{j=2}^d \E_{b_i,i\geq 2, i\neq j}\bigg[\frac{x_j^2}{\sqrt{x_1^2 + \sum_{i\geq 2, i\neq j} b_ix_i^2}}\bigg|b_j = 0\bigg]\pr[b_j = 0]\bigg)\\
        &=-\frac{2 x_1\eta (1 - \eta)}{\sqrt{2\pi(1 - x_1^2)}}\sum_{j=2}^d \E_{b_i,i\geq 2, i\neq j}\bigg[\frac{x_j^2}{\sqrt{x_j^2 + \sum_{i\geq 2, i\neq j} b_ix_i^2}} - \frac{x_j^2}{\sqrt{x_1^2 + \sum_{i\geq 2, i\neq j} b_ix_i^2}}\bigg].
    \end{align*}
    Using the assumption that $x_1\geq \xi\max_{i\geq 2}|x_i|$ for some constant $\xi>1$, it holds that $x_i^2\leq x_1^2/\xi^2 = \cos^2\theta/\xi^2$. Furthermore, observe $\sum_{i\geq 2, i\neq j}b_i x_i^2 \leq \sin^2\theta - x_j^2$, and the function $f(x) = 1/\sqrt{a+x} - 1/\sqrt{b+x}$ is non-increasing in $x$ when $0\leq a\leq b$. Thus, it holds that:
    \begin{align*}
        \frac{1}{\sqrt{x_j^2 + \sum_{i\geq 2, i\neq j}b_i x_i^2}} - \frac{1}{\sqrt{\cos^2\theta + \sum_{i\geq 2, i\neq j}b_i x_i^2}}&\geq \frac{1}{\sin\theta} - \frac{1}{\sqrt{1 - x_j^2}}\\
        &\geq \frac{1}{\sin\theta} - \frac{1}{\sqrt{1 - \cos^2\theta/\xi^2}}\geq \frac{(1 - 1/\xi^2)\cos^2\theta}{2\sin\theta}.
    \end{align*}

    Recalling that $\sum_{j\geq 2}x_j^2 = \sin^2\theta = 1-x_1^2$, we can finally conclude that in the region $S_\alpha$ defined in the statement of the theorem:
    \begin{align*}
        \E_\vb[C_\vb^{11}(\vx^\top\mSigma_\vb\vv)\1_{\cE_\vb}]\leq -\frac{2 x_1\eta (1 - \eta)}{\sqrt{2\pi(1 - x_1^2)}}\sum_{j=2}^d \frac{x_j^2(1 -1/\xi^2)x_1^2}{2\sqrt{1 - x_1^2}}\leq -\frac{2 x_1^3\eta(1 - \eta)(1 -1/\xi^2)}{2\sqrt{2\pi}}\leq -\frac{\alpha^3\eta(1 - \eta)(1 -1/\xi^2)}{\sqrt{2\pi}}.
    \end{align*}

    For the norm of the vector fields $\Evz[s_\vx(\vz)\vz_{\perp\vx}]$, $s_\vx(\vz)\in[-1,1]$, we have
    \begin{align*}
        \|\Evz[s_\vx(\vz)\vz_{\perp\vx}]\|_2 & = \sup_{\vu\in\spd}\E_{b_1,\dots, b_d \stackrel{i.i.d.}{\sim}\mathrm{Bernoulli}(\eta)}\bigg[\E_{\vz_\vb\sim\calN(0,\diag(\vb))}[s_\vx(\vz)(\vz_\vb\cdot\vu_{\perp\vx})|\vb]\bigg]\\
        &\leq \sup_{\vu\in\spd}\E_{b_1,\dots, b_d \stackrel{i.i.d.}{\sim}\mathrm{Bernoulli}(\eta)}\bigg[\sqrt{\E_{\vz_\vb\sim\calN(0,\diag(\vb))}[(\vz_\vb\cdot\vu_{\perp\vx})^2]}\bigg]\leq 1,
    \end{align*}
where the last inequality is by Cauchy-Schwarz. 

Thus, applying \Cref{coro:conditioned-gaussian-Sigma} with $\mu^{(1)} = \frac{\alpha^3\eta(1 - \eta)(1 -1/\xi^2)}{\sqrt{2\pi}}, \mu^{(2)} = 0, $ and $L_1 = 1$, we conclude that the correlation vector field $\vg_\ell(\vx,\vx^*)$ is $\mu$-RGA on $S_\alpha$ with $\mu = \frac{\alpha^3\eta(1 - \eta)(1 - 1/\xi^2)}{\sqrt{2\pi}}$. 
    Therefore, all subgradient fields in $\partial^{C,R}f_{\mathsf{dic}}(\vx)$
    are $\mu$-RGA on $S_\alpha$ with $\mu = \frac{\alpha^3\eta(1 - \eta)(1 - 1/\xi^2)}{\sqrt{2\pi}}$. \end{proof}

The intuition behind the proof of \Cref{lem:COD-is-RGA} is the following. Let us define $f_{\mathsf{dic}}(\vx|\vb)\eqdef \E_{\vz\sim\calN(0,\diag(\vb))}[|\vx\cdot\vz|]$ and for $\beta \in \{0, 1\}$, $f_{\mathsf{dic}}(\vx|b_1 = \beta) = \E_{b_2,\dots,b_d}[f_{\mathsf{dic}}(\vx|(\beta, b_2, \dots, b_d))]$. Note that $f_{\mathsf{dic}}(\vx) = (1-\eta) f_{\mathsf{dic}}(\vx|b_1 = 0) + \eta f_{\mathsf{dic}}(\vx|b_1 = 1).$ 
\Cref{coro:conditioned-gaussian-Sigma} tells us that the signal part of the loss, $f_{\mathsf{dic}}(\vx|b_1 = 0)$ provides a strong signal for $\vx^*$ with a large curvature parameter compared to the noise part of the loss, $f_{\mathsf{dic}}(\vx|b_1 = 1)$.

\subsubsection{Spiked Sparse Covariance PCA}\label{sec:spiked-sparse-pca}

In this section, we prove that the lasso-regularized Rayleigh Quotient objective for the spiked sparse covariance PCA problem (\Cref{prob:spiked-sparse-pca}) satisfies RGA with order $r = 2$, locally near the planted sparse vector. 
Recall that as formulated in \Cref{prob:spiked-sparse-pca}, our goal is to optimize the objective $f_\lambda(\vx) = -\vx^\top\mSigma\vx + \lambda\|\vx\|_1$, $\lambda\geq 0$, on the sphere, where the covariance matrix $\mSigma$ satisfies the spiked-sparse signal model: $\mSigma = \mI + \beta\vx^*\vx^{*\top}$ for some $\beta>0$. Here, $\vx^*\in\spd$ is the planted $s$-sparse vector, i.e., on an index set $\cI\in[d]$, $|\cI| = s$, it holds $x^*_i \in\{\pm1/\sqrt{s}\}$ and $x^*_i = 0$ for other indices.

To apply our sufficient conditions, the first step is to transform the objective $f_\lambda(\vx)$ into a regression loss under a Gaussian distribution.
\begin{claim}\label{claim:convert-spiked-sparse-pca-to-regression}
    Consider the spiked sparse covariance problem (\Cref{prob:spiked-sparse-pca}). Let $\vp_0,\vp_1,\dots,\vp_d$ be independent standard Gaussian random vectors in $\R^d$. Define: $\vz_0 = (\vx^*\vx^{*\top})\vp_0$, and $\vz_i = (\ve_i\ve_i^\top)\vp_i$ for $i\in[d]$. Let $\mZ\in\R^{(d+1)\times d}$, $\mZ^\top \eqdef [\vz_0,\vz_1,\dots,\vz_d]$, and define the loss $\ell(\vu,\vv):\R^{d+1}\times \R^{d+1}\to \R$:
    \begin{align*}
        \ell(\mZ\vx) = \ell_0(\vx\cdot\vz_0) + \sum_{i=1}^d \ell_i(\vx\cdot\vz_i),\;\text{where } \ell_0(u) = -\beta u^2,\,\ell_i(u) = \frac{\lambda}{\sqrt{2/\pi}}|u|.
    \end{align*}
    Let the population objective be $F(\vx) = \E_{\mZ}[\ell(\mZ\vx)]$.
    Then, 
$\partial^{C,R} F(\vx) = \partial^{C,R} f_\lambda(\vx)$.
\end{claim}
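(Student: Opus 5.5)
The plan is to show that $F$ and $f_\lambda$ agree on an open neighborhood of $\spd$ up to an additive constant; equality of the Riemannian Clarke subdifferentials then follows, since Clarke subdifferentials are local objects. Work with the natural extensions to $\R^d$, where the computations are explicit.

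\textbf{The quadratic part.} Since $\vz_0 = \vx^*(\vx^*\cdot\vp_0)$, we have $\vx\cdot\vz_0 = (\vx\cdot\vx^*)(\vx^*\cdot\vp_0)$. Here $\vx^*\cdot\vp_0\sim\calN(0,1)$ because $\|\vx^*\|_2=1$. Hence
\[
\E[\ell_0(\vx\cdot\vz_0)] = -\beta(\vx\cdot\vx^*)^2 .
\]

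\textbf{The $\ell_1$ part.} For each $i$, $\vx\cdot\vz_i = x_i(\ve_i\cdot\vp_i)$ with $\ve_i\cdot\vp_i\sim\calN(0,1)$. Since $\E|g|=\sqrt{2/\pi}$ for $g\sim\calN(0,1)$,
\[
\E[\ell_i(\vx\cdot\vz_i)] = \frac{\lambda}{\sqrt{2/\pi}}\,|x_i|\sqrt{2/\pi} = \lambda|x_i| .
\]
Therefore $F(\vx) = -\beta(\vx\cdot\vx^*)^2 + \lambda\|\vx\|_1$ for all $\vx\in\R^d$.

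\textbf{Matching $f_\lambda$.} We have $f_\lambda(\vx) = -\vx^\top\vx - \beta(\vx\cdot\vx^*)^2 + \lambda\|\vx\|_1$, so $f_\lambda = F - \|\vx\|_2^2$ on $\R^d$. The extra term $-\|\vx\|_2^2$ is smooth with Euclidean gradient $-2\vx$, whose projection onto $\mathrm{T}_\vx\spd$ is zero. By the Clarke sum rule with a $C^1$ summand, $\partial^C f_\lambda(\vx) = \partial^C F(\vx) - 2\vx$; applying $\mP_{\perp\vx}$ gives $\partial^{C,R} f_\lambda(\vx) = \partial^{C,R}F(\vx)$. Equivalently, on $\spd$ itself $-\|\vx\|_2^2\equiv-1$, so the two functions differ by a constant on the manifold, and any definition of the Riemannian Clarke subdifferential via extensions yields the same set.

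\textbf{Main obstacle.} The only delicate point is that $\partial^{C,R}F$ is computed from $F$ as a function, not from the Aumann expectation of per-sample subgradients of $\ell$. The closed form above sidesteps this: integrability is trivial because the losses grow at most quadratically in a Gaussian variable, so $F$ is an explicit locally Lipschitz function. If one also wants the correlation-field representation through \Cref{fact:Aumann-expectation}, the inclusion $\partial^C F\subseteq\E[\partial^C\ell]$ follows, and on $\vx_i\neq0$ the per-sample selections are a.s. unique anyway.
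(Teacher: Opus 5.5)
Your proof is correct and follows the paper's argument. Like the paper, you compute $F(\vx) = -\beta(\vx\cdot\vx^*)^2 + \lambda\|\vx\|_1$ in closed form and observe that it differs from $f_\lambda$ only by $\|\vx\|_2^2$, which is the constant $1$ on $\spd$. Your extra step, the exact Clarke sum rule for the $C^1$ summand $-\|\vx\|_2^2$ on $\R^d$, is a harmless refinement the paper omits; it is useful because the later spiked-PCA proof works with projections of ambient Clarke subdifferentials of these specific extensions.
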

\begin{proof}

    By the additive structure of $\ell(\mZ\vx)$, the population objective is $F(\vx) = \sum_{j=0}^d \E[\ell_j(\vx\cdot\vz_j,\vx^*\cdot\vz_j)]$, which equals:
    \begin{align*}
        F(\vx) = -\beta\E[(\vx\cdot\vx^*)^2(\vx^*\cdot\vp)^2] + \frac{\lambda}{\sqrt{2/\pi}}\sum_{i=1}^d \E[|\vx\cdot\ve_i||\ve_i\cdot\vp|] = - \beta(\vx\cdot\vx^*)^2 + \lambda\|\vx\|_1.
    \end{align*}
    Thus, $f_\lambda(\vx) = F(\vx) - 1$. Since the Riemannian derivative of a constant is zero, we have $\partial^{C,R} f_\lambda = \partial^{C,R} F$.
\end{proof}

The RGA property only concerns the vector field (in this case the gradient field) rather than the function values. 
Therefore, though the population objective defined in \Cref{claim:convert-spiked-sparse-pca-to-regression} is different from the deterministic lasso-regularized Rayleigh quotient, since their gradient fields coincide, it is sufficient to show that any choice of $\partial^{C,R} F$ satisfies RGA, which will directly imply that $f_\lambda$ is also RGA.
\Cref{claim:convert-spiked-sparse-pca-to-regression} converts the problem to minimizing a regression loss whose data distributions are independent Gaussians with general covariances, which exactly falls into the scope of our \Cref{assum:ell-Gaussian-sigma}.
However, noting that \Cref{claim:convert-spiked-sparse-pca-to-regression} defines the regression problem on $d+1$ independent Gaussian random vectors rather than just one Gaussian data distribution, we need a little bit of modification of \Cref{assum:ell-Gaussian-sigma} and \Cref{lem:regression-loss-satisfy-sharpness-Gaussian-sigma} for the convenience of implementation.
\begin{corollary}\label{coro:summable-gaussian-Sigma}
    Suppose we are given \eqref{problem:general-regression-formulation} whose underlying distribution $\mZ\in\R^{m\times d}$, $\mZ^\top = [\vz_1,\dots,\vz_m]$, satisfies $\vz_j\sim\calN(0,\mSigma_j)$, $\mSigma_j\succeq 0$, $j\in[m]$. 
    Furthermore, fixing any target $\vx^*\in\spd$,   assume the loss function $\ell(\vu,\vv;\vx,\vx^*):\R^{m}\times \R^{m}\times \R^d\times\R^d\to\R$ is summable: $\ell(\vu,\vv;\vx,\vx^*) = \sum_{j=1}^m \ell_j(u_j,v_j;\vx,\vx^*).$
For any $\vx\in\spd$ such that $\theta\eqdef\theta(\vx,\vx^*)\in(0,\pi/2)$, and $\vx^\top\mSigma_j\vx\neq 0$, $j\in[m]$, let \begin{align*}
        C_j^{11} = \E_{\vz_j\sim\calN(0,\mSigma_j)}[D_1\partial_1\ell_j(\vx\cdot\vz_j,\vx^*\cdot\vz_j;\vx,\vx^*)], \; C_j^{12} = \E_{\vz_j\sim\calN(0,\mSigma_j)}[D_2\partial_1\ell_j(\vx\cdot\vz_j,\vx^*\cdot\vz_j;\vx,\vx^*)]. \end{align*}
Let $\vv = \vx^*_{\perp\vx}/\|\vx^*_{\perp\vx}\|_2$, and suppose the following conditions hold:
    \begin{align*}
        \sum_{j=1}^m (C_j^{11}  + \cos(\theta) C_j^{12})\vx^\top\mSigma_j\vv\leq -\mu_\theta^{(1)},\; \sum_{j=1}^m C_j^{12}\vv^\top\mSigma_j\vv\leq -\mu_\theta^{(2)},
    \end{align*}
    for some parameters $\mu_\theta^{(1)}, \mu_\theta^{(2)}\geq 0$ (possibly dependent on $\theta$). Given the corresponding Riemannian correlation vector field $\vg_\ell(\vx,\vx^*) = \E[P_{\perp\vx}\mZ^\top\partial_1\ell(\mZ\vx,\mZ\vx^*)]$, we have:
    \begin{equation*}
        \vg_\ell(\vx,\vx^*)\cdot\vx^* \leq -\mu_\theta^{(1)}\sin\theta - \mu_\theta^{(2)}\sin^2\theta.
    \end{equation*}
\end{corollary}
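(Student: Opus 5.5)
The plan is to reduce the claim to the single-Gaussian computation of \Cref{claim:nablarf-x*-bounded-by-sin2theta}, applied term by term. The key structural fact is that the loss is summable, so $\partial_1\ell(\vu,\vv;\vx,\vx^*)$ has $j$-th coordinate $\partial_1\ell_j(u_j,v_j;\vx,\vx^*)$. We fix this measurable selection coordinatewise. Then
\begin{align*}
\vg_\ell(\vx,\vx^*) = \mP_{\perp\vx}\E[\mZ^\top\partial_1\ell(\mZ\vx,\mZ\vx^*)] = \sum_{j=1}^m \E_{\vz_j\sim\calN(0,\mSigma_j)}\big[\partial_1\ell_j(\vx\cdot\vz_j,\vx^*\cdot\vz_j;\vx,\vx^*)\,\mP_{\perp\vx}\vz_j\big] =: \sum_{j=1}^m \vg_{\ell_j}(\vx,\vx^*).
\end{align*}
The $j$-th summand depends only on the law of $\vz_j$. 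Joint independence of the rows is therefore not even needed, only linearity of expectation. Each $\vg_{\ell_j}$ is exactly the Riemannian correlation vector field of a one-row ($m=1$) \eqref{problem:general-regression-formulation} problem with distribution $\calN(0,\mSigma_j)$ and loss $\ell_j$.

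Next, for each $j$, we reproduce the Stein's lemma step from the proof of \Cref{claim:nablarf-x*-bounded-by-sin2theta}. Write $\vz_j=\mSigma_j^{1/2}\vp$ with $\vp\sim\calN(0,\mI)$, and apply \Cref{fact:stein} together with the distributional chain rule (assumed in \Cref{subsec:Gaussian-sigma}). This gives
\[
\vg_{\ell_j}(\vx,\vx^*) = C_j^{11}\mP_{\perp\vx}\mSigma_j\vx + C_j^{12}\mP_{\perp\vx}\mSigma_j\vx^*.
\]
We then take the inner product with $\vv$, using $\vv\perp\vx$ and $\vx^*=\cos\theta\,\vx+\sin\theta\,\vv$, exactly as in \eqref{eq:Gaussian-sigma-expression-vg-cdot-v}:
\[
\vg_{\ell_j}\cdot\vv=(C_j^{11}+\cos\theta\, C_j^{12})\vx^\top\mSigma_j\vv + C_j^{12}\,\vv^\top\mSigma_j\vv\,\sin\theta .
\]
The hypothesis $\vx^\top\mSigma_j\vx\neq0$ ensures that $\vx\cdot\vz_j$ is a nondegenerate Gaussian. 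This keeps the pairings $C_j^{11},C_j^{12}$ well defined, for instance when $\partial_1\ell_j$ has a jump so that $D_1\partial_1\ell_j$ contains a Dirac mass at $0$.

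Finally, we sum over $j$ and invoke the two displayed hypotheses to get $\vg_\ell\cdot\vv\le-\mu^{(1)}_\theta-\mu^{(2)}_\theta\sin\theta$. Since $\vg_\ell\in\mathrm{T}_\vx\spd$, we have $\vg_\ell\cdot\vx^*=\sin\theta\,\vg_\ell\cdot\vv$, which yields the claimed bound.

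The only delicate step is justifying Stein's lemma and the chain rule per component when $\mSigma_j$ is singular (e.g.\ rank one, as in \Cref{claim:convert-spiked-sparse-pca-to-regression}). I would handle this by working throughout with the representation $\vz_j=\mSigma_j^{1/2}\vp$, where the Gaussian is standard on $\R^d$, and relying on the standing integrability assumptions of \Cref{subsec:Gaussian-sigma}. With that in place, the argument is simply linearity of expectation plus the already proved single-Gaussian identity.
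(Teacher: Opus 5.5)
Your proposal is correct and matches the paper's proof: linearity of expectation splits $\vg_\ell$ into the per-row correlation fields $\vg_{\ell_j}$, and the identity \eqref{eq:Gaussian-sigma-expression-vg-cdot-v} from \Cref{claim:nablarf-x*-bounded-by-sin2theta} is applied to each of them. Summing and using $\vg_\ell\cdot\vx^* = \sin\theta\,\vg_\ell\cdot\vv$ then gives the stated bound, exactly as in your plan.
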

\begin{proof}
    Similar to \Cref{coro:conditioned-gaussian-Sigma}, the proof is a direct application of \Cref{claim:nablarf-x*-bounded-by-sin2theta} and \Cref{lem:regression-loss-satisfy-sharpness-Gaussian-sigma}. By the linearity of expectation, the Riemannian correlation vector field is 
    \begin{align*}
        \vg_\ell(\vx,\vx^*) &= \E_{\mZ}[P_{\perp\vx}\mZ^{\top}\partial_1\ell(\vu,\vv;\vx,\vx^*)|_{\vu = \mZ\vx,\vv = \mZ\vx^*}] = \sum_{j=1}^m \E_{\vz_j\sim\calN(0,\mSigma_j)}[\partial_1\ell_j(\vx\cdot\vz_j,\vx^*\cdot\vz_j;\vx,\vx^*)(\vz_j)_{\perp\vx}].
    \end{align*}
    Let us denote $\vg_{\ell_j}(\vx,\vx^*) = \E_{\vz_j\sim\calN(0,\mSigma_j)}[\partial_1\ell_j(\vx\cdot\vz_j,\vx^*\cdot\vz_j;\vx,\vx^*)(\vz_j)_{\perp\vx}]$ for $j\in[m]$, then $\vg_\ell = \sum_{j=1}^m \vg_{\ell_j}$.
    Applying \Cref{eq:Gaussian-sigma-expression-vg-cdot-v} in \Cref{claim:nablarf-x*-bounded-by-sin2theta} to each vector field $\vg_{\ell_i}(\vx,\vx^*)$ and using our assumptions, we obtain that
    \begin{align*}
        \vg_\ell(\vx,\vx^*)\cdot\vx^* = \sum_{j=1}^m (C_j^{11}  + \cos(\theta) C_j^{12})\vx^\top\mSigma_j\vv \sin\theta + \sum_{j=1}^m C_j^{12}\vv^\top\mSigma_j\vv \sin^2\theta\leq -\mu^{(1)}\sin\theta - \mu^{(2)}\sin^2\theta.\end{align*}
\end{proof}

Our main result is the following:
\begin{theorem}[Spiked PCA is high-order RGA]\label{lem:spiked-pca-order-2-rga}
    Consider \Cref{prob:spiked-sparse-pca} and suppose the signal strength $\beta$ and the regularization parameter $\lambda$ satisfy $2\beta>\lambda\sqrt{s}>0$.
    Let $\cS$ be a spherical cap centered at $\vx^*$ defined by:
    \begin{align*}
        \cS = \{\vx:\theta(\vx,\vx^*)\in(0,\Bar{\theta})\}, \;\Bar{\theta}\in\bigg(0,\arccos\bigg(\frac{\lambda\sqrt{s}}{2\beta}\bigg)\land \frac{\pi}{2}\bigg).
    \end{align*}
    Then, the lasso-regularized loss $f_\lambda(\vx)$ is $(\mu, 2)$-RGA with $\mu = \frac{2\beta\cos(\Bar{\theta}) -  \lambda\sqrt{s}}{\beta + \lambda\sqrt{d}}$ on $\cS$.
\end{theorem}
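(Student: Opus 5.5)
The plan is to verify the inequality directly from an explicit formula for every Riemannian Clarke subgradient of $f_\lambda$, and then divide by a crude bound on its norm. The computation it performs is the one that \Cref{coro:summable-gaussian-Sigma} would produce after the reformulation of \Cref{claim:convert-spiked-sparse-pca-to-regression}. The spike term $\ell_0(u)=-\beta u^2$ with $\mSigma_0=\vx^*\vx^{*\top}$ gives $C_0^{11}=-2\beta$, $C_0^{12}=0$ and $\vx^\top\mSigma_0\vv=\cos\theta\sin\theta$. The lasso terms $\ell_i(u)=\lambda|u|/\sqrt{2/\pi}$ with $\mSigma_i=\ve_i\ve_i^\top$ give $C_i^{11}\vx^\top\mSigma_i\vv=\lambda\,\sign(x_i)v_i$. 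I will not invoke the corollary itself, because it requires $x_i\neq 0$ for all $i$, whereas near a sparse $\vx^*$ many coordinates of $\vx$ may vanish. The direct computation covers every Clarke selection uniformly.

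\textbf{Step 1: formula for the subgradients.} By \Cref{claim:convert-spiked-sparse-pca-to-regression}, it suffices to treat $F(\vx)=-\beta(\vx\cdot\vx^*)^2+\lambda\|\vx\|_1$. Any $\vg\in\partial^{C,R}F(\vx)$ has the form
\[
\vg=\mP_{\perp\vx}\big(-2\beta(\vx\cdot\vx^*)\vx^*+\lambda\vs\big),
\]
where $\vs\in\partial\|\vx\|_1$. That is, $s_i=\sign(x_i)$ when $x_i\neq0$ and $s_i\in[-1,1]$ otherwise, so in all cases $\vs\cdot\vx=\|\vx\|_1$. Using $\mP_{\perp\vx}\vx^*=\vx^*-\cos\theta\,\vx$, the correlation with the target is
\[
\vg\cdot\vx^* = -2\beta\cos\theta\sin^2\theta+\lambda\big(\vs\cdot\vx^*-\cos\theta\|\vx\|_1\big).
\]

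\textbf{Step 2: controlling the lasso term.} I expect this to be the main obstacle, since the lasso term is a positive nuisance that must be shown to be of order $\sin^2\theta$ rather than $\sin\theta$. Two observations are used.
\begin{itemize}
\item Since $|s_i|\le1$, we have $\vs\cdot\vx^*\le\|\vx^*\|_1=\sqrt s$.
\item The key observation uses the equal-magnitude structure of $\vx^*$, namely $\vx^*=\sign(\vx^*)/\sqrt s$. By Hölder, $\|\vx\|_1\ge\sign(\vx^*)\cdot\vx=\sqrt s\,(\vx^*\cdot\vx)=\sqrt s\cos\theta$.
\end{itemize}
Because $\theta<\pi/2$, $\cos\theta>0$, so the second observation gives $\cos\theta\,\|\vx\|_1\ge\sqrt s\cos^2\theta$. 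Combining the two,
\[
\vs\cdot\vx^*-\cos\theta\|\vx\|_1\le\sqrt s(1-\cos^2\theta)=\sqrt s\sin^2\theta.
\]
Hence, for every $\theta\in(0,\bar\theta)$,
\[
\vg\cdot\vx^*\le-(2\beta\cos\theta-\lambda\sqrt s)\sin^2\theta\le-(2\beta\cos\bar\theta-\lambda\sqrt s)\sin^2\theta.
\]
The prefactor is positive because $\bar\theta<\arccos(\lambda\sqrt s/(2\beta))$.

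\textbf{Step 3: norm bound and conclusion.} The norm satisfies
\[
\|\vg\|_2\le 2\beta\cos\theta\,\|\vx^*_{\perp\vx}\|_2+\lambda\|\vs\|_2\le 2\beta\cos\theta\sin\theta+\lambda\sqrt d\le\beta+\lambda\sqrt d,
\]
using $\|\mP_{\perp\vx}\vs\|_2\le\|\vs\|_2\le\sqrt d$ and $2\sin\theta\cos\theta=\sin2\theta\le1$. Dividing Step 2 by this bound gives $\vg\cdot\vx^*\le-\mu\sin^2\theta\,\|\vg\|_2$ with $\mu=\frac{2\beta\cos\bar\theta-\lambda\sqrt s}{\beta+\lambda\sqrt d}$. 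This is the $(\mu,2)$-RGA property on $\cS$ for every Clarke selection.

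It remains to check $\vg\neq\vzero$ for $\vx\neq\vx^*$. This is immediate from Step 2, since $\vg\cdot\vx^*<0$ whenever $\theta>0$.
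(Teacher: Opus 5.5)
Your proof is correct and reaches exactly the paper's constant, using the same three estimates as the paper: $\vs\cdot\vx^*\le\|\vx^*\|_1=\sqrt s$, the equal-magnitude bound $\|\vx\|_1\ge\sqrt s\,\vx\cdot\vx^*=\sqrt s\cos\theta$, and $\|\vg\|_2\le\beta+\lambda\sqrt d$.

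The difference is only in the route taken to get there.
\begin{itemize}
\item The paper passes through the Gaussian reformulation of \Cref{claim:convert-spiked-sparse-pca-to-regression}, the Aumann-expectation inclusion, and \Cref{coro:summable-gaussian-Sigma}. It reads off $C_0^{11}=-2\beta$ and $C_i^{11}=\lambda/|x_i|$ as curvature terms.
\item That corollary needs $\vx^\top\mSigma_j\vx\neq0$. So the paper must split off the coordinates with $x_i=0$ (the complement of $\cJ_\vx$) and bound their contribution by $\lambda|v_i|$ separately. It then assembles a sign vector $\bm{\xi}$ with $\xi_i=\sign(v_i)$ on those coordinates before reaching the same inequality $\sin\theta\,\bm{\xi}\cdot\vv\le\sqrt s\sin^2\theta$.
\item Your direct formula $\vg=\mP_{\perp\vx}\big(-2\beta(\vx\cdot\vx^*)\vx^*+\lambda\vs\big)$ absorbs the zero coordinates automatically through $\vs\cdot\vx=\|\vx\|_1$. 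It is shorter and avoids that bookkeeping. The paper itself uses the same formula in its proof of \Cref{lem:spiked-sparse-pca-cannot-have-lower-order}.
\end{itemize}

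The paper's detour does not give a stronger result. What it buys is a demonstration that this deterministic, nonsmooth example is certified by the paper's derivative-based sufficient conditions, which is the purpose of that section.
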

\begin{proof}
Consider the distribution and the loss defined in \Cref{claim:convert-spiked-sparse-pca-to-regression}, 
$$F(\vx) = \E[\ell(\mZ\vx)] = \E\bigg[\sum_{i=0}^d \ell_i(\vx\cdot\vz_i)\bigg],\;\text{where } \ell_0(u_0) = -\beta u_0^2,\,\ell_i(u_i) = \frac{\lambda}{\sqrt{2/\pi}}|u_i|,\, i = 1\dots,d.$$
We drop the vectors $\vx,\vx^*$ as well as $\mZ\vx^*$ in the expression of $\ell$ and $\ell_j$, $j = 0,\dots,d$ as they do not depend on $\vx$, $\vx^*$ and $\mZ\vx^*$.
By the definition of $\ell(\vu)= \sum_{j=0}^{d}\ell_j(u_j)$, it holds that $(\partial_1\ell(\vu))_j = \partial_{u_j}\ell_j(u_j)\in\partial_{u_j}^C \ell_j(u_j)$ for $j = 0,\dots, d$.
Since $\ell$ is locally Lipschitz, applying Aumann Expectation \Cref{fact:Aumann-expectation} we have that every choice $\nablar f(\vx)\in\partial^{C,R} F(\vx)$ of the Clarke subdifferential of $F(\vx)$ is contained in the set:
\begin{align*}
    \nablar f(\vx)\in\partial^{C,R}F(\vx)&\subseteq \E[\mP_{\perp\vx}\mZ^\top\partial_1^{C}\ell(\mZ\vx)] \\
    & =\bigg\{\sum_{j=0}^d\E[ s_{j,\vx}(\vz_j)(\vz_j)_{\perp\vx}]:  s_{j,\vx}(\vz_j)\in\partial_j^C\ell_j(\vx\cdot\vz_j), \text{ $s_{j,\vx}$ is $\vz_j$-measurable.}\bigg\} \end{align*}
Therefore, to prove that $\partial^{C,R}F(\vx)$ satisfies RGA, it suffices to prove that for any measurable choice of $\vs_\vx(\mZ) = (s_{0,\vx}(\vz_0),\dots,s_{d,\vx}(\vz_d))$, $s_{j,\vx}\in\partial_j^C\ell_j$, the vector field $\wh{\vg}(\vx;\vs_\vx) = \sum_{j=0}^d\E[ s_{j,\vx}(\vz_j)(\vz_j)_{\perp\vx}]$ is RGA.

Fix any $\vs_\vx(\vz)$. Note that since $\ell_0$ is a smooth function, it always holds $s_{0,\vx}(\vz_0) = -2\beta(\vx\cdot\vz_0)$.
In addition, recall that in the formulation of \Cref{claim:convert-spiked-sparse-pca-to-regression}, the random vectors $\vz_j$ follow Gaussian distribution: $\vz_j\sim\calN(0,\mSigma_j)$, where $\mSigma_0 = \vx^*\vx^{*\top}$ and $\mSigma_i = \ve_i\ve_i^\top$, $i\in[d]$.
Let $\cJ_\vx = \{j\in\{0,1,\dots,d\}: \vx^\top\mSigma_j\vx\neq 0\}$. For any $\vx\in\cS$, we have $\vx\cdot\vx^*\neq 0$ hence $\vx^\top\mSigma_0\vx\neq 0$, i.e., the index $0$ is always an element in $\cJ_\vx$. 
For any index $i$ in $\cJ_\vx\cap\{1,\dots,d\}$, the random variable $\vx\cdot\vz_i\sim\calN(0,\vx^\top\mSigma_i\vx)$ is not degenerate, and since $\vx\cdot\vz_i\neq 0$ a.s., any measurable choice $s_{i,\vx}(\vz_i) = \sign(\vx\cdot\vz_i)$ a.s., with the convention that $\sign(0) = 0$. However, for those indices $i\notin\cJ_\vx$, we have $\vx\cdot\vz_j = 0$, making the random variable degenerate, which requires more care.
Define:
\begin{align*}
    \vg_\ell(\vx,\vx^*;\cJ_\vx) &= \sum_{j\in\cJ_\vx}\vg_{\ell_j}(\vx,\vx^*) = \sum_{j\in\cJ_\vx}\E[\partial_1\ell_j(\vx\cdot\vz_j)(\vz_j)_{\perp\vx}]\\
    &=-2\beta\E[(\vx\cdot\vz_0)(\vz_0)_{\perp\vx}] +  \sum_{j\in\cJ_\vx,j\neq 0}\frac{\lambda}{\sqrt{2/\pi}}\E[\sign(\vx\cdot\vz_j)(\vz_j)_{\perp\vx}]. 
\end{align*}
Then, the inner product $\wh{\vg}(\vx;\vs_\vx)\cdot\vx^*$ equals:
\begin{align}\label{eq:spiked-pca-curvature-eq-0}
    \wh{\vg}(\vx;\vs_\vx)\cdot\vx^* = \vg_\ell(\vx,\vx^*;\cJ_\vx)\cdot\vx^* + \sum_{i\notin \cJ_\vx, i\in[d]} \E[s_{i,\vx}(\vz_i)(\vz_i)_{\perp\vx}]\cdot\vx^*. 
\end{align}

The first term is the inner product of a correlation vector field, to which the \Cref{coro:summable-gaussian-Sigma} is directly applicable.  
First note that $\ell_j(u)$ does not depend on the second argument, therefore, $C_j^{12} = 0$ for all $j \in \cJ_\vx$. Hence, we only need to focus on $C_j^{11} = \E_{\vz_j}[D_1\partial_1\ell_j(\vx\cdot\vz_j)]$, $j \in\cJ_\vx$. Let $\vv = \vx^*_{\perp\vx}/\|\vx^*_{\perp\vx}\|_2$. For $j=0$:
    \begin{align*}
        C_0^{11} = \E_{\vz_0}[D_1\partial_1\ell_0(\vx\cdot\vz_0)] = \E_{\vz_0}[\partial^2_{u_0}(-\beta u_0^2)|_{u_0 = \vx\cdot\vz_0}] = -2\beta,\; \vx^\top\mSigma_0\vv = \vx^\top(\vx^*\vx^{*\top})\vv = \cos\theta\sin\theta.
    \end{align*}
    For $i \in\cJ_\vx\cap[d]$, since $\partial_1\ell_i(u_i) = (\lambda/\sqrt{2/\pi})\sign(u_i)$, by \Cref{fact:distributional-derivatives} we have that  $D_{u_i}\partial_{u_i}\ell_i(u_i) = (\lambda/\sqrt{2/\pi})2\delta(u_i)$ where $\delta$ is the Dirac function. 
    Noting that, according to \Cref{claim:convert-spiked-sparse-pca-to-regression}, $\vx^\top\mSigma_i\vx = x_i^2$, it holds $\vx\cdot\vz_i\sim\calN(0,x_i^2)$ and $x_i\neq 0$ for $i \in\cJ_\vx\cap[d]$, therefore, the second-order curvature expectation $C_i^{11}$ equals:
    \begin{align*}
        C_i^{11} = \E_{\vz_i}[D_1\partial_1\ell_i(\vx\cdot\vz_i)] = \E_{\vz_i}[D_{u_i}\partial_{u_i}\ell_i(u_i)|_{u_i = \vx\cdot\vz_i}] = \frac{\lambda}{\sqrt{2/\pi}}\int_\R 2\delta(z_i)\frac{1}{\sqrt{2\pi}|x_i|}\exp\bigg(-\frac{z_i^2}{2x_i^2}\bigg)\diff{z_i} = \frac{\lambda}{|x_i|}. 
    \end{align*}
    Furthermore, by the definition of $\mSigma_i = \ve_i\ve_i^\top$, we have $\vx^\top\mSigma_i\vv  = x_i v_i$.
Therefore, the sum of the joint curvature conditions equals:
    \begin{align*}\sum_{j\in\cJ_\vx} C_j^{11}\vx^\top\mSigma_j\vv &= -2\beta\cos(\theta)\sin(\theta) + \sum_{i\in[d]\cap\cJ_\vx} \frac{\lambda}{|x_i|}x_iv_i = -2\beta\cos(\theta)\sin(\theta) + \lambda \sum_{i\in[d]\cap\cJ_\vx}\sign(x_i)v_i=:-\mu_\theta^{(1)} . \end{align*}
    Therefore, $\vg_\ell(\vx,\vx^*;\cJ_\vx)$ satisfies \Cref{coro:summable-gaussian-Sigma} with $\mu_\theta^{(1)}$ presented above and $\mu_\theta^{(2)} = 0$. Applying \Cref{coro:summable-gaussian-Sigma}, we have $\vg_\ell(\vx,\vx^*;\cJ_\vx)\cdot\vx^*\leq -\mu_\theta^{(1)}\sin\theta$, and combining with \eqref{eq:spiked-pca-curvature-eq-0}, we get:
    \begin{align}\label{eq:spiked-pca-curvature-eq-1.5}
        \wh{\vg}(\vx;\vs_\vx)\cdot\vx^* \leq \bigg(-2\beta\cos(\theta)\sin(\theta) + \lambda \sum_{i\in[d]\cap\cJ_\vx}\sign(x_i)v_i\bigg)\sin\theta + \sum_{i\notin \cJ_\vx, i\in[d]} \E[s_{i,\vx}(\vz_i)(\vz_i\cdot\vv)]\sin\theta.
    \end{align}
    For the second term in \eqref{eq:spiked-pca-curvature-eq-1.5}, since $s_{i,\vx}(\vz_i)\in\partial_1^C\ell_i(0) = [-\lambda/\sqrt{2/\pi},\lambda/\sqrt{2/\pi}]$, we have that
    \begin{align*}
        \E[s_{i,\vx}(\vz_i)(\vz_i\cdot\vv)] \leq \E[|s_{i,\vx}(\vz_i)||\vz_i\cdot\vv|]\leq \frac{\lambda}{\sqrt{2/\pi}}\E_{\vp\sim\calN_d}[|\ve_i\cdot\vp|]|v_i|\leq \lambda |v_i|.
    \end{align*}
    Plugging this back into the right-hand side of \eqref{eq:spiked-pca-curvature-eq-1.5} above, we obtain:
    \begin{align}\label{eq:spiked-pca-curvature-eq-1}
        \wh{\vg}(\vx;\vs_\vx)\cdot\vx^* &\leq \bigg(-2\beta\cos(\theta)\sin(\theta) + \lambda \bigg(\sum_{i\in[d]\cap\cJ_\vx}\sign(x_i)v_i + \sum_{i\in[d]\cap\cJ_\vx^c}\sign(v_i)v_i\bigg)\bigg)\sin(\theta) \nonumber\\
        &= (-2\beta\cos(\theta)\sin(\theta) + \lambda\bm{\xi}\cdot\vv)\sin(\theta),
    \end{align}
where $\bm{\xi}\in\R^d$ is such that $\xi_i  = \sign(x_i)$ for $i\in[d]\cap\cJ_\vx$ and $\xi_i = \sign(v_i)$ when $i\in[d]\cap\cJ_\vx^c$. 
    Note that $x_i = 0$ for $i\in[d]\cap\cJ_\vx^c$, therefore,  $\bm{\xi}\cdot\vx = \|\vx\|_1$, and recalling that it holds $\vx^* = \cos(\theta)\vx + \sin(\theta)\vv$, we have: $\sin(\theta)\bm{\xi}\cdot\vv = \bm{\xi}\cdot\vx^* - \cos(\theta)\bm{\xi}\cdot\vx\leq\bm{\xi}\cdot\vx^*- \cos(\theta)\|\vx\|_1$. Furthermore, since $\vx^*$ is an $s$-sparse vector with uniform coordinate amplitudes (see \Cref{prob:spiked-sparse-pca}), we have $\bm{\xi}\cdot\vx^*\leq \|\vx^*\|_1 = \sqrt{s}$. Therefore, $\sin(\theta)\bm{\xi}\cdot\vv\leq \sqrt{s} - \cos(\theta)\|\vx\|_1$. The norm $\|\vx\|_1$ cannot be small when $\vx$ is far away from $\vx^*$. 
    In fact, denoting the index set on which $x^*_i\neq 0$ by $\cI$, $|\cI| = s$, we have:
    \begin{align*}
    	\|\vx\|_1 = \|\vx_{\cI}\|_1 + \|\vx_{\cI^c}\|_1 \geq \sum_{i\in\cI} x_i\sign(x_i)\geq \sum_{i\in \cI}x_i (\sqrt{s}  x^*_i) \geq \sqrt{s}\vx\cdot\vx^* = \sqrt{s}\cos\theta.
    \end{align*}
    This implies that $\sin(\theta)\bm{\xi}\cdot\vv\leq \sqrt{s} - \sqrt{s}\cos^2(\theta)$, yielding that $\bm{\xi}\cdot\vv\leq \sqrt{s}\sin(\theta)$.
    Thus, plugging the bound on $\bm{\xi}\cdot\vv$ back into \eqref{eq:spiked-pca-curvature-eq-1}, and noting that $\cos(\theta)\geq \cos(\bar{\theta})$ when $\vx\in\cS$, we conclude that 
\begin{align}\label{eq:spiked-pca-curvature-eq-2}
        \wh{\vg}(\vx;\vs_\vx)\cdot\vx^* \leq -(2\beta \cos(\bar{\theta}) - \lambda \sqrt{s})\sin^2(\theta).
    \end{align}
    Note that since $\Bar{\theta}\in(0,\arccos({\lambda\sqrt{s}}/{(2\beta)}))$, it is guaranteed that $\wh{\vg}(\vx;\vs_\vx)\cdot\vx^*<0$.

    It remains to bound the norm of the vector field $\wh{\vg}(\vx;\vs_\vx)$. For any $\vs_\vx\in\partial_1^C\ell$, by definition, we have
    \begin{align*}
    	\wh{\vg}(\vx;\vs_\vx) & = \sum_{j=0}^d \E_{\vz_j}[s_{j,\vx}(\vz_j)(\vz_j)_{\perp\vx}] =-2\beta\E_{\vp\sim\calN_d}[(\vx\cdot\vx^*)(\vx^*\cdot\vp)(\vx^*\vx^{*\top}\vp)_{\perp\vx}] + \sum_{i=1}^d\E_{\vz_i}[s_{i,\vx}(\vz_i)(\vz_i)_{\perp\vx}] \\
&=-2\beta\cos(\theta)\vx^*_{\perp\vx} + \sum_{i=1}^d\E_{\vz_i}[s_{i,\vx}(\vz_i)(\vz_i)_{\perp\vx}].\end{align*}
    The first term has its norm bounded by $\|2\beta\cos(\theta)\vx^*_{\perp\vx}\|_2 = \|2\beta\cos(\theta)\sin(\theta)\vv\|_2\leq \beta$.
    For the second term, since $|s_{,\vx}(\vz_i)|\leq \lambda/\sqrt{2/\pi}$, for any unit vector $\vu\perp\vx$, it holds:
    \begin{align*}
        \sum_{i=1}^d\E_{\vz_i}[s_{i,\vx}(\vz_i)(\vz_i)_{\perp\vx}]\cdot\vu\leq \sum_{i=1}^d\E_{\vz_i}[|s_{i,\vx}(\vz_i)||\vz_i\cdot\vu|]\leq \sum_{i=1}^d\frac{\lambda}{\sqrt{2/\pi}}\E_{\vp\sim\calN_d}[|\ve_i\cdot\vp|]||u_i| = \lambda\|\vu\|_1\leq \lambda\sqrt{d}.
    \end{align*}
    Thus, $\|\sum_{i=1}^d\E_{\vz_i}[s_{i,\vx}(\vz_i)(\vz_i)_{\perp\vx}]\|_2\leq \lambda\sqrt{d}$. Applying triangle inequality yields $\|\wh{\vg}(\vx;\vs_\vx)\|_2\leq \beta + \lambda\sqrt{d}$.

Combining with \eqref{eq:spiked-pca-curvature-eq-2}, we immediately obtain that, uniformly for any  measurable selection $\vs_\vx\in\partial^C\ell$:
    $$\wh{\vg}(\vx;\vs_\vx)\cdot\vx^*\leq -\frac{2\beta\cos(\bar{\theta}) - \lambda\sqrt{s}}{\beta + \lambda \sqrt{d}}\|\wh{\vg}(\vx;\vs_\vx)\|_2\sin^2(\theta).$$
    Hence any $
    \nablar f_\lambda (\vx)\in \partial^{C,R}f_\lambda(\vx)=\partial^{C,R}F(\vx)$ is $(\mu, 2)$-RGA with $\mu = \frac{2\beta\cos(\bar{\theta}) - \lambda\sqrt{s}}{\beta + \lambda \sqrt{d}}$, when $\vx\in\cS$.
\end{proof}

The characterization that the subgradient field of $f_\lambda(\vx)$ satisfies only high-order RGA is tight, in the sense that we can construct a geodesic near $\vx^*$ so that any choice of the subgradient field $\vg(\vx)\in\partial^C f_\lambda(\vx)$ does not satisfy order-$r$ RGA when $r<2$.
In particular, we have the following lemma:
\begin{restatable}[Spiked Sparse PCA Cannot Have Smaller Order]{lemma}{spikePCAMustBeOrderTwo}\label{lem:spiked-sparse-pca-cannot-have-lower-order}
Assume $2\leq s<d$ and $\lambda >0$. Under the condition of \Cref{lem:spiked-pca-order-2-rga}, no parameters $\mu>0$ and $r<2$ can satisfy 
    \begin{align*}
        \vg(\vx)\cdot\vx^*\leq -\mu\|\vg(\vx)\|_2\sin^r\theta(\vx,\vx^*)
    \end{align*}
    uniformly in a neighborhood of $\vx^*$ for any choice of subdifferential $\vg(\vx)$. In fact, there is a geodesic path $\vx(t)\to\vx^*$ such that for any choice of the subdifferential $\vg(\vx(t))$, and for any $r<2$ it holds that 
    \begin{align*}
        \frac{\vg(\vx(t))\cdot\vx^*}{\|\vg(\vx(t))\|_2\sin^r\theta(\vx(t),\vx^*)}\to 0,\; t\to 0.
    \end{align*}
\end{restatable}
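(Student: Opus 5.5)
The plan is to exhibit an explicit curve approaching $\vx^*$ along which every Riemannian Clarke subgradient has norm bounded below by a constant, while its correlation with $\vx^*$ is only of order $\sin^2\theta$. Along such a curve the ratio in the statement behaves like $\sin^{2-r}\theta\to 0$ for every $r<2$.

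\emph{Why neither single direction works.} First I will rule out the two naive directions, since this dictates the construction.
\begin{itemize}
\item A geodesic moving within the support $\cI$, tangentially to $\vx^*$ (this needs $s\ge 2$), gives $\vg\cdot\vx^*=-(2\beta\cos\theta-\lambda\sqrt s)\sin^2\theta$. However, since $\bm{\xi}_{\cI}=\sqrt s\,\vx^*_{\cI}$ is locked, the selection $\xi_k=0$ off the support makes $\|\vg\|_2\asymp\sin\theta$.
\item A geodesic moving toward an off-support coordinate $\ve_j$ (this needs $s<d$) forces $|\xi_j|=1$, so $\|\vg\|_2\gtrsim\lambda$. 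But it also produces a first-order correlation term $-\lambda\sin\theta\cos\theta$.
\end{itemize}
The construction will therefore mix the two at different scales: a first-order on-support tangential move, plus an off-support coordinate that is nonzero but only of order $\sin^2\theta$.

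\emph{The curve.} Fix $j\notin\cI$ and a unit vector $\vu$ supported on $\cI$ with $\vu\perp\vx^*$. Set
\[
\vx(t)=\frac{\vx^*+t\vu+t^2\ve_j}{\|\vx^*+t\vu+t^2\ve_j\|_2},
\]
so that $\sin\theta(\vx(t),\vx^*)\asymp t$. This is a smooth curve through $\vx^*$ rather than a single great circle; if the statement insists on a geodesic, I would instead take a sequence of geodesics whose off-support weight shrinks, but the curve is the cleanest version.

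For small $t$ the support signs of $\vx(t)$ agree with those of $\vx^*$, $x_j(t)>0$, and all other off-support coordinates vanish. Hence every Euclidean Clarke subgradient has the form $-2\mSigma\vx+\lambda\bm{\xi}$, where:
\begin{itemize}
\item $\bm{\xi}_{\cI}=\sqrt s\,\vx^*_{\cI}$ and $\xi_j=1$;
\item $\xi_k\in[-1,1]$ is arbitrary for the remaining $k\notin\cI\cup\{j\}$.
\end{itemize}
The Riemannian subgradient is $\vg=\mP_{\perp\vx}(-2\mSigma\vx+\lambda\bm{\xi})$.

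\emph{Step 1: the correlation.} Compute $\vg\cdot\vx^*=\nabla f\cdot\vx^*-\cos\theta\,(\nabla f\cdot\vx)$. The smooth part contributes $-2\beta\cos\theta\sin^2\theta$, as in the proof of \Cref{lem:spiked-pca-order-2-rga}. For the $\ell_1$ part, use $\bm{\xi}\cdot\vx^*=\sqrt s$ and $\bm{\xi}\cdot\vx=\sqrt s\cos\theta+x_j$; the arbitrary $\xi_k$ do not enter because $x_k=x^*_k=0$. This gives
\[
\lambda\bigl(\sqrt s\sin^2\theta-\cos\theta\,x_j\bigr),\qquad x_j=O(t^2)=O(\sin^2\theta).
\]
So $|\vg\cdot\vx^*|=O(\sin^2\theta)$ uniformly over all selections.

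\emph{Step 2: the norm lower bound.} Bound $\|\vg\|_2\ge|\vg\cdot\ve_j|$. Expanding,
\[
\vg\cdot\ve_j=\lambda\xi_j-(\nabla f\cdot\vx)\,x_j-2(\mSigma\vx)_j .
\]
Here $\xi_j=1$, $x_j=O(t^2)$, and $(\mSigma\vx)_j=x_j$ because $x^*_j=0$. Hence $\vg\cdot\ve_j=\lambda-O(t^2)$, using that $\nabla f\cdot\vx$ is bounded. Thus $\|\vg\|_2\ge\lambda/2$ for small $t$, for every selection. Combining the two steps, the ratio is $O(\sin^{2-r}\theta)\to0$ for every $r<2$. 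This also shows that no uniform RGA inequality with $r<2$ and $\mu>0$ can hold in any neighborhood of $\vx^*$.

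\emph{Main obstacle.} The delicate step is verifying that the first-order $\ell_1$ contributions cancel exactly. This relies on $\bm{\xi}_{\cI}$ being forced to equal $\sqrt s\,\vx^*_{\cI}$, which in turn uses the equal-magnitude structure of $\vx^*$ and $\vu\perp\vx^*$. It also relies on the off-support coordinate entering the correlation only through $x_j=O(\sin^2\theta)$. I will also check that the free coordinates $\xi_k$ can neither raise the correlation to first order nor cancel the $\ve_j$ component of $\vg$; both follow because those $\xi_k$ sit on coordinates orthogonal to $\vx$, $\vx^*$ and $\ve_j$.
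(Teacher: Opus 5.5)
Your proposal is correct and follows essentially the same route as the paper. The paper uses the curve $\vx(t)=\cos(t)\vx^*+\sin(t)\bigl(\sqrt{1-t^2}\,\vy+t\,\ve_j\bigr)$ with $\vy$ supported on $\cI$ and $\vy\perp\vx^*$, which is a first-order on-support tangential move plus an $O(t^2)$ off-support coordinate. It then shows $\vg(\vx(t))\cdot\ve_j\to\lambda$ and $|\vg(\vx(t))\cdot\vx^*|=O(\sin^2 t)$ for every selection, exactly as in your Steps 1--2. The paper's path is also not a single great circle despite being called a ``geodesic path,'' so your caveat about using a non-geodesic curve costs nothing.
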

The proof is deferred to \Cref{appx:omitted-facts}.

\subsubsection{Eigenvector Problem/Optimizing Rayleigh Quotient}\label{sec:eigenvector-problem-and-optimizing-rayleigh-quotient}

Finally, we revisit the classic eigenvector problem (\Cref{prob:eigenvector-problem}). 
Recall that in \Cref{prob:eigenvector-problem} we defined $\mA$ to be a real symmetric matrix in $\R^{d\times d}$ with eigenvalues $\lambda_1\geq \lambda_2\geq \dots\geq \lambda_{r-1}>\lambda_r = \dots = \lambda_d$ and corresponding eigenvectors $\vq_1,\dots,\vq_r,\dots,\vq_d$, and that our goal is to optimize the Rayleigh quotient $f_{\mathsf{RQ}}(\vx) = \vx^\top\mA\vx$ on the sphere.
Similar to \Cref{sec:spiked-sparse-pca}, since the Rayleigh quotient is not formulated as a regression objective, our first step is to convert \Cref{prob:eigenvector-problem} to a regression problem, as in \Cref{sec:spiked-sparse-pca}.

\begin{claim}\label{claim:convert-pca-to-regression}
    Consider the eigenvector problem (\Cref{prob:eigenvector-problem}). Let $\mSigma = \mA - \lambda_d\mI\succeq 0$, and $\vz\sim\calN(0,\mSigma)$. Let $F(\vx) = \Evz[\ell(\vx\cdot\vz,\vx^*\cdot\vz;\vx,\vx^*)]$ where $\ell(u,v;\vx,\vx^*) = u^2$. Then, $\nablar F(\vx) = \nablar f_{\mathsf{RQ}}(\vx)$.
\end{claim}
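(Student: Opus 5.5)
The plan is a direct computation of $F$ in closed form, followed by a comparison of Riemannian gradients. Since $\mA$ is real symmetric with smallest eigenvalue $\lambda_d$, the matrix $\mSigma = \mA - \lambda_d\mI$ has eigenvalues $\lambda_i - \lambda_d \geq 0$, so $\mSigma\succeq 0$ and $\vz\sim\calN(0,\mSigma)$ is well defined (possibly degenerate, which is allowed in \Cref{subsec:Gaussian-sigma}).

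The first step is to evaluate the population objective. For any $\vx\in\R^d$, $\vx\cdot\vz\sim\calN(0,\vx^\top\mSigma\vx)$, so
\[
F(\vx) = \Evz[(\vx\cdot\vz)^2] = \vx^\top\mSigma\vx = \vx^\top\mA\vx - \lambda_d\|\vx\|_2^2 .
\]
On the sphere this equals $f_{\mathsf{RQ}}(\vx) - \lambda_d$. The natural smooth extensions to $\R^d$ are $\tilde F(\vx) = \vx^\top\mSigma\vx$ and $\tilde f(\vx) = \vx^\top\mA\vx$. Their Euclidean gradients are $2\mSigma\vx$ and $2\mA\vx$, which differ by $2\lambda_d\vx$.

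The second step is to project. Applying $\mP_{\perp\vx}$ removes the difference, since $\mP_{\perp\vx}\vx = \vzero$. This gives
\[
\nablar F(\vx) = 2\mP_{\perp\vx}\mSigma\vx = 2\mP_{\perp\vx}\mA\vx = \nablar f_{\mathsf{RQ}}(\vx).
\]
Alternatively, one can note that the two functions differ by a constant on $\spd$, and the Riemannian gradient is independent of the chosen extension (as stated in \Cref{sec:prelims}). This is exactly the argument used in \Cref{claim:convert-spiked-sparse-pca-to-regression}.

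For later use, I would also record that $\nablar F$ coincides with the Riemannian correlation vector field $\vg_\ell$ of \Cref{def:grad-field}. Here $\partial_1\ell(u,v) = 2u$ and there is no dependence on $\vx,\vx^*$, so $\partial_3\ell = 0$. Hence
\[
\vg_\ell = \Evz[2(\vx\cdot\vz)\mP_{\perp\vx}\vz] = 2\mP_{\perp\vx}\mSigma\vx,
\]
which matches the expression above.

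None of these steps is a real obstacle. The only point needing care is justifying the interchange of expectation and differentiation, or equivalently the moment identity $\E[(\vx\cdot\vz)\vz] = \mSigma\vx$ when $\mSigma$ is singular. Both are immediate by writing $\vz = \mSigma^{1/2}\vp$ with $\vp$ standard Gaussian, because the loss is a polynomial.
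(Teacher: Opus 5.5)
Your proposal is correct and is the paper's argument. Both compute $F(\vx)=\vx^\top\mSigma\vx=f_{\mathsf{RQ}}(\vx)-\lambda_d$ on the sphere and conclude that the Riemannian gradients agree because the two functions differ by a constant; your explicit identity $2\mP_{\perp\vx}\mSigma\vx=2\mP_{\perp\vx}\mA\vx$ and the remark that $\nablar F=\vg_\ell$ just spell out what the paper uses implicitly when it later applies this claim.
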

\begin{proof}
    By the definition of the objective $F(\vx)$, we have $F(\vx) = \Evz[(\vx\cdot\vz)^2] = \vx^\top\mSigma\vx = f_{\mathsf{RQ}}(\vx) - \lambda_d$. Since the Riemannian gradient of a constant is zero, we have $\nablar F(\vx) = \nablar f_{\mathsf{RQ}}(\vx)$.
\end{proof}

After converting \Cref{prob:eigenvector-problem} to a regression problem, we are now ready to 
show that the gradient field of $F(\vx)$ (hence $f_{\mathsf{RQ}}(\vx)$) is RGA by proving tat it satisfies our 
sufficient conditions (\Cref{assum:ell-Gaussian-sigma}). \begin{theorem}\label{lem:rayleigh-quotient-rga}
Let $\mA$ be a real symmetric matrix in $\R^{d\times d}$ with eigenvalues $\lambda_1\geq \lambda_2\geq \dots\geq \lambda_{r-1}>\lambda_r = \dots = \lambda_d$ and corresponding eigenvectors $\vq_1,\dots,\vq_r,\dots,\vq_d$. 
    Consider \Cref{prob:eigenvector-problem}. Let $\mQ = [\vq_{r},\vq_{r+1},\dots,\vq_d]$ and define $\cS_\alpha = \{\vx\in\spd:\|\mQ^\top\vx\|_2\geq \alpha\}$.
    Let $\mu = \alpha\sqrt{\frac{\lambda_{r-1} - \lambda_d}{\lambda_1 - \lambda_d}}$. 
    Then, for any $\vx\in\cS_\alpha$, the Rayleigh Quotient $f_{\mathsf{RQ}}(\vx)$ is $\mu$-RGA at $(\vx,\vx^*(\vx))$ where $\vx^*(\vx)$ is the normalized projection of $\vx$ to the eigenspace $\mathrm{Span}(\vq_r,\dots,\vq_d)$, i.e., $\vx^*(\vx) = \mQ\mQ^\top\vx/\|\mQ\mQ^\top\vx\|_2$. 
\end{theorem}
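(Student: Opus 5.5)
We will use the regression reformulation of \Cref{claim:convert-pca-to-regression}: with $\mSigma = \mA - \lambda_d\mI \succeq 0$, $\vz\sim\calN(0,\mSigma)$ and $\ell(u,v;\vx,\vx^*) = u^2$, we have $\nablar f_{\mathsf{RQ}}(\vx) = \nablar F(\vx) = \vg_\ell(\vx,\vx^*) = 2\mP_{\perp\vx}\mSigma\vx$. The last identity holds because $\ell$ has no dependence on $\vx$ through its third argument. Since $\ell$ does not depend on $v$, we get $D_2\partial_1\ell = 0$, and $D_1\partial_1\ell = 2$. The identity \eqref{eq:Gaussian-sigma-expression-vg-cdot-v} from the proof of \Cref{claim:nablarf-x*-bounded-by-sin2theta} then gives
\[
\vg_\ell(\vx,\vx^*)\cdot\vx^* = 2\sin\theta\,\vx^\top\mSigma\vv = 2\,\vx^\top\mSigma\vx^*_{\perp\vx},
\]
where $\vx^* = \vx^*(\vx)$. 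We will then verify \Cref{assum:ell-Gaussian-sigma}(i) with an explicit $\mu^{(1)}_\theta$, and treat the norm directly rather than through \Cref{assum:smoothness-of-regression-loss}. A uniform Lipschitz bound $L_1$ would lose the factor we need, so the plan is to bound $\|\vg\|_2$ relative to the same quantity.

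\textbf{Step 1: compute the correlation.} Write $\vx = \va + \vb$, where $\vb = \mQ\mQ^\top\vx$ lies in the bottom eigenspace and $\va$ lies in $\mathrm{Span}(\vq_1,\dots,\vq_{r-1})$. Then $\vx^* = \vb/\|\vb\|_2$ and $\cos\theta = \|\vb\|_2$. Because $\mSigma\vb = 0$, we get $\mSigma\vx^* = 0$, so $\vx^\top\mSigma\vx^* = 0$. Hence
\[
\vx^\top\mSigma\vx^*_{\perp\vx} = \vx^\top\mSigma\vx^* - (\vx\cdot\vx^*)\,\vx^\top\mSigma\vx = -\cos\theta\,\va^\top\mSigma\va,
\]
and therefore $\vg\cdot\vx^* = -2\cos\theta\,\va^\top\mSigma\va$. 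The target inequality is $-2\cos\theta\,\va^\top\mSigma\va \le -\mu\sin\theta\,\|\vg\|_2$, with $\sin\theta = \|\va\|_2$.

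\textbf{Step 2: bound the norm.} We have $\|\vg\|_2 \le 2\|\mSigma\vx\|_2 = 2\|\mSigma\va\|_2$. Writing $\tilde\lambda_i = \lambda_i - \lambda_d \in [\lambda_{r-1}-\lambda_d,\ \lambda_1-\lambda_d]$ for $i<r$, we get
\[
\|\mSigma\va\|_2^2 = \sum_{i<r}\tilde\lambda_i^2 a_i^2 \le (\lambda_1-\lambda_d)\,\va^\top\mSigma\va,
\qquad
\va^\top\mSigma\va \ge (\lambda_{r-1}-\lambda_d)\,\|\va\|_2^2.
\]
Combining these two bounds gives
\[
\|\va\|_2\,\|\mSigma\va\|_2 \le \sqrt{\frac{\lambda_1-\lambda_d}{\lambda_{r-1}-\lambda_d}}\;\va^\top\mSigma\va.
\]
Therefore
\[
\vg\cdot\vx^* = -2\cos\theta\,\va^\top\mSigma\va \le -\cos\theta\sqrt{\frac{\lambda_{r-1}-\lambda_d}{\lambda_1-\lambda_d}}\;\sin\theta\,\|\vg\|_2 .
\]
Using $\cos\theta = \|\mQ^\top\vx\|_2 \ge \alpha$ on $\cS_\alpha$ yields $\mu$-RGA with $\mu = \alpha\sqrt{(\lambda_{r-1}-\lambda_d)/(\lambda_1-\lambda_d)}$.

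\textbf{Step 3: edge cases and the main obstacle.} We must check that $\vg\ne 0$ whenever $\vx\neq\vx^*$. This holds because $\va\ne 0$ forces $\va^\top\mSigma\va > 0$, so $\vg\cdot\vx^* < 0$. We also need $\theta\in(0,\pi/2)$, which holds on $\cS_\alpha$ when $\va \ne 0$; when $\va = 0$ the claim is trivial. The main obstacle is the norm step. The crude bound $\|\vg\|_2 \le 2(\lambda_1-\lambda_d)$ from \Cref{assum:smoothness-of-regression-loss}(i), fed into \Cref{lem:regression-loss-satisfy-sharpness-Gaussian-sigma}, only yields a $\mu$ that depends on $\sin\theta$. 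The square-root ratio comes out only from the Cauchy--Schwarz-type comparison between $\|\va\|_2\,\|\mSigma\va\|_2$ and $\va^\top\mSigma\va$ in Step 2.
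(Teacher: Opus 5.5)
Your proof is correct and follows essentially the same route as the paper: the same regression reformulation with $\mSigma=\mA-\lambda_d\mI$, the same identity $\vx^\top\mSigma\vx^*_{\perp\vx}=-\cos\theta\,\vx^\top\mSigma\vx$, and the same two spectral bounds $\vx^\top\mSigma\vx\geq(\lambda_{r-1}-\lambda_d)\sin^2\theta$ and $\mSigma^2\preceq(\lambda_1-\lambda_d)\mSigma$. The differences are only in packaging: you obtain the identity from $\mSigma\vb=\vzero$ rather than the paper's w.l.o.g.\ choice $\vx^*=\vq_d$, and you combine the bounds by hand instead of passing the $\vx$-dependent norm bound $L_1=2\sqrt{\lambda_1-\lambda_d}\sqrt{\vx^\top\mSigma\vx}$ into \Cref{lem:regression-loss-satisfy-sharpness-Gaussian-sigma}, which also makes the $\vg\neq\vzero$ requirement explicit.
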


\begin{remark}
    Let $\vx^0$ be a randomly sampled vector from the uniform distribution on the sphere and let $m = d-r+1$, which is the dimension of the subspace spanned by the eigenvectors $\vq_r,\dots,\vq_d$. Then, $\|\mQ\vx^0\|_2^2\sim \mathrm{Beta}(m/2, (d-m)/2)$. By standard concentration inequalities for Beta distributions, we have~\cite{Dasgupta2003JohnsonLindenstrauss}:
    \begin{align*}
        \pr[\vx^0\notin \cS_\alpha] = \pr[\|\mQ^\top\vx^0\|_2 < \alpha]\leq \bigg( C\alpha\sqrt{\frac{d}{m}}\bigg)^m.
    \end{align*}
    Therefore, with probability at least $1 - \delta$, $\|\mQ^\top\vx^0\|_2\geq \alpha = c\delta^{1/m}\sqrt{m/d}$, i.e., $\vx^0\in\cS_\alpha$ with $\alpha = c\delta^{1/m}\sqrt{m/d}$.
    Furthermore, for any $\vq'_d\in\sQ \eqdef \mathrm{Span}(\vq_r,\dots,\vq_d)$ such that $\vq'_d\perp \vx^*(\vx^0)$, we observe that $\vq'_d$ is orthogonal to $\nablar f_{\mathsf{RQ}}(\vx^0)$: since $\vx^*(\vx^0)$ is the projection of $\vx^0$ to  $\sQ$, and $\vq_d'\in\sQ$, $\vq_d'\perp\vx^*(\vx^0)$, it must hold that $\vq_d'\perp\vx^0$. In addition, since $\vq_d'\in\sQ$, it is an eigenvector of $\mA$ of eigenvalue $\lambda_d$. Therefore, 
    \begin{align*}
        \nablar f_{\mathsf{RQ}}(\vx^0)\cdot\vq_d' = 2\vq_d'^\top\mA\vx^0 - 2((\vx^0)^\top\mA\vx^0)(\vq'_d\cdot\vx^0) = 2\lambda_d \vq'_d\cdot\vx^0 - 2((\vx^0)^\top\mA\vx^0)(\vq'_d\cdot\vx^0) = 0,
    \end{align*}
    in other words, $\vq'_d$ is orthogonal to $\nablar f_{\mathsf{RQ}}(\vx^0)$. Thus, by an inductive argument, we know that starting from $\vx^0$, for any vector $\vx^k$ generated by \Cref{alg:RGD}, its normalized projection to the subspace $\sQ$ is always $\vx^*(\vx^0)$. Therefore, applying \Cref{lem:descent-rsi} with $\mu = \alpha\sqrt{\frac{\lambda_{r-1} - \lambda_d}{\lambda_1 - \lambda_d}}$, $\alpha = c\delta^{1/m}\sqrt{m/d}$, we have that with probability at least $1 - \delta$, \Cref{alg:RGD} converges to $\vx^*(\vx^0)\in\sQ$ and after $k = O(\log(1/\eps)/\mu^2)$ steps, $\|\vx^k - \vx^*(\vx^0)\|_2\leq \eps$.
\end{remark}

\begin{proof}[Proof of \Cref{lem:rayleigh-quotient-rga}]
We have argued in \Cref{claim:convert-pca-to-regression} that to prove $\nablar f_{\mathsf{RQ}}(\vx)$ satisfies RGA, it suffices to show that $\nablar F(\vx)$ is RGA.
Since the loss $\ell(\vx\cdot\vz,\vx^*\cdot\vz;\vx,\vx^*)$ does not depend on $\vx^*\cdot\vz$, $\vx$, and $\vx^*$, we simply write $\ell(\vx\cdot\vz)$ and drop other arguments. 
    The loss $\ell$ is simply the quadratic function, therefore $\nablar F(\vx)$ is a correlation vector field as $\nablar F(\vx) = \vg_\ell(\vx,\vx^*) = \Evz[\nablar\ell(\vx\cdot\vz)] = \Evz[\partial_1\ell(\vx\cdot\vz)\vz_{\perp\vx}]$. 
    
    We verify \Cref{assum:ell-Gaussian-sigma} and apply \Cref{lem:regression-loss-satisfy-sharpness-Gaussian-sigma}. As $\ell(u)$ does not depend on the second argument, we always have $D_2\partial_1\ell = 0$, implying that $\mu_\theta^{(2)} = 0$. Thus, we only need to show that $\Evz[\partial_u^2\ell(u)|_{u = \vx\cdot\vz}](\vx^\top\mSigma\vv) \leq -\mu^{(1)}$, where $\vv = \vx^*_{\perp\vx}/\|\vx^*_{\perp\vx}\|_2$. 
    Since $\ell(u) = u^2$, $\Evz[\partial_u^2\ell(u)|_{u = \vx\cdot\vz}] = 2$.
    Recall that, as specified in the statement of the theorem, 
$\vx^*$ is the normalized projection of $\vx$ to the eigenspace $\mathrm{Span}(\vq_r,\dots,\vq_d)$, and since the eigenspace is rotationally invariant, we can assume without loss of generality that $\vx^* = \vq_d$. As $\vx^* = \vq_d$ is the projection of $\vx$ to $\mathrm{Span}(\vq_r,\dots,\vq_d)$, it holds that $\vx\cdot\vq_i = 0$ for any $i = r,\dots,d-1$, therefore, $\vv = (\vq_d - \cos(\theta)\vx)/\sin(\theta)$. 
    Hence, the geometric filter $\vx^\top\mSigma\vv$ is bounded  above by
    \begin{align*}
        \vx^\top\mSigma\vv = \vx^\top(\mA - \lambda_d\mI)\frac{\vq_d - \cos(\theta)\vx}{\sin(\theta)} = -\frac{\cos(\theta)}{\sin(\theta)}\vx^\top(\mA - \lambda_d\mI)\vx\leq -\frac{\alpha}{\sin(\theta)}\vx^\top(\mA - \lambda_d\mI)\vx,
    \end{align*}
    as we have assumed $\theta\in(0,\arccos(\alpha))$.
    Note that since
    $$\vx^\top(\mA - \lambda_d\mI)\vx = \sum_{i=1}^d (\lambda_i - \lambda_d)(\vx\cdot\vq_i)^2\geq (\lambda_{r-1} - \lambda_d)\sum_{i=1}^{r-1}(\vx\cdot\vq_i)^2 = (\lambda_{r-1} - \lambda_d)\sin^2(\theta),$$
    we conclude that 
    \begin{align*}
        \vx^\top\mSigma\vv\leq -\frac{\alpha}{\sin(\theta)}\sqrt{\vx^\top(\mA - \lambda_d\mI)\vx}\sqrt{\vx^\top(\mA - \lambda_d\mI)\vx}\leq - \alpha\sqrt{\vx^\top\mSigma\vx}\sqrt{\lambda_{r-1} - \lambda_d}.
    \end{align*}
    Thus, 
in summary, 
    \Cref{assum:ell-Gaussian-sigma} is satisfied with $\mu_\theta^{(1)} = 2\alpha\sqrt{\vx^\top\mSigma\vx}\sqrt{\lambda_{r-1} - \lambda_d}$ and $\mu_\theta^{(2)} = 0$.

    It remains to bound the norm of the vector field $\vg_\ell(\vx,\vx^*)$. By definition, we have $\vg_\ell(\vx,\vx^*) = 2\Evz[(\vx\cdot\vz)\vz_{\perp\vx}] = 2\mP_{\perp\vx}\mSigma\vx$. Therefore, 
    \begin{align*}
        \|\vg_\ell(\vx,\vx^*)\|_2^2 = 4\|\mP_{\perp\vx}\mSigma\vx\|_2^2\leq 4\|\mSigma\vx\|_2^2 = 4\vx^\top\mSigma^2\vx.
    \end{align*}
    Since $\mSigma = \mA -\lambda_d\mI\preceq(\lambda_1 - \lambda_d)\mI$ it holds that $\mSigma^2\preceq(\lambda_1 - \lambda_d)\mSigma$. This implies that the norm of the vector field is bounded by: $\|\vg_\ell(\vx,\vx^*)\|_2\leq 2\sqrt{\lambda_1 - \lambda_d}\sqrt{\vx^\top\mSigma\vx}$.

    Therefore, applying \Cref{lem:regression-loss-satisfy-sharpness-Gaussian-sigma} with $\mu_\theta^{(1)} = 2\alpha\sqrt{\vx^\top\mSigma\vx}\sqrt{\lambda_{r-1} - \lambda_d}$, $\mu_\theta^{(2)} = 0$, and $L_1 = 2\sqrt{\lambda_1 - \lambda_d}\sqrt{\vx^\top\mSigma\vx}$, we obtain that
    \begin{align*}
        \vg_\ell(\vx,\vx^*)\cdot\vx^*\leq -\alpha\sqrt{\frac{\lambda_{r-1} - \lambda_d}{\lambda_1 - \lambda_d}}\|\vg(\vx,\vx^*)\|_2\sin(\theta).
    \end{align*}
    In other words, $\vg_\ell(\vx,\vx^*) = \nablar F(\vx) = \nablar f_{\mathsf{RQ}}(\vx)$ is $\mu$-RGA with $\mu = \alpha\sqrt{(\lambda_{r-1} - \lambda_d)/(\lambda_1 - \lambda_d)}$.
\end{proof}

\subsection{Examples under More General Distributions}\label{sec:ex:general distributions}

For more general distributions (beyond Gaussians), we primarily consider GLM and SIM problems, defined in \Cref{prob:glm+sim}. 
We argue that RGA is satisfied by GLM and SIM problems for broad classes of link functions and distributions, using our sufficient conditions from \Cref{lem:regression-loss-satisfy-sharpness-logconcave} and \Cref{lem:regression-loss-satisfy-sharpness-general-distr}, vastly simplifying and extending the results from \cite{WZDD2023,ZWDD2024}.
The link function class we consider comprises the following $(a,b)$-unbounded functions, incorporating commonly used activations such as ReLU, LeakyReLU, ELU, etc. 
This function class was first introduced in~\cite{diakonikolas2022learning} and later used in \cite{WZDD2023,ZWDD2024}.
\begin{definition}[$(a,b)$-unbounded Activations~\cite{diakonikolas2022learning}]\label{def:ab-unbounded-link}
    Let $a, b$ be positive constants such that $0<a\leq b$. We say that a function $\sigma:\R\to\R$ is $(a,b)$-unbounded, i.e., $\sigma\in\mathcal{F}_{(a,b)}$, if $0\leq\sigma'(z)\leq b$ and $\sigma'(z)\geq a$ whenever $z > 0$. At points where $\sigma$ is not differentiable, the stated inequalities apply to arbitrary elements of its Clarke subdifferential. 
\end{definition}

\subsubsection{SIMs of $(a,b)$-Unbounded Activations under Well-behaved Distributions}\label{subsubsec:logconcave-distribution-ab-link-SLM-are-RGA}

We first consider learning SIMs corresponding to $(a,b)$-unbounded activations under well-behaved distributions, defined below.

\begin{definition}[Well-Behaved Distributions] \label{def:well-behaved}
Let $L, R >0$. 
An isotropic distribution $\D$ on $\R^d$
is called $(L,R)$-well-behaved, $\D\in\mathfrak{D}_{\mathrm{well}}(L,R)$, if for every orthonormal pair $\vx, \vv \in \R^d$, the random vector $(\vx \cdot \vz, \vv \cdot \vz),$ $\vz \sim \D$, has a density $p_{\vx, \vv}$ satisfying $p_{\vx, \vv}(s, t) \geq L$ for every $(s, t) \in [-R, R]^2.$\end{definition}

The distributional family in \Cref{def:well-behaved} covers a wide range of isotropic distributions, since the only requirement is the stated anti-anti-concentration property. In particular, this is a broader class than the well-behaved family introduced in \cite{DKTZ20} and later used in \cite{diakonikolas2022learning,ZWDD2024}, as those prior works, in addition to  anti-anti-concentration, also required both concentration and anti-concentration. As a result, all example distributions covered by \cite{DKTZ20,diakonikolas2022learning,ZWDD2024} are also covered by \Cref{def:well-behaved}; for instance, all isotropic log-concave distributions and s-concave distributions belong to this family, with parameters $L$ and $R$ being universal constants, independent of the dimension.

We now proceed to the main theorem of this subsection. \cite{ZWDD2024} showed that in the Euclidean space, choosing $\sigma_{\Ltwo}(\vx\cdot\vz;\vx,\vx^*)\in\argmin_{\sigma\in\cF_{(a,b)}}\Evz[(\sigma(\vx\cdot\vz) - \sigma^*(\vx^*\cdot\vz))^2]$ to be the best-fitting link function under the $L_2^2$ loss, the empirical vector field\footnote{We use $\wh{\D}_N$ to denote the empirical distribution on $N$ i.i.d.\ samples from $\D$.} 
\begin{align}\label{eq:empirical-sim-vector-field-logconcave}
    \wh{\vg}(\vx) = \E_{\vz\sim\wh{\D}_N}[h_2'(\sigma_{\Ltwo}(\vx\cdot\vz;\vx,\vx^*) - \sigma^*(\vx^*\cdot\vz))\vz],\; h_2(u) = u^2/2
\end{align} 
satisfies the (Euclidean) gradient alignment property, that is: $\wh{\vg}(\vx)\cdot(\vx - \vx^*)\geq \mu_{\rm Euc}\|\wh{\vg}(\vx)\|_2\|\vx - \vx^*\|_2$ in a Euclidean ball, 
with parameter $\mu_{\rm Euc} = c (a^2/b^2) R^4L$ for some small constant $c > 0$; see Proposition 3.1 and Lemma 4.3 therein. 
Though they look similar, this Euclidean gradient alignment property does not directly imply RGA, as RGA relies on the Riemannian correlation vector field, which is defined differently and excludes the component parallel to $\vx$. 

Using the joint curvature conditions (\Cref{assum:ell-logconcave}), our \Cref{thm:well-behaved-sims-are-RGA} simplifies, extends, and generalizes the results from \cite{ZWDD2024} to the spherical manifold. 
We show that, in fact, vector fields similar to \Cref{eq:empirical-sim-vector-field-logconcave} but with an arbitrary smooth strongly convex penalty and defined as in \Cref{def:grad-field} satisfy the RGA property on the whole hemisphere centered at $\vx^*$, with a much larger RGA parameter $\mu$ that improves the scaling in $a/b$ from quadratic to linear and removes the dependence on the small constant $c$.

\begin{theorem}\label{thm:well-behaved-sims-are-RGA}
    Let $h:\R\to\R$ be any $m_h$-strongly convex and $L_h$-smooth penalty function with $h'(0) = 0$. Let $\sigma^*\in\cF_{(a,b)}$, and let $\D\in\mathfrak{D}_{\mathrm{well}}(L,R)$. Fix $\vx^*\in\spd$. Define the best-fitting link function under the penalty  $h$ by $\sigma_h(\cdot;\vx,\vx^*)\in\argmin_{\sigma\in L_2(\D_{\vx\cdot\vz})}\Evz[h(\sigma(\vx\cdot\vz) - \sigma^*(\vx^*\cdot\vz))]$, with minimizing functions understood up to the $\D_{\vx \cdot \vz}$-almost everywhere equality. Consider the vector field:
    \begin{align*}
        \vg_h(\vx,\vx^*) = \Evz[\partial_1\ell_h(\vx\cdot\vz,\vx^*\cdot\vz;\vx,\vx^*)\vz_{\perp\vx}],\;\text{where}\; \ell_h(u,v;\vx,\vx^*)\eqdef \int_0^{u}h'(\sigma_h(r;\vx,\vx^*) - \sigma^*(v))\diff{r}.
    \end{align*}
    Then, for any $\vx\in\spd$ such that $\theta(\vx,\vx^*)\in(0,\pi/2)$, $\vg_h(\vx,\vx^*)$ is $(m_h/L_h)(a/b)(R^4 L/6)$-RGA.
\end{theorem}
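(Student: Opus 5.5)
The plan is to verify \Cref{assum:ell-logconcave} for the loss $\ell_h$ and then apply \Cref{lem:regression-loss-satisfy-sharpness-logconcave}, Part 2. Part 2 is the right choice because it turns a $\mu^{(2)}\sin^2\theta$ bound on $\vg_h\cdot\vx^*$ into an order-one RGA inequality. Throughout, write $u=\vx\cdot\vz$, $V=\vv\cdot\vz$ and $v=\vx^*\cdot\vz=u\cos\theta+V\sin\theta$. By construction $\partial_1\ell_h(u,v)=h'(\sigma_h(u)-\sigma^*(v))$.

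\textbf{Step 1: Part (i) with $\mu^{(1)}=0$.} Since $h$ is strictly convex, the pointwise first-order optimality of the best-fitting link gives
\[
\E[h'(\sigma_h(u)-\sigma^*(v))\mid u]=0 \quad \text{for } \D_{\vx\cdot\vz}\text{-a.e. } u.
\]
This is exactly condition $(i')$ from the earlier remark, so Part (i) of \Cref{assum:ell-logconcave} holds with $\mu^{(1)}=0$. The same identity shows that $\vg_h\cdot\vv$ reduces to the conditional-covariance term of \Cref{claim:covariance-decomposition}.

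\textbf{Step 2: Part (ii), the joint curvature.} The derivative $\partial_1\ell_h$ is Lipschitz in $v$, because $h'$ is $L_h$-Lipschitz and $\sigma^*$ is $b$-Lipschitz. Differentiating gives
\[
\partial_2\partial_1\ell_h(u,v)=-h''(\cdot)\,(\sigma^*)'(v)\le -m_h a\,\1\{v>0\}.
\]
The kernel $c_{V|u}$ is nonnegative, so the curvature integral is at most
\[
-m_h a\,\E_u\Big[\int \1\{u\cos\theta+t\sin\theta>0\}\,c_{V|u}(t)\,dt\Big].
\]
I will lower bound $c_{V|u}(t)$ using the density lower bound $p_{\vx,\vv}\ge L$ on $[-R,R]^2$. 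Use the representation $c_{V|u}(t)=\E[(V-\E[V|u])\1\{V\ge t\}\mid u]$ together with \eqref{eq:kernel-positive}, and restrict to $u\in[0,R]$ and $t\in[0,R/2]$. On this region the indicator $\1\{u\cos\theta+t\sin\theta>0\}$ equals one, since $\theta<\pi/2$. For the kernel itself, take whichever of the two expressions in \eqref{eq:kernel-positive} is favourable for the sign of $t-\E[V|u]$. Either way, $c_{V|u}(t)$ dominates an integral of $|V-t|$ over a strip of width about $R/2$, and that strip carries conditional density mass bounded below in terms of $L$. Integrating over $t$ and then over $u$ should give $\mu^{(2)}\gtrsim m_h a R^4 L$, with the constant tuned to produce the $1/6$.

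\textbf{Step 3: \Cref{assum:smoothness-of-regression-loss}(ii).} For a unit $\vw\perp\vx$, Cauchy--Schwarz and isotropy give
\[
|\vg_h\cdot\vw|\le \sqrt{\E[h'(\sigma_h(u)-\sigma^*(v))^2]}\le L_h\|\sigma_h(u)-\sigma^*(v)\|_{L_2}.
\]
To control the last norm, compare $\sigma_h$ with the competitor $\sigma^*$, using that $\sigma_h$ minimizes the $h$-risk over $L_2(\D_{\vx\cdot\vz})$:
\[
\tfrac{m_h}{2}\E[(\sigma_h-\sigma^*)^2]\le \E[h(\sigma^*(u)-\sigma^*(v))]\le \tfrac{L_h b^2}{2}\|\vx-\vx^*\|_2^2.
\]
This yields $L_2\le L_h b\sqrt{L_h/m_h}$, and Part 2 of \Cref{lem:regression-loss-satisfy-sharpness-logconcave} then gives RGA with $\mu=\mu^{(2)}/(2L_2)$.

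\textbf{Main obstacle.} The resulting ratio scales like $(m_h/L_h)^{3/2}$, not the claimed $m_h/L_h$. To recover the sharper dependence I would instead bound $\vg_h\cdot\vw$ through the covariance representation of \Cref{claim:nablarf-x*-bounded-by-sin2theta-log-concave} applied in the direction $\vw$. That representation involves the joint curvature $|\partial_2\partial_1\ell_h|\le L_h b$, which would give a norm bound of order $L_h b$ times a geometric factor. Making this kernel-based upper bound, and the matching lower bound in Step 2, tight enough to reach the constant $R^4L/6$ is the step I expect to be delicate.
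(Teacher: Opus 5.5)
Your outline matches the paper's proof. Take $\mu^{(1)}=0$ from first-order optimality of $\sigma_h$, pair the nonpositive joint curvature $-h''(\cdot)(\sigma^*)'$ with a density-based lower bound on $c_{V|X}$, establish \Cref{assum:smoothness-of-regression-loss}(ii), and apply Part 2 of \Cref{lem:regression-loss-satisfy-sharpness-logconcave}. As written, however, it does not reach the stated parameter $(m_h/L_h)(a/b)(R^4L/6)$, and the remedy you sketch would not get there.

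\textbf{Step 3 (the extra factor $\sqrt{L_h/m_h}$).} The loss comes from routing the bound through strong convexity, via $\tfrac{m_h}{2}\E[(\sigma_h-\sigma^*)^2]\le \E[h(\sigma_h-\sigma^*)]$. The covariance/kernel representation does not repair this. The identity $\Cov(g(V),V\mid X)=\int g'(t)c_{V|X}(t)\,dt$ only controls correlation with $V=\vv\cdot\vz$ itself. Bounding $\|\vg_h\|_2$ also needs $\Cov(g(V),\vu\cdot\vz\mid X)$ for unit $\vu\perp\vx,\vv$, and under a non-Gaussian $\D$ those components of $\vg_h$ need not vanish.

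The missing ingredient is the standard inequality for an $L_h$-smooth function whose global minimizer is $0$ (which convexity and $h'(0)=0$ guarantee): $h'(s)^2\le 2L_h\,(h(s)-h(0))$. Assume $h(0)=0$ without loss of generality, and write $\Delta=\sigma_h(\vx\cdot\vz)-\sigma^*(\vx^*\cdot\vz)$ and $\Delta_0=\sigma^*(\vx\cdot\vz)-\sigma^*(\vx^*\cdot\vz)$. Cauchy--Schwarz, isotropy, and optimality of $\sigma_h$ against the competitor $\sigma^*$ give
\[
\|\vg_h\|_2^2\le \E[h'(\Delta)^2]\le 2L_h\E[h(\Delta)]\le 2L_h\E[h(\Delta_0)]\le L_h^2\E[\Delta_0^2]\le L_h^2b^2\|\vx-\vx^*\|_2^2 .
\]
So $L_2=L_hb$, strong convexity is never used in this step, and $m_h$ enters only through $\mu^{(2)}$.

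\textbf{Step 2 (the constant in $\mu^{(2)}$).} Restricting to $u\in[0,R]$, $t\in[0,R/2]$ captures only a fraction of the $LR^4/3$ you need. The paper instead uses the pointwise bound $c_{V|X=s}(t)\ge \frac{L}{2p_X(s)}(R-|t|)^2$ on all of $[-R,R]^2$. This comes from the two representations in \eqref{eq:kernel-positive}, integrating over $[t,R]$ or $[-R,t]$ according to the sign of $t-\E[V\mid X=s]$. The factor $p_X(s)$ then cancels inside $\E_X$.

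Integrate over the whole half-square $H_\theta=\{(s,t)\in[-R,R]^2:\ s\cos\theta+t\sin\theta>0\}$, on which $(\sigma^*)'\ge a$. The antipodal map $(s,t)\mapsto(-s,-t)$ swaps $H_\theta$ with its complement and preserves $(R-|t|)^2$. Hence $\int_{H_\theta}(R-|t|)^2\,ds\,dt=\tfrac12\cdot\tfrac{4R^4}{3}$, which gives $\mu^{(2)}=m_h aLR^4/3$. Together with $L_2=L_hb$, Part 2 yields exactly $\mu^{(2)}/(2L_2)=(m_h/L_h)(a/b)(R^4L/6)$.
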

\begin{proof}
    We first verify that \Cref{assum:ell-logconcave} is satisfied by the vector field $\vg_h(\vx,\vx^*)$.
    By the definition of the loss $\ell_h(u,v;\vx,\vx^*)$, we have $\partial_1\ell_h(u,v;\vx,\vx^*) = h'(\sigma_h(u;\vx,\vx^*) - \sigma^*(v))$. Strong convexity of $h$ implies that $\sigma_h$ is unique up to $\D_{\vx \cdot \vz}$-a.e.\ equality.  Subtracting the constant $h(0)$ does not change either the best-fitting link or the vector field, so we assume w.l.o.g.\ that $h(0) = 0.$ 

    Our first claim shows that \Cref{assum:ell-logconcave}(i) is satisfied by this vector field:
    \begin{claim}\label{claim:best-fitting-activation-property}
    Let $h:\R\to\R$ be a convex and continuously differentiable function with linear derivative growth: $|h'(t)|\leq C(1+|t|)$ for some constant $C<\infty$. Then, for any square-integrable link function $\sigma^*:\R\to\R$ and unit vector $\vx^*\in\spd$, the best-fitting link function $\sigma(z;\vx,\vx^*) \in \argmin_{\sigma\in L_2(\D(\vz\cdot\vx))}\Evz[h(\sigma(\vx\cdot\vz) - \sigma^*(\vx^*\cdot\vz))]$ satisfies $\Evz[h'(\sigma(\vx\cdot\vz;\vx,\vx^*) - \sigma^*(\vx^*\cdot\vz))\mid \vx\cdot\vz] = 0$ almost surely, assuming the displayed minimum is attained. 
\end{claim}
\begin{proof}
    Let $\tilde{\sigma}\in \cL_2(\D_{\vx\cdot\vz})$ be any bounded function that is $L_2$-measurable under $\vx\cdot\vz$. 
Let $g(t)\eqdef \sigma(\vx\cdot\vz;\vx,\vx^*) + t\tilde{\sigma}(\vx\cdot\vz) - \sigma^*(\vx^*\cdot\vz)$, which is also an $L_2$ integrable function, and denote $F(t) \eqdef \Evz[h(g(t))]$ where $t\in\R$. Since $h'$ has at most linear growth, we have that for some $\tau\in(0,t),$
    \begin{align*}
        \bigg|\frac{h(g(t)) - h(g(0))}{t}\bigg|\leq |h'(g(\tau))||\tilde{\sigma}(\vx\cdot\vz)|\leq C(1 + |g(\tau)|)|\tilde{\sigma}(\vx\cdot\vz)|.
    \end{align*}
    Since the right-hand side is a square-integrable function for any $t$, we know that differentiation and expectation can be interchanged, and it holds $F'(t) = \Evz[h'(g(t))\tilde{\sigma}(\vx\cdot\vz)]$, for any bounded square-integrable function $\tilde{\sigma}$. When $t = 0$, since $\sigma(z;\vx,\vx^*)$ is the minimizing link function, we have $F'(0) = 0$. This implies $\Evz[h'(\sigma(\vx\cdot\vz;\vx,\vx^*) - \sigma^*(\vx^*\cdot\vz))\tilde{\sigma}(\vx\cdot\vz)] = 0$, for any bounded $\tilde{\sigma}(z)$. Let $A$ be any Borel set on $\R$ that is $\D_{\vx\cdot\vz}$ measurable, and define $\tilde{\sigma}(u) = \1\{u\in A\}$. Then, we obtain that $\Evz[h'(\sigma(\vx\cdot\vz;\vx,\vx^*) - \sigma^*(\vx^*\cdot\vz))\1\{\vx \cdot \vz\in A\}] = 0$, and by the definition of conditional expectation, we have that $\Evz[h'(\sigma(\vx\cdot\vz;\vx,\vx^*) - \sigma^*(\vx^*\cdot\vz))|\vx\cdot\vz] = 0$.
\end{proof}

    By \Cref{claim:best-fitting-activation-property}, we have $\Evz[\partial_1\ell_h(\vx\cdot\vz,\vx^*\cdot\vz;\vx,\vx^*)|\vx\cdot\vz] = 0$, hence \Cref{assum:ell-logconcave}(i) is satisfied.

    For the second part of \Cref{assum:ell-logconcave}, note  first that since $h'$ is Lipschitz and $\sigma^*$ is $b$-Lipschitz (recall that $\sigma^*\in\cF_{(a,b)}$), the partial derivative  $\partial_1\ell_h(u,v;\vx,\vx^*)$ is also Lipschitz in $v$. Therefore, $\partial_2\partial_1\ell_h(u,v;\vx,\vx^*)$ exists a.e., and we have $\partial_2\partial_1\ell_h(u,v;\vx,\vx^*) = -h''(\sigma_h(u;\vx,\vx^*) - \sigma^*(v))(\sigma^*)'(v)$. Furthermore, since $h$ is strongly convex and smooth, we have $m_h\leq h''\leq L_h$ and in addition, $b\geq (\sigma^*)'\geq 0$, therefore, we conclude that $|\partial_2\partial_1\ell_h(u,v;\vx,\vx^*)|\leq L_h b$. Combining with the fact (\Cref{eq:kernel-positive}) that the kernel $c_{\vv\cdot\vz|\vx\cdot\vz}(t)\geq 0$ and its integral equals $\Var(\vv\cdot\vz|\vx\cdot\vz)<\infty$, we conclude that 
    \begin{align*}
        \E_{\vx \cdot \vz}\Big[\int_\R|\partial_2\partial_1\ell_h(\vx\cdot\vz,\vx\cdot\vz\cos\theta + t\sin\theta;\vx,\vx^*)|c_{\vv\cdot\vz|\vx\cdot\vz}(t)\diff{t}\Big] &\leq L_h b\E_{\vx \cdot\vz}\Big[\int_\R c_{\vv\cdot\vz|\vx\cdot\vz}(t)\diff{t}\Big] \\
        &= L_h b\, \E_{\vx\cdot\vz}[\Var(\vv\cdot\vz|\vx\cdot\vz)] \leq L_h b\E[(\vv\cdot\vz)^2]<\infty. 
    \end{align*}

    Since $h''\geq m_h$ and $(\sigma^*)'\geq 0$, we have $\partial_2\partial_1\ell_h(u,v;\vx,\vx^*)\leq 0$ for any $u,v\in\R$; and in addition, $c_{\vv\cdot\vz|\vx\cdot\vz}(t)\geq 0$ for all $t\in\R$. This implies that:
    \begin{align*}
&\E_{\vx\cdot\vz}\bigg[\int_\R\partial_2\partial_1\ell_h(\vx\cdot\vz,\vx\cdot\vz\cos\theta + t\sin\theta;\vx,\vx^*)c_{\vv\cdot\vz|\vx\cdot\vz}(t)\diff{t}\bigg]\\
\leq\; & -\E_{\vx\cdot\vz}\bigg[m_h \int_\R(\sigma^*)'(\vx\cdot\vz\cos\theta + t\sin\theta)c_{\vv\cdot\vz|\vx\cdot\vz}(t)\diff{t}\bigg].
    \end{align*}

    Let $V = \vv\cdot\vz$ and $X = \vx\cdot\vz$, and let $p_{X, V}$ be their joint density. By the well-behavedness assumption (\Cref{def:well-behaved}), we have that for all $(s, u) \in [-R, R]^2,$ $p_{X, V}(s, u) \geq L$. Let $m_s := \E[V|X = s].$ Then 
    \begin{equation}\label{eq:kernel-lb-under-well-behaved}
        c_{V|X = s}(t) \geq \frac{L}{2p_X(s)}(R - |t|)^2.
    \end{equation}
    (To see this, recall the representations of the conditional kernel from \Cref{eq:kernel-positive}: if $m_s < t$, integrate $(u-t)p_{V|X=s}(u)$ over $u \in [t, R]$; otherwise, integrate $(t-u)p_{V|X=s}(u)$ over $u \in [-R, t]$.)

    Let $H_\theta:=\{(s, t) \in [-R, R]^2: s\cos \theta + t \sin \theta > 0\}.$ Since, by definition, $(\sigma^*)'(u) \geq a > 0$ when $u > 0,$ we get from \eqref{eq:kernel-lb-under-well-behaved} that
\begin{equation}\notag
        \E_X\Big[\int_\R (\sigma^*)'(X\cos\theta + t \sin \theta)c_{V|X}(t)\diff{t}\Big] \geq \frac{a L}{2}\int_{H_\theta} (R - |t|)^2 \diff{s}\diff{t}. 
    \end{equation}
Now observe that $(s, t) \mapsto (-s, -t)$ maps $H_\theta$ to its complement in $[-R, R]^2$ (up to the measure-zero line $s\cos \theta + t \sin \theta = 0$ dividing the two sets). This map keeps $(R - |t|)^2$ unchanged. Thus:
\[
    \int_{H_\theta}(R - |t|)^2 \diff{t} = \frac{1}{2} \int_{[-R, R]^2}(R - |t|)^2\diff{s}\diff{t} = \frac{2R^4}{3}.
    \]
Therefore,
     \begin{equation}\notag
        \E_X\Big[\int_\R (\sigma^*)'(X\cos\theta + t \sin \theta)c_{V|X}(t)\diff{t}\Big] \geq \frac{a LR^4}{3}. 
    \end{equation}
As a result, \Cref{assum:ell-logconcave}(ii) holds with $\mu = \mu^{(2)} = \frac{m_h a LR^4}{3}.$

    It remains to show that the vector field $\vg_\ell(\vx,\vx^*)$ is restricted Lipschitz (\Cref{assum:smoothness-of-regression-loss}(ii)). By definition and applying the Cauchy-Schwarz inequality, we have:
    \begin{align*}
        \|\vg_h(\vx,\vx^*)\|_2 &= \sup_{\vr\in\spd}\Evz[h'(\sigma_h(\vx\cdot\vz;\vx,\vx^*) - \sigma^*(\vx^*\cdot\vz))\vz_{\perp\vx}\cdot\vr]\\
        &\leq \sqrt{\Evz[h'(\sigma_h(\vx\cdot\vz;\vx,\vx^*) - \sigma^*(\vx^*\cdot\vz))^2]\Evz[(\vz_{\perp \vx}\cdot\vr)^2]}.
    \end{align*}
Because $\D$ is isotropic, we have $\Evz[(\vz_{\perp \vx}\cdot\vr)^2] \leq 1.$ Moreover, 
    since $h$ is $L_h$-smooth , and $h'(0) = h(0) = 0$, we have $h(u) \leq \frac{L_h}{2}u^2.$ As $\sigma_h(z;\vx,\vx^*)$ is the best fitting link function, we get 
    \begin{align*}
        \Evz[h'(\sigma_h(\vx\cdot\vz;\vx,\vx^*) - \sigma^*(\vx^*\cdot\vz))^2]&\leq 2L_h\Evz[h(\sigma_h(\vx\cdot\vz;\vx,\vx^*) - \sigma^*(\vx^*\cdot\vz))]\\
        &\leq 2L_h\Evz[h(\sigma^*(\vx\cdot\vz) - \sigma^*(\vx^*\cdot\vz))]\\
        &\leq L_h^2 \Evz[(\sigma^*(\vx\cdot\vz) - \sigma^*(\vx^*\cdot\vz))^2]\\
        &\leq L_h^2 b^2 \norm{\vx - \vx^*}_2^2. 
    \end{align*}
    Hence, $\norm{\vg_h(\vx, \vx^*)} \leq L_h b \norm{\vx - \vx^*}_2,$ meaning that \Cref{assum:smoothness-of-regression-loss}(ii) holds with $L_2 = L_h b.$

    Thus, applying \Cref{lem:regression-loss-satisfy-sharpness-logconcave}, we obtain that $\vg_h(\vx,\vx^*)$ is $(m_h/L_h)(a/b)(R^4 L/6)$-RGA for all $\vx$ such that $\theta(\vx,\vx^*)\in(0,\pi/2)$.
\end{proof}

Noting that the vector fields used in \Cref{thm:well-behaved-sims-are-RGA} need not correspond to the (Riemannian) gradient of the SIM objective $f(\vx)$ in \Cref{prob:glm+sim}, \Cref{thm:well-behaved-sims-are-RGA} highlights the versatility of the RGA framework: the convergence property of \Cref{alg:RGD} is not restricted to a specific gradient field---it is applicable to any vector field satisfying the RGA property.

\subsubsection{GLMs of $(a,b)$-Unbounded Activations under Near-Isotropic Distributions}\label{subsubsec:general-distribution-ab-link-GLM-are-RGA}

For GLM problems where the target link function $\sigma^*$ is a given function in $\cF_{(a,b)}$, we show that RGA holds under any distribution that has a bounded fourth moment and has a margin in the direction of $\vx^*$. The set of such distributions was first introduced in~\cite{WZDD2023}, and here we denote it by $\mathfrak{D}_{\mathrm{nearIso}}$. 

\begin{definition}[Near-Isotropic Distributions~\cite{WZDD2023}]\label{def:near-iso-distr}
    Let $\vx^*$ be a fixed (unknown) unit vector in $\R^d$. Let $\mathfrak{D}_{\mathrm{nearIso}}(B_4,\gamma,\lambda)$ be a class of distributions over $\vz\in\R^d$ such that any distribution  $\D\in\mathfrak{D}_{\mathrm{nearIso}}(B_4,\gamma,\lambda)$ satisfies: 
    \begin{enumerate}
        \item (Margin covariance lower bound) $\Evz[\vz\vz^\top\1\{\vx^*\cdot\vz\geq \gamma\}]\succeq \lambda\mI$;
        \item (Concentration)   $\Evz[(\vr\cdot\vz)^4]\leq B_4$ for any unit vector $\vr$.
    \end{enumerate}
\end{definition}

\begin{remark} 
Unlike $\mathfrak{D}_{\mathrm{well}}$, $\mathfrak{D}_{\mathrm{nearIso}}$ does not require the distribution to have a density, and therefore includes discrete distributions like $\mathrm{Unif}\{-1,0,1\}^{d}$ (see \cite{WZDD2023} and \Cref{claim:lattice-relu-glm-not-geo-convex}), which cannot be included in $\mathfrak{D}_{\mathrm{well}}$. 
The non-regularity of near-isotropic distributions, especially the lack of anti-anti-concentration property, introduces challenges in verifying \Cref{assum:ell-logconcave} and extending the results to SIMs. The main obstacle is that the conditional kernel used in \Cref{assum:ell-logconcave} might be degenerate for near-isotropic distributions. 
For example, consider $\D = \mathrm{Unif}\{-1,0,1\}^{d}$. For almost every $\vx \in \spd,$ the map $\vz \mapsto \vx \cdot \vz$ is injective on the finite set $\{-1,0,1\}^{d}.$ Hence, for every $u$ in the support of $\vx \cdot \vz,$ the set $\cZ_u := \{\vz \in \{-1,0,1\}^{d}: \vx \cdot \vz = u\}$ is singleton. As a consequence, conditional on $\vx \cdot \vz,$ the random variables $\vv\cdot\vz$ (where, as before, $\vv := \vx^*_{\perp \vx}/\norm{\vx^*_{\perp \vx}}$) and $\vx^* \cdot \vz$ are deterministic, leading to the conclusion that $c_{\vv\cdot\vz|\vx\cdot\vz}(t) = 0$ for a.e.\ $t.$ This prevents \Cref{assum:ell-logconcave} from providing a nontrivial curvature certificate for the SIM loss. Moreover, for every such $\vx,$ there exists $\sigma \in \cL_2(\cD_{\vx \cdot \vz})$ such that $\sigma(\vx \cdot \vz) = \sigma^*(\vx^*\cdot\vz)$ on the support of $\D.$ Thus, the SIM loss $f(\vx) = \min_{\sigma\in \cL_2(\D_{\vx \cdot \vz})}\Evz[(\sigma(\vx\cdot\vz) - \sigma^*(\vx^*\cdot\vz))^2]$ equals zero for a.e.\ $\vx.$ \end{remark}
Nevertheless, using the conditions for general vector fields (\Cref{assum:ell-general-vector-field}), we are able to show that various GLM losses satisfy RGA, extending the results from \cite{WZDD2023} that only studied the (Euclidean) gradient alignment property of the \emph{convex surrogate loss} (defined below) on a \emph{Euclidean ball}.

\begin{theorem}\label{thm:general-distribution-ab-link-GLM-are-RGA}
Let $\D\in\mathfrak{D}_{\mathrm{nearIso}}(B_4,\gamma,\lambda)$, where $B_4,\gamma,\lambda$ are positive constants. Let $\sigma\in\cF_{(a,b)}$ be any $(a,b)$-unbounded activation, and fix any $\vx^*\in\spd$. Then the following losses are RGA:
    \begin{enumerate}
        \item The surrogate loss 
        $$f_\sur(\vx)\eqdef\Evz[\ell_\sur(\vx\cdot\vz,\vx^*\cdot\vz)],\; \ell_\sur(u_1,u_2)= \int_0^{u_1}(\sigma(r) - \sigma(u_2))\diff{r}.$$
        In particular, every $\vg(\vx) \in \partial^{C,R}f_\sur(\vx)$ is $\frac{a\lambda}{8b\sqrt{B_4}}$-RGA when \[0<\theta\leq \min\Big\{\arctan\Big(\frac{a\lambda}{4b\sqrt{B_4}}\Big), \arctan\Big(\gamma\sqrt{\frac{\lambda}{4B_4}}\Big)\Big\};\]
        \item The $L_2^2$ loss
        \begin{align*}
            f_\Ltwo(\vx)\eqdef\Evz[\ell_\Ltwo(\vx\cdot\vz,\vx^*\cdot\vz)],\;\ell_\Ltwo(u_1,u_2) = \frac{1}{2}(\sigma(u_1) - \sigma(u_2))^2.
        \end{align*}
        In particular, every $\vg(\vx) \in \partial^{C,R}f_\Ltwo(\vx)$ is $\frac{a^2\lambda}{8b^2\sqrt{B_4}}$-RGA when \[0< \theta\leq \min\Big\{\arctan\Big(\frac{a^2\lambda}{4b^2\sqrt{B_4}}\Big), \arctan\Big(\gamma\sqrt{\frac{\lambda}{4B_4}}\Big)\Big\};\]
        \item The $\alpha$-Huber loss, for constant $\alpha>0$, $h_\hub(u)= (u^2/2)\1\{|u|\leq \alpha\} + \alpha(|u| - \alpha/2)\1\{|u|>\alpha\}$,
        \begin{align*}
            f_\hub(\vx) \eqdef\Evz[\ell_\hub(\vx\cdot\vz,\vx^*\cdot\vz)], \; \ell_\hub(u_1,u_2) = h_\hub(\sigma(u_1) - \sigma(u_2)). 
        \end{align*}
        In particular, every $\vg(\vx) \in \partial^{C,R}f_\hub(\vx)$ is $\frac{a^2\lambda}{8b^2\sqrt{B_4}}$-RGA when 
        \[0<\theta\leq \min\Big\{\arctan\big(\frac{a^2\lambda}{4b^2\sqrt{B_4}}\big), \arctan\Big(\frac{\gamma}{2}\sqrt{\frac{\lambda}{B_4}}\Big), \arcsin\Big(\min\Big\{1, \frac{\alpha}{8b}\sqrt{\frac{\lambda}{B_4}}\Big\}\Big)\Big\}.\]
    \end{enumerate}
\end{theorem}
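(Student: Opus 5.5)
The plan is to apply \Cref{lem:regression-loss-satisfy-sharpness-general-distr} to a general vector field built from each loss. We define $\vg(\vx,\vy)\eqdef\Evz[\partial_1\ell(\vx\cdot\vz,\vy\cdot\vz)\,\vz_{\perp\vx}]$, where $\partial_1\ell$ is a fixed measurable selection.

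For the three losses the selections are:
\begin{itemize}
\item surrogate: $\partial_1\ell_\sur(u_1,u_2)=\sigma(u_1)-\sigma(u_2)$;
\item $L_2^2$: $\partial_1\ell_\Ltwo=(\sigma(u_1)-\sigma(u_2))s(u_1)$, with $s(u_1)\in\partial^C\sigma(u_1)$;
\item Huber: $\partial_1\ell_\hub=h_\hub'(\sigma(u_1)-\sigma(u_2))s(u_1)$.
\end{itemize}

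Stationarity $\vg(\vx,\vx)=\vzero$ holds in all three cases because $\sigma(u_1)-\sigma(u_2)$ vanishes at $u_1=u_2$. The subgradient issue is handled as in the COD proof. By \Cref{fact:Aumann-expectation}, every $\vg\in\partial^{C,R}f(\vx)$ is an Aumann-expectation element $\Evz[s_\vx(\vz)\,\vz_{\perp\vx}]$. In the $L_2^2$ and Huber cases, the selection ambiguity only enters through a factor that vanishes at $u_2=u_1$ together with the prefactor $\sigma(u_1)-\sigma(u_2)$ (or $h_\hub'$ of it). We would therefore prove the bounds below uniformly over the selection $s$, using only $s\in[0,b]$ and $s\ge a$ on $u_1>0$. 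In particular we fix $s$ in $\vx$ and differentiate only in $\vy$.

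\textbf{Curvature.} For fixed $\vx$, $\vy\mapsto\vg(\vx,\vy)$ is Lipschitz. Since $\sigma$ is $b$-Lipschitz and the fourth moment is bounded by $B_4$, its Clarke Jacobian is
\[
\partial_2\vg(\vx,\vy)=-\Evz\big[w(\vx\cdot\vz,\vy\cdot\vz)\,\sigma'(\vy\cdot\vz)\,\vz_{\perp\vx}\vz^\top\big],
\]
where the weight is $w=1$ for the surrogate, $w=s(u_1)$ for $L_2^2$, and $w=s(u_1)\1\{|\sigma(u_1)-\sigma(u_2)|\le\alpha\}$ for Huber. The operator-norm bound $\|\partial_2\vg\|_2\le b\cdot b\cdot\sqrt{B_4}$ (or $b\sqrt{B_4}$ for the surrogate) follows from Cauchy--Schwarz and the fourth-moment bound. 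We would take $B$ to be that quantity, or simply $b^2\sqrt{B_4}$ for $L_2^2$ and Huber and $b\sqrt{B_4}$ for the surrogate.

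For the curvature, $\vv^\top\partial_2\vg\,\vv=-\E[w\,\sigma'(\vy\cdot\vz)(\vv\cdot\vz)^2]$. The integrand is nonnegative, so we restrict to the event $E=\{\vx^*\cdot\vz\ge\gamma,\ \vx\cdot\vz>0,\ \vy(t)\cdot\vz>0\}$. On $E$ we have $w\,\sigma'\ge a^2$ (or $\ge a$ for the surrogate). We then lower bound $\E[(\vv\cdot\vz)^2\1_E]$ by $\lambda-\E[(\vv\cdot\vz)^2\1\{\vx^*\cdot\vz\ge\gamma,\ \vu\cdot\vz\le0\}]$ for $\vu\in\{\vx,\vy(t)\}$. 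On that bad event, $\vx^*\cdot\vz\ge\gamma$ while $\vu\cdot\vz\le0$ with $\theta(\vu,\vx^*)\le\theta$. This forces $|(\vx^*-\vu)\cdot\vz|\ge\gamma$, and $\|\vx^*-\vu\|\le\theta$. Chebyshev with the fourth moment then gives a bad-event contribution of at most $\sqrt{B_4}\sqrt{B_4\theta^4/\gamma^4}$, or similar, after Cauchy--Schwarz. Under $\tan\theta\le\gamma\sqrt{\lambda/(4B_4)}$ this is at most $\lambda/2$. This yields $\mu=a\lambda/2$ (surrogate) or $a^2\lambda/2$.

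\textbf{Huber and conclusion.} For Huber we must additionally discard the event $|\sigma(\vx\cdot\vz)-\sigma(\vy\cdot\vz)|>\alpha$. Since $|\sigma(\vx\cdot\vz)-\sigma(\vy\cdot\vz)|\le b|(\vx-\vy)\cdot\vz|$, Markov/Chebyshev with $B_4$ bounds its contribution to $\E[(\vv\cdot\vz)^2\cdot]$ by $\sqrt{B_4}\cdot 4b^2\theta^2B_4^{1/2}/\alpha^2$, or similar. The condition $\sin\theta\le(\alpha/8b)\sqrt{\lambda/B_4}$ makes this at most $\lambda/4$; we would then rebalance the constants between the two bad events. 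Finally, \Cref{lem:regression-loss-satisfy-sharpness-general-distr} gives $\mu/(4B)$-RGA whenever $\tan(\theta/2)\le\mu/(2B)$, and the stated arctan conditions on $\theta$ suffice for this. With $\mu=a\lambda/2$ and $B=b\sqrt{B_4}$ we get $a\lambda/(8b\sqrt{B_4})$, matching part 1, and analogously for parts 2 and 3.

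The main obstacle is making the margin-covariance lower bound survive along the whole geodesic $\vy(t)$, uniformly in $t$ and in the Clarke selection, via the fourth-moment tail estimate with the right constants. The Huber truncation event is the most delicate piece of this.
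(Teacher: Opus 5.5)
Your proposal is correct, and its architecture is the paper's. You freeze a measurable selection $s_\vx$ and form the correlation field $\vg(\vx,\vy)$. You then check stationarity on the diagonal and bound the quadratic form and operator norm of every Clarke Jacobian in $\vy$ along the geodesic, with $\mu=a\lambda/2$ (resp.\ $a^2\lambda/2$) and $B=b\sqrt{B_4}$ (resp.\ $b^2\sqrt{B_4}$). Finally you invoke \Cref{lem:regression-loss-satisfy-sharpness-general-distr}. One attribution is off: what you actually describe for the subgradient issue is proving the bounds uniformly over all measurable $s_\vx$, using only $s\in[0,b]$ and $s\ge a$ on positives. That is the paper's argument for this theorem, not the COD one, which relied on every selection giving the same inner product with $\vx^*$.

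The genuine difference is the estimate that keeps the margin covariance alive along $\vy(t)$.

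The paper truncates at $\vv\cdot\vz\le R$ with $R=2\sqrt{B_4/\lambda}$, adding $|\vx\cdot\vz|\le R$ for Huber. It then uses explicit geodesic algebra to show that, under $\tan\theta\le\gamma/R$, the margin event deterministically forces $\vx\cdot\vz>0$ and $\vy(t)\cdot\vz>0$. For Huber it also forces $|(\vx-\vy(t))\cdot\vz|<\alpha/b$. The only loss is a fixed tail term $B_4/R^2=\lambda/4$ per truncated direction.

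You instead use the inclusion $\{\vx^*\cdot\vz\ge\gamma,\ \vu\cdot\vz\le 0\}\subseteq\{(\vx^*-\vu)\cdot\vz\ge\gamma\}$ together with a fourth-moment Chebyshev bound in the short direction $\vx^*-\vu$. This avoids the case analysis on the sign of $\vv\cdot\vz$ and needs only $\|\vx^*-\vy(t)\|_2,\|\vx-\vy(t)\|_2\le\theta$. It also shows the loss is $O(\theta^2)$ rather than a fixed fraction of $\lambda$.

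The price is in the constants. Each margin bad event costs up to $B_4\theta^2/\gamma^2\le\lambda/4$. For Huber, the separately summed bounds therefore give up to $\lambda/2+\lambda/32$, which is slightly too much. The clean way to rebalance is to apply Cauchy--Schwarz once to the union $U$ of the bad events:
$\E[(\vv\cdot\vz)^2\1_U]\le\sqrt{B_4(p_1+p_2+p_3)}\le\lambda\sqrt{1/8+1/1024}<\lambda/2$.
This uses $\|\vx-\vy(t)\|_2\le\sqrt2\sin\theta$ for the Huber event, and it recovers exactly the stated constants.

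One small fix for the Huber weight: lower bound $h''_\hub\ge\1\{|u|<\alpha\}$ rather than using $\1\{|u|\le\alpha\}$, since Clarke elements at $|u|=\alpha$ range over $[0,1]$. Your Chebyshev bound on $\{b|(\vx-\vy)\cdot\vz|\ge\alpha\}$ already covers this.
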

\begin{proof}
Observe that, unlike in the Gaussian marginal setting, when $\D$ is a discrete distribution and $\sigma$ is nonsmooth, the population objectives $f_\ell(\vx)$, $\ell\in\{\Ltwo,\hub\}$, may only be locally Lipschitz rather than continuously differentiable. 
Let $\ell(\vx\cdot\vz,\vx^*\cdot\vz) = h_\ell(\sigma(\vx\cdot\vz) - \sigma(\vx^*\cdot\vz))$,
where $h_\Ltwo(z) = z^2/2$, and $h_\hub(u)= (u^2/2)\1\{|u|\leq \alpha\} + \alpha(|u| - \alpha/2)\1\{|u|>\alpha\}$. For either $L_2^2$ loss or the Huber loss, $h_\ell$ is continuously differentiable, and hence the non-differentiability of the population objective $f_\ell$ comes from the non-differentiability of the link functions. 

Note that by the Clarke chain rule, it holds $\partial^{C,R}_\vx\ell(\vx\cdot\vz,\vx^*\cdot\vz) \subseteq h'_\ell(\sigma(\vx\cdot\vz) - \sigma(\vx^*\cdot\vz))\partial^C\sigma(\vx\cdot\vz)\vz_{\perp\vx}$.
For a fixed $\vx$, let $s_{\vx}(\vz)$ be a fixed $\D$-measurable selection from the Clarke subdifferential: $s_{\vx}(\vz)\in\partial^C \sigma(u)|_{u = \vx\cdot\vz} = \partial^C \sigma(\vx\cdot\vz)$.
Define the Riemannian correlation vector field (\Cref{def:grad-field}) $\vg_\ell(\vx,\vx^*)$ via: 
\begin{align*}
    &\vg_\ell(\vx,\vx^*;s_{\vx}(\vz)) = \Evz[h_\ell'(\sigma(\vx\cdot\vz) - \sigma(\vx^*\cdot\vz))s_{\vx}(\vz)\vz_{\perp\vx}] = \Evz[\partial_1\ell(\vx\cdot\vz,\vx^*\cdot\vz)\vz_{\perp\vx}]\\
    &\begin{cases}
        \vg_\sur(\vx,\vx^*)  = \Evz[(\sigma(\vx\cdot\vz) - \sigma(\vx^*\cdot\vz))\vz_{\perp\vx}];\\
        \vg_\Ltwo(\vx,\vx^*;s_{\vx}(\vz))  = \Evz[(\sigma(\vx\cdot\vz) - \sigma(\vx^*\cdot\vz))s_{\vx}(\vz)\vz_{\perp\vx}];\\
        \vg_\hub(\vx,\vx^*;s_{\vx}(\vz))  = \Evz[h_\hub'(\sigma(\vx\cdot\vz) - \sigma(\vx^*\cdot\vz))s_{\vx}(\vz)\vz_{\perp\vx}].
    \end{cases}
\end{align*} 
Then, for $\ell = \sur,$ $\nablar f_\ell(\vx) = \vg_\sur(\vx; \vx^*)$, while for $\ell \in \{\Ltwo, \hub\}$ by \Cref{fact:Aumann-expectation} for Aumann expectation, it holds, 
\begin{align*}
    \partial^{C,R} f_\ell(\vx)&\subseteq\Evz[\partial_\vx^{C,R}\ell(\vx\cdot\vz,\vx^*\cdot\vz)] \\
    &\subseteq\{\vg_\ell(\vx,\vx^*;s_{\vx}(\vz)): \text{$s_{\vx}(\vz)\in\partial^C \sigma(\vx\cdot\vz)$ and $s_{\vx}(\vz)$ is $\D$-measurable}\}. \end{align*} 
Therefore, by the inclusion above, it follows that if the RGA property holds for all $\vg_\ell(\vx,\vx^*;s_{\vx}(\vz))$, it will also be valid for all Clarke subdifferentials of $f_\ell(\vx)$.
Thus, it suffices to prove that for any measurable 
choice of $s_{\vx}(\vz)\in\partial^C\sigma(\vx\cdot\vz),$ the vector field 
$\vg_\ell(\vx,\vx^*;s_{\vx}(\vz))$ satisfies \Cref{assum:ell-general-vector-field},
because then applying \Cref{lem:regression-loss-satisfy-sharpness-general-distr} leads to the desired RGA result for $f_\ell$.

In agreement with the notation in \Cref{assum:ell-general-vector-field}, throughout the proof, we define $\vv := (\vx^*)_{\perp\vx}/\|(\vx^*)_{\perp\vx}\|_2$, $\theta := \theta(\vx,\vx^*)$, $\vy(t) := \cos(t\theta)\vx + \sin(t\theta)\vv$ for $t\in[0,1]$. 
    Observe first that for all $\ell\in\{\sur,\Ltwo,\hub\}$, 
since $\sigma$ is Lipschitz, we have that $\vg_\ell(\vx,\vy;s_{\vx}(\vz))$ is Lipschitz in $\vy\in\R^d$ for all measurable choices of $s_{\vx}(\vz)\in\partial^C\sigma(\vx\cdot\vz)$, providing the locally Lipschitz ambient extensions required by \Cref{assum:ell-general-vector-field}. 
Therefore, the Clarke Jacobian matrices $\partial^C_{2}\vg_\ell(\vx,\vy(t);s_{\vx}(\vz))$ are well defined. Fix a measurable choice of $s_{\vx}(\vz)\in\partial^C\sigma(\vx\cdot\vz)$. Then, using \Cref{fact:Aumann-expectation}, Aumann expectation, and the Clarke chain rule again: 
    \begin{align}\label{eq:jacobi-inclusion-1}
        \partial^C_{2}\vg_\ell(\vx,\vy(t);s_{\vx}(\vz)) &\subseteq \Evz[\partial^C_2\partial_1\ell(\vx\cdot\vz,\vy(t)\cdot\vz)\vz_{\perp\vx}\vz^\top]\\
        &\subseteq\begin{cases}
            -\Evz[\partial^C\sigma(\vy(t)\cdot\vz)\vz_{\perp\vx}\vz^\top]& \sur\\
            -\Evz[\partial^C\sigma(\vy(t)\cdot\vz)s_{\vx}(\vz)\vz_{\perp\vx}\vz^\top] & \Ltwo\\
            -\Evz[\partial^C h'_\hub(\sigma(\vx\cdot\vz) - \sigma(\vy(t)\cdot\vz))\partial^C\sigma(\vy(t)\cdot\vz)s_{\vx}(\vz)\vz_{\perp\vx}\vz^\top] & \hub.
        \end{cases}
    \end{align}
Now, for any $t\in[0,1]$, let $s_{\vy(t)}(\vz) \in\partial^C\sigma(\vy(t)\cdot\vz)$, and $h''_\hub(u)\in\partial^C h'_\hub(u)$, where
\[
    \partial^C h'_\hub(u) = \begin{cases}
        \{1\}, &\text{ if } |u|<\alpha,\\
        [0, 1], &\text{ if } |u| = \alpha,\\
        \{0\}, &\text{ if } |u|>\alpha.\\
    \end{cases}
\]
Define: 
\begin{align}\label{eq:nonsmooth-joint-curvature}
    \partial_{2}\vg_\ell(\vx,\vy(t);s_{\vx}(\vz),s_{\vy(t)}(\vz),h''_\ell) = \begin{cases}
            -\Evz[s_{\vy(t)}(\vz)\vz_{\perp\vx}\vz^\top]& \sur\\
            -\Evz[s_{\vy(t)}(\vz)s_{\vx}(\vz)\vz_{\perp\vx}\vz^\top] & \Ltwo\\
            -\Evz[h''_\hub(\sigma(\vx\cdot\vz) - \sigma(\vy(t)\cdot\vz))s_{\vy(t)}(\vz)s_{\vx}(\vz)\vz_{\perp\vx}\vz^\top] & \hub,
        \end{cases}
\end{align}
then according to \Cref{eq:jacobi-inclusion-1}, the Clarke Jacobian $\partial^C_{2}\vg_\ell(\vx,\vy(t);s_{\vx}(\vz))$ becomes a subset of:
$$\partial^C_{2}\vg_\ell(\vx,\vy(t);s_{\vx}(\vz))\subseteq\{\partial_{2}\vg_\ell(\vx,\vy(t);s_{\vx}(\vz),s_{\vy(t)}(\vz),h''_\ell): s_{\vy(t)}(\vz)\in\partial^C\sigma(\vy(t)\cdot\vz), h_\ell''\in\partial^C h'_\ell\}.$$ 
Therefore, to prove that \Cref{assum:ell-general-vector-field} is satisfied for all Clarke Jacobians $\partial^C_{2}\vg_\ell(\vx,\vy(t);s_{\vx}(\vz))$, it suffices to show that $\partial_{2}\vg_\ell(\vx,\vy(t);s_{\vx}(\vz),s_{\vy(t)}(\vz),h''_\ell)$ satisfies \Cref{assum:ell-general-vector-field} for any choice of $s_{\vy(t)}(\vz)$ and $h''_\ell$.

    \textbf{(I): Surrogate Loss. } First, it is easy to see that the stationarity part of \Cref{assum:ell-general-vector-field} is satisfied for $\vg_\sur$: $\vg_{\sur}(\vx,\vx) = \Evz[(\sigma(\vx\cdot\vz) - \sigma(\vx\cdot\vz))\vz_{\perp\vx}] = \vzero$.

    It remains to check the curvature part of  \Cref{assum:ell-general-vector-field}. Note that by the definition of $\partial_{2}\vg_\sur(\vx,\vy(t);s_{\vy(t)}(\vz))$ in \Cref{eq:nonsmooth-joint-curvature},
the quadratic form is bounded  above by:
    \begin{align*}
        \vv^\top \partial_{2}\vg_\sur(\vx,\vy(t);s_{\vy(t)}(\vz))\vv\leq -\Evz[s_{\vy(t)}(\vz)(\vv\cdot\vz)^2]\leq -\E[a(\vv\cdot\vz)^2\1\{\vy(t)\cdot\vz >  0\}],
    \end{align*}
    where in the last inequality we used the fact that for an $(a,b)$-unbounded function $\sigma$ it holds $\sigma'\geq 0$, and $\sigma'(z)\geq a$ when $z > 0$.
    Recall that $\vy(t) = \cos(t\theta)\vx + \sin(t\theta)\vv$. Let $R\eqdef 2\sqrt{B_4/\lambda}$, and recall that we have restricted $\theta\leq \arctan(\gamma/R) = \arctan(\gamma\sqrt{\lambda/(4B_4)})$. Then for any $\vz$ such that $\vz\cdot\vx^*\geq\gamma$ and $\vv\cdot\vz\leq R$, and any $t\in(0,1)$, we have:
    \begin{align*}
        \vy(t)\cdot\vz = \frac{\cos(t\theta)}{\cos\theta}(\vx^* - \sin\theta\vv)\cdot\vz + \sin(t\theta)\vv\cdot\vz. \end{align*}
    When $\vv\cdot\vz<0$, since $t\in(0,1)$ and $\theta\in(0,\pi/2)$ it implies:
    \begin{align*}
        \vy(t)\cdot\vz = \frac{\cos(t\theta)}{\cos\theta}\vx^*\cdot\vz -\bigg(\frac{ \sin\theta\cos(t\theta)}{\cos\theta} - \sin(t\theta)\bigg)\vv\cdot\vz\geq \frac{\cos(t\theta)}{\cos\theta}\gamma - \frac{\sin((1 - t)\theta)}{\cos\theta}\vv\cdot\vz\geq \gamma>0.
    \end{align*}
    On the other hand, when $0\leq \vv\cdot\vz\leq R$, we have
    \begin{align*}
        \vy(t)\cdot\vz = \frac{\cos(t\theta)}{\cos\theta}(\vx^* - \sin\theta\vv)\cdot\vz + \sin(t\theta)\vv\cdot\vz> \vx^*\cdot\vz - \tan\theta \vv\cdot\vz\geq \gamma - \gamma\ = 0.
    \end{align*}
    Therefore we conclude that $\1\{\vy(t)\cdot\vz > 0,\vv\cdot\vz\leq R\}\geq \1\{\vx^*\cdot\vz\geq \gamma, \vv\cdot\vz\leq R\}$.
    Since $(\vv\cdot\vz)^2\geq 0$, it holds
    \begin{align}\label{eq:joint-curvature-bound-1}
        \Evz[(\vv\cdot\vz)^2(\1\{\vy(t)\cdot\vz >  0\} - \1\{\vx^*\cdot\vz\geq \gamma\})] &= \Evz[(\vv\cdot\vz)^2(\1\{\vy(t)\cdot\vz >  0\} - \1\{\vx^*\cdot\vz\geq \gamma\})\1\{\vv\cdot\vz\geq R\}] \nonumber\\
        &\quad + \Evz[(\vv\cdot\vz)^2(\1\{\vy(t)\cdot\vz >  0\} - \1\{\vx^*\cdot\vz\geq \gamma\})\1\{\vv\cdot\vz\leq R\}] \nonumber\\
        &\geq -\Evz[(\vv\cdot\vz)^2|\1\{\vy(t)\cdot\vz >  0\} - \1\{\vx^*\cdot\vz\geq \gamma\}|\1\{\vv\cdot\vz\geq R\}] \nonumber\\
        &\geq -\sqrt{\Evz[(\vv\cdot\vz)^4]\pr[\vv\cdot\vz\geq R]},
    \end{align}
    where the last inequality is due to Cauchy-Schwarz. 
    By Markov inequality, we have $\pr[|\vv\cdot\vz|\geq R]\leq \Evz[|\vv\cdot\vz|^4]/R^4$. Recall that $R = 2\sqrt{B_4/\lambda}$, we then obtain:
    \begin{align*}
        \Evz[(\vv\cdot\vz)^2(\1\{\vy(t)\cdot\vz >  0\} - \1\{\vx^*\cdot\vz\geq \gamma\})]\geq -\frac{B_4}{R^2} = -\lambda/4.
    \end{align*}
    Since by the margin assumption on $\D$ it holds $\E[\vz\vz^\top\1\{\vx^*\cdot\vz\geq \gamma\}]\succeq \lambda\mI$, we conclude that $\E[(\vv\cdot\vz)^2\1\{\vy(t)\cdot\vz\geq 0\}]\geq \lambda/2$, and hence $\vv^\top\partial_{2}\vg_\sur(\vx,\vy(t);s_{\vy(t)}(\vz))\vv\leq -a\lambda/2$.

    For the second part of the curvature condition in \Cref{assum:ell-general-vector-field}, by definition of the operator norm we have:
    \begin{align*}
        \|\partial_{2}\vg_\sur(\vx,\vy(t);s_{\vy(t)}(\vz))\|_{2} &= \sup_{\|\vu\|_2 = \|\vr\|_2 = 1}\Evz[s_{\vy(t)}(\vz)(\vu\cdot\vz_{\perp\vx})(\vr\cdot\vz)]\\
        &\leq \sup_{\|\vu\|_2 = \|\vr\|_2 = 1}b\sqrt{\Evz[(\vu\cdot\vz_{\perp\vx})^2(\vr\cdot\vz)^2]}
        \leq b\sqrt{B_4},
    \end{align*}
    where in the last inequality we used the fact that $\sigma$ is $b$-Lipschitz and Cauchy-Schwarz inequality.
    Thus, to conclude, \Cref{assum:ell-general-vector-field} holds for all Clarke joint curvature matrices $\partial^C_{2}\vg_\sur(\vx,\vy(t);s_{\vx}(\vz))$ with $\mu= a\lambda/2$, $B = b\sqrt{B_4}$.

Therefore, using \Cref{lem:regression-loss-satisfy-sharpness-general-distr} implies that when $\theta\leq \min\Big\{\arctan\big(\frac{a\lambda}{4b\sqrt{B_4}}\big), \arctan\Big(\gamma\sqrt{\frac{\lambda}{4B_4}}\Big)\Big\}$, the vector field $\vg_\sur(\vx,\vx^*;s_{\vx}(\vz))$ is $a\lambda/(8b\sqrt{B_4})$-RGA. Recall that $\partial^{C,R}f_\sur(\vx)\subseteq\{\vg_\sur(\vx,\vx^*;s_{\vx}(\vz)): s_{\vx}(\vz)\in\partial^C\sigma(\vx\cdot\vz)\}$, hence any $\nablar f_\sur(\vx)\in\partial^{C,R}f_\sur(\vx)$ is $a\lambda/(8b\sqrt{B_4})$-RGA.

    \textbf{(II): $L_2^2$ Loss. } For $L_2^2$ loss, it is easy to see that the stationarity requirement in  \Cref{assum:ell-general-vector-field}(i) is satisfied. Therefore, we focus on $\partial_{2}\vg_\Ltwo(\vx,\vy(t);s_{\vx}(\vz),s_{\vy(t)}(\vz))$. Fix any $s_{\vy(t)}(\vz)\in\partial^C \sigma(\vy(t)\cdot\vz)$. For the curvature condition, the quadratic form equals (by \Cref{eq:nonsmooth-joint-curvature})
    \begin{align*}
        \vv^\top\partial_{2}\vg_\Ltwo(\vx,\vy(t);s_{\vx}(\vz),s_{\vy(t)}(\vz))\vv &=-\Evz[s_{\vx}(\vz) s_{\vy(t)}(\vz) (\vv\cdot\vz)^2] \\
        &\leq -a^2\Evz[(\vv\cdot\vz)^2\1\{\vy(t)\cdot\vz >  0, \vx\cdot\vz >  0\}].
    \end{align*}
    Similar to the proof for the surrogate loss, let $R = 2\sqrt{B_4/\lambda}$ and restrict $\theta\leq \arctan(\gamma/R)$. Then for any $\vz$ such that $\vx^*\cdot\vz\geq \gamma$ and $\vv\cdot\vz\leq R$, it holds that for all $t\in(0,1)$:
    \begin{align}\label{eq:indicator-inequality}
        \vx\cdot\vz = \frac{1}{\cos\theta}(\vx^* - \sin\theta\vv)\cdot\vz\geq \frac{\gamma}{\cos\theta} - R\tan\theta >  0,\;\vy(t)\cdot\vz > 0.
    \end{align}
    Therefore, $\1\{\vy(t)\cdot\vz >  0,\vx\cdot\vz >  0\}\1\{\vv\cdot\vz\leq R\}\geq\1\{\vx^*\cdot\vz\geq \gamma\}\1\{\vv\cdot\vz\leq R\}$, and hence, similar to \Cref{eq:joint-curvature-bound-1} we have:
    \begin{align*}
        &\quad \Evz[(\vv\cdot\vz)^2(\1\{\vy(t)\cdot\vz > 0,\vx\cdot\vz > 0\} - \1\{\vx^*\cdot\vz\geq \gamma\})]\\
        &=\Evz[(\vv\cdot\vz)^2(\1\{\vy(t)\cdot\vz > 0,\vx\cdot\vz > 0\} - \1\{\vx^*\cdot\vz\geq \gamma\})\1\{\vv\cdot\vz\leq R\}]\\
        &\quad + \Evz[(\vv\cdot\vz)^2(\1\{\vy(t)\cdot\vz > 0,\vx\cdot\vz > 0\} - \1\{\vx^*\cdot\vz\geq \gamma\})\1\{\vv\cdot\vz\geq R\}]\\
        &\geq -\sqrt{\Evz[(\vv\cdot\vz)^4]\pr[\vv\cdot\vz\geq R]}\geq -\lambda/2.
    \end{align*}
    Combining with the fact that $\Evz[\vz\vz^\top\1\{\vz\cdot\vx^*\geq \gamma\}]\succeq\lambda\mI$, we have
    \begin{align*}
        \vv^\top\partial_{2}\vg_\Ltwo(\vx,\vy(t);s_{\vx}(\vz),s_{\vy(t)}(\vz))\vv\leq -a^2(\Evz[(\vv\cdot\vz)^2\1\{\vx^*\cdot\vz\geq \gamma\}]- \lambda/2)\leq -a^2\lambda/2,
    \end{align*}
    indicating that the first part of the curvature condition in \Cref{assum:ell-general-vector-field} is satisfied with $\mu = a^2\lambda/2$. For the second part of the curvature condition, similar to the proof of the surrogate loss, we have 
    \begin{align*}
        \|\partial_{2}\vg_\Ltwo(\vx,\vy(t);s_{\vx}(\vz),s_{\vy(t)}(\vz))\|_2 = \sup_{\|\vu\|_2= \|\vr\|_2 = 1}\Evz[s_{\vy(t)}(\vz) s_{\vx}(\vz)(\vu\cdot\vz)(\vr\cdot\vz_{\perp\vx})]\leq b^2\sqrt{B_4}.
    \end{align*}

    Thus, we conclude that the $L_2^2$ loss induced vector field $\vg_\Ltwo(\vx,\vx^*;s_{\vx}(\vz),s_{\vy(t)}(\vz))$ satisfies \Cref{assum:ell-general-vector-field} with $\mu = a^2\lambda/2$ and $B = b^2\sqrt{B_4}$. 
Using \Cref{lem:regression-loss-satisfy-sharpness-general-distr}, when $\theta\leq \min\Big\{\arctan\big(\frac{a^2\lambda}{4b^2\sqrt{B_4}}\big), \arctan\Big(\gamma\sqrt{\frac{\lambda}{4B_4}}\Big)\Big\}$, the vector field $\vg_\Ltwo(\vx,\vx^*;s_{\vx}(\vz))$ is $a^2\lambda/(8b^2\sqrt{B_4})$-RGA for all choices of $s_{\vx}(\vz)$. Recall that $\partial^{C,R}f_\Ltwo(\vx)\subseteq\{\vg_\Ltwo(\vx,\vx^*;s_{\vx}(\vz)): s_{\vx}(\vz)\in\partial^C\sigma(\vx\cdot\vz)\}$, hence any $\nablar f_\Ltwo(\vx)\in\partial^{C,R}f_\Ltwo(\vx)$ is $a^2\lambda/(8b^2\sqrt{B_4})$-RGA.

    \textbf{(III): $\alpha$-Huber Loss. } First, we note that the stationarity condition of \Cref{assum:ell-general-vector-field} is satisfied. Let us denote $\Delta_\sigma(\vx,\vy(t),\vz) = \sigma(\vx\cdot\vz) - \sigma(\vy(t)\cdot\vz)$ and let $h_\hub(z) = z^2/2\1\{|z| \leq \alpha\} + \alpha(|z| - \alpha/2)\1\{|z| > \alpha\}$. Then using \Cref{eq:nonsmooth-joint-curvature}
and noting that for any $h''_\hub\in\partial^C h'_\hub$, it holds $h_\hub''(u)\geq 0$, $h_\hub''(u)\geq \1\{|u|<\alpha\}$,
    and $\sigma'\geq 0$, the quadratic form is upper bounded by:
    \begin{align*}
    \vv^\top\partial_{2}\vg_\hub(\vx,\vy(t);s_{\vx}(\vz),s_{\vy(t)}(\vz),h''_\hub)\vv
       =& -\Evz[h''_\hub(\Delta_\sigma(\vx,\vy(t),\vz))s_\vx(\vz)s_{\vy(t)}(\vz) (\vv\cdot\vz)^2]\\\leq & -\E[\1\{|\Delta_\sigma(\vx,\vy(t),\vz)|<\alpha\}s_\vx(\vz)s_{\vy(t)}(\vz) (\vv\cdot\vz)^2] \\
        \leq & -a^2\Evz[\1\{|\sigma(\vx\cdot\vz) - \sigma(\vy(t)\cdot\vz)| < \alpha, \vx\cdot\vz >  0, \vy(t)\cdot\vz >  0\}(\vv\cdot\vz)^2].
    \end{align*}
    Since $\sigma$ is $b$-Lipschitz, we have $\1\{|\sigma(\vx\cdot\vz) - \sigma(\vy(t)\cdot\vz)| < \alpha\}\geq \1\{|\vx\cdot\vz - \vy(t)\cdot\vz| < \alpha/b\}$. 
    Next we need to compare:
    $\1\{\vy(t)\cdot\vz > 0,\vx\cdot\vz > 0\}\1\{|(\vx - \vy(t))\cdot\vz| <  \alpha/b\}$ and $\1\{\vx^*\cdot\vz\geq \gamma\}$. Let $R = 2\sqrt{B_4/\lambda}$. Recall that we have proved in \Cref{eq:indicator-inequality} that when $\theta\leq \arctan(\gamma/R)$, for all $t\in(0,1)$ it holds 
    $$\1\{\vy(t)\cdot\vz > 0,\vx\cdot\vz > 0\}\1\{|\vv\cdot\vz|\leq R\}\geq \1\{\vx^*\cdot\vz\geq \gamma\}\1\{|\vv\cdot\vz|\leq R\}. $$
    Furthermore, by the definition of $\vy(t)$, we know that when $\sin\theta\leq \alpha/(4bR)$, $|\vv\cdot\vz|\leq R$ and $|\vx\cdot\vz|\leq R$, it holds:
    \begin{align*}
        |\vy(t)\cdot\vz - \vx\cdot\vz| &= |(1 - \cos(t\theta))\vx\cdot\vz + \sin(t\theta)\vv\cdot\vz|\leq 2\sin^2((t\theta)/2)|\vx\cdot\vz| + \sin(t\theta)|\vv\cdot\vz|\\
        &\leq 2\sin^2\theta R + \sin\theta R< \alpha/b.
    \end{align*}
    This implies that when $\theta\leq \arcsin(\alpha/(4bR))$, it holds $\1\{|\vy(t)\cdot\vz - \vx\cdot\vz| < \alpha/b\}\1\{|\vx\cdot\vz|\leq R, |\vv\cdot\vz|\leq R\} = \1\{|\vx\cdot\vz|\leq R, |\vv\cdot\vz|\leq R\}$.
    Therefore, we can further bound the expectation form via:
    \begin{align*}
        &\quad \Evz[(\vv\cdot\vz)^2 (\1\{\vx\cdot\vz >  0, \vy(t)\cdot\vz >  0, |\sigma(\vx\cdot\vz) - \sigma(\vy(t)\cdot\vz)| <  \alpha\} - \1\{\vx^*\cdot\vz\geq \gamma\})]\\
        &=\Evz[(\vv\cdot\vz)^2 (\1\{\vx\cdot\vz >  0, \vy(t)\cdot\vz >  0, |\sigma(\vx\cdot\vz) - \sigma(\vy(t)\cdot\vz)| <  \alpha\} - \1\{\vx^*\cdot\vz\geq \gamma\})\1\{|\vv\cdot\vz|\leq R\}]\\
        &\quad +\Evz[(\vv\cdot\vz)^2 (\1\{\vx\cdot\vz >  0, \vy(t)\cdot\vz >  0, |\sigma(\vx\cdot\vz) - \sigma(\vy(t)\cdot\vz)| <  \alpha\} - \1\{\vx^*\cdot\vz\geq \gamma\})\1\{|\vv\cdot\vz| > R\}]\\
        &\geq \Evz[(\vv\cdot\vz)^2 (\1\{\vx\cdot\vz >  0, \vy(t)\cdot\vz >  0, |\sigma(\vx\cdot\vz) - \sigma(\vy(t)\cdot\vz)| <  \alpha\} - \1\{\vx^*\cdot\vz\geq \gamma\})\1\{|\vv\cdot\vz|\leq R, |\vx\cdot\vz|\leq R\}]\\
        &\quad - \Evz[(\vv\cdot\vz)^2 |\1\{\vx\cdot\vz >  0, \vy(t)\cdot\vz >  0, |\sigma(\vx\cdot\vz) - \sigma(\vy(t)\cdot\vz)| < \alpha\} - \1\{\vx^*\cdot\vz\geq \gamma\}| \1\{|\vv\cdot\vz|\leq R, |\vx\cdot\vz|> R\}]\\
        &\quad - \Evz[(\vv\cdot\vz)^2 |\1\{\vx\cdot\vz >  0, \vy(t)\cdot\vz >  0, |\sigma(\vx\cdot\vz) - \sigma(\vy(t)\cdot\vz)| < \alpha\} - \1\{\vx^*\cdot\vz\geq \gamma\}|\1\{|\vv\cdot\vz| > R\}]\\
        &\geq -\Evz[(\vv\cdot\vz)^2(\1\{|\vx\cdot\vz|>R\} + \1\{|\vv\cdot\vz|>R\})]\\
        &\overset{(i)}{\geq} -\sqrt{\Evz[(\vv\cdot\vz)^4]\pr[|\vv\cdot\vz|>R]}-\sqrt{\Evz[(\vv\cdot\vz)^4]\pr[|\vx\cdot\vz|>R]}\overset{(ii)}{\geq} -\lambda/2,
    \end{align*}
    where in $(i)$ we applied Cauchy-Schwarz and in $(ii)$ we applied Markov inequality.

    Thus, as $\Evz[\vz\vz^\top\1\{\vx^*\cdot\vz\geq \gamma\}]\succeq\lambda\mI$,  when $R = \sqrt{\frac{4 B_4}{\lambda}}$ and $\theta\leq\min\Big\{\arcsin\big(\frac{\alpha}{4bR}\big), \arctan\big(\frac{\gamma}{R}\big)\Big\}$, the quadratic form is bounded by:
    \begin{align*}
        \vv^\top\partial_{2}\vg_\hub(\vx,\vy(t);s_{\vx}(\vz),s_{\vy(t)}(\vz),h''_\hub)\vv\leq -a^2(\Evz[(\vv\cdot\vz)^2\1\{\vx^*\cdot\vz\geq\gamma\}] - \lambda/2)\leq -a^2\lambda/2.
    \end{align*}

    For the second part in the curvature condition in \Cref{assum:ell-general-vector-field}, recalling that $h_\hub''(z)\in[0,1]$, by definition we have:
    \begin{align*}
        &\|\partial_{2}\vg_{\hub}(\vx,\vy(t);s_{\vx}(\vz),s_{\vy(t)}(\vz),h''_\hub)\|_2 \\
        =\;& \sup_{\|\vu\|_2= \|\vr\|_2 = 1}\Evz[h''_\hub(\Delta_\sigma(\vx,\vy(t),\vz))s_{\vy(t)}(\vz) s_{\vx}(\vz)(\vu\cdot\vz)(\vr\cdot\vz_{\perp\vx})]\\
        \leq\; & \sup_{\|\vu\|_2= \|\vr\|_2 = 1}\Evz[s_{\vy(t)}(\vz) s_{\vx}(\vz)(\vu\cdot\vz)(\vr\cdot\vz_{\perp\vx})] \leq b^2\sqrt{B_4}.
    \end{align*}
    Thus, we conclude that the $\alpha$-Huber loss induced vector field $\vg_{\hub}(\vx,\vy(t);s_{\vx}(\vz))$ satisfies \Cref{assum:ell-general-vector-field} with $\mu = a^2\lambda/2$ and $B = b^2\sqrt{B_4}$. 
Applying \Cref{lem:regression-loss-satisfy-sharpness-general-distr}, when \[\theta\leq \min\Big\{\arctan\big(\frac{a^2\lambda}{4b^2\sqrt{B_4}}\Big),\, \arctan\Big(\frac{\gamma}{2}\sqrt{\frac{\lambda}{B_4}}\Big),\, \arcsin\Big(\min\{1, \frac{\alpha}{8b}\sqrt{\frac{\lambda}{B_4}}\}\Big)\Big\},\] we have that the vector field $\vg_{\hub}(\vx,\vx^*;s_{\vx}(\vz))$ is $a^2\lambda/(8b^2\sqrt{B_4})$-RGA. Recalling that $\partial^{C,R}f_\hub(\vx)\subseteq\{\vg_\hub(\vx,\vx^*;s_{\vx}(\vz)): s_{\vx}(\vz)\in\partial^C\sigma(\vx\cdot\vz)\}$, we conclude that $\partial^{C,R}f_\hub(\vx)$ is $a^2\lambda/(8b^2\sqrt{B_4})$-RGA.
\end{proof}

\paragraph{RGA is weaker than geodesic strong convexity} 

The joint curvature in \Cref{assum:ell-general-vector-field}  is a much weaker condition compared to geodesic (strong) convexity. In particular, RGA is implied by geodesic strong convexity (\Cref{claim:str-cvx-implies-RGA}), while, as we argue below, RGA may hold on regions where the function is not even geodesically convex. 
Consider a simple GLM regression problem, where $\D$ is the uniform distribution on the lattice $\{0,\pm 1\}^d$, and $\sigma$ is the ReLU link function. 
In \Cref{claim:lattice-relu-glm-not-geo-convex} below, we show that for this regression problem, the $L_2^2$ loss $f_\Ltwo(\vx)$ is not geodesically convex on a spherical cap centered at $\vx^*$ with radius as small as $O(1/\sqrt{d})$, while in sharp contrast, \Cref{thm:general-distribution-ab-link-GLM-are-RGA}, which uses the joint curvature condition (\Cref{assum:ell-general-vector-field}), implies that  $f_\Ltwo$ is $\Omega(1)$-RGA on a large spherical cap of radius $\Omega(1)$.

\begin{lemma}\label{claim:lattice-relu-glm-not-geo-convex}
    Let $\D = \mathrm{Unif}\{-1,0,1\}^d$ for $d \geq 2$ and let $\vx^* = \ve_1$. Let $\sigma(z) = \mathrm{ReLU}(z) = \max\{z,0\}$. Denote the spherical cap centered at $\vx^*$ with radius $r$ by $\cS_{\vx^*}(r)\eqdef\{\vx\in\spd:\theta(\vx,\vx^*)\leq r\}$. Then, $f_\Ltwo(\vx)$ cannot be geodesically convex on $\cS_{\vx^*}(2/\sqrt{d})$, whereas it is $\sqrt{3}/72$-RGA on $\cS_{\vx^*}(\arctan(\sqrt{3}/36))$.
\end{lemma}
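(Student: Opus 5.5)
The statement has two parts, and I would handle them separately: the RGA part by plugging constants into \Cref{thm:general-distribution-ab-link-GLM-are-RGA}, and the non-convexity part by an explicit geodesic on which $f_\Ltwo$ fails convexity.

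\textbf{RGA part.} I would check that $\mathrm{Unif}\{-1,0,1\}^d$ lies in $\mathfrak{D}_{\mathrm{nearIso}}(B_4,\gamma,\lambda)$ with explicit constants and then invoke part 2 of \Cref{thm:general-distribution-ab-link-GLM-are-RGA}. The coordinates are i.i.d.\ with $\E z_i^2 = 2/3$ and $\E z_i^4 = 2/3$. For a unit vector $\vr$,
\[
\E(\vr\cdot\vz)^4=\sum_i r_i^4\,\E z_i^4+3\sum_{i\neq j}r_i^2r_j^2(\E z_i^2)^2\le \tfrac{4}{3}\Big(\sum_i r_i^2\Big)^2 ,
\]
so $B_4=4/3$ works. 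Take $\gamma=1$, so the margin event is $\{z_1=1\}$, which has probability $1/3$. On this event, $\E[\vz\vz^\top\1\{z_1=1\}]$ has $(1,1)$ entry $1/3$ and diagonal entries $(1/3)(2/3)=2/9$ for $i\ge2$. The off-diagonal entries vanish by symmetry of $z_i$ for $i\ge2$, including the $(1,i)$ entries, since $\E[z_1 z_i\1\{z_1=1\}]=\E[z_i]/3=0$. Hence $\lambda=2/9$. ReLU is $(1,1)$-unbounded, so $a=b=1$.

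Substituting gives $\mu=\lambda/(8\sqrt{B_4})=(2/9)/(8\cdot 2/\sqrt3)=\sqrt3/72$. For the radius, $\lambda/(4\sqrt{B_4})=\sqrt3/36$ and $\gamma\sqrt{\lambda/(4B_4)}=\sqrt{1/24}$. The first is smaller, so the cap is $\arctan(\sqrt3/36)$, matching the statement.

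\textbf{Non-convexity part.} Geodesic convexity on a cap forces $t\mapsto f_\Ltwo(\cG(t))$ to be convex along every geodesic inside it, so it suffices to exhibit one geodesic in $\cS_{\vx^*}(2/\sqrt d)$ along which this fails. I would take points near $\ve_1$ where large numbers of lattice points $\vz$ sit on the ReLU kink hyperplane $\vx\cdot\vz=0$. A natural choice is a great circle through $\vx(\phi)=\cos\phi\,\ve_1+\sin\phi\,\vw$ with $\vw=(\ve_2+\dots+\ve_d)/\sqrt{d-1}$ (or a variant using fewer coordinates). Here $\vx\cdot\vz=\cos\phi\, z_1+\sin\phi\, S/\sqrt{d-1}$ with $S=\sum_{i\ge2}z_i$, so $f_\Ltwo$ becomes a finite sum over $(z_1,S)$ of piecewise quadratics in trigonometric functions of $\phi$. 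The kinks occur at $\tan\phi=-z_1\sqrt{d-1}/S$. With $z_1=\pm1$ and $|S|\approx d-1$, the first kink sits at $\phi\approx 1/\sqrt{d-1}$, which lies within radius $2/\sqrt d$.

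The key step, and the one I expect to be the main obstacle, is showing that crossing this kink produces a concave jump. The concern is that the one-sided derivatives of $f$ along the curve decrease at the kink, or more robustly that a midpoint convexity inequality fails between two points straddling it. Intuitively, the term $\tfrac12(\sigma(\vx\cdot\vz)-\sigma(z_1))^2$ with $z_1=-1$ has target $0$, which makes the kink convex. For $z_1=+1$, however, we get $\tfrac12(\mathrm{ReLU}(u)-1)^2$, whose derivative jumps from $0$ to $-1$ at $u=0$, a concave kink. This is exactly why we need configurations with $z_1=1$ and $S\approx-(d-1)$.

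So I would direct the geodesic toward $-\vw$ instead, i.e.\ use $\vx(\phi)=\cos\phi\,\ve_1-\sin\phi\,\vw$, so that the $z_1=1$, $S>0$ points cross the kink. I would then compute the jump in the derivative contributed by the points with $z_1=1$ and $S\approx d-1$. Their probability is exponentially small, so rather than relying on many points I would choose the direction $\vw$ to be supported on only two or so coordinates. This keeps the kink probability at constant order while placing the kink at angle $\Theta(1/\sqrt d)$; for example $\vw$ could be $\ve_2$ scaled so that the kink sits at $\tan\phi\approx 1/\sqrt d$ through weighting. I would then verify that the resulting negative derivative jump is not cancelled by the smooth second-order part over a small interval, which makes the convexity inequality fail.
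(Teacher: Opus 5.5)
Your RGA half is the paper's argument: membership in $\mathfrak{D}_{\mathrm{nearIso}}(4/3,1,2/9)$, $a=b=1$, then part 2 of \Cref{thm:general-distribution-ab-link-GLM-are-RGA}. Your constants are correct.

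The gap is in the non-convexity half. Your first geodesic, $\vx(\phi)=\cos\phi\,\ve_1+\sin\phi\,\mathbf{w}$ with $\mathbf{w}=(\ve_2+\dots+\ve_d)/\sqrt{d-1}$, is exactly the paper's. The mechanism you identify is also the right one: the concave kink of $\tfrac12([u]_+-1)^2$ at $u=0$ for atoms with $z_1=1$. But you then treat the $3^{-d}$ mass of the kink atom as an obstacle and switch to a direction supported on two or so coordinates. That route cannot work. For a $k$-sparse unit $\mathbf{w}$ and $\vz\in\{-1,0,1\}^d$ you have $|\mathbf{w}\cdot\vz|\le\|\mathbf{w}\|_1\le\sqrt k$. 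The kink condition $\cos\phi+\sin\phi\,(\mathbf{w}\cdot\vz)=0$ then forces $\tan\phi\ge 1/\sqrt k$, no matter how you weight $\mathbf{w}$; for $k=2$ the first kink is at $\arctan(1/\sqrt2)$, outside the cap of radius $2/\sqrt d$ for all but small $d$. More generally, take any $\vx$ in that cap at angle $\theta$ lying on $\{\vx\cdot\vz=0\}$ with $z_1=1$. Then $\cos\theta=-\sum_{i\ge2}x_iz_i\le\sqrt{k}\sin\theta$, so $\vx$ has at least $\cot^2(2/\sqrt d)\approx d/4$ nonzero coordinates besides the first. You therefore cannot make the kink mass constant.

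What you are missing is that the size of the jump is irrelevant.
\begin{itemize}
\item Atoms with $z_1\in\{0,-1\}$ contribute $\tfrac12[\vx\cdot\vz]_+^2$, which is $C^1$. This is not a ``convex kink'' as you say; there is no first-derivative jump at all.
\item An atom with $z_1=1$ crossed at $\phi_0$ with $u'(\phi_0)\neq0$ adds a jump of $-|u'(\phi_0)|$ to the $\phi$-derivative, in whichever direction it is crossed. So your flip to $-\mathbf{w}$ is unnecessary; it is equivalent by the sign symmetry of $z_2,\dots,z_d$.
\end{itemize}
Hence every first-derivative jump of $q(\phi)=f_\Ltwo(\vx(\phi))$ is strictly negative, and nothing can cancel it.

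On the original geodesic, take $\theta_0=\arcsin(1/\sqrt d)=\arctan(1/\sqrt{d-1})<2/\sqrt d$. The only atom with $z_1=1$ and $\vx(\theta_0)\cdot\vz=0$ is $\vz_0=(1,-1,\dots,-1)$, with $u'(\theta_0)=-\sqrt d$. This gives $q'(\theta_0^+)-q'(\theta_0^-)=-\sqrt d\,3^{-d}<0$. A convex function of one variable has nondecreasing one-sided derivatives, so this already contradicts geodesic convexity on the cap. No midpoint inequality or comparison with the smooth second-order part is needed, because the smooth pieces have continuous first derivatives and contribute nothing to the jump. This is the paper's argument.
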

\begin{proof}
    We first show that $\D\in\mathfrak{D_{\mathrm{nearIso}}}(4/3,1,2/9)$. Let $z_i \sim\mathrm{Unif}\{-1,0,1\}$, independent for $i=1,\dots,d$. First, for the 4th-order moment, for any $\vr\in\spd$, we have:
    \begin{align*}
        \Evz[(\vr\cdot\vz)^4] = \sum_{i=1}^d r_i^4 \E[z_i^4] + 6\sum_{i\leq j}r_i^2r_j^2\E[z_i^2]\E[\z_j^2] = \frac{2}{3}\sum_{i=1}^d r_i^4 + \frac{8}{3}\sum_{i< j}r_i^2r_j^2.
    \end{align*}
    Since $1 = (\sum_i r_i^2)^2$, we have $2\sum_{i< j}r_i^2r_j^2 = 1 - \sum_i r_i^4$, therefore, we have $\Evz[(\vr\cdot\vz)^4] = 4/3 - (2/3)\sum_{i=1}^d r_i^4\leq 4/3$. Therefore, we have $B_4\leq 4/3$. Furthermore, note that since each coordinate of $\vz$ is independent, it holds $\Evz[z_i z_j\1\{z_1\geq 1\}] = 0$ for any $i\neq j$. In addition, direct calculation yields $\E[z_1^2\1\{z_1\geq 1\}]= 1/3$ and $\E[z_i^2\1\{z_1\geq 1\}] = \E[z_i^2]\pr[z_1\geq 1] = 2/9$ when $i\neq 1$. Therefore, we conclude that $\Evz[\vz\vz^\top\1\{\vx^*\cdot\vz\geq 1\}]\succeq (2/9)\mI$. In summary, $\D\in\mathfrak{D}_{\mathrm{nearIso}}(4/3,1,2/9)$. Since $\sigma$ is $(1,1)$-unbounded, applying \Cref{thm:general-distribution-ab-link-GLM-are-RGA} we immediately obtain that $f_\Ltwo(\vx)$ is $\sqrt{3}/72$-RGA in $\cS_{\vx^*}(\arctan(\sqrt{3}/36))$.

    On the other hand, consider $\vx(\theta) = \cos\theta\vx^* + \sin\theta\vv$, where $\vv = \sum_{i\geq 2}\ve_i/\sqrt{d-1}$, $\vv\perp\vx^*$, and $\theta\in(0, \theta_1)$, $\theta_1\eqdef 2/\sqrt{d}$. 
    In other words, $\vx(\theta)$ is the geodesic from $\vx^*$ to $\vx(\theta_1)$: $\cG_{\vx^*,\vx(\theta_1)}(t), t = \theta/\theta_1\in[0,1]$. Let $\theta_0 \eqdef \arcsin(1/\sqrt{d})$. 
    For atoms $\vz\in\{-1,0,1\}^d$, we have
    \begin{align*}
        q(\theta_0) = f_\Ltwo(\vx(\theta_0)) &= \frac{3^{-d}}{2}\sum_{\vz\in\{-1,0,1\}^d} (\sigma(\vx(\theta_0)\cdot\vz) - [z_1]_+)^2 \\
        &= \frac{3^{-d}}{2}\sum_{\vz\in\{-1,0,1\}^d}\bigg(\bigg[\sqrt{\frac{d-1}{d}}z_1 + \frac{1}{\sqrt{(d-1)d}}\sum_{i\geq 2}z_i\bigg]_+ - [z_1]_+\bigg)^2. 
    \end{align*}
    If $f_\Ltwo$ were geodesically convex on $\cS_{\vx^*}(2/\sqrt{d})$, then by definition $f_\Ltwo(\vx(\theta))$ would need to be convex on the geodesic from $\vx^*$ to $\vx(\theta_1)$, therefore, it must hold that $q'(\theta_0^+)\geq q'(\theta_0^-)$, due to convexity.
    Let $\vz$ be any atom, and denote $p_\vz(\theta)\eqdef ([\vx(\theta)\cdot\vz]_+ - [z_1]_+)^2$. 
    Consider first $z_1\leq 0$. Then since $p_\vz(\theta) = [\vx(\theta)\cdot\vz]_+^2$ is a smooth function, there is no jump in $p_\vz'(\theta)$ at $\theta_0$.
    Then consider $z_1>0$. It is easy to see that the only atom such that $\vx(\theta_0)\cdot\vz = 0$ is $\vz_0 = (1,-1,\dots, -1)$. Furthermore at $\theta_0^+$, we have $\vx(\theta_0^+)\cdot\vz<0$ and $\vx(\theta_0^-)\cdot\vz>0$. This implies that $p_\vz(\theta_0^-) = (\vx(\theta_0^-)\cdot\vz - 1)^2$ and $p_\vz(\theta_0^+) = 1$, hence
    \begin{align*}
    p_{\vz_0}'(\theta_0^+) \to 0, \;     p_{\vz_0}'(\theta_0^-) \to 2(0 - 1)(-\sin\theta_0 - \sqrt{d-1}\cos\theta_0) = 2\sqrt{d}.
    \end{align*}
    For any other atoms such that $z_1>0$, we have $p_\vz(\theta) = (\vx(\theta)\cdot\vz - z_1)^2$ in the neighborhood of $\theta_0$, which is also a smooth function around $\theta_0$ and $p'_{\vz}(\theta_0^+) = p'_{\vz}(\theta_0^-)$.
    This implies that 
    $$q'(\theta_0^+) - q'(\theta_0^-) = \frac{3^{-d}}{2}(p_{\vz_0}'(\theta_0^+) - p_{\vz_0}'(\theta_0^-)) + \frac{3^{-d}}{2}\sum_{\vz\neq \vz_0}(p_{\vz}'(\theta_0^+) - p_{\vz}'(\theta_0^-)) = -\sqrt{d}/3^d<0.$$
    This implies $f_\Ltwo(\vx)$ cannot be geodesically convex on $\cS_{\vx^*}(2/\sqrt{d})$.
\end{proof}

\section{Discussion and Future Directions}

In this work, we introduced Riemannian Gradient Alignment (RGA) as a
directional local error bound for optimization over the Euclidean sphere.
RGA directly controls the alignment between a tangent vector field and the
geodesic direction toward a target vector. Unlike standard conditions based
on convexity or smoothness, RGA applies to nonsmooth objectives and, more
generally, to tangent vector fields that need not be gradients of any
objective. We showed that a diminishing-step Riemannian Gradient Descent method converges linearly when RGA holds with order $r=1$, and at rate 
\(
    O\left(k^{-1/(2r-2)}\right)
\)
when $r>1$. 

Our main technical contribution is a collection of derivative-based conditions that make the RGA property verifiable and characterize the landscape of objective functions that can be efficiently minimized on the sphere.  
The examples in \Cref{sec:examples} illustrate the scope of this approach, which unifies several classes of learning and estimation problems under the same RGA-based principle. These examples include objectives that are nonsmooth, non-geodesically convex, or defined under discrete distributions. They also demonstrate the usefulness of analyzing vector fields other than the gradient of the
associated objective. Finally, the provided lasso-regularized spiked sparse-PCA example satisfies RGA with order $r=2$, while no order $r<2$ can hold uniformly near the target. This shows that the higher-order RGA regime is intrinsic to natural problems and not merely an artifact of the definition.

We now discuss some natural directions for future research that we hope our work will stimulate. 

\paragraph{Adaptive and stochastic algorithms.}
The stepsize schedules analyzed in this work use the RGA parameters $\mu$ and $r$. It would be desirable to obtain comparable convergence rates using a parameter-free method, for example through a line search, restarts, or an adaptive estimate of the relevant angular scale.
Moreover, our analysis is primarily formulated in terms of population
or oracle vector fields. An important next step is to determine when
empirical or stochastic estimates inherit RGA uniformly over a spherical
cap, and to develop finite-sample and stochastic-oracle convergence
guarantees. This requires understanding the stability of both the RGA
inequality and the proposed curvature certificates under sampling error.

\paragraph{Characterizing joint curvature and higher-order RGA.}
Our conditions identify negative joint curvature as a broadly applicable
sufficient mechanism for RGA, but they are not intended as necessary
conditions. It would be useful to develop partial converses or alternative
certificates that more sharply characterize when RGA holds. A related
question is whether the RGA order $r$ can be inferred from the rate at
which the relevant curvature or vector-field norm degenerates near the
target. The sparse-PCA example establishes the necessity of $r=2$ in one
natural setting. Finding other intrinsic higher-order examples and
determining which values of $r$ arise in statistical and learning
problems may lead to a more refined classification of benign
nonconvex landscapes.

\paragraph{Extensions beyond the sphere.}
The unit sphere provides a particularly transparent geometry: relative
to a target $\vx^*$, there is a distinguished tangent direction
$(\vx^*)_{\perp \vx}$, and the relevant geodesics have an explicit
two-dimensional representation. A natural question is whether analogous 
alignment conditions and curvature certificates can be developed on 
other manifolds, particularly the Stiefel and Grassmann manifolds. 
Extending the present theory would require an appropriate matrix-valued 
notion of alignment, together with a way to transport and compare vector 
fields along geodesics. It would also be interesting to determine whether 
the higher-order convergence theory for $r>1$ extends to these settings.

\paragraph{Designing vector fields and enlarging the basin of attraction.} 
Because RGA is a property of a vector field rather than necessarily of  the gradient of a prescribed objective, the framework raises the possibility of algorithmically designing a ``useful'' field. One may ask  how to select a vector field for a given problem so as to maximize the RGA parameter or enlarge the region on which RGA holds. This question is especially relevant for semi-parametric problems, where different best-fitting-link objectives  or correlation fields may have very different optimization geometry. For instance, the vector fields chosen for Gaussian GLMs and Gaussian SIMs in \cite{zarifis2025robustly,wang2026robustly} were obtained by  maximizing their correlation with target $\vx^*$ over a specific class of vector fields capturing the Riemannian gradient fields of the $L_2^2$ and the surrogate loss as special cases.  More generally, combining the local RGA guarantees established here  with initialization procedures  would give  end-to-end algorithms that reach and then remain within the certified region.

\section*{Acknowledgements}

This work was supported in part by the Air Force Office of Scientific Research under award number FA9550-24-1-0076, NSF CAREER Award CCF-2440563, and NSF MFAI Award DMS-2502282. Any opinions, findings, conclusions, or recommendations expressed in this material are those of the authors and do not necessarily reflect the views of the U.S.\ Department of Defense.

Part of this work was completed while Nikos Zarifis was a graduate student at UW-Madison.

\newpage

\bibliographystyle{alpha}
\bibliography{mydb}

\appendix
\section*{Appendix}
\section{Omitted Proofs of Auxiliary Facts}\label{appx:omitted-facts}
\subsection{Omitted Proof from \Cref{subsec:glm-sim-Gaussian}}
We restate and prove \Cref{fct:Gaussian-smoothing}.
\factgaussiansmoothing*
{
\Cref{fct:Gaussian-smoothing} follows from properties of Hermite polynomials and Ornstein-Uhlenbeck operators (see, e.g., section 11 in~\cite{Don14}). Here we provide a self-contained proof for completeness. 
}
\begin{proof}
    By the definition of $\Tr_\rho\sigma(u)$, after changing integration variables, we have:
    \begin{align*}
        \Tr_\rho\sigma(u) &= \int_\R\sigma(\rho u + \sqrt{1-\rho^2}z)p_\calN(z)\diff{z} =\int_\R\sigma(y)\frac{1}{\sqrt{1-\rho^2}}p_\calN\bigg(\frac{y-\rho u}{\sqrt{1-\rho^2}}\bigg)\diff{y}\\
        &=\int_\R\sigma(y)c(\rho,u,y)p_\calN(y)\diff{y}, \;\text{where }\;c(\rho,u,y) = \frac{1}{\sqrt{(1-\rho^2)}}\exp\bigg(-\frac{(y-\rho u)^2}{2(1-\rho^2)} + \frac{y^2}{2}\bigg).
    \end{align*}
    Note that $c(\rho,u,y)$ is an infinitely smooth kernel for any $u$ and any $\rho\in(0,1)$. 
    Therefore, $(\rho,u)\to\Tr_\rho\sigma(u)$ is infinitely continuously differentiable for any $u\in\R$ and $\rho\in(0,1)$, by the fact of smooth convolution.
    Note that by the differentiation property of smooth convolution, it holds $\partial_\rho^a\partial_u^b \Tr_\rho\sigma(u) = \int_\R \sigma(y)\partial_\rho^a\partial_u^b c(\rho,u,y)p_\calN(y)\diff{y}.$
    Furthermore, for any compact set $K\subset(0,1)$ and any $\rho\in K$, it holds $\partial_\rho^a\partial_u^b c(\rho,u,y) = p(\rho,u,y)c(\rho,u,y)$ where $p(\rho,u,y)$ is a polynomial of $\rho,u,y$ with bounded coefficients. Therefore, there exists a constant $C_{K,a,b}$ such that 
    \begin{align*}
        \sup_{\rho\in K}\iint_\R (\partial_\rho^a\partial_u^b c(\rho,u,y))^2 p_\calN(y) p_\calN(u)\diff{y}\diff{u}\leq C_{K,a,b}.
    \end{align*}
    Hence, by Cauchy-Schwarz, we have:
    \begin{align*}
        \int_\R (\partial_\rho^a\partial_u^b \Tr_\rho\sigma(u))^2 p_\calN(u)\diff{u}&\leq \int_\R \bigg(\int_\R\sigma(y)^2 p_\calN(y)\diff{y}\int_\R(\partial_\rho^a\partial_u^b c(\rho,u,y))^2 p_\calN(y)\bigg)p_\calN(u)\diff{u}\\
       &\leq \|\sigma\|_{L_2(\D)}^2\int_\R \int_\R(\partial_\rho^a\partial_u^b c(\rho,u,y))^2 p_\calN(y)p_\calN(u)\diff{y}\diff{u}\leq \|\sigma\|_{L_2(\D)}^2 C_{\rho,a,b}.
    \end{align*}
    Therefore, $\partial_\rho^a\partial_u^b \Tr_\rho\sigma(u)\in L_2(\D)$. 
    
    In addition, fixing any $\rho_0\in (0,1)$ and a compact set $K\subset(0,1)$ such that $\rho_0$ is in the interior of $K$, then for any small $h\in(0,1)$ such that $\rho_0 + h\in K$, since $\partial_\rho^a\partial_u^b c(\rho,u,y)$ is a smooth function in $\rho$, there exists $0\leq h''\leq h'\leq h$ such that: 
    \begin{align*}
        &\quad \iint_\R \bigg(\frac{\partial_\rho^a\partial_u^b c(\rho_0 + h,u,y) - \partial_\rho^a\partial_u^b c(\rho_0,u,y)}{h} - \partial_\rho^{a+1}\partial_u^b c(\rho_0,u,y)\bigg)^2p_{\calN}(y) p_\calN(u)\diff{y}\diff{u}\\
        &=\iint_\R  \bigg(\partial_\rho^{a+1}\partial_u^b c(\rho_0 + h',u,y) - \partial_\rho^{a+1}\partial_u^b c(\rho_0,u,y)\bigg)^2p_{\calN}(y) p_\calN(u)\diff{y}\diff{u}\\
        &=(h'')^2 \iint_\R  \bigg(\partial_\rho^{a+2}\partial_u^b c(\rho_0 + h'',u,y)\bigg)^2 p_{\calN}(y) p_\calN(u)\diff{y}\diff{u}\leq (h'')^2 C_{K,a+2,b} \to 0,\;\text{when}\; h\to 0.
    \end{align*}
    This implies
    \begin{align*}
        &\quad \int_\R \bigg(\frac{\partial_\rho^a\partial_u^b \Tr_{\rho_0 + h}\sigma(u) -\partial_\rho^a\partial_u^b \Tr_{\rho_0}\sigma(u)}{h} - \partial_\rho^{a+1}\partial_u^b \Tr_{\rho_0}\sigma(u)\bigg)^2 p_\calN(u)\diff{u}\\
        &=\int_\R \bigg(\int_\R\sigma(y)\bigg(\frac{\partial_\rho^a\partial_u^b c(\rho_0+h,u,y) -\partial_\rho^a\partial_u^b c(\rho_0,u,y)}{h} - \partial_\rho^{a+1}\partial_u^b c(\rho_0,u,y)\bigg)p_\calN(y)\diff{y}\bigg)^2 p_\calN(u)\diff{u}\\
        &\leq \|\sigma\|_{L_2(\D)}^2  \iint_\R \bigg(\frac{\partial_\rho^a\partial_u^b c(\rho_0 + h,u,y) - \partial_\rho^a\partial_u^b c(\rho_0,u,y)}{h} - \partial_\rho^{a+1}\partial_u^b c(\rho_0,u,y)\bigg)^2p_{\calN}(y) p_\calN(u)\diff{y}\diff{u}\to 0.
    \end{align*}
    Similarly we can show that ${(\partial_\rho^a\partial_u^b \Tr_{\rho_0 + h}\sigma(u_0 + r) -\partial_\rho^a\partial_u^b \Tr_{\rho_0}\sigma(u_0))}/{hr}\to \partial_\rho^{a+1}\partial_u^{b+1} \Tr_{\rho_0}\sigma(u_0)$ is in $L_2$.
    Now let $F(\vx)\eqdef(\Tr_{\vx\cdot\vx^*}\sigma(\vx\cdot\vz) - \sigma(\vx^*\cdot\vz))^2$ and let $\vx_t = \cos(t\beta)\vx + \sin(t\beta)\vr$ be any geodesic in a small neighborhood of $\vx$ for some $\vr\perp\vx$, $\|\vr\|_2 = 1$ and some small $\beta\in(0,\pi/2)$, we need to prove $(F(\vx_t)- F(\vx))/t\to\nablar F(\vx)[\vr]$ in $L_1$. Since both $(\Tr_\rho\sigma)'(u)$ and $({\diff{}}/{\diff{\rho}})\Tr_\rho\sigma(u)$ are smooth functions in both $\rho$ and $u$, we know that $(\mathrm{T}_{\vx_t\cdot\vx^*}\sigma)'(\vx_t\cdot\vz)$ and $({\diff{}}/{\diff{\rho}})\mathrm{T}_{\vx_t\cdot\vx^*}\sigma(\vx_t\cdot\vz)$ are smooth functions in $t$, and hence by the mean value theorem, there exists some $\tau\in(0,t)$ such that
    \begin{align*}
        (F(\vx_t)- F(\vx))/t = 2(\Tr_{\vx_\tau\cdot\vx^*}\sigma(\vx_\tau\cdot\vz) - \sigma(\vx^*\cdot\vz))\bigg(\partial_u\mathrm{T}_{\vx_\tau\cdot\vx^*}\sigma(\vx_\tau\cdot\vz)\vz_{\perp\vx_\tau} + \partial_\rho\mathrm{T}_{\vx_\tau\cdot\vx^*}\sigma(\vx_\tau\cdot\vz)\vx^*_{\perp\vx_\tau}\bigg)\cdot\vr.
    \end{align*}
    Then, since every term converges in $L_2$, we can then conclude that $(F(\vx_t)- F(\vx))/t\to\nablar F(\vx)[\vr]$ in $L_1$, implying that $\nablar f(\vx) = \Evz[\nablar F(\vx)]$.
\end{proof}

\subsection{Omitted Proof from \Cref{sec:spiked-sparse-pca}}
We restate and prove \Cref{lem:spiked-sparse-pca-cannot-have-lower-order}.
\spikePCAMustBeOrderTwo*
\begin{proof}
    Our goal is to construct a path $\vx(t)\to\vx^*$ such that when $t$ is small, $-\vg(\vx(t))\cdot\vx^*\asymp t^2$ while $\|\vg(\vx(t))\|_2 = \Theta(1)$. Then when $t\to 0$, this would imply that $-\vg(\vx(t))\cdot\vx^*/(\|\vg(\vx(t))\|_2\sin^r\theta(\vx(t),\vx^*))\to 0$. 
    The idea is to consider a point $\vx$ that has a non-zero entry outside of the support of $\vx^*$, i.e., there exists an index $j\notin \cI$ such that $x_j\neq 0$, then consider the geodesic $\cG_{\vx,\vx^*}(t), t\in[0,1]$.
    On this geodesic, when $\cG_{\vx,\vx^*}(t)$ approaches $\vx^*$ while $t\to 0$, the signal from $\vx^*$ is of second order, while the gradient vector is always large due to the lasso penalization on the outlier entry $j$.  
   
   Recall that under the setting of \Cref{prob:spiked-sparse-pca}, $x^*_i\neq 0$ for $i\in\cI$, $x^*_i = 0$ for $i\notin \cI$, and $|\cI| = s$. 
   Let $\vy\in\spd$ be a vector supported on $\cI$ (in other words, $y_j = 0$ for all $j\notin \cI$) such that $\vy\perp\vx^*$. Such a vector exists because $s = |\cI|\geq 2$ by our assumption, indicating that the subspace supported on $\cI$ has dimension at least 2. Then pick any $j\notin \cI$ (such an index exists since we assumed $s<d$). Let $\vu(t) = \sqrt{1 - t^2}\vy + t \ve_j$, and construct $\vx(t) = \cos(t)\vx^* + \sin(t)\vu(t)$, $t\in[0,1]$.
   Note that since both $\vy$ and $\ve_j$ are orthogonal to $\vx^*$, the vector $\vu(t)$ is orthogonal to $\vx^*$. In addition, since $\vy\perp\ve_j$, $\vu(t)$ has unit norm.

   Pick any $\vg(\vx(t))\in\partial^{C,R}f_\lambda(\vx(t))$. In \Cref{lem:spiked-pca-order-2-rga} we showed that by the definition of the Clarke subdifferential and the definition of $f_\lambda$, $\vg(\vx(t))$ equals
   \begin{align*}
       \vg(\vx(t)) = -2\beta(\vx(t)\cdot\vx^*)\vx^*_{\perp\vx(t)} + \lambda \vz(t)_{\perp\vx(t)},
   \end{align*}
   where $z_i(t) = \sign(x_i(t))$ when $x_i(t)\neq 0$ and $z_i(t)\in[-1,1]$ otherwise. First, observe that any choice of $\vg(\vx(t))$ has a large component along the $\ve_j$ direction. Noting that $\ve_j\perp\vx^*$, $\vz(t)\cdot\vx(t) = \|\vx(t)\|_1$, $\theta(\vx(t),\vx^*) = t$ and $z_j(t) = 1$ since $x_j(t) = \sin(t)t>0$, we have
   \begin{align*}
       \vg(\vx(t))\cdot\ve_j &= -2\beta \cos(\theta(\vx(t),\vx^*))\vx^{*\top}(\mI - \vx(t)\vx(t)^\top)\ve_j + \lambda \vz(t)^\top(\mI - \vx(t)\vx(t)^\top)\ve_j\\
       &=2\beta\cos^2(\theta(\vx(t),\vx^*)) \sin(t)t + \lambda(\vz(t)\cdot\e_j - (\vz(t)\cdot\vx(t))(\vx(t)\cdot\ve_j))\\
       & = 2\beta\cos^2(t) \sin(t)t + \lambda(1 - \|\vx(t)\|_1\sin(t)t)\to\lambda \text{ when } t\to 0.
   \end{align*}
   Therefore, we conclude that $\|\vg(\vx(t))\|_2\geq \lambda/2$ when $t\to 0$.

   On the other hand, the inner product between $\vg(\vx(t))$ and $\vx^*$ is:
   \begin{align*}
       \vg(\vx(t))\cdot\vx^* &= -2\beta \cos(t)\sin^2(t) + \lambda\vz(t)^\top(\mI - \vx(t)\vx(t)^\top)\vx^*\\
       &= -2\beta \cos(t)\sin^2(t) + \lambda((\vz(t)\cdot\vx^*) - \|\vx(t)\|_1\cos(t)).
   \end{align*}
   Now when $t$ is small, $\vx(t)$ is nearby $\vx^*$, therefore for $i\in\cI$ it will hold $\sign(x_i(t)) = \sign(x^*_i)$. This implies that $\vz(t)\cdot\vx^* = \|\vx^*\|_1 = \sqrt{s}$. Furthermore, recall that in \Cref{lem:spiked-pca-order-2-rga} we proved that $\|\vx(t)\|_1\geq \sqrt{s}\cos(\theta(\vx(t),\vx^*)) = \sqrt{s}\cos(t)$. 
   In addition, by the definition of $\vx(t)$, and $\vu(t)$, when $t$ is small we have $\|\vx(t)\|_1 = \|\cos(t)\vx^* + \sin(t)\vu(t)\|_1\leq \cos(t)\sqrt{s} + t\sin(t)$.
   Therefore, when $t$ is sufficiently small, it holds that
   \begin{align*}
       |\vg(\vx(t))\cdot\vx^*|\leq 2\beta \cos(t)\sin^2(t) + \lambda\sqrt{s}\sin^2(t).
   \end{align*}
   Combining with the lower bound on $\|\vg(\vx(t))\|_2$ yields
   \begin{align*}
        \frac{\vg(\vx(t))\cdot\vx^*}{\|\vg(\vx(t))\|_2\sin^r\theta(\vx(t),\vx^*)}\to 0 \text{ when } t\to 0.
    \end{align*}
    This completes the proof.
\end{proof}

\section{Geodesic Strong Convexity Implies RGA}\label{sec:tr-cvx-implies-RGA}
\begin{lemma}\label{claim:str-cvx-implies-RGA}
    Let $\cS\subset\spd$ be a geodesically convex set, and let $f:\cS\to\R$ be differentiable and $\nu$-strongly geodesically convex on $\cS$. Suppose $\vx^*$ is the minimizer of $f$ over $\cS$, and define $\cS' = \cS\cap\{\vx\in\spd:\theta(\vx,\vx^*)\in(0,\pi/2)\}$. If $\nablar f$ satisfies \Cref{assum:smoothness-of-regression-loss}(i), then $f$ is $(\mu,2)$-RGA on $\cS'$ with $\mu = \nu/(2L_1)$; if $\nablar f$ satisfies \Cref{assum:smoothness-of-regression-loss}(ii), then $f$ is $(\mu, 1)$-RGA on $\cS'$ with $\mu = \nu/(4L_2)$. 
\end{lemma}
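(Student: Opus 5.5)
The plan is to turn geodesic strong convexity into a first-order inequality along the geodesic from $\vx$ to $\vx^*$, and then rewrite it in the form of \eqref{eq:sharp}.

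\textbf{Step 1: first-order inequality.} Fix $\vx\in\cS'$ and write $\theta=\theta(\vx,\vx^*)\in(0,\pi/2)$ and $\vv=\vx^*_{\perp\vx}/\|\vx^*_{\perp\vx}\|_2$. Since $\cS$ is geodesically convex, the geodesic $\cG_{\vx,\vx^*}(t)=\cos(t\theta)\vx+\sin(t\theta)\vv$ lies in $\cS$. Rearrange \eqref{eq:geo-str-cvx-basic} as
\[
\frac{f(\cG_{\vx,\vx^*}(t))-f(\vx)}{t}+\frac{\nu}{2}(1-t)\theta^2\le f(\vx^*)-f(\vx)
\]
and let $t\to0^+$. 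Because $f$ is differentiable and $\cG_{\vx,\vx^*}'(0)=\theta\vv\in\mathrm{T}_\vx\spd$, this gives
\[
\theta\,\nablar f(\vx)\cdot\vv+\frac{\nu}{2}\theta^2\le f(\vx^*)-f(\vx)\le 0,
\]
where the last inequality uses that $\vx^*$ minimizes $f$ over $\cS$. Hence $\nablar f(\vx)\cdot\vv\le-\frac{\nu}{2}\theta$. Since $\nablar f(\vx)\perp\vx$, we have $\nablar f(\vx)\cdot\vx^*=\sin\theta\,\nablar f(\vx)\cdot\vv$, and therefore
\[
\nablar f(\vx)\cdot\vx^*\le-\frac{\nu}{2}\,\theta\sin\theta\le-\frac{\nu}{2}\sin^2\theta,
\]
using $\theta\ge\sin\theta$. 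The same bound shows $\nablar f(\vx)\neq\vzero$ whenever $\vx\neq\vx^*$, as \Cref{def:sharp} requires.

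\textbf{Step 2, case (i).} Under \Cref{assum:smoothness-of-regression-loss}(i), $\|\nablar f(\vx)\|_2\le L_1$. Then
\[
-\frac{\nu}{2}\sin^2\theta\le-\frac{\nu}{2L_1}\|\nablar f(\vx)\|_2\sin^2\theta,
\]
which is $(\nu/(2L_1),2)$-RGA.

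\textbf{Step 2, case (ii).} Under \Cref{assum:smoothness-of-regression-loss}(ii), $\|\nablar f(\vx)\|_2\le L_2\|\vx-\vx^*\|_2=2L_2\sin(\theta/2)$. For $\theta\in(0,\pi/2)$ we have $2\sin(\theta/2)\le\sqrt2\sin\theta\le2\sin\theta$. Using one factor of $\sin\theta$ from Step 1 to absorb this norm bound gives
\[
\nablar f(\vx)\cdot\vx^*\le-\frac{\nu}{4L_2}\|\nablar f(\vx)\|_2\sin\theta,
\]
which is $(\nu/(4L_2),1)$-RGA.

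\textbf{Main point requiring care.} The step that needs attention is justifying the one-sided derivative limit in Step 1. This is routine given differentiability of $f$ along the smooth curve, with $\nablar f$ defined through an extension as in \Cref{sec:prelims}. If $f$ were only defined on $\cS$, one would use a one-sided directional derivative, which still equals $\nablar f(\vx)\cdot\theta\vv$.
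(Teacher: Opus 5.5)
Your proof is correct and follows essentially the same route as the paper. Both arguments use the first-order strong-convexity inequality along the geodesic from $\vx$ to $\vx^*$, together with $f(\vx^*)\le f(\vx)$, to get $\nablar f(\vx)\cdot\vx^*\le -\frac{\nu}{2}\sin^2\theta$, and then divide by the norm bound from \Cref{assum:smoothness-of-regression-loss}(i) or (ii). The only differences are minor: you derive the first-order inequality from the zeroth-order definition \eqref{eq:geo-str-cvx-basic} by letting $t\to 0^+$ instead of quoting the exponential-map characterization, and you write out the case (ii) step $2\sin(\theta/2)\le 2\sin\theta$, which the paper leaves implicit.
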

\begin{proof}
    Because $f(\vx)$ is $\nu$-strongly geodesically convex on a geodesically convex subset on $\spd$, by definition~\cite{boumal2023introduction}, for any $t\in(0,1]$ and $\vv'\in{\rm T}_\vx\spd$ such that ${\rm Exp}_\vx(t\vv')\in\cS$ , it holds
    \begin{align}\label{eq:geo-str-cvx}
        f({\rm Exp}_\vx(t\vv'))\geq f(\vx) + t\la \nablar f(\vx), \vv'\ra + \frac{t^2\nu}{2}\|\vv'\|_2^2.
    \end{align}
    In the equality above, ${\rm Exp}_\vx(t\vv')$ denotes the exponential map from $\vx$ at time $t$ with velocity $\vv'$. On the sphere, the exponential map is simply ${\rm Exp}_\vx(t\vv') = \cos(t\|\vv'\|_2)\vx + \sin(t\|\vv'\|_2)\vv'/\|\vv'\|_2$. 
Fix any $\vx\in\cS' = \cS\cap\{\theta(\vx,\vx^*)\in(0, \pi/2)\}$ and denote $\theta = \theta(\vx,\vx^*)$. Choosing $\vv = \vx^*_{\perp\vx}/\|\vx^*_{\perp\vx}\|_2$ and $\vv' = \theta\vv$, we have ${\rm Exp}_\vx(t\vv') = \cos(t\theta)\vx + \sin(t\theta)\vv$ and ${\rm Exp}_\vx(\vv') = \vx^*$. Since $f(\vx^*)\leq f(\vx)$ for any $\vx\in\cS'$, strong geodesic convexity \eqref{eq:geo-str-cvx} then implies (choosing $t = 1$):
    \begin{align*}
        f(\vx^*)\geq f(\vx) + \theta\nablar f(\vx) \cdot\vv + \frac{\nu^2}{2}\theta^2\;\Rightarrow \nablar f(\vx) \cdot \vv\leq -\frac{\nu}{2}\theta.
    \end{align*}
    Since when $\theta\in(0,\pi/2)$ we have $\sin\theta\leq \theta$, it holds that $\nablar f(\vx) \cdot\vx^* = \sin\theta\nablar f(\vx)\cdot\vv\leq -(\nu/2)\sin^2\theta$. Therefore, when $\nablar f(\vx)$ satisfies \Cref{assum:smoothness-of-regression-loss}(i), we have that $f$ is $(\mu,2)$-RGA on $\cS'$ with $\mu = \nu/(2L_1)$; and when $\nablar f(\vx)$ satisfies \Cref{assum:smoothness-of-regression-loss}(ii), we have that $f$ is $(\mu, 1)$-RGA on $\cS'$ with $\mu = \nu/(4L_2)$. 
\end{proof}

 \end{document}